\documentclass[11pt]{amsart}
\usepackage{amsfonts, mathtools, nccmath, amssymb, amsthm,  commath, bbm}
\usepackage{algorithm, algorithmicx, algpseudocode, float}
\usepackage{tikz, graphicx, subfig, booktabs}
\usepackage[titletoc, title]{appendix}
\usepackage{indentfirst}
\usepackage[english]{babel}
\usepackage[top=1in, bottom=1in, left=1in, right=1in]{geometry}
\graphicspath{ {./images/} }
\usepackage{color}
\usepackage{times}
\usepackage[foot]{amsaddr}
\usepackage{hyperref}

\hypersetup{colorlinks,breaklinks,
             linkcolor=blue,urlcolor=blue,
             anchorcolor=blue,citecolor=blue}
\usepackage{dsfont}
\usepackage{cite}

\newcommand{\one}{\mathds{1}}
\renewcommand{\phi}{\varphi}
\renewcommand{\L}{\mathcal{L}}
\renewcommand{\div}{\text{div}}
\renewcommand{\P}{\mathbb{P}}
\renewcommand{\S}{\mathbb{S}}
\newcommand{\E}{\mathbb{E}}
\newcommand{\Var}{\text{Var}}

\newcommand{\bnu}{\bar{\nu}_{\eps}(x^0)}
\renewcommand{\O}{\mathcal{O}}

\numberwithin{equation}{section}

\newcommand{\Rn}[1][n]{\ensuremath{\mathbb{R}^{#1}}}
\newcommand{\mux}[1][]{\ensuremath{E_{1}^{#1}}}
\newcommand{\falseneg}{\ensuremath{\mathbb{P}(\widehat{T}^1_{\eps,\rr}(x^{0})=1\;|\,  d_{\Omega}(x^{0})\geq 2\eps)}}
\newcommand{\falsepos}{\ensuremath{\mathbb{P}(\widehat{T}^1_{\eps,\rr}(x^{0})=0\;|\, d_{\Omega}(x^{0})\leq\eps)}}

\newcommand{\Sx}[1][]{\ensuremath{S_{n}^{#1}}}
\newcommand{\Sy}{\ensuremath{S_{n}^{d}}}
\newcommand{\Sxsum}{\ensuremath{\left(\sum_{j=1}^{d-1}\left(\Sx[j]\right)^2\right)^{1/2}}}
\newcommand{\wod}[1][d]{\ensuremath{\omega_{#1}}}
\newcommand{\rr}{\mathit{r}}

\DeclareMathOperator{\supp}{supp}


\definecolor{byzantine}{rgb}{0.74, 0.2, 0.64}
\definecolor{darkgreen}{rgb}{0.1,0.6,0.1}
\definecolor{darkred}{rgb}{0.6,0,0}
\definecolor{lightgray}{rgb}{0.5,0.5,0.5}
\newcommand{\Ds}[1]{{\color{darkgreen}#1}}

\newcommand{\Sa}[1]{{\color{blue}#1}}

\newcommand{\jc}{\color{orange}}
\DeclareMathOperator{\dist}{dist}

\newcommand{\nc}{\normalcolor}

\newcommand{\R}{\mathbb{R}}

\newcommand{\M}{\mathcal{M}}
\newcommand{\x}{\textbf{x}}
\newcommand{\X}{\mathcal{X}}
\newcommand{\te}{\textrm}

\newcommand{\eps}{\varepsilon}
\renewcommand{\epsilon}{\varepsilon}


\newtheorem{theorem}{Theorem}[section]
\newtheorem{lemma}[theorem]{Lemma}
\newtheorem{proposition}[theorem]{Proposition}
\newtheorem{corollary}[theorem]{Corollary}

\newtheorem{assumption}[theorem]{Assumption}
\theoremstyle{definition}

\newenvironment{remark}{\pushQED{\qed}\remarkx}{\popQED\endremarkx} 

\newenvironment{proofsketch}{\pushQED{\qed}
  \proof}{\endproof}

\title{Boundary Estimation from Point Clouds: Algorithms, Guarantees and Applications }

\author{Jeff Calder} 
\address{J. Calder: School of Mathematics, University of Minnesota, 127 Vincent Hall, 206 Church St. S.E., Minneapolis, MN 55455}
\email{jwcalder@umn.edu}

\author{Sangmin Park} 
\author{Dejan Slep\v{c}ev}
\address{S. Park, D. Slep\v{c}ev: Department of Mathematical Sciences, Carnegie Mellon University, 5000 Forbes ave., Pittsburgh, PA 15213}
\email{sangminp@andrew.cmu.edu, slepcev@math.cmu.edu}
\thanks{\textbf{Acknowledgments.} JC was supported by NSF grant DMS 1944925, the Alfred P.~Sloan Foundation, and a McKnight Presidential Fellowship. SP and DS were supported by NSF grant DMS 1814991. The authors would like to thank Eddie Aamari for valuable comments. 
The authors are grateful to CNA of CMU, IMA of Univ. of Minnesota, and Simons Institute at UC Berkeley for hospitality.}
\date{\today}

\begin{document}
\maketitle



\begin{abstract}
We investigate identifying  the boundary of a domain from sample points in the domain. 
We introduce new estimators for the normal vector to the boundary, distance of a point to the boundary, and a test for whether a point lies within a boundary strip. 
The estimators can be efficiently computed and are more accurate than the ones present in the literature. 
We provide rigorous error estimates for the estimators. 
Furthermore we use the  detected boundary points to solve boundary-value problems for PDE on point clouds. We prove error estimates for the Laplace and eikonal equations on point clouds. Finally we provide a range of numerical experiments illustrating the performance of our boundary estimators, applications to PDE on point clouds, and tests on image data sets. 
\end{abstract}
\medskip

\noindent \textbf{Keywords:}
boundary detection, distance to boundary,  PDE on point clouds, meshfree methods \\
\noindent \textbf{MSC (2020):} 65N75, 62G20, 65N12, 65N15,  65D99
\medskip

\subsection*{Notation}
	\begin{itemize} \addtolength{\itemsep}{2pt}
	\addtolength{\itemindent}{40pt}
		\item[$\Omega$:] bounded domain in $\R^d$. We denote the volume of $\Omega$ by $|\Omega|$.
		\item[$R$:]  lower bound for the reach of $\partial \Omega$.
		\item[$d_{\Omega}:$] the distance function $d_{\Omega}=\dist(x,\partial\Omega):\Omega\rightarrow \R_+$.
		\item[$\partial_{a}\Omega$:] boundary region $\partial_{a}\Omega: =\{x\in\Omega: \dist(x,\partial\Omega)\leq a\}$ for $a>0$.
		\item[$\wod$:]  volume of the unit ball in $\Rn[d]$.
		\item[$\rho$:]  probability density function $\rho:\Omega\rightarrow [\rho_{\min},\rho_{\max}]$ where $0<\rho_{\min}\leq \rho_{\max} < \infty$.
		\item[$L$:]  upper bound for the Lipschitz constant of $\rho$. 
		\item[$\X$:] $=\{x^1,\cdots,x^n\}$: set of \emph{i.i.d.}~sample points drawn from density $\rho$. 
		\item[$n$:]  total number of sample points considered.
		\item[$\rr$:]  neighborhood radius.
		\item[$\eps$:]  thickness of the boundary region we seek to identify.
        \item[$\nu$:] inward unit normal vector to $\partial\Omega$, extended to $\partial_R \Omega$ by \eqref{def:nu_star}.
        \item[$\bar{v}_{\rr},\,\bar{\nu}_{\rr}$:]  population-based estimator of the normal vector, and its unit normalization, \eqref{eq:v_bar}.
		\item[$\hat{v}_{\rr},\,\hat{\nu}_{\rr}$:]  first-order empirical estimator of the normal vector, and its unit normalization, \eqref{eq:v_hat}.
		\item[$\hat{v}^{2}_{\rr},\,\hat{\nu}^{2}_{\rr}$:]  second-order empirical estimator of the normal vector, and its unit normalization, \eqref{eq:hatvn}. \item[$\hat{d}_\rr^1(x^0), \hat{d}_\rr^2(x^0)$] first and second-order estimators of the distance to boundary of $\Omega$,  \eqref{eq:D_hat} and \eqref{eq:Deps2}.
        \item[$C_x, C_y, C_r$:] 	dimensionless constants explicitly stated  in Appendix \ref{appendixConstants}.
\end{itemize}

\section{Introduction}

We focus on determining the boundary of a domain given sample points in the domain. By determining the boundary we mean identifying the points which lie within an $\eps>0$ neighborhood of the boundary; see Figure \ref{fig:test_illustration} for illustration. Our aim is develop an algorithm that is efficient to compute, accurate (so that the boundary strip can be identified even for $\eps>0$ which is smaller than the typical distance between neighboring sample points), and guarantees that we identify a high percentage of points that are within distance $\eps$, while misidentifying as few points as possible that are at distance greater than $2 \eps$ as boundary points. Having such a set is sufficient for imposing boundary values for computing solutions of PDE on point clouds. 
\begin{figure}[ht]
    \centering
    \includegraphics[trim={20pt 10pt 20pt  30pt}, height=7cm]{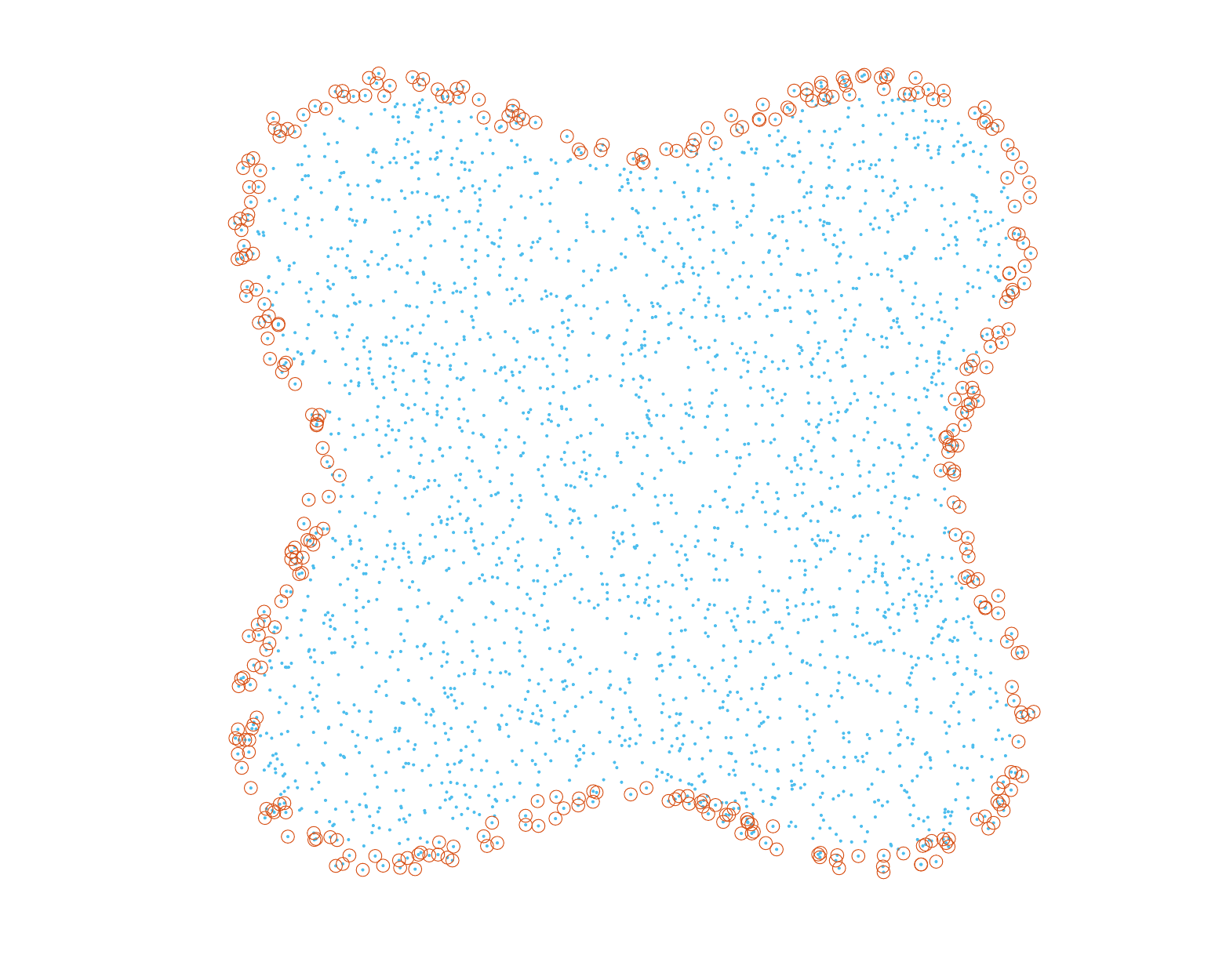}
    \vspace*{-12pt}
    \caption{Boundary points identified using the proposed  test \eqref{def:test}. } 
    \label{fig:test_illustration}
\end{figure}	

Estimating the boundary of the support of an unknown distribution and the normal vector to the boundary  are important and basic tasks with many applications.  Identification of boundary points are crucial to solving partial differential equations (PDEs) on data clouds \cite{CST20, liang2021solving, Shi17, vaughn2019diffusion}, and have applications such as detecting anomalies in a point cloud \cite{DevWis80} or assigning a notion of depth to each point (Section \ref{sec:realdata}). Estimation of the distance of each point to the boundary is also used to improve the accuracy of kernel distance estimators near the boundary \cite{BerrySauer17}. When the distribution is supported on a lower dimensional manifold, identifying points close to the boundary is important for estimation of the manifold itself. See \cite{AAL21} and references therein. 
While identifying the boundary of a point cloud is a basic problem, there are relatively few works that investigate the question in depth, see Section
\ref{sec:related-works}, and none satisfied the desired criteria above. In this work we introduce an approach that is simple, efficient, accurate and has the desired guarantees. 

Our approach is to first estimate the approximate normal vector to the boundary using a kernel average. 
In fact, in Section \ref{ssec:normalvec} we develop two such estimators: a first-order estimator,  given in \eqref{eq:v_hat}, which estimates the normal vector to first-order with respect to the kernel bandwidth, and a second-order estimator, given in \eqref{eq:hatvn}.  We use these normal vector estimators in Section \ref{ssec:dist} to define estimators for the distance to the boundary, \eqref{eq:D_hat} and \eqref{eq:Deps2},  which are, respectively, first and second-order accurate for points near the boundary. 
This allows us to define in Section \ref{ssec:bdrytest} the statistical test for the boundary strip in \eqref{def:test}. We implement our boundary test using MATLAB and Python, and make our code available on Github \footnote{\url{https://github.com/sangmin-park0/BoundaryTest}}.

In this work we provide rigorous non-asymptotic error bounds of the first-order estimators and only asymptotic estimates for the second-order estimators. We focus on the first-order estimators in this paper, since nonasymptotic bounds for the second-order versions would be highly complicated, involving nontrivial dependence on a large number of parameters, including higher order derivatives of the density $\rho$ and the boundary of $\Omega$, which the first-order estimators do not require.

In Sections \ref{ssec:normalvec} and \ref{ssec:dist} we motivate and define the normal vector and distance-to-boundary estimators. The estimates on the normal vector estimators are provided in Section \ref{sec:prelim}. Section \ref{sec:main} then establishes nonasymptotic estimates for the first-order test. In particular, the nonasymptotic error bounds on the distance estimator are provided in Theorem \ref{thm:main}, and Corollary \ref{corol:boundary_test} establishes the nonasymptotic estimates for the first-order test. Asymptotic error estimates for the second-order distance test are given in Section \ref{sec:2ndOrder}.

In Section \ref{sec:algorithm_experiments} we state our boundary tests in the form of a practical procedure, see Algorithm \ref{alg:1st} and Algorithm \ref{alg:2nd}. We conduct a number of experiments that illustrate the qualitative and quantitative  performance of the algorithms. We also discuss the optimal selection of parameters, in particular the bandwidth of the kernel. 

In Section \ref{sec:PDE_graph} we turn to applications of the boundary test towards solving PDE boundary value problems  using graph-based approximations, which is one of the problems that motivated our work. Since we estimate both the boundary points and the normal vector to the boundary, we are able to assign Dirichlet, Neumann, and Robin boundary conditions. In particular, we study the eikonal equation with Dirichlet boundary conditions and Poisson equations with Robin conditions on point clouds, and prove quantitative convergence rates to the solutions of the continuum PDEs. It is important to point out that not all methods for detecting boundary points will lead to convergent numerical approximations of PDEs. If too few points are identified, the boundary conditions may not be attained continuously as the mesh is refined \cite{CST20}. Similar problems can occur if points far inside the interior of the domain are falsely identified as boundary points. The purpose of this section is to illustrate that our boundary detection method is compatible with setting boundary conditions for PDEs on point clouds. Our results cover only some preliminary examples, with much investigation left to future work. 

Finally, in Sections \ref{sssec:1storderPDE} and \ref{sssection:2ndorderPDE} we implement numerical schemes for solving the eikonal and Robin equations on point clouds and conducted a number of experiments to both illustrate the solutions and numerically investigate the rate of convergence. Solving the eikonal equation enables us to estimate the distance to the boundary of any point in the dataset, which gives a notion of data depth on a point cloud. While our boundary test is not designed for working with manifolds in high dimensional spaces, Section \ref{sec:realdata} include experiments with notions of data depth based on the eikonal equation and Dirichlet eigenfunctions of the graph Laplacian on MNIST and FashionMNIST, using our boundary detection method to set the Dirichlet boundary conditions. The results are intriguing and agree with intuition; the boundary images are clearly outliers while the deepest images are good representatives of their class.

\subsection{ Setting }\label{ssec:setting}
Consider a domain $\Omega\subset\Rn[d]$ such that both $\Omega$ and $\R^{d}\setminus\Omega$ has reach at least $R>0$ , where reach is the maximal distance such that for all $x$ with $\dist(x,\Omega)\leq R$ there exists a \emph{unique} point $y \in \overline \Omega$ such that $|x-y| = \dist(x, \Omega)$. Denote by $\rho: \Rn[d] \rightarrow [0, \infty)$ a probability density function, which we assume satisfies $\rho_{\min}\leq\rho \leq \rho_{\max}$ on $\Omega$ for some positive numbers $\rho_{\min}\leq \rho_{\max}$ and $\rho =0$ outside of $\Omega$. 
We assume that on $\Omega$, the function $\rho$ is Lipschitz continuous with Lipschitz constant $L$. Given a set of \emph{i.i.d.}~points $\X$ distributed according to $\rho$, our goal is to identify the points that are close to the boundary $\partial \Omega$ with high probability; namely, we aim to approximate the set
\[\partial_\eps\Omega\cap\X=\{x\in \X: d_{\Omega}(x)\leq \eps\}\]
of $\eps$-boundary points, where $d_{\Omega}:\Omega\rightarrow\mathbb{R}_{+}$ is the distance function
\[d_{\Omega}(x):=\dist(x,\partial\Omega).\]

Our approach is as follows: we approximate inward normal vectors, use these to estimate the distance of each point to the boundary, and threshold the distance to obtain a boundary test.  For $x \in\partial\Omega$ we denote by $\nu(x)$ the unit inward normal to $\partial\Omega$ at $x$. We extend the unit normal to a vector field on the set $\partial_R \Omega$ by setting 
\begin{equation} \label{def:nu_star}
\nu(x)=\nu(x^*),
\end{equation}
where $x^*\in \partial\Omega$ is the closest point to $x$ on $\partial\Omega$. Note that $x^*$ is uniquely defined on $\partial_R \Omega$. We can also equivalently set $\nu(x) = \nabla  d_\Omega(x)$.

\subsection{Estimation of the inward normal vector} \label{ssec:normalvec}

\begin{figure}[ht]  
	\centering
	\begin{tikzpicture}
	    \node at (0,0){\includegraphics[height=7cm]{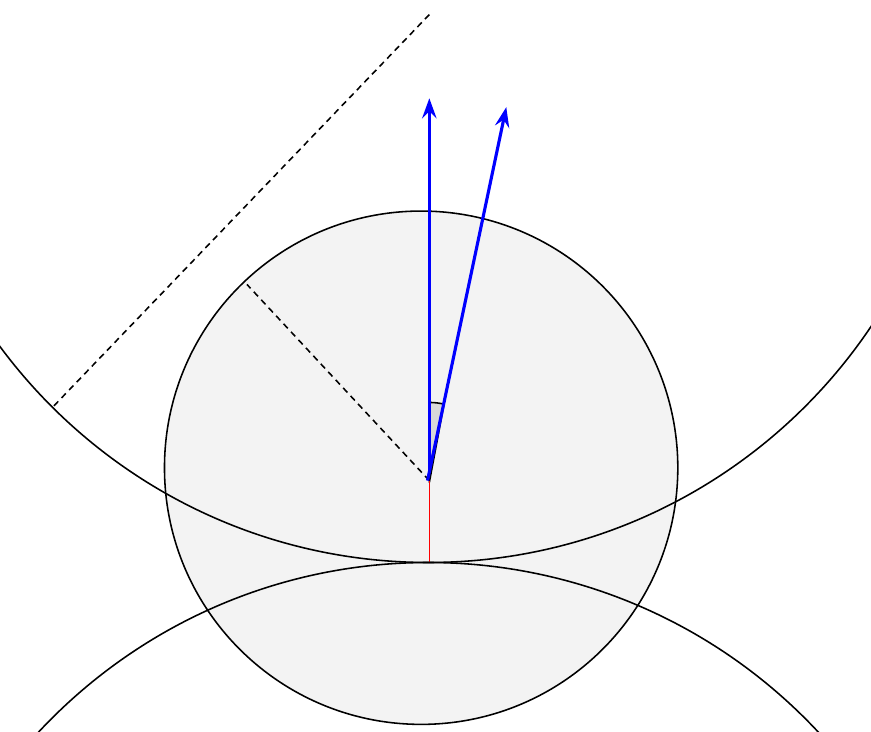}};
	    \node at (-0.3,-1.2){$\boldsymbol{x^0}$};
	    \node  at (0.85,1.75){\large $\boldsymbol{\hat{\nu}_{\rr}}$};
	    \node at (-0.28,1.85){\large $\boldsymbol{\nu}$};
	    \node at (-2.05,1.65){\large $\boldsymbol{R}$};
	    \node at (0.52,-1.5){$\boldsymbol{d_{\Omega}(x^0\!)}$};
	    \node at (-0.9,0.1){\large $\boldsymbol{r}$};
	    \node at (-4,-1.4){\large $\boldsymbol{\partial\Omega}$};
	    \draw[scale=1, domain=-4.2:4.2, smooth, variable=\x, very thick, black] plot ({\x}, {-0.40*(\x-0.4)*(\x-0.4)*(\x-0.4)/40-1.9});
	    \draw plot [only marks, mark=*, mark size=0.15, domain=-4.2:4.2, samples=700] (\x,{mod(5.7*rnd-0.40*(\x-0.4)*(\x-0.4)*(\x-0.4)/40-1.9,3.3)});
	\end{tikzpicture}
	    \caption{Illustration of the test setup: $x^0$ is the point tested.  }\label{fig_test_setup}
\end{figure}

We now introduce the first and second-order estimator of $\nu(x^0)$. These estimators are accurate when $x^0$ is near the boundary. This is sufficient as our test does not require any accuracy of the estimated normal vectors in the interior. In fact, even in the continuum case the normal vectors are not necessarily well-defined for points outside of $\partial_R \Omega$.
\medskip

\textbf{First-order normal vector estimator.} 
Let $\rr>0$ and $\X=\{x^1,x^2,\cdots,x^n\}$ be the set of \emph{i.i.d.}~points distributed according to $\rho$. For each $x^0\in \X$ we define the first-order normal vector estimator
\begin{equation}\label{eq:v_hat}
 \boxed{    \hat{v}_{\rr}(x^0)=\frac{1}{n}\sum_{i=1}^n\one_{B(x^0,\rr)}(x^i)(x^i-x^0),\qquad \hat{\nu}_{\rr}(x^0)=\frac{\hat{v}_{\rr}(x^0)}{|\hat{v}_{\rr}(x^0)|}}.
\end{equation}
 If $\hat v_r(x^0)= 0$ then we set $\hat\nu_r(x^0) = 0$. In this case, our test will identify $x^0$ as a boundary point. Note that this can happen with nonzero probability only when $x^0$ is an isolated point.
We also define the corresponding population level estimator
\begin{equation}\label{eq:v_bar}
    \bar{v}_{\rr}(x^0)=\int_{\Omega\cap B(x^0,\rr)}(x-x^0)\rho(x)\, dx,\qquad \bar{\nu}_{\rr}(x^0)=\frac{\bar{v}_{\rr}(x^0)}{|\bar{v}_{\rr}(x^0)|}.
\end{equation}
Theorem \ref{thm:normal_vector} establishes precise error bounds on the normal estimator, which in particular imply that 
\begin{equation}\label{eq:intro_normalVecRate}
        \mathbb{P}\left(|\hat{\nu}_{\rr}(x^0)-\nu(x^0)|>C\left(\frac{\log n}{n}\right)^{\frac{1}{d+2}}\right)\leq \frac{2d}{n^3}
\end{equation}
for $\rr\sim (\log n/n)^{1/(d+2)}$, where $C>0$ is a constant independent of $n$, with scaling $C\sim d^2$. 
\medskip

\textbf{Second-order normal vector estimator.}  
In addition to the assumptions for the first-order test, we now assume that $\rho$ is a $C^2$ function and that the boundary of $\Omega$ is a $C^3$ manifold. To reduce the bias that arises from the fact that $\rho$ is not  constant near $x^0$ we weight the points by the inverse of a kernel density estimate of $\rho$. For each $x^0 \in \X$  we define the second-order normal vector estimator
\begin{equation}\label{eq:hatvn}
\boxed{\hat{v}^{2}_r(x^0) = \frac{1}{n}\sum_{i=1}^n \frac{\one_{B(x^0,\rr)}(x^i)}{\hat \theta(x^i)}(x^i - x^0), \qquad \hat{\nu}^{2}_r(x^0) = \frac{\hat{v}^{2}_r(x^0)}{|\hat{v}^{2}_r(x^0)|},}
\end{equation}
where
\begin{equation}\label{def:theta;sample} \boxed{\hat \theta(x) = \frac{1}{\omega_d  n} \left(\frac{2}{\rr}\right)^{\!d}  \sum_{j=1}^n \one_{B(x,\rr/2)}(x^j).} \end{equation}
Similarly, we set $\hat\nu_r^2(x^0) = 0$ if $\hat{v}_\rr^2(x^0)= 0$.
We note that the radius for estimating $\theta$, namely $\frac{r}{2}$ is somewhat arbitrary. Using $r$
instead of $\frac{r}{2}$ results in the error of the same order, however in practice using  $r/2$ resulted in  smaller error than using $r$.

At the population level our estimator takes the form
\begin{equation}\label{eq:batvn}
\bar{v}^{2}_r(x^0) = \int_{B(x^0,r) \cap \Omega}  \frac{ \rho(x) }{\theta(x)}(x - x^0) dx, \qquad \bar{\nu}^{2}_r(x^0) = \frac{\bar{v}^{2}_r(x^0)}{|\bar{v}^{2}_r(x^0)|},
\end{equation}
where 
\begin{equation}\label{def:theta;population} \theta(x) = \frac{2^d}{\omega_d r^d} \, \int_{B(x,r/2) \cap \Omega}  \rho(z) dz. \end{equation}

In Section \ref{sec:asym2} we provide a proof that the error is indeed of size $r^2$ when $r \gtrsim (\log n/n)^{1/(d+4)}$, for $n$ large enough. In contrast to our results for the first-order test (Theorem \ref{thm:normal_vector}) we did not carry out a careful analysis of the second-order estimator to determine the exact constants appearing in the error bounds, and only determined the asymptotic scaling law. A more careful analysis of the second-order estimator is a nontrivial undertaking that we leave to future work.

We note that in addition to its use for distance estimation and the boundary test, the estimation of normal vectors is itself important to PDEs on graphs. It allows for the solution of PDEs on point cloud with not only Dirichlet boundary conditions but also Neumann, oblique, and Robin boundary conditions, which we study in  Section \ref{sec:PDE_graph}.

\subsection{Estimation of the distance to the boundary} \label{ssec:dist}

The distance to $\partial \Omega$, $\;d_\Omega : \Omega \to \R$, is differentiable in $ \partial_R \Omega$; see for example Lemma 2.21 in
\cite{BD-medial}. Furthermore, the gradient of the distance function conicides with the extension of the inward normal vector, that is, for $x\in \partial_R \Omega$ we have
\begin{equation}\label{eq:normal_graddist}
\nabla d_\Omega(x)=\nu(x).
\end{equation}
We exploit this relationship to approximate the distance function using the normal vectors near the boundary. First, we observe that $d_\Omega$ satisfies
\begin{equation}\label{eq:rep}
d_\Omega(x) = \max_{y\in B(x,\rr)\cap \Omega}\left\{ d_\Omega(x) - d_\Omega(y) \right\}
\end{equation}
provided $B(x,\rr)\cap \partial \Omega$ is not empty. Indeed, the maximum is attained at $y\in \partial \Omega$ where $d_\Omega(y)=0$. Suppose $d_\Omega\in C^2$ near the boundary. Then we can use the Taylor expansion 
\[d_\Omega(y) = d_\Omega(x) + \nabla d_\Omega(x)\cdot(y-x) + O(\rr^2)\]
in \eqref{eq:rep}, along with \eqref{eq:normal_graddist}, to obtain
\begin{equation}\label{eq:test1}
d_\Omega(x) = \max_{y\in B(x,\rr)\cap \Omega}\left\{\nu(x)\cdot  (x-y)\right\} + O(\rr^2).
\end{equation}
Replacing the true normal $\nu(x)$ in \eqref{eq:test1} with our first-order normal estimator $\hat{\nu}_{\rr}(x^0)$, and restricting the maximum to the point cloud, leads to our first-order estimator of the distance to the boundary.  
\medskip

\textbf{First-order estimator for the distance to the boundary of $\Omega$.}
Let $\rr>0$ and $\X=\{x^1,x^2,\cdots,x^n\}\subset\Omega$. We define the first-order distance function estimator  $\hat{d}_\rr^1: \X\rightarrow \mathbb{R}$ by
\begin{equation}\label{eq:D_hat}
\boxed{\hat{d}_\rr^1(x^0) = \max_{x^i\in B(x^0,\rr)\cap \X}(x^0-x^i)\cdot \hat{\nu}_{\rr}(x^0).}
\end{equation}
In Sections \ref{sec:prelim} and \ref{sec:main}, we show that the assumption that $\partial\Omega$ has positive reach guarantees the error rate $O(\rr^2)$ of the first-order distance estimator near the boundary.

The associated population based estimator $\bar{d}_\rr^1$ defined by
\begin{equation}\label{eq:D_bar}
\bar{d}_\rr^1(x^0) = \max_{x\in \overline{B(x^0,\rr)\cap\Omega}}(x^0-x)\cdot \bar{\nu}_{\rr}(x^0).
\end{equation}
Note that the population based estimator has a positive bias, meaning $d_{\Omega}(x^0)\leq \bar{d}_\rr(x^0)$. 
 In Lemma \ref{lem:bias} we obtain explicit bounds on the bias which establish that  $\bar{d}_\rr(x^0)- d_{\Omega}(x^0) = O(r^2)$ as $\rr\rightarrow 0$.
We combine this with variance bounds on $\bar \nu_r - \hat \nu_r$
established in Lemma \ref{lem:bernstein} to show, in Theorem \ref{thm:main} that  when  $r \gtrsim (\log n/n)^{1/(d+2)}$ we have $|\hat{d}^1_\rr(x^0) -  d_{\Omega}(x^0)| = O(r^2)$, with high probability, for $x^0$ sufficiently close to the boundary. The dependence of the error bounds on the parameters is explicitly stated.

\medskip

\textbf{Second-order estimator for the distance to the boundary of $\Omega$.}
If the boundary of $\Omega$ is $C^3$, and thus $d_\Omega$ is $C^3$ within the a sufficiently small tubular neighborhood of the boundary \cite{foote1984regularity}, then we can use the second-order estimator $\hat \nu^n_r$ of the unit  normal vector to obtain a second-order accurate estimator for the distance. 

To derive a second-order distance function estimation near the boundary, we proceed from \eqref{eq:rep}, as before, except now we use the higher order Taylor expansion 
\begin{equation}\label{eq:first_exp}
d_\Omega(y) = d_\Omega(x) + \nabla d_\Omega(x)\cdot(y-x) + \frac{1}{2}(y-x)\cdot \nabla^2 d_\Omega(x)(y-x)+ O(\rr^3).
\end{equation}
To handle the second-order terms, which cannot be easily estimated from the point cloud, we use the Taylor expansion
\[\nabla d_\Omega(y)  = \nabla d_\Omega(x) + \nabla^2 d_\Omega(x) (y-x) +O(\rr^2).\]
Taking dot products of both sides with $y-x$ yields
\[(y-x)\cdot \nabla^2 d_\Omega(x) (y-x) = (\nabla d_\Omega(y)- \nabla d_\Omega(x))\cdot(y-x) + O(\rr^3).\]
Combining this with the first expansion \eqref{eq:first_exp}    yields
\[d_\Omega(y) = d_\Omega(x) + \frac{1}{2}(\nabla d_\Omega(x) + \nabla d_\Omega(y))\cdot(y-x) + O(\rr^3).\]
Inserting this into \eqref{eq:rep} and using that $\nabla d_\Omega(x) = \nu(x)$ we obtain
\begin{equation}\label{eq:test2}
d_\Omega(x) = \max_{y\in B(x,\rr)\cap \Omega}\left\{ (x-y)\cdot \frac{1}{2}(\nu(x)+ \nu(y)) \right\} + O(\rr^3).
\end{equation}
Hence, the second-order distance estimator simply involves averaging the normals at $x$ and $y$. When discretizing to the point cloud, this yields the distance function estimation
\begin{equation} \label{eq:secd-pop}
\max_{x^i\in B(x^0,\rr)\cap X_n}(x^0-x^i)\cdot\frac12 (\hat{\nu}_\rr(x^0)+\hat{\nu}_\rr(x^i))
\end{equation}
The above test is second-order accurate when applied to points that are closer to boundary than $\tfrac r2$, however at far away points, in particular those further than $r$, 
$\hat{\nu}^{2}_\rr(x^0)$ and  $\hat{\nu}^{2}_\rr(x^i)$ are to large extent random and can be almost opposite to each other. This can lead to the distance being severely underestimated by the test above. 

 To avoid this problem, we define the second-order estimator with cutoff
\begin{equation}\label{eq:Deps2}
\boxed{\hat{d}_\rr^2(x^0) = \max_{x^i\in B(x^0,\rr)\cap \X}(x^0-x^i)\cdot\left[\hat{\nu}^{2}_\rr(x^0)+\frac{\hat{\nu}^{2}_\rr(x^i)-\hat{\nu}^{2}_\rr(x^0)}{2}\one_{\R_+}(\hat{\nu}^{2}_\rr(x^i)\cdot\hat{\nu}^{2}_\rr(x^0))\right]. }
\end{equation}
The rationale for the particular cutoff function is as follows. We need a highly accurate estimate of the distance, for example, to determine the points in a boundary strip,  only when $d_\Omega(x^0) <  \frac12 r \ll R$. The point where the right-hand side of \eqref{eq:secd-pop} is maximized is on the boundary. Thus the point where \eqref{eq:Deps2} is maximized, provided the normals are accurate, are close to the boundary. 
Points far away from the boundary can only maximize the right hand side if there is cancellation between the normal vector estimates.  So we just need to discard the points where the normal is very poorly estimated, or rather, where the normal estimation is irrelevant as $B(x^0,r)\cap \partial\Omega = \varnothing$.  Selecting the points where $\hat{\nu}_\rr(x^i)\cdot\hat{\nu}_\rr(x^0)>0$ provides a convenient way to do so. We note that instead of discarding such points, we simply resort back to the first-order test, which provides another layer of robustness, in the case that the assumptions under which the second-order test was derived do not hold. 
 
Henceforth, by the second-order estimator we refer to the estimator with cutoff \eqref{eq:Deps2}, unless stated otherwise. In practice, we recommend the use of the second-order estimator.  The estimates of Section \ref{sec:asym2} imply that for 
 $ r \gtrsim (\log n/n)^{1/(d+4)}$, the test \eqref{eq:Deps2}
 provides a   second-order estimator of the normal vector. 
We note that unlike for the first-order test, our analysis for the second-order test is in the asymptotic regime, without precise estimates in the non-asymptotic regime.  
Developing the full error analysis of the second-order estimators remains a future task. 
 \medskip

\subsubsection{Extension to manifolds} We can generalize both the first and the second-order distance estimators to the case where $\rho$ is supported on an $m$-dimensional manifold $\M$ with $m<d$. We simply replace the normal vectors by their projection onto the relevant tangent spaces approximated using PCA locally. Using such projections in boundary estimation for manifolds has been exploited in \cite{AAL21}.
Let us denote by $\hat T^j$ the $m$-dimensional subspace spanned by the largest $m$ eigenvectors of the sample covariance matrix from the observations $x^i-x^j$ for $x^i\in B(x^j,\rr)$, and $\Pi^j$ the projection onto such a subspace. Thus we may define the first-order distance estimator in the manifold case as
\begin{equation}\label{def:Deps1_manifold}
\boxed{\hat{d}_\rr^1(x^0) = \max_{x^i\in B(x^0,\rr)\cap \X}\Pi^0((x^0-x^i))\cdot \hat{\nu}_{\rr}(x^0),}
\end{equation}
and the corresponding second-order estimator as
\begin{equation}\label{def:Deps2_manifold}
    \boxed{\hat{d}_{\rr,\M}^2 (x^0) = \max_{x^i\in B(x^0,\rr)\cap \X}\left(\Pi^0(x^0-x^i)\right)\cdot\left[\hat{\nu}^{2}_\rr(x^0)+\frac{\hat{\nu}^{2}_\rr(x^i)-\hat{\nu}^{2}_\rr(x^0)}{2}\one_{\R_+}(\Pi^0(\hat{\nu}^{2}_\rr(x^i))\cdot\Pi^0(\hat{\nu}^{2}_\rr(x^0)))\right]. }
\end{equation}
Note we have the equivalent distance estimators when we replace every vector $w$ that appear in the above definitions with $\Pi^0 w$, which we avoid to keep notation simple. When $\M$ itself has positive reach, $\Pi^j$ approximates the projection onto the true tangent plane at $x^j$ with an error of $O(\rr)$ in the operator norm with high probability; when $\M$ is a $C^3$ manifold, the error is of order $O(\rr^2)$ (see Theorem 2 of \cite{AL19}). In fact, this is also true in the presence of small additive noise. Further, Aamri and Levrard \cite{AL19} suggest the same order of accuracy in the presence of small additive, possibly non-random noise of order $O(\rr^2)$. This means that the error rates for the estimated normal vector carry  over, hence we can expect similar bounds on the distance estimators. Figure \ref{fig:manifold_illustration} shows experiments for 2 dimensional surfaces. However, the analysis required in this case is more intricate. One would need to bound the additional errors due to curvature and empirical estimation of the tangent plane. Thus we do not include the analysis in the current paper, and instead leave it to future work.

 \medskip

 \subsection{The new boundary test}\label{ssec:bdrytest}
 Now we are ready to present our boundary test.
 Our aim is to create a test such that given $\eps>0$ small the test would 
 recognize as boundary points all of the points within the distance $\eps$ from the true boundary of $\Omega$ and none of the points which are further than $2\eps$ from $\partial \Omega$. 
 
 The boundary test we introduce depends on the empirical estimator of the distance to the boundary. 
\medskip 
 
 \textbf{Boundary region test.}
Let 
$\X=\{x^1,x^2,\cdots,x^n\}\subset\Omega$ be an \emph{i.i.d.}~random sample of the density $\rho$. Let $\eps,\rr>0$ and $x^{0} \in \X$. 
Given an empirical estimator of the distance to the boundary $\hat d_r$ we define the test $\widehat{T}_{\eps,\rr}:\X\rightarrow\{0,1\}$ by
\begin{equation}  \label{def:test}
\boxed{
\widehat{T}_{\eps,\rr}(x^{0})=
		\begin{cases}
			1 \;\;\;\text{  if  } \hat{d}_\rr(x^0) < \frac{3\eps}{2}  \\
			0 \;\;\; \text{  otherwise}.
 		\end{cases}}
\end{equation}
We denote by $\widehat{T}^1_{\eps,\rr}$ the estimator that uses the first-order estimator for the distance $\hat{d}^1_\rr(x^0)$ defined in \eqref{eq:D_hat} and by $\widehat{T}^2_{\eps,\rr}$
the estimator that uses the second-order estimator for the distance $\hat{d}^2_\rr(x^0)$ defined in \eqref{eq:Deps2}. 
\medskip

Our theoretical guarantees focus on $\widehat{T}^1_{\eps,\rr}$. In particular we show that $\widehat{T}^1_{\eps,\rr}$ identifies the \emph{$\eps$-boundary points} with high probability, even when $\eps$ is much smaller than the typical distance between nearby points.  In particular Theorem \ref{thm:main} shows that, for $\eps \gtrsim(\log n/n)^{2/(d+2)}$, under appropriate assumptions, 
\begin{equation}\label{eq:Intro_prob_decay}
 	\falsepos+\falseneg\leq (2d+1) n^{-3}.
\end{equation}


The assumptions we make on the geometric parameters are as follows.

\begin{assumption}\label{ass:eps/r}
 $\frac{\eps}{\rr}\leq\frac{1}{3\sqrt{d}}$.
\end{assumption}

\begin{assumption}\label{ass:r/R}
$\rr^2\leq R\eps$.
\end{assumption}

Assumption \ref{ass:eps/r} assures that $\rr$ is sufficiently large
so that distances to boundary of size $\eps$ can be detected. In particular it ensures that there are points  $x \in B(x^0,\rr)$ for which $\left(x-x^{0}\right)\cdot\hat{\nu}_{\rr}(x^0)<-\frac{3\eps}{2}$. Assumptions \ref{ass:eps/r} and \ref{ass:r/R} together imply
\begin{equation}\label{eq:assumption_consequence1}
\left(\frac{\eps}{\rr}-\frac{\rr}{R}\right)^{2}\leq\frac{1}{d+1},
\end{equation}
which bounds the rate of growth of constant $C$ in Lemma \ref{lem:v0} in $d$. Assumption \ref{ass:r/R} is needed in  Lemma \ref{lem:hatd1_lowbd}
to ensure that  $\hat{d}^1_\rr(x^0)$  does not  underestimate the distance for 
positively curved domains. 
Assumptions \ref{ass:eps/r} and \ref{ass:r/R} imply
\begin{equation}\label{eq:assumption_consequence2}
\rr\leq R\frac{\eps}{\rr}\leq\frac{R}{3\sqrt{d}}.
\end{equation}
This guarantees that at least one third of $B(x^0,\rr)$ is in $\Omega$, which is crucial for establishing the lower bound in Lemma \ref{lem:p_bounds}. Finally, $\rr\leq\frac{R}{2}$ follows easily from the assumptions. This implies the estimate 
\[R-\sqrt{R^2-x^2}\leq\frac{x^2}{R} \quad \text{ for } \quad |x|\leq\rr,\]
which is used in the proof of Lemmas \ref{lem:p_bounds} and \ref{lem:v0}.

  Now we summarize our result on the accuracy of the boundary test. Corollary \ref{corol:BorelCantelli} states that under suitable conditions $\partial_{\eps,\rr}\X=\{x\in \X: \widehat T^1_{\eps,\rr}(x)=1\}$ satisfies
	   \begin{align*}
        \partial_{\eps}\Omega\subset
        \partial_{\eps,\rr}\X\subset
        \partial_{2\eps}\Omega
	   \end{align*}
        with probability at least $1-2dn^{-3}$, 
        if
        \begin{equation}\label{eq:intro_deltaRate}
        	       \eps\geq C\left(\frac{\log n}{n}\right)^{\tfrac{2}{d+2}}
        	   \end{equation}
        for some constant $C=C\left(d,R,L,\rho_{\min},\rho_{\max}\right)$. 
        For our second-order boundary test, our analysis in the asymptotic regime suggest that we can identify $\eps$-boundary points with $\eps\gtrsim (\log n/n)^{3/(d+4)}$ with high probability. Please see Sections \ref{sec:asym2} and \ref{sec:second-asymp} for precise statements. 

        We can compare the above result with that from Cuevas and Rodr\'{\i}guez-Casal \cite{CueRod04}, which gives the best available theroetical guarantee the authors are aware of. Theorem 4 of \cite{CueRod04} states that with probability one, the estimated set of boundary points $\partial\Omega_n$ based on the Devroye-Wise estimator \cite{DevWis80} satisfies
	   \begin{equation}\label{eq:intro_Ineqcuevas}
	       d_{H}\left(\partial\Omega_{n},\partial\Omega\right)
	       \leq(2s^{-1}\wod^{-1})^{\tfrac{1}{d}}\left(\frac{\log n}{n}\right)^{\tfrac{1}{d}}  
	       \;\;\;\text{eventually}.
	   \end{equation}
	   Here, $s$ denotes the \emph{standardness constant}, which in our case is at least $\frac{1}{3}$. Further, Theorem 5 of \cite{CueRod04} states that the rate in $n$ in \eqref{eq:intro_Ineqcuevas} is optimal for the Devroye-Wise estimator. Let us temporarily denote the right hand side of \eqref{eq:intro_Ineqcuevas}  by $\eps_n$. Note that this allows identifying all points within $\eps_n$ of the boundary and none farther than $2\eps$ via taking the points within $\eps_n$ of $\partial\Omega_{n}$.
	   
	   Note that our test satisfies, under suitable choices of $\eps,\rr$,
	   \[d_H(\partial_{\eps,\rr}\X,\partial\Omega)\leq 2\eps=O\left(\frac{\log n}{n}\right)^{\tfrac{2}{d+2}} \text{ with probability at least } 1-2dn^{-3},\]
       provided we choose $\epsilon$ at the lower bound in \eqref{eq:intro_deltaRate}. 
	   Thus for $d\geq 3$ our rate in $n$ compares favorably to the optimal rate of the Devroye-Wise estimator \eqref{eq:intro_Ineqcuevas}. However, the constant in \eqref{eq:intro_deltaRate} is of order $C\sim O(d^{5/2})$, while the constant $(2s^{-1}\wod^{-1})^{1/d}$ in  \eqref{eq:intro_Ineqcuevas} is of order $O(d^{1/2})$. Details on the dependence of the constants on $d$ can be found in Remark \ref{rmk:main}.
	   
	   Another notable difference is that identifying the boundary points through \cite{CueRod04} does not seem computationally tractable in higher dimensions. The points corresponding $x^i$ whose balls $B(x^i,\rr)$ contribute to the boundary correspond exactly to points on the boundary of the $\alpha$-shape \cite{EdelKirkSeidel83} of $\X$ . However, computing this involves Delaunay triangulation and may be difficult in dimensions higher than $3$. See Section \ref{sec:related-works} for more details.
	   
	   In contrast, our proposed boundary test is easy to implement and computationally efficient, as can be seen in Algorithms \ref{alg:1st} and \ref{alg:2nd}. The range search task of identifying $B(x^0,r)\cap \X$ for each $x^0\in \X$ is the computational bottleneck of our test. This is computationally equivalent to performing a $k$-nearest neighbor search for each point in $\X$ (all-kNN) for suitable $k$. Empirically, k-nearest neighbor search (kNN) can be done in almost linear time with high accuracy \cite{DongMosLi11,Github:annoy}. For further details, we refer the reader to the discussions in Section \ref{sec:algorithm_experiments}.
	   
	   Finally, our test does not require the knowledge of the intrinsic dimension of $\supp\rho$. For instance, if $\Omega$ is an $m$-dimensional disc, the proposed boundary test will perform exactly the same when $\Omega$ is embedded in $\R^d$ for any $d\geq m$, besides the slightly higher computational cost of performing range search or kNN in higher dimensions. This is because our test is based on estimation of the distance $d_\Omega$, which is intrinsic.

\subsection{Related works} \label{sec:related-works}



One of most studied  approaches to boundary and support estimation is via the Devroye-Wise estimator, which approximates the support of $\rho$ by a union of balls: 
\begin{equation}\label{def:DevWise}
    \Omega_n:=\bigcup_{i=1}^{n}B\left(x^i,\rr_{n}\right).
\end{equation}
 Devroye and Wise \cite{DevWis80} establish the convergence of $\Omega_n$ to $\Omega:=\supp\rho$ as $n\to \infty$ and  $r_n\rightarrow 0$, at a suitable rate, in the following sense: $\rho(\Omega \Delta \Omega_{n})\rightarrow 0$ in probability if $\rr_n\gg n^{-1/d}$, while $\rr_n \gg (\log n/n)^{1/d}$ implies almost sure convergence.
 
Cuevas and Rodriguez-Casal,
 \cite{CueRod04}, established that, under certain smoothness assumptions, the Hausdorff distances $d_{H}\left(\Omega_n,\Omega\right),d_{H}\left(\partial{\Omega}_{n},\partial \Omega\right)\sim (\log n/n)^{1/d}$, and that the rate is \emph{optimal}. Furthermore, it is possible to compute the points $x^i$ contributing to the boundary $\partial\Omega_n$ using $\alpha$-shapes, introduced in \cite{EdelKirkSeidel83}. However, $\alpha$-shapes are a union of a certain subset of simplicies of the Delaunay triangulation. This poses challenges as the Delaunay triangulation in $d>3$ dimensions is itself not an easy computational problem, as the number of simplices can be large, up to $O(n^{\lceil d/2 \rceil}$) \cite{mcmullen_1970}. Thus, while efficient $O(n^2)$ algorithms are established for $d\leq 3$ \cite{EdelsMucke94}, less is known for higher dimensions.
 
 We also note that the Devroye-Wise boundary estimators have been used to estimate the Minkowski content of the boundary of $S$, which for sufficiently regular sets approximates the surface area ($\left(d-1\right)$-dimensional Hausdorff measure). This  is shown to be $L_2$-consistent for general dimensions in \cite{CuFrGy13} and convergent at $O(n^{-1/(2d)})$ for $d=2,3$ in \cite{CuFrRo07}.
 
 Casal \cite{Casal07} defines an estimator called $r$-convex hull, based on the Minkowski sum and differences of sets and closely related to $\alpha$-shapes, to approximate the support $\Omega$ with improved rate of $(\log n/n)^{2/(d+1)}$ in the Hausdorff distance with high probability. 
 
 We note that the while the works of Devroye-Wise and Casal propose different estimators for the boundary of the set, the data points $x^i$ which are identified as being near the boundary are the same for both estimators, see Section \ref{sec:comparison} for explanation and  Figure \ref{fig:2nd-comparison} for illustration. 
 
Another family of approaches are associated with the kernel density estimators (KDE).
 Estimating the density level set via the kernel density estimator is well-studied \cite{ChGeWa17} \cite{QiaPol19}. Cuevas and Fraiman \cite{cuevas1997plug} approximate the support by the super-level sets $\{\hat f>\alpha_n\}$ of the KDE $\hat f$, where 
 tuning parameter $\alpha_n \to 0$ as $n \to \infty$, and establish $d_H$ almost at the aforementioned optimal rate.

On the other hand, Berry and Sauer \cite{BerrySauer17} approximates the distance $d_\Omega$ of points to the boundary of the manifold to improve accuracy of KDE near the boundary. To do so, they use the graph Laplacian to estimate the normal vectors, and compute $d_\Omega$ by solving an expression it satisfies in relation to the expectation of the said graph Laplacian.
 
For self-similar but possibly non-smooth $\partial\Omega$, such as the von Koch snowflake, 
 Lachi\`eze-Rey and Vega \cite{LacVeg17} use Voronoi cells to define an estimator that converges to $\Omega$ at the optimal rate in $d_H$ when $\rho$ is uniform.
 
Several further works, \cite{AAL21,AarCho20,WuWu19,BORDER06,BRIM07}, have focused on identifying the boundary when $\rho$ is supported on a lower dimensional manifold $\M$. Aamari, Aaron, and Levrard \cite{AAL21} generalize the result of Casal \cite{Casal07} to the manifold setting. They project the relevant geometric quantities onto the approximate tangent space estimated using principal component analysis (PCA) to identify the set $\mathcal Y\subset\X$ of points such that with high probability, for all $y^i\in\mathcal Y$ we have $d_H(y^i,\partial\M)\lesssim (\log n/n)^{2/(d+1)}$. Based on $\mathcal Y$, they use the weighted Tangential Delaunay Complex to provide an estimator approximating $\partial\M$ with rate $(\log n/n)^{2/(d+1)}$ in the Hausdorff distance with high probability. Further, they establish that this rate is minimax over the class of convex submanifolds (i.e. those diffeomorphic to a convex subset of $\R^d$), thus showing not only that their upper bound is tight, but also that estimation of boundary under the assumption of positive reach is not more difficult than that in the convex case.

Our first-order test identifies the set of boundary points such that with high probability each point is at most $(\log n /n)^{2/(d+2)}$. While our theoretical results are established for flat domains, we believe the same rate would apply to the generalized first-order estimator \eqref{def:Deps1_manifold} in the manifold case. Through the same boundary reconstruction process as stated in \cite{AAL21}, we may construct boundary estimators with the same rate, which is slightly slower than the minimax rate proven by \cite{AAL21}. However, we note that our test identifies w.h.p. \emph{all points} within such tubular neighborhood of the boundary, which is stronger than obtaining the same bound in the Hausdorff distance, and is important for application to PDEs on graphs.

It is also interesting to note that the asymptotic error rate for our second-order test \eqref{def:test} based on distance estimator 
 \eqref{eq:Deps2}
in the Euclidean case is $(\log n/n)^{3/(d+4)}$, see Sections \ref{sec:asym2} and  \ref{sec:second-asymp}. This estimator however requires that manifolds are of class $C^3$ and that $\rho$ is $C^2$, while the rates in  \cite{AAL21} hold for manifolds which are merely $C^2$ and bounded densities. Determining minimax rates for estimators for $C^3$, and more regular manifolds and densities, remains an open problem.



  Aaron and Cholaquidis \cite{AarCho20} devise a statistical test to determine whether a random sample supported on a manifold has a boundary, along with heuristics to identify some of the points closer to the boundary. While their test uses k-nearest neighbor search instead of range search, the suggested test statistic for each point $x^0$ is similar to the size of the projection of $\hat{v}_{\rr}(x^0)$ onto the approximate tangent space at $x^0$.
Thus, loosely speaking, this statistic exploits that the normal vector is of order $O(\rr)$ near the boundary, while $O(\rr^2)$ in the interior. We note that this approaches only use the size of the estimated normal, while we utilize the normal vector itself.
 
 Wu and Wu\cite{WuWu19} use the behavior of the locally-linear embedding (LLE) near the boundary to identify boundary points. Interestingly, their test statistic is a quadratic function of a kNN-analogue of our normal vector $\hat v_\rr$, where the coefficients take into account the curvature of $\partial\Omega$ and density fluctuations. Further, they provide theoretical guarantees for their test statistic (see Proposition 5.1 of \cite{WuWu19}).

A couple other methods try to use the normal vectors, but approximated in a different way. BORDER algorithm \cite{BORDER06} uses that, given a fixed $k\in\mathbb N$ and sufficiently many points, the number of points of which $x^0$ is a $k$-neighbor of will be roughly half when $x^0$ is near the boundary, compared to that when $x^0$ is in the interior. BRIM algorithm introduced in \cite{BRIM07}, exploits the fact that given a suitable approximation of the inward normal at $x^0$, say $\nu(x^0)$, the number of points $x^i$ such that $(x^i-x^0)\cdot \nu(x^0)$ is positive is greater than the number of points  for which the inner product is negative, when $x^0$ is near the boundary. BRIM approximates the inward normal by identifying the point $y\in B(x^0,\rr)\cap\mathcal \X$ such that $|B(y,\rr)\cap\mathcal \X|$ is largest, then using $y-x^0$ as the estimator. However, for both approaches, such difference is of the same order as the statistic, which is weaker than the dichotomy used in \cite{WuWu19}. Moreover, none of the approaches above use the normal vector to measure the distance to the boundary, which is one of the key elements for the improved accuracy. 

Our convergence proofs for the solutions of PDEs on point clouds in Section \ref{sec:PDE_graph} utilize the maximum principle, building upon previous related works in the field \cite{calder2018game,calder2019lip,garcia2020maximum,yuan2020continuum,flores2019algorithms}. We also expect that recent advances in the studies of PDEs on point clouds \cite{calder2020Lip, calder2019improved, GGHS2020} can also be applied in this setting, to obtain, for example, spectral convergence for the Dirichlet graph Laplacian.  There are many methods in the numerical analysis literature for solving PDEs on unstructured meshes or point clouds.  Methods with rigorous convergence results include the wide stencil schemes for Hamilton-Jacobi equations and elliptic PDEs \cite{oberman2008wide}, which were originally defined on regular grids and have subsequently been extended to unstructured point clouds \cite{froese2018meshfree,finlay2019improved}, and the point integral method \cite{li2017point}. Other works without convergence guarantees include upwind schemes for Hamilton-Jacobi equations on unstructured meshes \cite{sethian2000fast}, mesh-free generalized finite difference methods \cite{suchde2019fully,suchde2019meshfree}, least squares manifold approximation methods \cite{liang2013solving,wang2018modified,trask2020compatible}, the local mesh method \cite{lai2013local}, radial basis function methods \cite{flyer2009radial,fuselier2012scattered,piret2012orthogonal,piret2016fast}, and a recent approach using graph Laplacians and deep learning \cite{liang2021solving}. A general survey of meshfree methods in PDEs is given in \cite{chen2017meshfree}.

Regarding data depth, the ordering of multivariate data is an old problem in statistics \cite{barnett1976ordering,liu1999multivariate}. The goal is generally to extend robust statistical notions, like quantiles and the median, to multivariate data.  For point clouds, there are notions of depth like the Tukey halfspace depth \cite{tukey1975mathematics}, which has been extended to graphs \cite{small1997multidimensional} and metric spaces \cite{carrizosa1996characterization}, and the Monge-Kantorovich depth \cite{chernozhukov2017monge}. There are also notions of depth for curves \cite{de2020depth} It was recently shown in \cite{molina2021tukey} that the Tukey depth satisfies a non-standard eikonal equation in the viscosity sense, at the population level. To the best of our knowledge, the eikonal equation on a graph has not been used for data depth previously. Two forthcoming papers will study the graph eikonal depth in more detail \cite{molina2021eikonal,calder2021hamilton}. Other examples of connections between data depth and PDEs include convex hull peeling \cite{calder2020convex}, non-dominated sorting \cite{calder2014}, and Pareto envelope peeling \cite{bou2021hamilton}.

	   \medskip
	   
	   \textbf{Outline.} 
	   The remainder of this paper is organized as follows. In Section \ref{sec:prelim} we establish preliminary estimates and error estimates on normal vectors estimators that will be useful in proving the main results, which are presented in Sections \ref{sec:main} and \ref{sec:second-asymp}. Section \ref{sec:main} rigorously establishes nonasymptotic error bounds for the first-order test, which is the theoretical basis for applications to PDEs on graphs presented later in the paper. Section \ref{sec:second-asymp}, under some additional regularity assumptions, establishes asymptotic error bounds for the second-order test, which we recommend for practical use. Then we present the algorithm and discuss the computational aspects of the boundary test in Section \ref{sec:algorithm_experiments}. Turning to applications, in Section \ref{sec:PDE_graph} we will apply the boundary test to solving PDEs on graphs with various boundary conditions. Particular attention is paid to computing data-depth using PDEs in two ways: by solving the graph eikonal equation, and considering the first eigenfunction of the graph Laplacian. We also demonstrate these to MNIST and FashionMNIST data sets; see Section \ref{sec:realdata}.

	   
	   

\section{Preliminary results and error bounds for normal vector estimators}\label{sec:prelim}

    In this section we establish several results on the geometry of the empirical estimates we use, most importantly the error bounds for the normal vector estimators. Nonasymptotic $O(\rr)$ error bound for the first-order normal vector estimator is given in Theorem \ref{thm:normal_vector}, and Section \ref{sec:asym2} establishes asymptotic $O(\rr^2)$ error bound for the second-order normal vector estimator. All the constants introduced in this and the following sections can also be found in Appendix \ref{appendixConstants}, and are non-dimensional. That is, they are invariant under the change of length-scale.

	First we derive useful bounds on $\int_{B(x^0,\rr)}\rho(x)\,dx$ from the assumptions. We note that the following lemma is closely related to the `standardness constant' in \cite{CueRod04}, which denotes the constant $s>0$ in 
    such that for all $x^{0} \in \Omega$
	\begin{equation}\label{StandardnessIneq}
	    \frac{|B(x^0,\rr) \cap \Omega|}{|B\left(x^{0},\rr\right)|}\geq s.
	\end{equation}
	This constant is of importance as it gives a lower bound on the number of points in $B(x^0,\rr)\cap\Omega$ with high probability. Our first lemma asserts that the Assumptions \ref{ass:eps/r}, \ref{ass:r/R} imply that $s\geq\frac{1}{3}$. 

    \begin{lemma}\label{lem:p_bounds}
    Let $\rr>0$. Then
     \begin{equation}\label{eq:p_bounds}
        \frac{\rho_{\min}\wod\rr^{d}}{3}\leq \int_{B(x^0,\rr)}\rho(x)\,dx\leq \rho_{\max}\wod\rr^{d}
     \end{equation}
    \end{lemma}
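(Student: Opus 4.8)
The upper bound is immediate: since $\rho \le \rho_{\max}$ everywhere and $\rho$ is supported on $\Omega$, we have $\int_{B(x^0,\rr)}\rho(x)\,dx = \int_{B(x^0,\rr)\cap\Omega}\rho(x)\,dx \le \rho_{\max}\,|B(x^0,\rr)\cap\Omega| \le \rho_{\max}\,\omega_d \rr^d$. So the only real content is the lower bound, and the plan is to reduce it to the purely geometric statement that $|B(x^0,\rr)\cap\Omega| \ge \tfrac13\,\omega_d\rr^d$ (i.e.\ the standardness constant $s\ge\tfrac13$), after which $\int_{B(x^0,\rr)}\rho\,dx \ge \rho_{\min}|B(x^0,\rr)\cap\Omega| \ge \tfrac13\rho_{\min}\omega_d\rr^d$ follows.

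To prove the geometric lower bound, I would first dispose of the easy case: if $B(x^0,\rr)\subset\Omega$ then the intersection is the whole ball and the bound holds with constant $1$. Otherwise $B(x^0,\rr)$ meets $\partial\Omega$, and here I invoke the reach hypothesis $\mathrm{reach}(\partial\Omega)\ge R$ together with the consequence \eqref{eq:assumption_consequence2} of Assumptions \ref{assumption2}--\ref{assumption3}, namely $\rr \le R/(3\sqrt d)$, in particular $\rr\le R$. The standard fact about sets of positive reach is an interior (and exterior) ball condition: at the nearest boundary point $p = (x^0)^*$ to $x^0$ there is a ball $B(c, R)\subset\Omega$ tangent to $\partial\Omega$ at $p$ with center $c = p + R\nu(p)$ on the inward side. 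I would then show that the half-ball of $B(x^0,\rr)$ lying on the interior side of the tangent hyperplane to $\partial\Omega$ at $p$ is essentially contained in $\Omega$ up to a thin curved sliver, and estimate the volume of that sliver using the bound $R - \sqrt{R^2 - |x|^2} \le |x|^2/R$ for $|x|\le \rr$ (this is exactly the elementary inequality flagged in the text, valid since $\rr\le R/2$). Concretely: a point $x\in B(x^0,\rr)$ whose signed distance to the tangent hyperplane on the interior side exceeds $\rr^2/R$ lies inside the interior ball $B(c,R)$, hence in $\Omega$. Thus $|B(x^0,\rr)\cap\Omega|$ is at least the volume of the half-ball minus the volume of the slab of $B(x^0,\rr)$ of width $\rr^2/R$ around the hyperplane.

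The half-ball has volume $\tfrac12\omega_d\rr^d$, and the slab $\{x\in B(x^0,\rr): |\langle x - x^0,\nu\rangle| \le \rr^2/R\}$ — actually only the half-slab on the interior side matters, but bounding the full slab is cleaner — has volume at most (diameter in the slab direction) $\times$ ($(d-1)$-volume of the equatorial disk) $= \tfrac{2\rr^2}{R}\cdot\omega_{d-1}\rr^{d-1}$. Hence
\[
\frac{|B(x^0,\rr)\cap\Omega|}{\omega_d\rr^d} \;\ge\; \frac12 - \frac{\omega_{d-1}}{\omega_d}\cdot\frac{2\rr}{R}.
\]
Now I use $\rr/R \le \eps/\rr \cdot \tfrac{1}{?}$... more directly $\rr \le R/(3\sqrt d)$ from \eqref{eq:assumption_consequence2}, so $\rr/R \le 1/(3\sqrt d)$, together with the standard bound $\omega_{d-1}/\omega_d \le \sqrt{d}$ (which follows from $\omega_d = \pi^{d/2}/\Gamma(d/2+1)$ and monotonicity/Stirling-type estimates, and can be checked to hold for all $d\ge1$). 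This gives $\tfrac{\omega_{d-1}}{\omega_d}\cdot\tfrac{2\rr}{R} \le \sqrt d \cdot \tfrac{2}{3\sqrt d} = \tfrac23$, which is too weak — it only yields $s \ge \tfrac12 - \tfrac23 < 0$. So I must be more careful: only the interior half of the slab is lost, giving $\tfrac12 - \tfrac{\omega_{d-1}}{\omega_d}\cdot\tfrac{\rr}{R} \ge \tfrac12 - \tfrac13 = \tfrac16$, still short of $\tfrac13$. The fix is to not discard the whole half-ball but observe that the region of the interior half-ball at hyperplane-distance $> \rr^2/R$ but also the curved part near the equator is still substantially in $\Omega$; alternatively, and more robustly, I would bound the lost curved sliver by $\int$ over the equatorial disk of radius $\rr$ of the local sliver height, exploiting that the height is $|x'|^2/R$ pointwise rather than the worst-case $\rr^2/R$ — integrating $|x'|^2/R$ over the $(d-1)$-ball of radius $\rr$ gives a gain of a factor $\tfrac{d-1}{d+1}$ and an extra smallness, yielding a loss of order $\tfrac{\omega_{d-1}\rr^{d+1}}{R(d+1)}$ relative to $\omega_d\rr^d$, i.e.\ $\lesssim \tfrac{\sqrt d}{d+1}\cdot\tfrac{\rr}{R} \lesssim \tfrac{1}{3(d+1)} \cdot \tfrac{\sqrt d}{\sqrt d}$, comfortably less than $\tfrac16$, so $s \ge \tfrac12 - \tfrac16 = \tfrac13$.

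\textbf{Main obstacle.} The crux — and where I expect the real bookkeeping to live — is making the geometric lower bound $|B(x^0,\rr)\cap\Omega|\ge\tfrac13\omega_d\rr^d$ sharp enough: a crude slab estimate loses too much, so one has to integrate the curved-sliver height $|x'|^2/R$ over the equatorial cross-section (using $\rr\le R/2$ to justify $R-\sqrt{R^2-|x'|^2}\le|x'|^2/R$) and carefully track the dimensional constants $\omega_{d-1}/\omega_d \le \sqrt d$ against $\rr/R \le 1/(3\sqrt d)$ so that the $\sqrt d$ factors cancel and the net loss stays below $\tfrac16$. The reach/interior-ball geometry itself is standard, and the reduction from the integral bound to the volume bound is trivial; everything delicate is in this one volume estimate, which is precisely why the authors isolated the inequality $R-\sqrt{R^2-x^2}\le x^2/R$ and the consequence $\rr\le R/(3\sqrt d)$ in the preceding discussion.
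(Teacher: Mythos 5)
Your route is the same as the paper's: reduce the lemma to the standardness bound $|B(x^0,\rr)\cap\Omega|\ge\tfrac13\omega_d\rr^d$, obtain that as (half-ball) minus (sliver between the tangent hyperplane at the nearest boundary point and $\partial\Omega$), bound the sliver using the reach, and close with $\omega_{d-1}/\omega_d\le\sqrt d$ together with $\rr\le R/(3\sqrt d)$. The paper, however, stops at the crude cylinder bound $|\text{sliver}|\le \omega_{d-1}\rr^{d-1}\cdot\rr^2/R$, which yields $s\ge\tfrac12-\tfrac{\omega_{d-1}}{\omega_d}\tfrac{\rr}{R}\ge\tfrac12-\tfrac13=\tfrac16$, and then asserts $s\ge\tfrac12\bigl(1-\tfrac{\sqrt d\,\rr}{R}\bigr)\ge\tfrac13$ --- a factor of $2$ in the loss term that its displayed computation does not deliver. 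So the difficulty you isolate at the end of your write-up is genuine, and it is present (unacknowledged) in the paper's own proof.

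Your proposed repair does not close the gap either, because of an arithmetic slip. Integrating the pointwise sliver height over the equatorial cross-section gives
\[
\int_{\{|x'|\le\rr\}\subset\R^{d-1}}\frac{|x'|^2}{R}\,dx'=\frac1R\int_0^{\rr}r^2\,(d-1)\omega_{d-1}r^{d-2}\,dr=\frac{d-1}{d+1}\cdot\frac{\omega_{d-1}\rr^{d+1}}{R},
\]
not $\tfrac{\omega_{d-1}\rr^{d+1}}{(d+1)R}$: the surface measure of the $(d-2)$-sphere of radius $r$ in $\R^{d-1}$ is $(d-1)\omega_{d-1}r^{d-2}$. The gain over the cylinder bound is therefore only the factor $\tfrac{d-1}{d+1}$, and the resulting estimate $s\ge\tfrac12-\tfrac13\cdot\tfrac{d-1}{d+1}$ drops below $\tfrac13$ once $d\ge4$ and tends to $\tfrac16$ as $d\to\infty$. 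To actually reach $\tfrac13$ for every $d$ you need, in addition, the sharper height bound $R-\sqrt{R^2-t^2}=\tfrac{t^2}{R+\sqrt{R^2-t^2}}\le\tfrac{t^2}{R\left(2-\frac{1}{9d}\right)}$ (valid since $t\le\rr\le R/(3\sqrt d)$) in place of $\le t^2/R$; combining this extra factor of essentially $\tfrac12$ with your $\tfrac{d-1}{d+1}$ gives a loss of at most $\tfrac13\cdot\tfrac{d-1}{d+1}\cdot\bigl(2-\tfrac{1}{9d}\bigr)^{-1}\le\tfrac16$ for all $d\ge1$, hence $s\ge\tfrac13$. The upper bound and the reduction from the integral to the volume estimate in your plan are fine; only this constant-chasing step needs the correction above.
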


    \begin{proof} 
	As the upper bound is obvious, we focus on the lower bound, which easily follows from $s\geq 1/3$. We claim that \eqref{StandardnessIneq} holds for  $s = \frac{1}{2}\left(1-\frac{\sqrt{d}\rr}{R}\right)$.
	Note that $B(x^0,\rr)\cap\Omega$ at least consists of the hemisphere minus the area between the tangent hyperplane at $x^{0}$. As the assumption $\rr\leq\frac{R}{3}$ implies that the height of the region between the tangent hyperplane and $\Omega$ with reach $R$ is bounded above by $\frac{\rr^2}{R}$. Therefore, we may upper bound the area of the region by considering the cylinder with base $\left(d-1\right)$-dimensional hypersphere of radius $\rr$ and height $\frac{\rr^2}{R}$. Thus its area is $\wod[d-1]\rr^{d-1}\frac{\rr^2}{R}=\frac{\wod[d-1]\rr^{d+1}}{R}$. Therefore
	    \begin{equation}\label{eq:pbounds;s_ineq1}
	        s\geq \frac{1}{2}-\frac{\wod[d-1]\rr^{d+1}R^{-1}}{\wod[d]\rr^{d}}=\frac{\wod[d-1]}{\wod[d]}\frac{\rr}{R}
	    \end{equation}
	 We introduce the notation
	 \begin{equation}\label{eq:gammad}
	 \kappa_{d} = \frac{\omega_{d-1}}{\omega_d}
	 \end{equation}
    and claim that $\kappa_{d}\leq\sqrt{d}$. Note that since $\Gamma$ is a logarithmically convex function
        \[ {\Gamma\left(\frac{d}{2}+1\right)}^2 \leq {\Gamma\left(\frac{d-1}{2}+1\right)} \, {\Gamma\left(\frac{d+1}{2}+1\right)}. \]
    Therefore, $\wod[d]^2 \geq \wod[d-1] \wod[d+1]$, and $\kappa_{d+1} \geq \kappa_{d}$.
    On the other hand, 
    \[\kappa_{d} \kappa_{d+1} = \frac{\wod[d-1]}{\wod[d+1]} = \frac{\Gamma\left(\frac{d+1}{2}+1\right)}{\pi \Gamma\left(\frac{d-1}{2}+1\right)} = \frac{d+3}{2 \pi}.\]
    Combining with $\kappa_{d+1} \geq \kappa_{d}$, we get $\kappa_{d}\leq\frac{\sqrt{d+3}}{2\pi}\leq\sqrt{d}$ as $d+3\leq 4\pi d$. Similarly, we have a lower bound  $\kappa_{d+1} \geq \sqrt{ \frac{d+3}{2 \pi}} \geq \frac{1}{3} \sqrt{d+1}$, which will be of use later. Hence
    \begin{equation}\label{eq:sphereVol_ratio}
        \frac{\sqrt{d}}{3}\leq\frac{\wod[d-1]}{\wod}\leq \sqrt{d}.
    \end{equation}
    Combining the upper bound of \eqref{eq:sphereVol_ratio} with \eqref{eq:pbounds;s_ineq1}, we have $s\geq\frac{1}{2}\left(1-\frac{\sqrt{d}\rr}{R}\right)$. This, along with \eqref{eq:assumption_consequence2}, implies that $s\geq\frac{1}{3}$.
    \end{proof}

In the following two lemmas we examine the bias of the population-based estimators.

\begin{lemma}[Bias of the estimated normal]\label{lem:v0}
For every $x^0\in \Omega$ with $d_{\Omega}(x^0)\leq \rr/2$ 
we have
\begin{equation}\label{eq:v_expansion}
\left|\bar{v}_{\rr}(x^0)  -C_{y}(x^0)\rho(x^0)\rr^{d+1}\nu(x^0)\right|  \leq \frac{C_{x}\rho(x^0)}{R} \rr^{d+2},
\end{equation}
provided $|\tfrac{\alpha}{\rr}-\tfrac{\rr}{R}|\leq 1$, where 
\begin{equation}\label{eq:v_const}
\begin{split}
&C_{x} = 2\wod[d-1]+\frac{LR\wod}{\rho_{min}}    \\
&C_{y}(x^0) = \frac{\omega_{d-1}\left(1-\left(\tfrac{ d_{\Omega}(x^0)}{\rr}-\tfrac{\rr}{R}\right)^2\right)^{\frac{d+1}{2}}}{(d+1)}.
\end{split}
\end{equation}
In particular, whenever $d_\Omega(x^0)\leq 2/(3\sqrt{d})$, we have $C_y(x^0)\geq\frac{\omega_{d-1}}{2(d+1)}$.
\end{lemma}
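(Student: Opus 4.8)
The plan is to evaluate $\bar v_\rr(x^0)$ by flattening $\partial\Omega$ near the boundary point closest to $x^0$ and peeling off two small errors --- one from the variation of $\rho$ across $B(x^0,\rr)$, one from the deviation of $\partial\Omega$ from its tangent plane --- using the rolling-ball geometry forced by the reach hypothesis. First we normalize so that $x^0=0$ and $\nu(x^0)=e_d$, and set $\alpha:=d_\Omega(x^0)\le\rr$; the nearest boundary point is then $p:=-\alpha e_d$, with inward normal $e_d$ there as well. Since $\partial\Omega$ has reach $\ge R$, the interior and exterior tangent balls of radius $R$ at $p$ satisfy $B((R-\alpha)e_d,R)\subseteq\Omega$ and $B(-(R+\alpha)e_d,R)^\circ\cap\Omega=\varnothing$. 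Feeding $\rr\le R$ (which holds under the standing assumptions; see \eqref{eq:assumption_consequence2}) and the elementary inequality $R-\sqrt{R^2-t^2}\le t^2/R$ for $|t|\le\rr$ into these two ball relations, a short algebraic check gives the sandwich
\[
H^+:=\{x\in B(0,\rr):x_d\ge -\alpha+\tfrac{\rr^2}{R}\}\ \subseteq\ \Omega\cap B(0,\rr)\ \subseteq\ \{x\in B(0,\rr):x_d\ge -\alpha-\tfrac{\rr^2}{R}\},
\]
so that $\big(\Omega\cap B(0,\rr)\big)\setminus H^+$ is contained in the slab $\{|x_d+\alpha|\le \rr^2/R\}\cap B(0,\rr)$, of volume at most $\omega_{d-1}\rr^{d-1}\cdot\tfrac{2\rr^2}{R}=\tfrac{2\omega_{d-1}\rr^{d+1}}{R}$.

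Next we decompose
\[
\bar v_\rr(x^0)=\rho(x^0)\!\int_{H^+}\!\! x\,dx\ +\ \rho(x^0)\!\!\int_{(\Omega\cap B(0,\rr))\setminus H^+}\!\!\! x\,dx\ +\ \int_{\Omega\cap B(0,\rr)}\!\! x\,\big(\rho(x)-\rho(x^0)\big)\,dx
\]
and bound the three terms. The first is computed exactly: $H^+$ is rotationally symmetric about the $e_d$-axis, so its transverse moment vanishes, and slicing perpendicular to $e_d$ with the substitution $u=\rr^2-x_d^2$ (legitimate because $|-\alpha+\rr^2/R|\le\rr$, which is precisely the hypothesis $|\tfrac\alpha\rr-\tfrac\rr R|\le1$) gives $\int_{H^+}x\,dx=\tfrac{\omega_{d-1}}{d+1}\big(\rr^2-(\alpha-\tfrac{\rr^2}{R})^2\big)^{(d+1)/2}e_d=C_y(x^0)\,\rr^{d+1}\,\nu(x^0)$; so the first term equals the claimed leading term $C_y(x^0)\rho(x^0)\rr^{d+1}\nu(x^0)$. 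The second term has norm at most $\rr$ times the slab volume above, hence at most $\tfrac{2\omega_{d-1}\rho(x^0)}{R}\rr^{d+2}$, which supplies the summand $2\omega_{d-1}$ in $C_x$. The third has norm at most $\rr\cdot L\rr\cdot\omega_d\rr^d=L\omega_d\rr^{d+2}\le\tfrac{LR\omega_d}{\rho_{\min}}\cdot\tfrac{\rho(x^0)}{R}\rr^{d+2}$ (using $\rho(x^0)\ge\rho_{\min}$), which supplies the summand $\tfrac{LR\omega_d}{\rho_{\min}}$ in $C_x$. Adding the two error bounds yields \eqref{eq:v_expansion}.

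For the last assertion, when $d_\Omega(x^0)\le2\eps$ Assumptions \ref{assumption2}--\ref{assumption3} give $\big|\tfrac{d_\Omega(x^0)}{\rr}-\tfrac\rr R\big|\le\tfrac{2}{3\sqrt d}$, and a one-variable estimate shows $(1-t^2)^{(d+1)/2}\ge\tfrac12$ for every $0\le t\le\tfrac2{3\sqrt d}$ and every $d\ge1$ (the infimum over $d$ is attained at $d=1$, where it equals $5/9$); hence $C_y(x^0)\ge\tfrac{\omega_{d-1}}{2(d+1)}=C_y$, which is the content of the final sentence --- $C_y(x^0)$ enters the arguments that follow only through this lower bound. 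The one step that requires genuine care is the geometric sandwich: one must check that the offset $\rr^2/R$ is simultaneously large enough to push $H^+$ inside the interior rolling ball and, together with the exterior one, small enough to trap $\Omega\cap B(0,\rr)$ from the other side. This is exactly what produces the precise factor $\big(1-(\tfrac\alpha\rr-\tfrac\rr R)^2\big)^{(d+1)/2}$ in $C_y(x^0)$, rather than the cruder $\big(1-(\tfrac\alpha\rr)^2\big)^{(d+1)/2}$ one would get by replacing $\partial\Omega$ with its tangent plane; the remaining pieces --- the slicing integral and the $\rho$-variation bound --- are routine.
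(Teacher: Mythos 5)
Your proof is correct and follows essentially the same route as the paper's: the same decomposition into a leading half-space (spherical cap) moment giving $C_y(x^0)\rho(x^0)\rr^{d+1}\nu(x^0)$, a slab error of width $2\rr^2/R$ around the tangent plane controlled by the reach (yielding the $2\omega_{d-1}$ part of $C_x$), and a Lipschitz density-variation term (yielding the $LR\omega_d/\rho_{\min}$ part). The only difference is presentational — you make the interior/exterior rolling-ball sandwich explicit where the paper simply asserts $\partial\Omega\cap B(x^0,\rr)\subset\{|x_d|\le \rr^2/R\}$ — so no further comment is needed.
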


\begin{remark}[Lower bound on $C_y$]\label{rmk:Cy_lowbd}
Suppose $d_{\Omega}(x^0)\leq 2\rr/(3\sqrt{d})$. Then $\left(\frac{d_\Omega(x^0)}{\rr}-\frac{\rr}{R}\right)^2\leq \frac{d_\Omega(x^0)^2}{\rr^2}\leq\frac{4}{9d}\leq\frac{1}{d+1}$
\begin{equation}\label{eq:simplified_const_half}
\left(1-\left(\tfrac{d_{\Omega}(x^0)}{\rr}-\tfrac{\rr}{R}\right)^2\right)^{\frac{d+1}{2}}\geq 1 -\frac{d+1}{2} \left(\tfrac{d_{\Omega}(x^0)}{\rr}-\tfrac{\rr}{R}\right)^2\geq \frac12
\end{equation}
and so
\begin{equation}\label{eq:simplified_const_C}
 C_{y}(x^0)\geq\frac{\wod[d-1]}{2(d+1)}.
\end{equation}
This lower bound will be important for results to follow. Observe that $d_\Omega(x^0)\lesssim \rr/\sqrt{d}$ allows a similar bound $C_y(x^0)\gtrsim \omega_{d-1}/d$.

Note also by Assumption \ref{ass:eps/r}, $d_\Omega(x^0)\leq 2\eps$ is a sufficient condition. As this is more intuitive and sufficient for theoretical results on the boundary test, we henceforth state the condition as $d_\Omega(x^0)\leq 2\eps$, but note here that all such conditions can be replaced by $d_\Omega(x^0)\leq 2\rr/(3\sqrt{d})$.
\end{remark}

\begin{proof}[Proof of Lemma \ref{lem:v0}]
We write
\begin{equation}\label{eq:Vbreakup}
\bar{v}_{\rr}(x^0)=E_1 + \rho(x^0)E_2,
\end{equation}
where
\begin{equation}\label{eq:A}
E_1=\int_{\Omega\cap B(x^0,\rr)}(x-x^0)(\rho(x)-\rho(x^0))\, dx,
\end{equation}
and
\begin{equation}\label{eq:B}
E_2 = \int_{\Omega\cap B(x^0,\rr)}(x-x^0)\, dx.
\end{equation}
Since $\rho$ is Lipschitz with constant $L$, the term $E_1$ is bounded by
\begin{equation}\label{eq:E1bound}
|E_1\:|\: \leq L\int_{B(x^0,\rr)}|x-x^0|^2\, dx= L\int_0^\rr\int_{\partial B(x^0,t)}t^2 \, dS \, dt\, dx= L\int_0^\rr d\omega_dt^{d+1}\, dt = \frac{Ld\omega_d}{d+2} \rr^{d+2}.
\end{equation}

We now estimate $E_2$. Without loss of generality, we may assume $x^0 = (0,0,\dots,0,\alpha)$ for $\alpha=\text{dist}(x^0,\partial\Omega)$. By the assumption that the reach of $\partial\Omega$ is greater than $R>0$, we have
\[\partial\Omega \cap B(x^0,\rr) \subset \left\{x\in B(x^0,\rr) \, : \, |x_d| \leq \frac{\rr^2}{R}\right\},\]
provided $\rr\leq R/2$.   Therefore
\begin{equation}\label{eq:E2bound}
\left| E_2 - \int_{B(x^0,\rr)\cap\{x_d\geq \frac{\rr^2}{R}\}}(x-x^0)\,dx\right| \leq \int_{B(x^0,\rr)\cap\{|x_d|\leq \frac{\rr^2}{R}\}}|x^0-x|\, dx \leq \frac{2\omega_{d-1}\rr^{d+2}}{R}.
\end{equation}
We now change variables $z=(x-x^0)/\rr$ and write
\begin{align*}
\int_{B(x^0,\rr)\cap\{x_d \geq \frac{\rr^2}{R}\}}(x_d-x^0_d)\, dx&=\rr^{d+1}\int_{B(0,1)\cap\{z_d \geq \frac{\rr}{R}-\frac{\alpha}{\rr}\}}z_d \, dz\\
&=\rr^{d+1}\int_{B(0,1)\cap\{z_d \geq \left|\frac{\alpha}{\rr}-\frac{\rr}{R}\right|\}}z_d \, dz,
\end{align*}
where the last inequality comes from symmetry of the integrand. We now compute for any $0\leq t \leq 1$
\begin{align*}
\int_{B(0,1)\cap\{z_d\geq t\}}z_d\, dz&= \omega_{d-1}\int_{t}^1 z_d(1-z_d^2)^{\frac{d-1}{2}}\, dz \\
&=\frac{\omega_{d-1}}{2}\int_{t^2}^1 (1-s)^{\frac{d-1}{2}}\, ds\\
&=\frac{\omega_{d-1}}{d+1}(1-t^2)^{\frac{d+1}{2}}.
\end{align*}
Due to symmetry of the integrand, we have
\[\int_{B(x^0,\rr)\cap\{x_d \geq \frac{\rr^2}{R}\}}(x_j-x^0_j)\, dx = 0\]
for all $j=1,\dots,d-1$. Combining this with \eqref{eq:E2bound} we find that
\begin{equation}\label{eq:E2bound2}
\left| E_2 - \frac{\omega_{d-1}}{d+1}\left(1-\left(\tfrac{\alpha}{\rr}-\tfrac{\rr}{R}\right)^2\right)^{\frac{d+1}{2}}\rr^{d+1}\nu(x^0)\right| \leq  \frac{2\omega_{d-1}\rr^{d+2}}{R},
\end{equation}
provided $|\tfrac{\alpha}{\rr}-\tfrac{\rr}{R}|\leq 1$, since $\nu(x^0)=e_d$. Thus
\[
    \left|\bar{\nu}_{\rr}(x^0)- \frac{\omega_{d-1}}{d+1}\left(1-\left(\tfrac{\alpha}{\rr}-\tfrac{\rr}{R}\right)^2\right)^{\frac{d+1}{2}}\rr^{d+1}\nu(x^0)\right|
    \leq \left(\frac{2\wod[d-1]}{R}\rho(x^0)+L\wod\right)\rr^{d+2}.
\]
We complete the proof by noting
\[
    \frac{2\wod[d-1]}{R}\rho(x^0)+L\wod
    = \frac{\rho(x^0)}{R}\left(2\wod[d-1]+\frac{LR}{\rho(x^0)}\right)\leq\frac{\rho(x^0)}{R}\left(2\wod[d-1]+\frac{LR}{\rho_{\min}}\right)=:\frac{C_{x}\rho(x^0)}{R}.
\]
\end{proof}

Based on the bias of the estimated normal, we can approximate the bias of the distance estimator.

 \begin{lemma}[Bias of the distance estimator]\label{lem:bias}
 Let $x^0\in\Omega$ with $ d_{\Omega}(x^0)\leq 2\eps$. If
    \begin{equation}\label{eq:bias;eps_constraint}
        \rr\leq\frac{R C_{y}}{2 C_{x}}
    \end{equation}
 then
    \begin{equation}\label{eq:bias_dist}
     d_{\Omega}(x^0)\leq\bar{d}^1_{\rr}(x^0)
    \leq  d_{\Omega}(x^0)+\left(\frac{7C_{x}}{RC_{y}}+\frac1R\right)\rr^2.
    \end{equation}
\end{lemma}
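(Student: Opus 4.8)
The plan is to prove the two inequalities separately: the lower bound is essentially free, while the upper bound splits into a ``normal direction'' contribution controlled by the reach of $\partial\Omega$ and a ``tilt'' contribution controlled by Lemma \ref{lem:v0}.

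For the lower bound I would exhibit a competitor in the maximization \eqref{eq:D_bar}. Since $d_\Omega(x^0)\le 2\eps\le\rr$ by Assumption \ref{assumption2}, and the open ball $B(x^0,d_\Omega(x^0))$ lies in $\Omega$, the point $x^0 - d_\Omega(x^0)\bar{\nu}_{\rr}(x^0)$ belongs to $\overline{B(x^0,\rr)\cap\Omega}$; plugging it into \eqref{eq:D_bar} yields the value $d_\Omega(x^0)\,|\bar{\nu}_{\rr}(x^0)|^2 = d_\Omega(x^0)$, so $\bar{d}^1_{\rr}(x^0)\ge d_\Omega(x^0)$. One should first record that $\bar v_\rr(x^0)\neq 0$, so that $\bar{\nu}_{\rr}$ is defined: Lemma \ref{lem:v0} together with $\rr\le RC_y/(2C_x)$ gives $|\bar v_\rr(x^0)|\ge \tfrac12 C_y\rho(x^0)\rr^{d+1}>0$.

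For the upper bound, let $x$ attain the maximum in \eqref{eq:D_bar}, and let $x^*\in\partial\Omega$ be the closest boundary point to $x^0$ (unique since $d_\Omega(x^0)<R$), so that $x^0 = x^* + d_\Omega(x^0)\nu(x^0)$. Choosing coordinates with $x^*=0$ and $\nu(x^0)=e_d$, I would write $\bar{d}^1_{\rr}(x^0) = (x^0-x)\cdot\nu(x^0) + (x^0-x)\cdot(\bar{\nu}_{\rr}(x^0)-\nu(x^0))$. The first term equals $d_\Omega(x^0) - x_d$; since $\partial\Omega$ has reach $\ge R$, the exterior ball $B(-Re_d,R)$ misses $\Omega$, and because $|x-x^*|\le 2\rr<R$ this forces $x_d\ge\sqrt{R^2-|x'|^2}-R\ge -|x'|^2/R$, using $R-\sqrt{R^2-t^2}\le t^2/R$ for $|t|\le\rr\le R/2$ as recorded before Lemma \ref{lem:p_bounds}; with $|x'|\le|x-x^0|\le\rr$ this bounds the first term by $d_\Omega(x^0)+\rr^2/R$. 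For the second term I would invoke Lemma \ref{lem:v0} with $C_y=\omega_{d-1}/(2(d+1))$ (legitimate since $d_\Omega(x^0)\le 2\eps$) and the elementary fact that $|v-a\nu|\le b<a$ for a unit vector $\nu$ and $a>0$ implies $\bigl|v/|v|-\nu\bigr|\le 2b/a$; taking $a=C_y\rho(x^0)\rr^{d+1}$, $b=\tfrac{C_x}{R}\rho(x^0)\rr^{d+2}$, with $b\le a/2$ guaranteed by $\rr\le RC_y/(2C_x)$, gives $|\bar{\nu}_{\rr}(x^0)-\nu(x^0)|\le 2C_x\rr/(RC_y)$, so the second term is at most $\rr\cdot 2C_x\rr/(RC_y)$. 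Adding, $\bar{d}^1_{\rr}(x^0)\le d_\Omega(x^0)+\bigl(\tfrac1R+\tfrac{2C_x}{RC_y}\bigr)\rr^2$, which sits comfortably inside the claimed bound (the looser constant $7$ in the statement leaves room for cruder bookkeeping of the same terms).

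Throughout one must check the geometric hypotheses dovetail: $d_\Omega(x^0)\le 2\eps\le\rr$ and $\rr\le R/2$ follow from Assumptions \ref{assumption2}--\ref{assumption3} via \eqref{eq:assumption_consequence2}, which ensures both the uniqueness of $x^*$ and that Lemma \ref{lem:v0} applies with $|d_\Omega(x^0)/\rr-\rr/R|\le 1$. The only genuinely delicate points are (i) converting the set constraint $x\in\Omega$ into the pointwise inequality $x_d\ge -|x'|^2/R$ via the exterior-ball consequence of positive reach, and (ii) passing from the vector estimate of Lemma \ref{lem:v0} to an angular estimate on $\bar{\nu}_{\rr}$, which is valid only once $\rr$ is small enough ($\rr\le RC_y/(2C_x)$) to keep $\bar v_\rr(x^0)$ bounded away from $0$ --- precisely why that hypothesis appears.
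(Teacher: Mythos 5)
Your proof is correct, and it follows the same overall architecture as the paper's — lower bound by exhibiting that the maximum in \eqref{eq:D_bar} dominates $d_\Omega(x^0)$, upper bound by splitting $(x^0-x)\cdot\bar{\nu}_{\rr}(x^0)$ into the true-normal contribution (controlled by the reach via the exterior tangent ball, exactly as in the paper) plus the tilt $(x^0-x)\cdot(\bar{\nu}_{\rr}(x^0)-\nu(x^0))$ controlled by Lemma \ref{lem:v0}. Where you genuinely diverge is in quantifying the tilt: the paper expands $1/|\bar{v}_{\rr}(x^0)|$ with the $\tfrac{1}{1+t}=1+\O(4|t|)$ device and tracks the resulting $\O$-terms, accumulating $\tfrac{5C_x}{RC_y}\rr^2+\tfrac{4C_x^2\rr^3}{R^2C_y^2}$ and then absorbing the cubic term to reach the constant $7$; you instead invoke the normalization inequality $\bigl|\tfrac{u}{|u|}-\tfrac{w}{|w|}\bigr|\le \tfrac{2|u-w|}{\max(|u|,|w|)}$ (which is the sharp form of your "elementary fact" — note the naive estimate via $|v|\ge a-b$ only gives $4b/a$, so you do need the $\max$ in the denominator) to get $|\bar{\nu}_{\rr}(x^0)-\nu(x^0)|\le \tfrac{2C_x}{RC_y}\rr$ in one line. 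This buys a cleaner argument and a better constant ($2$ in place of $7$), at no cost; the paper's route has the advantage of producing the intermediate expansion \eqref{eq:inv}, which it reuses in Lemma \ref{lem:bernstein} and Theorem \ref{thm:normal_vector}. Your lower-bound competitor $x^0-d_\Omega(x^0)\bar{\nu}_{\rr}(x^0)$ is also a slightly more explicit justification than the paper's one-line observation, and you correctly flag that it needs $d_\Omega(x^0)\le\rr$, which comes from Assumption \ref{assumption2} rather than from the stated hypotheses of the lemma — a dependence the paper leaves implicit as well.
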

\begin{proof}
\begin{enumerate}
\item Recall
\[{\hat{v}_{\rr}(x^0)}=\frac{1}{n}\sum_{i=1}^n\one_{B(x^0,\rr)}(x^i)(x^i-x^0)\]
and
\[\E{\hat{v}_{\rr}(x^0)} = \int_{B(x^0,\rr)}(x-x^0)\rho(x)\,dx = \bar{v}_{\rr}(x^0).\]
We consider the population based statistic 
\[\bar{d}_\Omega^1(x^0) = \max_{x\in \Omega\cap B(x^0,\rr)}\{ (x^0-x)\cdot  \bar{v}_{\rr} \},\]
where $\bnu :=\frac{\bar{v}_{\rr}(x^0)}{\|\bar{v}_{\rr}(x^0)\|}$. 
\item By Lemma \ref{lem:v0} we have
\[\bar{v}_{\rr}(x^0) = C_{y}\rho(x^0)\rr^{d+1}\nu(x^0) + \frac{1}{R}\O\left( C_{x}\rho(x^0)\rr^{d+2} \right).\]
 Here, we can use the big-Oh notation very precisely, to mean that $f\in\O(g)$ if $|f|\leq g$ (without any implicit constant). Therefore
\[|\bar{v}_{\rr}(x^0)| = C_{y}\rho(x^0)\rr^{d+1} + \frac{1}{R}\O\left(C_{x}\rho(x^0)\rr^{d+2} \right).\]
We also have
\begin{equation}\label{eq:dot}
(x^0-x)\cdot \bar{v}_{\rr}(x^0) = C_{y}\rho(x^0)\rr^{d+1}(x^0-x)\cdot \nu(x^0) + \frac{1}{R}\O(C_{x}\rho(x^0)\rr^{d+3}).
\end{equation}
We now write
\begin{align*}
\frac{1}{|\bar{v}_{\rr}(x^0)|}&=\frac{1}{C_{y}\rho(x^0)\rr^{d+1} + \frac1R \O(C_{x}\rho(x^0)\rr^{d+2})}\\
&=\frac{1}{C_{y}\rho(x^0)\rr^{d+1}\left( 1 + \frac1R\O\left(\frac{C_{x}\rr}{C_{y}}\right) \right)}.
\end{align*}
We now use that
\[\frac{1}{1+t} = 1 + \O(4|t|) \ \ \text{for }|x|\leq \frac{1}{2}.\]
Hence, if 
\begin{equation}\label{eq:restriction1}
\rr \leq \frac{R C_{y}}{2C_{x}},
\end{equation}
which implies that $\frac{C_{x}\rr}{RC_{y}}\leq \frac{1}{2}$, then we have
\begin{equation}\label{eq:inv}
\frac{1}{|\bar{v}_{\rr}(x^0)|}=\frac{1}{C_{y}\rho(x^0)\rr^{d+1}}\left( 1 + \O\left(\frac{4 C_{x}\rr}{RC_{y}}  \right) \right).
\end{equation}
Recall from \eqref{eq:simplified_const_C} that $C_{y}>\frac{\wod[d-1]}{2(d+1)}$. Thus
\[
    \frac{C_{x}}{C_{y}}\leq 2(d+1)\left(2+\frac{LR \kappa_{d}}{\rho_{\min}}\right).
\]
\item Inserting \eqref{eq:inv} into \eqref{eq:dot} we have
\begin{align}
(x^0-x)\cdot \bar{\nu}_{\rr}(x^0) &= \frac{(x^0-x)\cdot \bar{v}_{\rr}(x^0)}{|\bar{v}_{\rr}(x^0)|} \notag \\
&= \left( (x^0-x)\cdot \nu(x^0) + \O\left( \frac{C_{x} \rr^2}{RC_{y}} \right) \right)\left( 1 + \O\left( \frac{4C_{x}\rr}{RC_{y}} \right) \right) \notag \\
&= (x^0-x)\cdot \nu(x^0) + \O\left( \frac{C_{x} \rr^2}{RC_{y}} + \frac{4C_{x}\rr^2}{RC_{y}}+ \frac{4C_{x}^2\rr^3}{R^2C_{y}^2}\right) \notag \\
&= (x^0-x)\cdot \nu(x^0) + \O\left( \frac{5C_{x}\rr^2}{RC_{y}}+ \frac{4C_{x}^2\rr^3}{R^2C_{y}^2}\right),
\end{align}
where $x\in \Omega\cap B(x^0,\rr)$.
\item To obtain the lower bound we simply observe that $\max_{|x^i-x^0|\leq\rr}(x^0 - x^i )\cdot v$ is smallest when $v=\nu(x^0)$, in which case $\max_{|x^i-x^0|\leq\rr}(x^0 - x^i )\cdot \nu(x^0)= d_{\Omega}(x^0)$. Thus
\begin{equation}\label{eq:lowerbound_dist}
     d_{\Omega}(x^0)\leq\bar{d}_\Omega^1(x^0)
\end{equation}

\item For the other direction, by the assumption that the reach of $\partial\Omega$ is greater than $R$, we have
\begin{equation}\label{eq:reach;1/R}
    \Omega\cap B(x^0,\rr) \subset \left\{x\in B(x^0,\rr) \, : \, (x^0-x)\cdot \nu(x^0) \leq  d_{\Omega}(x^0) + \frac{\rr^2}{R}\right\},
\end{equation}
provided $\rr\leq R/2$. 
It follows that
\begin{equation}\label{eq:upper_bound}
\bar{d}_\Omega^1(x^0) \leq  d_{\Omega}(x^0) +\left(\frac{5C_{x}}{RC_{y}}+\frac1R\right)\rr^2+ \frac{4C_{x}^2\rr^3}{R^2C_{y}^2}.
\end{equation}

\item Now combining \eqref{eq:lowerbound_dist} and \eqref{eq:upper_bound}
\end{enumerate} we have 
\[
     d_{\Omega}(x^0)\leq\bar{d}_\Omega^1(x^0)
    \leq \left(\frac{5C_{x}}{RC_{y}}+\frac{1}{R}\right)\rr^2 + \frac{4C_{x}^2}{R^2C_{y}^2}\rr^3\leq\left(\frac{7C_{x}}{RC_{y}}+\frac{1}{R}\right)\rr^2
\] as desired, where the last inequality follows from the condition $\rr\leq\frac{RC_y}{2C_x}$. Finally, as $\kappa_{d}\sim\sqrt{d}$ by \eqref{eq:sphereVol_ratio}, $\frac{C_{x}}{C_{y}}\sim d^{\frac32}$.
\end{proof}

Next, we bound the variance of $\hat{\nu}_\rr$, the empirical estimator of the normal vector.

\begin{lemma}[Bound on the variance]\label{lem:bernstein}
    Let $\gamma>0$
    and $c\leq\frac{6d^3C_{x}\rho_{\max}\wod}{RC_{y}}$. If $d_\Omega(x^0)\leq 2\eps$ and $\rr$ satisfies
    \begin{equation}\label{eq:lem_bernstein_eps}
      \left(\frac{3\gamma \rho_{\max}d^2\wod}{c^2}\frac{\log n}{n}\right)^{\frac{1}{d+2}}
      \leq\rr \leq \frac{RC_{y}}{2C_{x}}
    \end{equation}
    then
    \begin{equation}\label{eq:lem;bd_var}
        \mathbb{P}\left(|\hat{\nu}_{\rr}(x^0)-\bar{\nu}_{\rr}(x^0)|> \frac{6c\rr}{C_y \rho(x^0)}\right)
        \leq 2dn^{-\gamma}
    \end{equation}
\end{lemma}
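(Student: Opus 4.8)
The plan is to split the statement into three ingredients: a lower bound on the length of the population vector $\bar v_\rr(x^0)$, a coordinatewise concentration estimate for $\hat v_\rr(x^0)$ about its mean $\bar v_\rr(x^0)$, and the elementary Lipschitz stability of the normalization map $w\mapsto w/|w|$ away from the origin. Note first that $\bar v_\rr(x^0)=\E\,\hat v_\rr(x^0)$ (the $i$-th summand with $x^i=x^0$ vanishes), so the second ingredient is genuinely a concentration statement. \emph{Lower bound on $|\bar v_\rr(x^0)|$.} Since $d_\Omega(x^0)\le 2\eps$, Lemma \ref{lem:v0} applies with the simplified constant $C_y=\tfrac{\omega_{d-1}}{2(d+1)}$ and gives $\bigl|\bar v_\rr(x^0)-C_y\rho(x^0)\rr^{d+1}\nu(x^0)\bigr|\le \tfrac{C_x\rho(x^0)}{R}\rr^{d+2}$, whence
\[
|\bar v_\rr(x^0)|\;\ge\;C_y\rho(x^0)\rr^{d+1}\Bigl(1-\tfrac{C_x\rr}{RC_y}\Bigr)\;\ge\;\tfrac12\,C_y\rho(x^0)\rr^{d+1},
\]
where the last step uses the upper bound $\rr\le \tfrac{RC_y}{2C_x}$ from \eqref{eq:lem_bernstein_eps} (the same constraint already invoked in Lemma \ref{lem:bias}).

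\emph{Coordinatewise Bernstein.} Fix a coordinate $k\in\{1,\dots,d\}$ and write $\hat v_\rr^{(k)}(x^0)-\bar v_\rr^{(k)}(x^0)=\tfrac1n\sum_{i=1}^n Z_i^{(k)}$ with $Z_i^{(k)}=\one_{B(x^0,\rr)}(x^i)(x^i-x^0)_k-\E[\one_{B(x^0,\rr)}(x^i)(x^i-x^0)_k]$. These are i.i.d., mean zero, with $|Z_i^{(k)}|\le \rr$, and, using the crude bound $\int_{B(x^0,\rr)}\rho\le \rho_{\max}\omega_d\rr^d$,
\[
\Var\bigl(Z_i^{(k)}\bigr)\;\le\;\E\bigl[\one_{B(x^0,\rr)}(x^i)\,|x^i-x^0|^2\bigr]\;\le\;\rho_{\max}\omega_d\,\rr^{d+2}.
\]
Apply Bernstein's inequality at deviation level of order $\rr^{d+2}$, precisely $\tfrac{c}{d}\rr^{d+2}$. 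Here the hypothesis $c\le \tfrac{6d^3C_x\rho_{\max}\omega_d}{RC_y}$ together with $\rr\le\tfrac{RC_y}{2C_x}$ keeps the sub-exponential (range) term $\sim\rr\cdot\tfrac cd\rr^{d+2}$ in Bernstein's denominator subordinate to the variance term $\sim\rho_{\max}\omega_d\rr^{d+2}$, so the exponent is at least a fixed multiple of $\tfrac{nc^2\rr^{d+2}}{d^2\rho_{\max}\omega_d}$; the lower bound $\rr\ge\bigl(\tfrac{3\gamma\rho_{\max}d^2\omega_d}{c^2}\tfrac{\log n}{n}\bigr)^{1/(d+2)}$ from \eqref{eq:lem_bernstein_eps} is exactly what forces this exponent to exceed $\gamma\log n$. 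Hence $\P\bigl(|\hat v_\rr^{(k)}(x^0)-\bar v_\rr^{(k)}(x^0)|>\tfrac cd\rr^{d+2}\bigr)\le 2n^{-\gamma}$.

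\emph{Putting it together.} A union bound over the $d$ coordinates shows that, with probability at least $1-2dn^{-\gamma}$,
\[
|\hat v_\rr(x^0)-\bar v_\rr(x^0)|\;\le\;\sqrt d\cdot\tfrac cd\,\rr^{d+2}\;\le\;c\,\rr^{d+2}.
\]
On this event combine the elementary inequality $\bigl|\tfrac{a}{|a|}-\tfrac{b}{|b|}\bigr|\le \tfrac{2|a-b|}{\max(|a|,|b|)}$ (which follows from $\bigl||b|a-|a|b\bigr|\le |a|\bigl(|a-b|+\bigl||b|-|a|\bigr|\bigr)\le 2|a|\,|a-b|$), applied with $a=\hat v_\rr(x^0)$ and $b=\bar v_\rr(x^0)$, with the lower bound on $|\bar v_\rr(x^0)|$ from the first step:
\[
|\hat\nu_\rr(x^0)-\bar\nu_\rr(x^0)|\;\le\;\frac{2\,|\hat v_\rr(x^0)-\bar v_\rr(x^0)|}{|\bar v_\rr(x^0)|}\;\le\;\frac{2c\,\rr^{d+2}}{\tfrac12 C_y\rho(x^0)\rr^{d+1}}\;=\;\frac{4c\,\rr}{C_y\rho(x^0)}\;\le\;\frac{6c\,\rr}{C_y\rho(x^0)},
\]
which is \eqref{eq:lem;bd_var}.

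The step I expect to require the most care is the Bernstein estimate: one must verify that the stated range for $\rr$ actually makes the Bernstein exponent at least $\gamma\log n$, and this hinges on keeping the linear (range) term in Bernstein's denominator dominated by the variance term $\rho_{\max}\omega_d\rr^{d+2}$, which is precisely where both hypotheses $c\le\tfrac{6d^3C_x\rho_{\max}\omega_d}{RC_y}$ and $\rr\le\tfrac{RC_y}{2C_x}$ enter. In addition, the powers of $d$ must be tracked carefully, since they enter through three places: the choice $\tfrac cd\rr^{d+2}$ of per-coordinate deviation (contributing $d^2$ after squaring in the exponent), the passage from the $d$ scalar bounds to the Euclidean norm of $\hat v_\rr-\bar v_\rr$, and the leading constant $C_y\sim \omega_d d^{-1/2}$ from the first step. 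Everything else — the variance computation, the union bound, and the normalization inequality — is routine.
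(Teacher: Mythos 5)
Your proof is correct and follows essentially the same route as the paper's: a coordinatewise Bernstein bound at deviation level $\tfrac{c}{d}\rr^{d+2}$ with the variance bound $\rho_{\max}\omega_d\rr^{d+2}$, a union bound over coordinates, and a transfer to the unit normals via a Lipschitz estimate for $w\mapsto w/|w|$ combined with the lower bound on $|\bar v_\rr(x^0)|$ from Lemma \ref{lem:v0}. The only differences are cosmetic: you combine the coordinates in $\ell^2$ (gaining a factor $\sqrt d$ the paper does not use) and bound $|\bar v_\rr(x^0)|\ge\tfrac12 C_y\rho(x^0)\rr^{d+1}$ directly rather than via \eqref{eq:inv}, which yields the constant $4$ in place of the paper's $6$ — both of course within the claimed bound.
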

\begin{proof}
    Let us first fix $x^0\in \X$. For each $j=1,2,\cdots,d$ let 
 		\[S_{n}^{j}=
 		\sum_{i=1}^n \one_{B(x^0,\rr)}(x^i)(x_j^i - x_j^0).\]
    Note
    \[\sigma^2 = \Var\left(\one_{B(x^0,\rr)}(x^i)(x_j^i - x_j^0)\right) \leq \int_{B(x^0,\rr)}|x^i-x^0|^2 \rho(x) \, dx \leq \rho_{\max}\wod\rr^{d+2}.\]
    By Bernstein's Inequality \eqref{thm:bernstein}, we have
    \begin{align*}
        \mathbb{P}\left( \left|\frac1n S_n-\bar{v}_{\rr}(x^0) \right|>c\rr^{d+2}\right)
        &\leq\sum_{j=1}^{d}\mathbb{P}\left( \left|\frac1n S_{n}^{j}-\bar{v}_{\rr}(x^0)_j \right|>\frac{c\rr^{d+2}}{d}\right)  \\
        &\leq\sum_{j=1}^{d}  
        2\exp\left[-\frac{-nc^2\rr^{2d+4}}{2d^2\rho_{\max}\wod\rr^{d+2}+\frac{c}{3d}\rr^{d+3}}\right]   \\
        &\leq 2\sum_{j=1}^{d}
        \exp\left[-\frac{nc^2\rr^{d+2}}{2d^2\rho_{\max}\wod+\frac{cRC_{y}}{6d C_{x}}}\right]
        \leq 2d\exp\left[-\frac{nc^2\rr^{d+2}}{3d^2\rho_{\max}\wod}\right]
    \end{align*}
    where the second last inequality follows from \eqref{eq:bias;eps_constraint}, and the last inequality from the condition \[c\leq\frac{6d^3C_{x}\rho_{\max}\wod}{RC_{y}}.\] 
    The exponent is smaller than $-\gamma\log n$ when
    \[
      \rr\geq\left(\frac{3\gamma \rho_{\max}d^2\wod}{c^2}\frac{\log n}{n}\right)^{\frac{1}{d+2}}
    \]
    which is \eqref{eq:lem_bernstein_eps}. Thus
    \begin{equation}\label{eq:lem_bernstein_pf}
        \mathbb{P}\left(|\hat{v}_{\rr}(x^0)-\bar{v}_{\rr}(x^0)|>c\rr^{d+2}\right)\leq 2dn^{-\gamma}
    \end{equation}
    Now, note that
    \[
        |\hat{\nu}_{\rr}(x^0)-\bar{\nu}_{\rr}(x^0)|=\left|\frac{\hat{v}_{\rr}(x^0)}{|\hat{v}_{\rr}(x^0)|}-\frac{\bar{v}_{\rr}(x^0)}{|\bar{v}_{\rr}(x^0)|}\right|
        \leq\left|\hat{v}_{\rr}(x^0)\left(\frac{1}{|\hat{v}_{\rr}(x^0)|}-\frac{1}{|\bar{v}_{\rr}(x^0)|}\right)\right|+\frac{|\hat{v}_{\rr}(x^0)-\bar{v}_{\rr}(x^0)|}{|\bar{v}_{\rr}(x^0)|}.
    \]
    Then \eqref{eq:lem_bernstein_pf} implies
    \begin{align*}
      \left|\hat{v}_{\rr}(x^0)\left(\frac{1}{|\hat{v}_{\rr}(x^0)|}-\frac{1}{|\bar{v}_{\rr}(x^0)|}\right)\right|
      =\frac{1}{|\bar{v}_{\rr}(x^0)|}|\hat{\nu}_{\rr}(x^0)(|\bar{v}_{\rr}(x^0)|-|\hat{v}_{\rr}(x^0)|)|
      \\
      \leq\frac{1}{|\bar{v}_{\rr}(x^0)|}|\bar{v}_{\rr}(x^0)-\hat{v}_{\rr}(x^0)|
      \leq\frac{c\rr^{d+2}}{|\bar{v}_{\rr}(x^0)|}
    \end{align*}
    and
    \[
        \frac{1}{|\bar{v}_{\rr}(x^0)|}|\hat{v}_{\rr}(x^0)-\bar{v}_{\rr}(x^0)|
        \leq\frac{c\rr^{d+2}}{|\bar{v}_{\rr}(x^0)|}.
    \]
    Therefore, we have
    \begin{equation}\label{eq:lem_bernstein_prob}
        \mathbb{P}\left(|\hat{\nu}_{\rr}(x^0)-\bar{\nu}_{\rr}(x^0)|> \frac{2c\rr^{d+2}}{|\bar{v}_{\rr}(x^0)|}\right)
        \leq 2dn^{-\gamma}.
    \end{equation}
    Finally, from \eqref{eq:inv} and the condition $\rr\leq \frac{R C_y}{2C_x}$ we can deduce \eqref{eq:lem;bd_var} as
    \[\frac{2c\rr^{d+2}}{|\bar{v}_{\rr}(x^0)|}\leq \frac{2c\rr^{d+2}}{C_y\rho(x^0)\rr^{d+1}}\left(1+\O\left(\frac{4C_x\rr}{RC_y}\right)\right)\leq \frac{6c\rr}{C_y \rho(x^0)}.\]
\end{proof}

	    
	
	\begin{theorem} (Error estimates for the estimated normal vector)\label{thm:normal_vector}    \\
		Let $x^0\in \X$ with $d_\Omega(x^0)\leq 2\eps$. Let $\gamma>2$ and $\eps,\rr>0$ satisfy Assumption \ref{ass:r/R}. Let $\rr$ and $n$ satisfy
		\begin{equation}\label{eq:r_cond_nvec}
		    \left(\frac{3\gamma\rho_{max} d^2\wod R^2}{C_{x}^2\rho_{\min}^2}\frac{\log n}{n}\right)^{\frac{1}{d+2}}\leq r\leq \frac{RC_y}{2 C_x}.
		\end{equation}
	  Then
	\begin{equation}\label{eq:normalvec;main}
	    \mathbb{P}\left(|\hat{\nu}_{\rr}(x^0)-\nu(x^0)|\geq \frac{13C_{x}}{RC_{y}}\rr\right)\leq 2dn^{-\gamma}
	\end{equation}
	
	\end{theorem}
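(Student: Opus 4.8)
The natural route is the triangle inequality
\[
|\hat{\nu}_{\rr}(x^0)-\nu(x^0)|\;\leq\;|\hat{\nu}_{\rr}(x^0)-\bar{\nu}_{\rr}(x^0)|\;+\;|\bar{\nu}_{\rr}(x^0)-\nu(x^0)|,
\]
which splits the task into a deterministic \emph{bias} bound on the second summand and a probabilistic \emph{variance} bound on the first. (Here we implicitly use $d_\Omega(x^0)\le 2\eps$, so that $\nu(x^0)$ and the simplified constant $C_y=\tfrac{\omega_{d-1}}{2(d+1)}$ from Lemma~\ref{lem:v0} and Remark~\ref{rmk:v_const} are available, and Lemma~\ref{lem:bernstein} applies.) The strategy is: bound the bias term by $O\!\left(\tfrac{C_x}{RC_y}\rr\right)$ using Lemma~\ref{lem:v0}, bound the variance term by $O\!\left(\tfrac{C_x}{RC_y}\rr\right)$ with probability $1-2dn^{-\gamma}$ using Lemma~\ref{lem:bernstein} with a carefully chosen constant, and add the two.

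For the bias term, Lemma~\ref{lem:v0} gives $\bar{v}_{\rr}(x^0)=C_y\rho(x^0)\rr^{d+1}\nu(x^0)+E$ with $|E|\le \tfrac{C_x\rho(x^0)}{R}\rr^{d+2}$; factoring out $C_y\rho(x^0)\rr^{d+1}$ writes $\bar{v}_{\rr}(x^0)=C_y\rho(x^0)\rr^{d+1}\bigl(\nu(x^0)+w\bigr)$ with $|w|\le \tfrac{C_x\rr}{RC_y}$. The upper constraint $\rr\le \tfrac{RC_y}{2C_x}$ in \eqref{eq:r_cond_nvec} guarantees $|w|\le\tfrac12$, so the normalization is well behaved, and a short estimate (as in steps (2)--(3) of Lemma~\ref{lem:bias}, or directly from $\bigl|\tfrac{\nu+w}{|\nu+w|}-\nu\bigr|\le \bigl||\nu+w|-1\bigr|+|w|\le 2|w|$) yields
\[
|\bar{\nu}_{\rr}(x^0)-\nu(x^0)|\;\leq\;\frac{2C_x}{RC_y}\rr .
\]

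For the variance term I would apply Lemma~\ref{lem:bernstein} with the specific choice $c=\tfrac{C_x\rho_{\min}}{R}$. With this $c$ the lower bound on $\rr$ required by \eqref{eq:lem_bernstein_eps}, namely $\bigl(\tfrac{3\gamma\rho_{\max}d^2\omega_d}{c^2}\tfrac{\log n}{n}\bigr)^{1/(d+2)}$, becomes exactly the left-hand side of \eqref{eq:r_cond_nvec}; and the hypothesis $c\le \tfrac{6d^3C_x\rho_{\max}\omega_d}{RC_y}$ reduces to $\rho_{\min}\le \tfrac{6d^3\rho_{\max}\omega_d}{C_y}$, which holds trivially since $C_y\le \omega_{d-1}$ and $\omega_{d-1}/\omega_d=\kappa_d\le\sqrt{d}$ by \eqref{eq:sphereVol_ratio}. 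Lemma~\ref{lem:bernstein} then gives
\[
\mathbb{P}\!\left(|\hat{\nu}_{\rr}(x^0)-\bar{\nu}_{\rr}(x^0)|>\frac{6c\rr}{C_y\rho(x^0)}\right)\leq 2dn^{-\gamma},
\qquad \frac{6c\rr}{C_y\rho(x^0)}\leq \frac{6c\rr}{C_y\rho_{\min}}=\frac{6C_x}{RC_y}\rr .
\]
Combining the two bounds, on the complement of the bad event we get $|\hat{\nu}_{\rr}(x^0)-\nu(x^0)|\le \tfrac{2C_x}{RC_y}\rr+\tfrac{6C_x}{RC_y}\rr=\tfrac{8C_x}{RC_y}\rr<\tfrac{13C_x}{RC_y}\rr$, which is \eqref{eq:normalvec;main}. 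The only genuine subtlety here is the calibration of the free constant $c$ in Lemma~\ref{lem:bernstein}: it must be taken proportional to $C_x\rho_{\min}/R$ so that the Bernstein radius threshold collapses precisely onto the hypothesis \eqref{eq:r_cond_nvec} while still keeping the aggregate constant below $13$; everything else is the triangle inequality together with the two preceding lemmas.
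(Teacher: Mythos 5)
Your proposal is correct and follows essentially the same route as the paper: split $|\hat{\nu}_{\rr}-\nu|$ by the triangle inequality through $\bar{\nu}_{\rr}$, control the bias with Lemma \ref{lem:v0} and the variance with Lemma \ref{lem:bernstein}, and use the upper bound $\rr\le RC_y/(2C_x)$ to tame the normalization. Your constant bookkeeping is in fact slightly sharper (the paper takes $c=C_x\rho(x^0)/R$ and arrives at $7+6=13$, whereas your normalization inequality gives a bias constant of $2$ and a total of $8$), but this only strengthens the stated conclusion.
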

	
	 \begin{remark}\label{rmk:normal_vector}
	    Observe that if $\rr$ satisfies \eqref{eq:r_cond_nvec}, then we may choose $\rr=\left(\frac{3\gamma\rho_{max} d^2\wod R^2}{C_{x}^2\rho_{\min}^2}\frac{\log n}{n}\right)^{\frac{1}{d+2}}$, which means
	    \[\mathbb{P}\left(|\hat{\nu}_{\rr}(x^0)-\nu(x^0)|\geq C\left(\frac{\log n}{n}\right)^{\frac{1}{d+2}}\right)\leq 2dn^{-\gamma}\]
	    with
	    \[
	        C:=\frac{C_{x}}{C_{y}}\left(\frac{3\gamma\rho_{max} d^2\wod R^2}{C_{x}^2\rho_{\min}^2}\right)\sim d^{2},
	    \]
	    where the asymptotics in $d$ can be derived using Stirling's formula. For a more detailed analysis of the how the constants scale with dimension, please see Remarks \ref{rmk:main} and \ref{rmk:boundary_test}.
	    
	    Further, we note that the above result holds for $x^0\in \Omega_{2\eps}$ -- i.e. the reference point need not be one of the samples. The same applies to following results on the distance estimator.
	\end{remark}
	
	\begin{proof}
	The upper bound of \eqref{eq:new_main;condition} allows us to apply Lemma \ref{lem:bias}, which we will combine with Lemma \ref{lem:bernstein}. The lower bound in \eqref{eq:sphereVol_ratio} implies that
\[
    \frac{6d^3\wod}{C_{y}} \geq \frac{12d^3(d+1)}{\kappa_{d}} \geq 12
\] from which easily follows $\frac{C_{x}\rho(x^0)}{R}\leq\frac{6d^3\rho_{\max}\wod C_{x}}{RC_{y}}$. Thus we may set $c=\frac{C_{x}\rho(x^0)}{R}$. Then Lemma \ref{lem:bernstein} implies that if
\[
    \rr\geq\left(\frac{3\gamma\rho_{max} d^2\wod R^2}{C_{x}^2\rho_{\min}^2}\frac{\log n}{n}\right)^{\frac{1}{d+2}}
\]
then, by \eqref{eq:inv},
\begin{equation}\label{eq:new_main;normal_upperbd}
    |\hat{\nu}_{\rr}(x^0)-\bar{\nu}_{\rr}(x^0)|\leq \frac{2C_{x}}{RC_{y}}\left(1+\O\left(\frac{4C_{x}\rr}{RC_{y}}\right)\right)\rr\leq\frac{6C_{x}}{RC_{y}}\rr
\end{equation}
with probability at least $1-2dn^{-\gamma}$, where the last inequality follows from the condition $\rr\leq\frac{RC_y}{2C_x}$.

	
	Next we bound $|\bar{\nu}_{\rr}(x^0)-\nu(x^0)|$. Again by \eqref{eq:inv}
	\begin{align*}
	    |\bar{\nu}_{\rr}(x^0)-\nu(x^0)|&=\left|\frac{\bar{v}_{\rr}(x^0)}{|\bar{v}_{\rr}(x^0)|}-\nu(x^0)\right|
	    \\
	    &=\frac{1}{C_{y}\rho(x^0)\rr^{d+1}}\left|\bar{v}_{\rr}(x^0)\left(1+\O\left(\frac{4C_{x}\rr}{RC_{y}}\right)\right)-C_{y}\rho(x^0)\rr^{d+1}\nu(x^0)\right|.
	 \end{align*}
	By Lemma \ref{lem:v0}
	\[
	    \left|\bar{v}_{\rr}(x^0)\left(1+\O\left(\frac{4C_{x}\rr}{RC_{y}}\right)\right)-C_{y}\rho(x^0)\rr^{d+1}\nu(x^0)\right|
	    \leq|\bar{v}_{\rr}(x^0)-C_{y}\rho(x^0)\rr^{d+1}\nu(x^0)|+\frac{4C_{x}|\bar{v}_{\rr}(x^0)|\rr}{RC_{y}}.\]
	Thus
	\begin{equation}\label{eq:normalvec;population}
	    |\bar{\nu}_{\rr}(x^0)-\nu(x^0)|
	    \leq \frac{C_{x}}{RC_{y}}\rr
	    +\frac{4C_{x}\rr}{RC_{y}}+\frac{4C_{x}^2\rr^2}{R^2C_{y}^2}\leq \frac{7C_{x}}{RC_{y}}\rr
	\end{equation}
	where the last inequality follows from \eqref{eq:restriction1}. Combining \eqref{eq:new_main;normal_upperbd} and \eqref{eq:normalvec;population} we have
	\[
	    |\hat{\nu}_{\rr}(x^0)-\nu(x^0)|\leq\frac{13C_{x}}{RC_{y}}\rr
	\]
	with probability at least $1-2dn^{-\gamma}$. 
	\end{proof}

\subsection{Second-order estimators: asymptotic error scaling} \label{sec:asym2}
	 
Here we analyze the asymptotic error of the ``second-order'' estimator of the normal vector, $\hat{\nu}^{2}_r(x^0)$, defined in \eqref{eq:hatvn}, and show that the error is indeed second-order in $r$, for points $x^0$ sufficiently close to the boundary, namely $d_\Omega(x^0)\lesssim \rr/\sqrt{d}$, which allows us to use \eqref{eq:v_expansion} with a reasonable lower bound on $C_y(x^0)$ (see Remark \ref{rmk:Cy_lowbd}). We note that in this section, in order to simplify expressions we use radius $\rr$ for estimating $\theta$, instead of the radius $\rr/2$ as in \eqref{def:theta;sample} and \eqref{def:theta;population}. However, a similar argument works when we set the radius to be $\rr/2$.



For simplicity, we first assume the boundary is the graph of a quadratic function near $x^0$. That is that near $x^0 = |x^0| e_d$ and  the boundary is given by 
\[ x_d = H(x)^T A H(x) \]
where $A$ is a $(d-1) \times (d-1)$ symmetric matrix and 
\[ H(x) = (x_1, \dots, x_{d-1})^T \] 
We also introduce the symbols for projection of a vector to the $e_d$ direction and for central symmetry with respect to the first $d-1$ variables
\[ N(x) = x \cdot e_d \, e_d \quad \te{and} \quad S(x) = (-H(x), x_d). \]
Furthermore let $U(x) = B(x,r) \cap \Omega$. 

Since $\bar v^2_r(x^0) \cdot e_d > Cr^{d+1}$ by estimate \eqref{eq:v_expansion} it suffices to show that $|H(\bar v^2_r(x^0))|  \leq C r^{d+3}$. 
We start by noting that due to symmetry of the quadratic function near $x^0$
\begin{align*}
H(\bar v^2_r(x^0))  & = \frac12
\int_{U(x^0)}  H  \left( \frac{ \rho(x) }{\theta(x)}(x - x^0)  +  \frac{ \rho(S(x)) }{\theta(S(x))}(S(x) - x^0)   \right) dx \\
& \leq  \frac12 \int_{U(x^0)}
\frac{ |\rho(x) \theta(S(x)) - \rho(S(x)) \theta(x) |}{ \theta(S(x)) \theta(x)}  |H(x)| dx  \\
& \leq \frac{8}{\rho_{min}^2} r^{d+1} \sup_{x \in U(x^0)} |\rho(x) \theta(S(x)) - \rho(S(x)) \theta(x) |. 
\end{align*}
 For $x \in U(x^0)$ we now estimate, assuming $4 r < R$ and using
 that $S$ is isometry between  $U(x)$ and $U(S(x))$
\begin{align*}
| \rho(x) \theta(S(x)) &  - \rho(S(x)) \theta(x) |  = \frac{1}{\omega_d r^d} 
 \left| \rho (x)\int_{U(S(x))} \rho(z) - \rho(S(x)) dz - \rho(S(x)) \int_{U(x)} \rho(z) - \rho(x) dz \right| \\
&  \leq  \frac{1}{\omega_d r^d} \left| \rho (x)\int_{U(S(x))} \nabla \rho(N(0)) \cdot (z - S(x))dz -    \rho(S(x)) \int_{U(x)} \nabla \rho(N(0)) (z-x) dz  \right| \\
     & \phantom{\leq}  + 4 \|\rho\|_{L^\infty} \|D^2 \rho\|_{L^\infty} r^{2} \\
&=  \frac{1}{\omega_d r^d}  \left| (\rho(S(x)) - \rho(x))  \int_{U(x)} \nabla \rho(N(0)) (z-x) dz  \right| + 4\|\rho\|_{L^\infty} \|D^2 \rho\|_{L^\infty} r^{2} \\
& \leq 4 \left( \| \nabla \rho \|_{\L^\infty}^2 + \|\rho\|_{L^\infty} \|D^2 \rho\|_{L^\infty} \right) r^{2}
\end{align*} 

Combining with the estimate above we obtain
\[ |H(\bar v^2_r(x^0))|  \leq C r^{d+3} \]
where $C$ depends on $\rho$ alone. 

We now relax the assumption that the boundary of $\Omega$ is a graph of a quadratic function. Namely note that since the boundary of $\Omega$ is $C^3$ there exists $C^r>0$ such that near $x^0$ the boundary of $\Omega$ is between the graphs of $x_d = H(x)^T A H(x) - C_r |H(x)|^3$ and 
$x_d = H(x)^T A H(x) + C_r |H(x)|^3$. Note that neglecting the part of $\Omega$ between the graphs produces an error of size $r^{d+3}$ and that all of the estimates above carry over to the part of $\Omega$ where $x_d > H(x)^T A H(x) + C_r |H(x)|^3$. Thus it still holds that $|T(\bar v^n_r(x^0))| \leq C r^{d+3}$, only that $C$ depends both of $\rho$ and $\Omega$.

\medskip
We now outline the argument at the level of the sample. One can use standard concentration inequalities to control the variance and  obtain the regime in which the empirical estimator $\hat v^n_r$ is within $Cr^3$ of the population based estimate $\bar v^n_r$.

Applying  Bernstein's inequality to the random variables $Y^j = \frac{1}{\omega_d r^d }  \one_{|x^j-x|\leq r/2}$ one obtains 
\begin{equation}\label{eq:thera-var}
|\hat \theta(x^i) - \theta(x^i)| \lesssim r^2 
\end{equation}
with high probability provided that $r \gtrsim (\log n/n)^{1/(d+4)}$.
Using the union bound the estimate holds uniformly for all $i$.
Thus 
\[ \left| \hat{v}^{2}_r(x^0) -  \frac{1}{n}\sum_{i=1}^n \frac{\one_{B(x^0,\rr)}(x^i)}{ \theta(x^i)}(x^i - x^0) \right| \lesssim r^{d+3} \]
Using the Bernstein inequality once more one obtains that 
\[ \left|   \frac{1}{n}\sum_{i=1}^n \frac{\one_{B(x^0,\rr)}(x^i)}{ \theta(x^i)}(x^i - x^0)  - \bar v^2_r(x^0) \right| \lesssim r^{d+3} \]
with high probability if $r \gtrsim (\log n/n)^{1/(d+4)}$

Combining with $\bar v^2_r(x^0) \cdot e_d \gtrsim r^{d+1}$ and 
$|H(\bar v^2_r(x^0))| \lesssim r^{d+3}$ we conclude that $|\hat \nu^2_r(x^0) - \nu(x^0) | \lesssim r^2$, as desired.

\section{Nonasymptotic error bounds for first-order distance and boundary estimators}\label{sec:main}

 In this section we establish the main results. Namely in Theorem \ref{thm:main} we show that the estimator $\hat{d}^1_\rr(x^0)$ has $O(\rr^2)$ error, provided that $r \gtrsim (\log n/n)^{1/(d+2)}$. 
 We then use this estimate to show that when $\frac{r^2}{R} \lesssim \eps \lesssim r$
 then we can accurately identify the $\eps$-boundary points.
    
  We start with establishing a lower bound on error of the distance estimator $\hat{d}^1_\rr$.
  
  	\begin{figure}[htb]
	\begin{tikzpicture}
	    \node at (0,0){\includegraphics[width=0.48\textwidth]{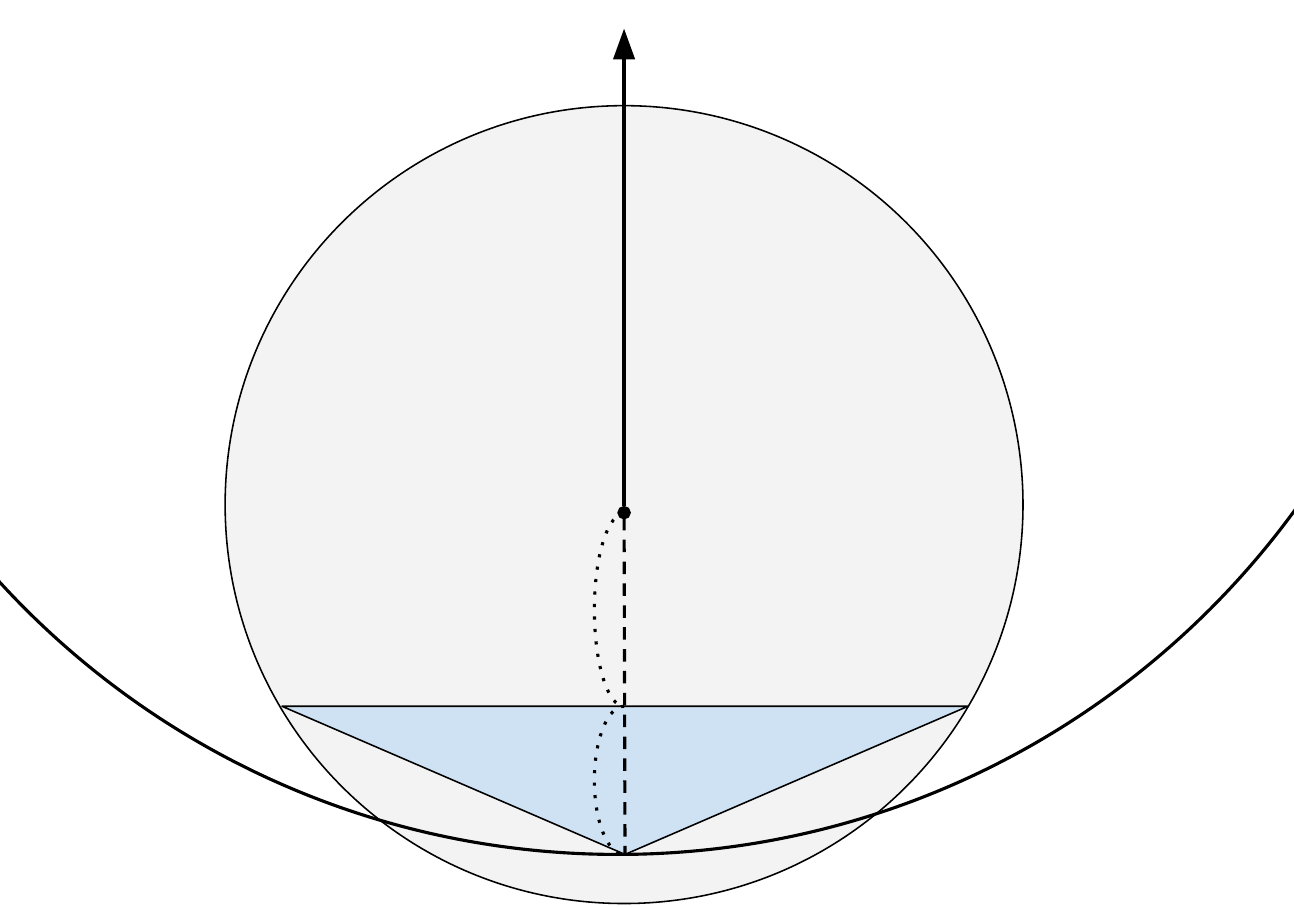}};
	    \node at (0.2,-0.2){$x^0$};
	    \node at (-1,-0.7){$\alpha-t$};
	    \node at (-0.5,-2){$t$};
	    \node at (-0.5,1.5){$\nu$};
	    \node at (0.3,-1.9){$K_{t,\rr}$};
	    \node at (-3,-2){$\partial\Omega$};
	\end{tikzpicture}
	\begin{tikzpicture}
	    \node at (0,0){\includegraphics[width=0.48\textwidth]{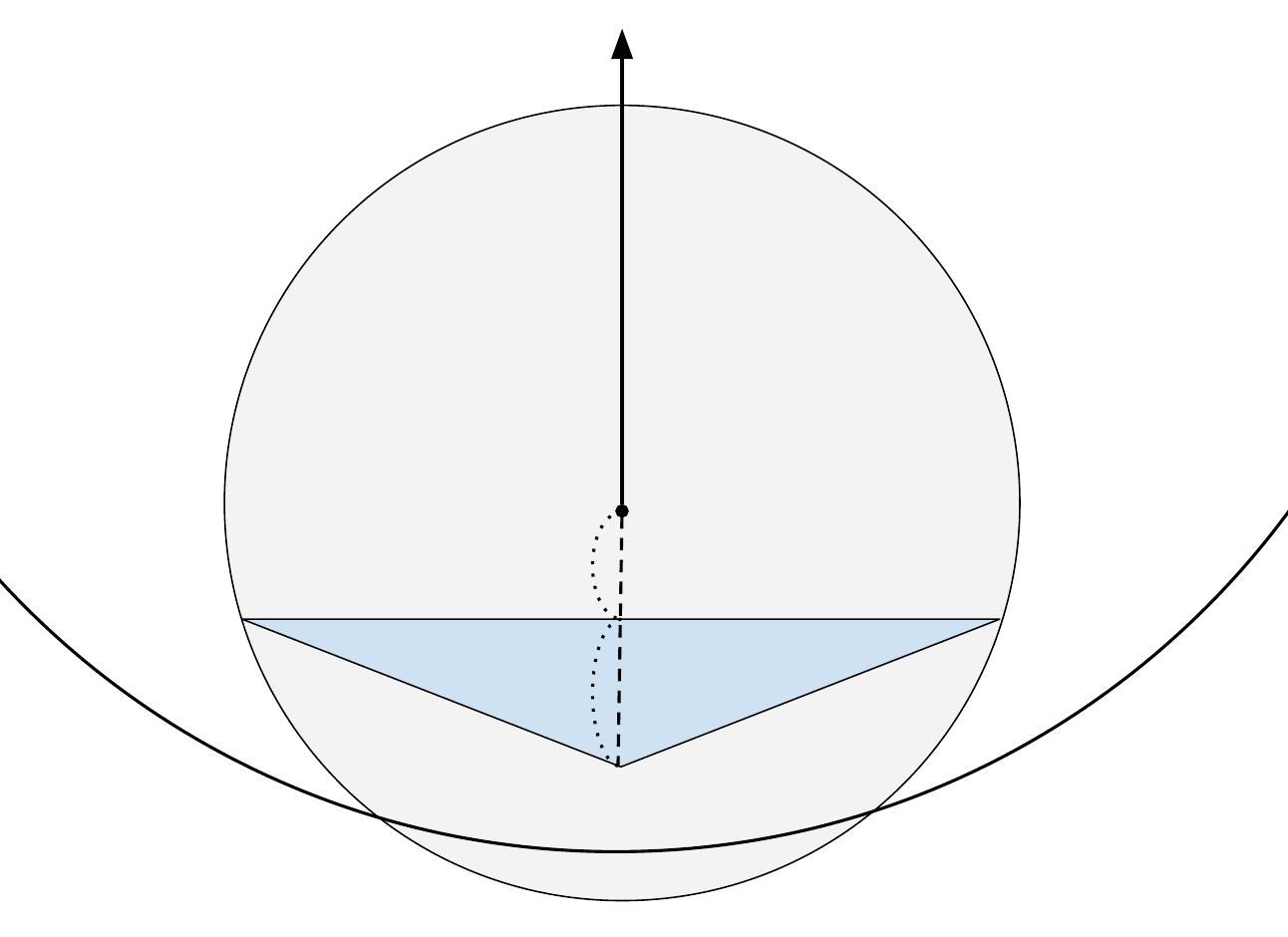}};
	    \node at (0.2,-0.1){$x^0$};
	    \node at (-1,-0.5){$\alpha-t$};
	    \node at (-0.5,-1.4){$t$};
	    \node at (-0.5,1.5){$\nu$};
	    \node at (0.3,-1.4){$K_{t,\rr}$};
	    \node at (-3,-2){$\partial\Omega$};
	\end{tikzpicture}
	\caption{Geometry relevant to the lower bound on $\hat{d}^1_\rr(x^0)$. $\alpha=d_\Omega(x^0)\wedge \tfrac\rr2$. (Left) Case where $d_\Omega(x^0)<\tfrac\rr2$; (Right) case where $d_\Omega(x^0)>\tfrac\rr2$.}\label{fig:false_positive}
\end{figure}

\begin{lemma}[Lower bound on the distance estimator]\label{lem:hatd1_lowbd}
 Let $\gamma>2$, $0<t\leq d_{\Omega}(x^0)$, and suppose Assumption \ref{ass:eps/r} holds. If $n$ and $\lambda>0$ satisfy
 \[n\geq d\vee (1+4\lambda^{-1})\]
 and $t,\rr$ satisfy
    \begin{equation}\label{eq:tr_lowbd}
        t\rr^{d-1}\geq\frac{\gamma d^2 2^{(d-1)/2}}{\rho_{\min}\omega_{d-1}}\left(\frac{\log n}{n}\right),
    \end{equation}
    then
    \begin{equation}\label{eq:hatd1_lowbd}
        \hat d^1_r(x^0)\geq (1-\lambda) (d_\Omega(x^0)\wedge \frac{\rr}{2}) - t
    \end{equation}
    with probability at least $1-n^{-\gamma}$.
\end{lemma}

\begin{remark}\label{rmk:lowbd}
   In fact, the lemma holds for any unit vector $\hat u$ that may depend on $\X$. Recall that the second-order distance estimator $\hat d_\rr^2$ defined in \eqref{eq:Deps2} is of the form
   \[\hat d_\rr^2(x^0)=\max_{x^\in B(x^0,\rr)\cap\X}(x^0-x^i)\cdot \hat u,\]
   where $|\hat u|$ can be as small as $\frac{1}{\sqrt{2}}$ in the interior, when $\hat u$ is an average of orthogonal unit vectors. Thus a slight modification allows us to obtain a similar result to the second-order distance estimator $\hat d_\rr^2$.
\end{remark}

\begin{proofsketch}
    As the proof involves lengthy elementary calculations, we delay the full proof to Appendix \ref{appendix:proof_lowbd}, and only present the main ideas here. The idea is to ensure that for any unit vector $u\in\S^{d-1}$, possibly depending on the samples $\X$, there is a point in the spherical segment $S^u \cap\Omega$ that contains points at least $(1-\lambda) (d_\Omega(x^0)\wedge \frac\rr2) - t$ away in the opposite direction of $u$. See Figure \ref{fig:false_positive} for the illustration in the case $u=\nu$. As there are infinitely many choices of $u$, we shrink the spherical segment slightly so that we have a finite family $\{\tilde S^1,\cdots,\tilde S^N\}$ such that for any $u\in\S^{d-1}$ we can find $\tilde S^i\subset S^u$. This means it suffices to show that each $\tilde S^i$ is nonempty for $i=1,\cdots,N$, and
    \begin{align*}
        \P(\hat d^1_\rr(x^0) \leq (1-\lambda) (d_\Omega(x^0)\wedge \frac{\rr}{2} - t) &\leq \P(S^u\cap\Omega \text{ is nonempty for all } u\in\S^{d-1})\\
        &\leq \sum_{i=1}^N \P(\tilde S^i\cap\Omega \text{ is nonempty for all } i=1,\cdots,N).
    \end{align*}
    For suitably chosen spherical segments, we may observe that $\tilde S^i\cap\Omega$ contains a cone $K$ with the same base and height as the spherical segment. Thus the proof comes down to obtaining a lower bound for the volume of this cone, and an upper bound on the number $N$.
\end{proofsketch}

We now state the nonasymptotic error bounds on the first-order distance estimator. 
    
    \begin{theorem}[Error bounds for the distance estimator]\label{thm:main}
    Let $\eps,\rr>0$ satisfy Assumptions \ref{ass:eps/r} and \ref{ass:r/R}. Let constants $C_x$ and $C_y$ be as in \eqref{eq:v_const}, and
    \begin{align*}
    C_{\rr}&=\frac{1}{R}\max\left[\left(\frac{3\gamma\rho_{\max}d^2\wod R^2}{{C_{x}}^2 \rho_{\min}^2 }\right)^{\frac{1}{d+2}},\left(\frac{4\gamma C_yd^2 2^{(d-1)/2}}{13\rho_{\min}\wod[d-1]C_{x}}\right)^{\frac{1}{d+1}}\right]   \\
    \end{align*}
    Suppose $\gamma>2$, and $n,\rr$ satisfy 
    \begin{equation}\label{eq:new_main;n_lowbd}
        n\geq d\vee \left(1+\frac{RC_y}{13C_x}\rr^{-1}\right)
    \end{equation}
    and
    \begin{equation}\label{eq:new_main;condition}
    \begin{split}
        &RC_{\rr}\left(\frac{\log n}{n}\right)^{\frac{1}{d+2}}\leq\rr
        \leq \frac{RC_{y}}{2C_{x}}  \\
    \end{split}
    \end{equation}
    Then, for $x^0\in \X$ we have
    \begin{equation}\label{eq:new_main;lowbd}
       d_{\Omega}(x^0)\wedge \frac\rr2-\frac{13C_{x}}{RC_{y}}\rr^2\leq \hat{d}^1_\rr(x^0)
    \end{equation}
    with probability at least $1-n^{\gamma}$. Moreover, if $d_{\Omega}(x^0)\leq 2\eps \leq \rr$, 
        \begin{equation}\label{eq:new_main;conclusion}
             \hat{d}^1_\rr(x^0) \leq d_{\Omega}(x^0)
            +\left(\frac{13 C_{x}}{RC_{y}}+\frac1R\right)\rr^2
        \end{equation}
    with probability at least $1-2dn^{-\gamma}$.
    \end{theorem}
    
\begin{remark}\label{rmk:main}
    We make two brief remarks. Firstly, \eqref{eq:new_main;n_lowbd} is a much weaker condition than the lower bound of \eqref{eq:new_main;condition}, as $\rr\geq RC_r\left(\frac{\log n}{n}\right)^{\frac{1}{d+2}}$ implies
    \[\frac{RC_y}{13C_x}r^{-1}\leq \frac{RC_y}{13C_x}(RC_r)^{-\frac{1}{d+2}} \left(\frac{n}{\log n} \right)^{\frac{1}{d+2}},\]
    which is much smaller than $n$ for reasonably large $n$.

     Secondly, we note that $C_{\rr}\sim \omega_d^{-1/d}$. Using  Stirling's formula $d!\sim \sqrt{2\pi d}(d/e)^{d}$ one obtains $\omega_d \sim (1/\sqrt{\pi d}) (2\pi e/d)^{d/2}$. Therefore $C_{\rr}\sim \omega_d^{-1/d}=O(\sqrt{d})$
\end{remark}
\begin{proof} 
We first prove the upper bound \eqref{eq:new_main;conclusion}. Suppose $d_\Omega(x^0)\leq 2\eps\leq \rr$. Condition \eqref{eq:new_main;condition} allows us to apply Theorem \ref{thm:normal_vector} to obtain \eqref{eq:new_main;normal_upperbd} --i.e. 
\[|\hat \nu_r(x^0)-\nu(x^0)|\leq \frac{13 C_x}{RC_y}\rr\]
with probability at least $1-2dn^{-\gamma}$. Thus
\begin{align*}
    \hat{d}^1_\rr(x^0) &= \max_{x^i\in B(x_0,\rr)\cap\X} \left\{(x^0-x^i)\cdot (\hat\nu_\rr(x^0)-\nu(x^0)+\nu(x^0))\right\}  \\
    &\leq \max_{x^i\in B(x_0,\rr)\cap\X} (x^0-x^i)\cdot (\hat\nu_\rr(x^0)-\nu(x^0)) + \max_{x^i\in B(x_0,\rr)\cap\X} (x^0-x^i)\cdot \nu(x^0)\\
    &\leq \frac{13 C_x}{RC_y}\rr^2+d_\Omega(x^0)+\frac1R \rr^2
\end{align*}
with the same probability. The last inequality uses the bound on $|\hat\nu_r(x^0)-\nu(x^0)|$ and that positive reach condition implies \eqref{eq:reach;1/R}. Thus we have the upper bound \eqref{eq:new_main;conclusion}.

Next, suppose $x^0\in\X$, not necessarily close to the boundary. Letting $t=\frac{13C_{x}}{2C_{y}}\rr^2$ in Lemma \ref{lem:hatd1_lowbd}, if $\rr$ satisfies
\[  
    \rr^{d+1} \geq\frac{4\gamma C_{y}d^2 2^{(d-1)/2}}{13\rho_{\min}\wod[d-1]C_{x}}\frac{\log n}{n}
\]
then Lemma \ref{lem:hatd1_lowbd} implies that
\begin{equation}\label{eq:hatd1_lowbd;lambda}
    \hat{d}^1_\rr(x^0)\geq  (1-\lambda)(d_{\Omega}(x^0)\wedge \frac\rr2)-\frac{13C_{x}}{2C_{y}}\rr^2
\end{equation}
with probability at least $1-n^{-\gamma}$, given $n\geq d\vee (1+4\lambda^{-1})$. Further, choose $\lambda=\frac{13 C_x}{RC_y}\rr$, so that by Assumption \ref{ass:eps/r}
\[\lambda(d_\Omega(x^0)\wedge \frac\rr2)\leq \frac{\lambda \rr}{2} = \frac{13 C_x}{2R C_y}\rr^2.\]
Then \eqref{eq:new_main;n_lowbd} implies
\[\hat d_\rr^1(x^0)\geq d_\Omega(x^0)\wedge 2\eps - \lambda(d_\Omega(x^0)\wedge \frac\rr2) - t \geq d_\Omega(x^0) - \frac{13 C_x}{RC_y}\rr^2,\]
hence we obtain \eqref{eq:new_main;lowbd}.
\end{proof}

\begin{corollary}[Accuracy of the boundary test]\label{corol:boundary_test}
    Let $x^0\in \X$, $\gamma>2$ and $\eps,\rr>0$ satisfy Assumptions \ref{ass:eps/r} and \ref{ass:r/R}. Let $C_\rr$ be as in \eqref{thm:main}.
    If $n\geq d\vee 33$ and $\rr,n$ satisfy
    \begin{equation}\label{eq:corol:boundary_test;eps_cond}
        RC_r\left(\frac{\log n}{n}\right)^{\frac{1}{d+2}} \leq \rr \leq \frac{RC_{y}}{2 C_{x}}
    \end{equation}
    and $\eps$ satisfies
    \begin{equation}\label{eq:corol:boundary_test;delta_cond}
        \frac1R \left(\frac{26C_{x}}{C_{y}}+2\right)\rr^2
        <\eps
    \end{equation}
    then 
    \begin{equation}\label{eq:corol:boundary_test;Prob}
        \falseneg+\falsepos\leq (2d+1)n^{-\gamma}.
    \end{equation}
    In particular, choosing the optimal $\rr,\eps$
    \begin{equation}\label{eq:eps_opt}
        \eps=\frac1R \left(\frac{26C_{x}}{C_{y}}+2\right)\rr^2=RC_r^2 \left(\frac{26C_{x}}{C_{y}}+2\right) \left(\frac{\log n}{n}\right)^{\frac{2}{d+2}},
    \end{equation}
    the test identifies the $\eps$-boundary with probability at least $1-(2d+1)n^{-\gamma}$.
\end{corollary}

\begin{remark}\label{rmk:boundary_test}
    Recall that \eqref{eq:assumption_consequence1} implies
    \[
        \frac{C_{x}}{C_{y}}\leq 2(d+1)\left(1+  \frac{RL}{\rho_{\min}}\kappa_{d} \right)=O(d^{\frac32})
    \]
    as $\kappa_{d}\sim\sqrt{d}$ by \eqref{eq:sphereVol_ratio}. Also, recall from Remark \eqref{rmk:main} that $C_{\rr}=O(\sqrt{d})$. Therefore the constant for the optimal choice $\eps=C(\log n/n)^{2/(d+2)}$ in \eqref{eq:eps_opt} satisfies $C\sim C_r^2 C_x/C_y\sim d^{5/2}$.
\end{remark}

    \begin{proof}
    Suppose $n\geq d\vee \left(1+4\cdot 8\right)=d\vee 33$ and $d_\Omega(x^0)\geq 2\eps$. Then we may choose $\lambda=\frac{1}{8}$ in \eqref{eq:hatd1_lowbd;lambda} and apply Lemma \ref{lem:hatd1_lowbd} to deduce
    \[\hat d_\rr^1(x^0)\geq \frac78(d_\Omega(x^0)\wedge \frac{\rr}{2}) - \frac{13 C_x}{2R C_y}\rr^2 \geq
    \frac78(d_\Omega(x^0)\wedge 2\eps) - \frac{13 C_x}{2R C_y}\rr^2
    \geq \frac{7\eps}{4} - \frac{13 C_x}{2RC_y}\rr^2 > \frac{3\eps}{2}\]
    with probability at least $1-n^{-\gamma}$, where last inequality follows from the condition \eqref{eq:corol:boundary_test;delta_cond}. Note that we have used that Assumption \ref{ass:eps/r} implies $2\eps\leq \frac{3\sqrt{d}\eps}{2}\leq \frac{\rr}{2}$.Thus we deduce
    \[\P(\widehat T_{\eps,\rr}^1(x^0)=1\,|\,d_\Omega(x^0)\geq 2\eps)\leq n^{-\gamma}.\]
    
    On the other hand, if $d_\Omega(x^0)\leq \eps$, then the upper bound in \eqref{eq:new_main;conclusion} applies. Thus, again using \eqref{eq:corol:boundary_test;delta_cond}
    \[\hat d_\rr^1(x^0)\leq d_\Omega(x^0)+  \left(\frac{13 C_x}{R C_y}+\frac1R\right)\rr^2 \leq \frac{3\eps}{2},\]
    with probability at least $1-2dn^{-\gamma}$. Hence
    \[\P(\widehat T_{\eps,\rr}^1(x^0)=0\,|\,d_\Omega(x^0)\leq \eps)\leq n^{-\gamma}.\]
    Combining this with the bound for the probability of false positive occurring, we obtain \eqref{eq:corol:boundary_test;Prob}.
    \end{proof}
    
    For application to solving boundary value problems on graphs \cite{CST20}, it is crucial to limit the number of false positives, while the false negatives are not as detrimental. If we are only interested in bounding the probability of false positives, we may obtain the improved rate $\eps\geq C\left(\frac{\log n}{n}\right)^{\frac{1}{d+1}}$ with $C\sim d$.
    	     
     \begin{theorem}[One-sided accuracy of the boundary test]\label{thm:delta_lowbd}
    Let $\gamma>2$, and $x^0\in \X$. Suppose $\eps,\rr>0$ satisfy Assumptions \ref{ass:eps/r} and \ref{ass:r/R}. If $n\geq d\vee 33$ and $\eps, \rr$ satisfy
        \[
            \left(\frac{\gamma d^2 2^{(d-1)/2}}{\rho_{\min}\wod[d-1]}\frac{\log n}{n}\right)^{\frac{1}{d+1}}\leq \rr^2 < \frac{\eps}{4}
        \]
        then
        \[
            \mathbb{P}(\widehat{T}^1_{\eps,\rr}(x^0)=1\:|\: d_{\Omega}(x^0) > 2\eps)\leq n^{-\gamma}.
        \]
     \end{theorem}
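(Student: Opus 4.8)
The plan is to show that, for $x^0$ with $2\eps < d_{\Omega}(x^0)\le \rr$ (the relevant range, consistent since Assumption \ref{assumption2} forces $\rr \ge 3\sqrt d\,\eps > 2\eps$), the first order distance estimator satisfies $\hat{d}^1_\rr(x^0)\ge \tfrac{3\eps}{2}$ with probability at least $1-n^{-\gamma}$; since $\widehat{T}^1_{\eps,\rr}(x^0)=1$ exactly when $\hat{d}^1_\rr(x^0)<\tfrac{3\eps}{2}$, this is the claim.

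The first step rephrases the failure event as an underestimation event for $\hat{d}^1_\rr$. Since $d_{\Omega}(x^0)>2\eps$, set $t := d_{\Omega}(x^0)-\tfrac{3\eps}{2}$, so that $\tfrac{\eps}{2}<t\le d_{\Omega}(x^0)\le \rr$ and $\{\hat{d}^1_\rr(x^0)<\tfrac{3\eps}{2}\}\subseteq\{\hat{d}^1_\rr(x^0)\le d_{\Omega}(x^0)-t\}$. The essential point is that Lemma \ref{lem:lowbd} is exactly the tool that survives here: it is a purely geometric packing bound (a single sample point in the cone $K_{t,\rr}$ already forces $\hat{d}^1_\rr(x^0)\ge d_{\Omega}(x^0)-t$) and needs only $d_{\Omega}(x^0)\le \rr$. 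In particular it does not require $\hat{\nu}_{\rr}(x^0)$ to be a good estimate of $\nu(x^0)$, which we cannot claim for $x^0$ with $d_{\Omega}(x^0)>2\eps$ — there the normal-vector error bounds (Lemma \ref{lem:v0}, Lemma \ref{lem:bernstein}, Theorem \ref{thm:normal_vector}) degenerate — so none of the other Section \ref{sec:prelim} estimates would apply.

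The second step invokes Lemma \ref{lem:lowbd} with this $t$ and checks its quantitative hypothesis $t\rr^{d-1}\ge \tfrac{2\gamma d}{\rho_{\min}\wod[d-1]}\tfrac{\log n}{n}$. Because $t>\tfrac{\eps}{2}$ it suffices that $\tfrac{\eps}{2}\rr^{d-1}\ge \tfrac{2\gamma d}{\rho_{\min}\wod[d-1]}\tfrac{\log n}{n}$; inserting $\rr\ge 3\sqrt d\,\eps$ from Assumption \ref{assumption2} reduces this to $\eps^{d}\gtrsim \tfrac{\gamma}{\rho_{\min}\wod[d-1]}\tfrac{\log n}{n}$, which follows from the standing hypothesis $\eps\ge 2\bigl(\tfrac{2\gamma}{\rho_{\min}\wod[d-1]}\tfrac{\log n}{n}\bigr)^{1/(d+1)}$ once one uses the bound $\eps\le \tfrac{R}{9d}$ — a consequence of Assumptions \ref{assumption2} and \ref{assumption3} via $9d\eps^2\le \rr^2\le R\eps$ — to convert the exponent $d+1$ into the exponent $d$. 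Lemma \ref{lem:lowbd} then yields $\P(\hat{d}^1_\rr(x^0)\le d_{\Omega}(x^0)-t)\le n^{-\gamma}$, hence $\P(\widehat{T}^1_{\eps,\rr}(x^0)=1\mid d_{\Omega}(x^0)>2\eps)\le n^{-\gamma}$.

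The only real obstacle is the bookkeeping in the second step: matching the power $d+1$ in the hypothesis to the power $d$ that the cone volume $|K_{t,\rr}|\sim \tfrac{\wod[d-1]}{d}\rr^{d-1}t$ naturally carries, and soaking up the stray factor $d$, which is done by playing the lower bound $\rr\gtrsim \sqrt d\,\eps$ off against the upper bound $\eps\lesssim R/d$ on the length scales. One also checks in passing that $d_{\Omega}(x^0)-t=\tfrac{3\eps}{2}\le \tfrac{\rr}{2\sqrt d}$ (again Assumption \ref{assumption2}), so that the slice of $B(x^0,\rr)$ defining $K_{t,\rr}$ has essentially full radius and the packing bound is not wasteful. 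Beyond that, the argument is a direct application of Lemma \ref{lem:lowbd}.
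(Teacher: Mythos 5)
Your proof is correct and rests on the same key tool as the paper's: both arguments reduce the one-sided error to the event that $\hat d^1_\rr(x^0)$ underestimates $d_\Omega(x^0)$ by some margin $t$ and then invoke the cone-packing bound of Lemma \ref{lem:lowbd}. The difference is in the choice of $t$ and the handling of the conditioning. The paper first replaces the event $\{d_\Omega(x^0)>2\eps\}$ by the extremal case $d_\Omega(x^0)=2\eps$ (a monotonicity step it does not justify) and then applies the lemma with $t=\rr^2$, which makes the exponents match the hypothesis $\eps\gtrsim(\log n/n)^{1/(d+1)}$ exactly but forces the unverified requirement $\rr^2\le\eps/2$ at the end (Assumption \ref{assumption3} only gives $\rr^2\le R\eps$). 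You instead take $t=d_\Omega(x^0)-\tfrac{3\eps}{2}>\tfrac{\eps}{2}$ and treat all $d_\Omega(x^0)\in(2\eps,\rr]$ at once, which is cleaner in two respects: it targets the test threshold directly, and your parenthetical check that $d_\Omega(x^0)-t=\tfrac{3\eps}{2}\le\tfrac{\rr}{2\sqrt d}$ is exactly what is needed to make the final step of the lemma's proof (the bound $(\tfrac{\alpha-t}{\rr})^2\le\tfrac{1}{d+1}$) go through, since that step secretly uses $\alpha-t\le 2\eps$ even though the lemma's statement does not record this hypothesis.

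The one place where your bookkeeping does not fully close from the stated hypotheses is the conversion of the $\eps^{d+1}$ lower bound into the needed lower bound on $t\rr^{d-1}\gtrsim\eps^d$: dividing by $\eps$ and using $\eps\le R/(9d)$ leaves a residual factor of $1/R$, so your chain of inequalities only closes under an additional bound of the form $R\le 9\cdot 2^d(3\sqrt d)^{d-1}$. This is not a flaw specific to your argument — the theorem's hypothesis is not dimensionally homogeneous, and the paper's own proof has the parallel unaddressed requirement $\rr^2\le\eps/2$ — but you should state the residual condition explicitly rather than absorbing it into a ``$\gtrsim$''.
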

     
     
     \begin{proof}
        Again, recall that Assumption \ref{ass:eps/r} implies $2\eps\leq \frac{\rr}{2}$. Applying Lemma \ref{lem:hatd1_lowbd} with $t=\rr^2$ and $\lambda=\frac18$, we have $\hat d_\rr^1(x^0)\geq \frac78(d_{\Omega}(x^0)\wedge \rr/2)-\rr^2$ with probability at least $n^{-\gamma}$. Thus if
        \[
            4\rr^2\leq\eps
        \]
        then, with probability at least $1-n^{-\gamma}$
        \[
            \hat{d}^1_\rr(x^0) \geq \frac{7}{8}(d_{\Omega}(x^0)\wedge \frac\rr2)-\rr^2 \geq \frac{7}{8}(d_{\Omega}(x^0)\wedge 2\eps)-\rr^2> \frac{7\eps}{4}-\frac{\eps}{4}\geq \frac{3\eps}{2}.
        \]
        This implies that $\widehat{T}^1_{\eps,\rr}(x^0)=0$ by  \eqref{def:test}.
     \end{proof}

    \begin{corollary}\label{corol:BorelCantelli}
        Let $x^0\in \X$, $\gamma>2$. Let $n\geq d\vee 33$ and be sufficiently large such that
        \begin{equation}\label{eq:eps_r;optimal}
              	\eps=RC_{\eps}\left(\frac{\log n}{n}\right)^{\frac{2}{d+2}},\,
    	        \rr=C_{\rr}\left(\frac{\log n}{n}\right)^{\frac{1}{d+2}}
        \end{equation}
        satisfy Assumptions \ref{ass:eps/r} and \ref{ass:r/R}. Recall the definitions
        \begin{align*}
            &\partial_a\Omega=\{x^{0}\in \X:  d_{\Omega}(x^0)\leq a\}   \\
            &\partial_{\eps,\rr}\X=\{x^{0}\in \X: \widehat{T}^1_{\eps,\rr}\left(x^{0}\right)=1 \}.
        \end{align*}
        Then, with probability at least $1-(2d+1)n^{1-\gamma}$.
	    
	    \begin{equation}\label{boundarySet}
	        \partial_{\eps}\Omega\subset
	        \partial_{\eps,\rr}\X\subset
	        \partial_{2\eps}\Omega.
	    \end{equation}
	   In particular, by the Borel-Cantelli lemma, the test identifies a set between $\partial_{\eps}\Omega$ and $\partial_{2\eps}\Omega$ eventually with probability 1.
    \end{corollary}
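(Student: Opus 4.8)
The plan is to derive the two-sided inclusion \eqref{boundarySet} from Corollary \ref{corol:boundary_test} by a union bound over the $n$ sample points, and then to pass to the almost sure statement via the Borel--Cantelli lemma. Throughout I would work on a single probability space carrying an i.i.d.\ sequence $x^1,x^2,\dots$ distributed according to $\rho$, with $\X=\X_n=\{x^1,\dots,x^n\}$, so that the events for different $n$ are defined on a common space.

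I would first record that, with $\eps$ and $\rr$ chosen as in \eqref{eq:eps_r;optimal}, all hypotheses of Corollary \ref{corol:boundary_test} hold once $n$ is large: Assumptions \ref{assumption2} and \ref{assumption3} hold by hypothesis, the lower bound \eqref{eq:corol:boundary_test;delta_cond} on $\eps$ holds with equality by the choice of $\eps$, and since $\rr\to 0$ as $n\to\infty$ the upper bound \eqref{eq:corol:boundary_test;eps_cond} holds for $n$ large. This is the only place the qualifier ``$n$ large enough'' is used.

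Next I would fix $i\in\{1,\dots,n\}$ and observe that $x^i$ witnesses a failure of the left inclusion in \eqref{boundarySet} exactly when $d_\Omega(x^i)\le\eps$ and $\widehat T^1_{\eps,\rr}(x^i)=0$, and of the right inclusion exactly when $d_\Omega(x^i)>2\eps$ and $\widehat T^1_{\eps,\rr}(x^i)=1$. These two events are disjoint, being supported on the disjoint sets $\{d_\Omega(x^i)\le\eps\}$ and $\{d_\Omega(x^i)>2\eps\}$; hence the probability that $x^i$ breaks \eqref{boundarySet} is a convex combination of $\falsepos$ and $\falseneg$, and so at most their maximum. By the accounting in the proof of Theorem \ref{thm:main} one has $\falsepos\le 2dn^{-\gamma}$ (from the normal-vector estimate of Theorem \ref{thm:normal_vector} and the bias bound of Lemma \ref{lem:bias}, which together control the upper bound on $\hat d^1_\rr$) and $\falseneg\le n^{-\gamma}$ (from Lemma \ref{lem:lowbd}, controlling the lower bound); for the latter one first reduces the case $d_\Omega(x^i)>2\eps$ to $d_\Omega(x^i)=2\eps$ using the monotonicity $\P(\widehat T^1_{\eps,\rr}(x^i)=1\mid d_\Omega(x^i)\ge 2\eps)\le\P(\widehat T^1_{\eps,\rr}(x^i)=1\mid d_\Omega(x^i)=2\eps)$ from the proof of Corollary \ref{corol:boundary_test}. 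Thus each $x^i$ breaks \eqref{boundarySet} with probability at most $2dn^{-\gamma}$, and a union bound over $i=1,\dots,n$ yields \eqref{boundarySet} with probability at least $1-2dn^{1-\gamma}$. Finally, letting $E_n$ denote the event that \eqref{boundarySet} fails at sample size $n$, we have $\sum_n\P(E_n)\le 2d\sum_n n^{1-\gamma}<\infty$ since $\gamma>2$, so the Borel--Cantelli lemma gives $\P(\limsup_n E_n)=0$; that is, almost surely \eqref{boundarySet} holds for all sufficiently large $n$.

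The main obstacle is the monotonicity in $d_\Omega(x^0)$ invoked above: Lemma \ref{lem:lowbd} requires $d_\Omega(x^0)\le\rr$, so it does not literally cover interior points with $d_\Omega(x^0)>\rr$, and one must argue that pushing $x^0$ deeper into $\Omega$ can only increase $\hat d^1_\rr(x^0)$ in a stochastic sense --- for instance by a ``south pole'' spherical-cap estimate conditioned on the realized value of $\hat\nu_\rr(x^0)$, or by a monotone coupling of the sample. All remaining steps (checking the admissible ranges of $\eps,\rr$, the union bound, and Borel--Cantelli) are routine.
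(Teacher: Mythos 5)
Your proposal matches the paper's proof: the paper likewise obtains \eqref{boundarySet} by union-bounding the per-point guarantee of Corollary \ref{corol:boundary_test} over the $n$ sample points and then invokes Borel--Cantelli using $\gamma>2$. The monotonicity issue you flag (reducing the case $d_\Omega(x^0)>2\eps$ to $d_\Omega(x^0)=2\eps$, where Lemma \ref{lem:lowbd} does not directly apply) is not something you need to resolve at this stage --- it is asserted without further justification inside the paper's own proof of Corollary \ref{corol:boundary_test}, which you are entitled to cite as a black box here.
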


	\begin{proof}
	    By Corollary \ref{corol:boundary_test}, applying the test to all $n$ points we have \eqref{boundarySet} hold with probability at least $1-(2d+1)n^{-\gamma}\cdot n=1-(2d+1)n^{1-\gamma}$.
	\end{proof}

	\begin{remark}[Reconstruction of boundary from boundary points]\label{rmk:bdry_reconstruction}
	   Based on the set $\partial_{\eps\rr}\X$ of boundary points we can reconstruct the boundary strip that approximates $\partial\Omega$ in the Hausdorff distance. See for instance Theorem 3.11 of \cite{AAL21} and the comment preceding it on the reconstruction process using Delaunay Complex, and \cite{AL19} for further details.
	\end{remark}
	    
	
\section{Asymptotic error bounds for second-order distance and boundary estimators}\label{sec:second-asymp}
In this section, we use the $O(\rr^2)$ bound on the second-order normal estimator $\hat{\nu}_\rr^2$ from Section \ref{sec:asym2} to obtain $O(\rr^3)$ error bound on the second-order distance estimator $\hat d_\rr^2$ in the asymptotic regime, additionally assuming $\partial\Omega$ is of class $C^3$ and $\rho\in C^2_b(\Omega)$. Namely, we show that we can find some constant $C>0$ independent of $\rr$ such that
\begin{equation}\label{eq:hat_d_2;error}
    \begin{split}
        \hat d_\rr^2(x^0) &\geq d_\Omega(x^0)\wedge \frac{\rr}{2} - C\rr^3, \text{ and }    \\
        \hat d_\rr^2(x^0) &\leq d_\Omega(x^0)+C\rr^3 \text{ if } d_\Omega(x^0)\leq 2\eps
    \end{split}
\end{equation}
with high probability under the scaling $\rr\gtrsim (\log n/n)^{1/(d+4)}$. Note that the lower bound holds for general $x^0\in\X$, not just those close to the boundary. Given the estimates above, we may set $\eps=C\rr^3/2\sim (\log n/n)^{3/(d+4)}$ to see that our test \eqref{def:test} will identify the $\eps$-boundary points with high probability. For a detailed argument deducing accuracy of the boundary estimator from that of the distance estimator, please see the the proof of Corollary \ref{corol:boundary_test}; while the corollary applies to the first-order estimator, the same argument carries over to the second-order estimator.

For simplicity, we will show \eqref{eq:hat_d_2;error} for a slight modification of the estimator \eqref{eq:Deps2}. Namely, instead of the cutoff $\one_{\R^+}(\hat{\nu}_\rr^2(x^i)\cdot\hat{\nu}_\rr^2(x^0))$, we use $\one_{\{x:\,x\leq c\rr\}}(|\hat{\nu}_\rr^2(x^i)-\hat{\nu}_\rr^2(x^0)|)$ for suitably large $c$, say, twice the Lipschitz constant of $d_\Omega(\cdot)$. Note that this is a reasonable cutoff, as
\[|\hat{\nu}_\rr^2(x^i)-\hat{\nu}_\rr^2 (x^0)|\leq |\hat{\nu}_\rr^2(x^i)-\nu(x^i)|+|\nu(x^i)-\nu(x^0)|+|\nu(x^0)-\hat{\nu}_\rr^2(x^0)|.\] 
From Section \ref{sec:asym2} we know that the first and third terms are small are of order $O(\rr^2)$ when $\rr\gtrsim(\log n/n)^{1/(d+4)}$; the second term is of order $O(\rr)$ as $\nu(x)=\nabla d_\Omega(x)$ near the boundary, which is a $C^2$ function as we assumed $\partial\Omega$ to be of class $C^3$. Thus, for sufficiently small $\rr$ we have
\[|\hat{\nu}_\rr^2(x^i)-\hat{\nu}_\rr^2 (x^0)|\leq \frac{c}{2}|x^i-x^0|+O(\rr^2) \leq c\rr.\]

\vspace*{10pt}

\textbf{Upper bound.} For the upper bound, suppose $d_\Omega(x^0)\leq 2\eps$. Fix $c',C'>0$ and $\rr>0$, and denote by $E_0$ the event 
\[E_0:=\{|\hat\nu_r^2(x^i)-\nu(x^i)|\leq C'\rr^2\,\,\text{ for all } x_i\in B(x^0,\rr)\cap\X \text{ such that } d_\Omega(x^i)\leq \rr/\sqrt{d}\}.\]
Recall from Section \ref{sec:asym2} that $E_0$ occurs with high probability when $\rr\gtrsim (\log n/n)^{1/(d+4)}$ and $C'>0$ is chosen suitably large. 

For simplified notation, let us temporarily define $\hat u^i(x^0)$ for each $i=1,\cdots,n$ by
\begin{equation}\label{def:hatu}
    \hat u^i(x^0):=\left[\hat{\nu}^{2}_\rr(x^0)+\frac{\hat{\nu}^{2}_\rr(x^i)-\hat{\nu}^{2}_\rr(x^0)}{2}\one_{\{x:\,x\leq c\rr\}}(|\hat{\nu}_\rr^2(x^i)-\hat{\nu}_\rr^2(x^0)|)\right],
\end{equation}
so that $\hat d_\rr^2(x^0)=\max_{x^i\in B(x^0,\rr)\cap\X}(x^0-x^i)\cdot \hat u^i(x^0)$. Define the set $\hat\X$ by
\[\hat\X :=\{x^i\in\X:(x^0-x^i)\cdot\hat u^i(x^0)\geq 0\}.\]
Then we may write
\[\hat d_\rr^{2}(x^0)=\max_{x^i\in B(x^0,\rr)\cap\X}(x^0-x^i)\cdot \hat u^i(x^0)=\max_{x^i\in B(x^0,\rr)\cap\hat\X}(x^0-x^i)\cdot \hat u^i(x^0).\]
Indeed the right-hand side is the nonnegative part of $\hat d_\rr^2(x^0)$, while $\hat d_\rr^2(x^0)\geq 0$ due to that $x^0\in B(x^0,\rr)\cap\hat\X$. Thus the above equality holds.

Due to the cutoff, note 
\[\left|\frac{\hat u^i(x^0)}{|\hat u^i(x^0)|}-\nu(x^0)\right|\leq \left|\frac{\hat u^i(x^0)}{|\hat u^i(x^0)|}-\hat\nu_\rr^2(x^0)\right|+|\hat\nu_\rr^2(x^0)-\nu(x^0)| \leq c\rr+O(\rr^2)\leq 2c\rr\]
for sufficiently small $\rr$. Thus, if $x^i\in\hat\X$, it is in the half plane opposite of $\hat u^i(x^0)$, which is closely approximated by the half plane opposite of $\nu(x^0)$. As $d_\Omega(x^0)\leq 2\eps$, collecting the errors due to curvature of the boundary and the difference between $\hat u^i(x^0)/|\hat u^i(x^0)|$ and $\nu(x^0)$, we see
\[d_\Omega(x^i)\leq 2\eps + \frac{\rr^2}{R}+ 2c\rr^2 \leq \frac{\rr}{\sqrt{d}}\]
when $\eps\ll\rr$ and $\rr$ is sufficiently small. Thus, by $E_0$ we have $|\hat{\nu}_\rr^2(x^i)-\nu(x^i)|\leq C'\rr^2$ for all $x^i\in\hat\X$, and
\[\hat d_\rr^{2}(x^0)=\max_{x^i\in B(x^0,\rr)\cap\hat\X}(x^0-x^i)\cdot\frac{\hat{\nu}_\rr^2(x^i)+\hat{\nu}_\rr^2(x^0)}{2}.\]
Now, when $\partial\Omega$ is of class $C^3$, recall \eqref{eq:test2} holds. Thus, we have
\[d_\Omega(x^0)\geq \max_{x^j\in B(x^0,\rr)}\left\{\frac12(\nu(x^0)+\nu(x^j))\cdot (x^0-x^j)\right\}+O(\rr^3).\]
Then we have the upper bound on $\hat d_\rr^2$
\[\hat d_\rr^2(x^0)-d_\Omega(x^0)\leq \max_{x^i\in B(x^0,\rr)\cap\hat\X}\left\{\frac12\left(\hat{\nu}_\rr^2(x^i)+\hat\nu_\rr(x^0)-\nu(x^0)-\nu(x^i)\right)\cdot(x^0-x^i)\right\}+O(\rr^3)=O(\rr^3),\]
as $|x^0-x^i|\leq\rr$ and $|\hat{\nu}_\rr^2(x^i)-\nu(x^i)|+|\hat{\nu}_\rr^2(x^0)-\nu(x^0)|\lesssim r^2$.

\vspace*{10pt}

\textbf{Lower bound.} Recall the elementary equality $\tfrac{|u+w|^2}{4}=1-\tfrac{|u-w|^2}{4}$ that holds when $|u|=|w|=1$. This implies the following lower bound on the magnitude of $\hat u^i(x^0)$ defined in \eqref{def:hatu}
\begin{equation}\label{eq:hatui_lowbd}
    |\hat u^i(x^0)|\geq (1-c'\rr^2)^{1/2}.
\end{equation}

Writing $\alpha=d_\Omega(x^0)\wedge \frac{\rr}{2}$, under the assumptions of Lemma \ref{lem:hatd1_lowbd}, we have
\begin{align*}
    \P(\hat d_\rr^2(x^0)\leq (1-\lambda)\alpha-t)
    &=\P\left(\max_{x^i\in B(x^0,\rr)\cap\X}(x^0-x^i)\cdot\hat u^i\leq (1-\lambda)\alpha-t\right)\\
    &=\P\left(\max_{x^i\in B(x^0,\rr)\cap\X}(x^0-x^i)\cdot\frac{\hat u^i}{|\hat u^i|}
    \leq \frac{1}{|\hat u^i|}((1-\lambda)\alpha-t)\right).
\end{align*}

By \eqref{eq:hatui_lowbd}, we can fix $C>0$ such that $\frac{1}{|\hat u^i|}\leq \frac{1}{\sqrt{1-c'\rr^2}}\leq 1+C\rr^2$ when $\rr$ is sufficiently small. As $t<\alpha\leq \rr$, we have
\begin{align*}
    \P(\hat d_\rr^2(x^0)\leq (1-\lambda)\alpha-t)
    &\leq \P\left(\max_{x^i\in B(x^0,\rr)\cap\X}(x^0-x^i)\cdot\frac{\hat u^i}{|\hat u^i|}
    \leq (1-\lambda)\alpha-t+C\rr^2((1-\lambda)\alpha-t)\right) \\
    &\leq \P\left(\max_{x^i\in B(x^0,\rr)\cap\X}(x^0-x^i)\cdot\frac{\hat u^i}{|\hat u^i|}
    \leq (1-\lambda)\alpha-t+C\rr^3\right)   
    \leq n^{-\gamma}.
\end{align*}
The last inequality follows when $t>C\rr^3$ by Lemma \ref{lem:hatd1_lowbd}, as its proof only uses that $|\hat\nu_\rr(x^0)|=1$. Choosing $t=2C\rr^3$ and $\lambda\leq C\rr^2$ for instance, we obtain that $\hat d_\rr^2(x^0)\geq d_\Omega(x^0)-3C\rr^3$ with high probability, and the condition \eqref{eq:tr_lowbd} becomes $\rr\gtrsim(\log n/n)^{1/(d+2)}$. Note that this is less restrictive than the scaling $\rr\gtrsim (\log n/n)^{1/(d+4)}$, required for the upper bound. While Lemma \ref{lem:hatd1_lowbd} also requires $n\geq d\wedge 4\lambda^{-1}$, but this is a much milder condition when $\lambda\sim \rr^2$. Thus we deduce that \eqref{eq:hat_d_2;error} holds with high probability, when $\rr\gtrsim (\log n/n)^{1/(d+4)}$.

\section{Algorithms and Experiments}\label{sec:algorithm_experiments}
     We now turn to the algorithms for our boundary tests and related numerical experiments. After presenting the pseudocode for the boundary tests and briefly commenting on the computational complexity, we demonstrate the efficiency and accuracy of our results, focusing on domains with constant positive or negative curvatures. Again we stress that, while the rigorous theoretical results in Section \ref{sec:main} are established for the first-order test, we recommend the second-order test for practical purposes. As we will see, the second-order test takes into account the curvature, hence performs much better than the first-order test.
    
    To begin, we present the pseudocodes for the first- and second-order boundary tests, and the generalization of the second-order test to point clouds supported on manifolds.
    
    \begin{algorithm}[ht] 
    \caption{First-order boundary test}\label{alg:1st}
    \begin{flushleft}
   \textbf{Input:} The set of points $\X=\{x^1,\cdots,x^n\}$, and parameters $\rr,\eps>0$     \\
    \textbf{Output:} $T\left(x_k\right)=1$ if $x_k$ is a $\eps$-boundary point, $0$ if an $\eps$-interior point
    \end{flushleft}
    \begin{algorithmic}[1]
        \For{$i=1 \cdots n$}
            \State $T\left(i\right)\gets 1$
            \State $\hat{v}_\rr(x^i)\gets\sum_{y\in B(x^i,\rr) \cap \X}\left(y-x^i\right)$ \smallskip
            \State $\hat{\nu}_\rr(x^i)\gets \hat{v}_\rr(x^i)/|\hat{v}_\rr(x^i)|$  \smallskip
            \If{$\max_{x^j\in B(x^i,\rr) \cap \X} \, (x^i-x^j)\cdot\hat{\nu}_\rr>\frac{3\eps}{2}$} $T(i)=0$\EndIf
        \EndFor
    \end{algorithmic}
    \end{algorithm}
 
    \begin{algorithm}[ht]
    \caption{Second-order boundary test}\label{alg:2nd_manifold}
    \begin{flushleft}
    \textbf{Input:} The set of points $\X=\{x^1,\cdots,x^n\}$, and parameters $\rr,\eps>0$     \\
    \textbf{Output:} $T\left(x_k\right)=1$ if $x_k$ is a $\eps$-boundary point, $0$ if an $\eps$-interior point
    \end{flushleft}
    \begin{algorithmic}[1]
    \For{$i=1 \cdots n$}
            \State $\hat\theta(x^i)\gets \sum_{j=1}^n \one_{B(x^i,\rr/2)}(x^j)$ \smallskip
            \State $\hat{v}_\rr^2(x^i)\gets\sum_{x^j\in B(x^i,\rr) \cap \X}\frac{\left(x^j-x^i\right)}{\hat\theta(x^j)}$ \smallskip
            \State $\displaystyle{\hat\nu_{\rr}^2(x^i)\gets \hat{v}_\rr^2(x^i)/|\hat{v}_\rr^2(x^i)|}$
    \EndFor
    \For{$i=1 \cdots n$}
            \For{$j=1\cdots n$}
                \State $\displaystyle{\hat{\nu}_{\rr,test}^{ij}=\hat{\nu}_{\rr}^2(x^i)+\frac{\hat{\nu}_{\rr}^2(x^j)-\hat{\nu}_{\rr}^2(x^i)}{2}\one_{\R_+}(\hat{\nu}_{\rr}^2(x^i)\cdot\hat{\nu}_{\rr}^2(x^i))}$ 
            \EndFor
            \If{$\max_{x^j\in B(x^i,\rr) \cap \X}\,(x^i-x^j)\cdot\hat{\nu}_{\rr,test}^{ij}>\frac{3\eps}{2}$} $T(i)=0$
            \EndIf
    \EndFor
    \end{algorithmic}
    \end{algorithm}

        \begin{algorithm}[htb]
    \caption{Second-order boundary test for point clouds supported on manifolds}\label{alg:2nd}
    \begin{flushleft}
   \textbf{Input:} The set of points $\X=\{x^1,\cdots,x^n\}$,  parameters $\rr,\eps>0$, and the dimension of the manifold $m$    \\
    \textbf{Output:} $T\left(x_k\right)=1$ if $x_k$ is a $\eps$-boundary point, $0$ if an $\eps$-interior point 
    \end{flushleft}
    \begin{algorithmic}[1]
    \For{$i=1 \cdots n$}
            \State $\hat\theta(x^i)\gets \sum_{j=1}^n \one_{B(x^i,\rr/2)}(x^j)$ \smallskip
            \State $\hat{v}_\rr^2(x^i)\gets\sum_{x^j\in B(x^i,\rr) \cap \X}\frac{\left(x^j-x^i\right)}{\hat\theta(x^j)}$ \smallskip
            \State $\displaystyle{\hat\nu_{\rr}^2(x^i)\gets \hat{v}_\rr^2(x^i)/|\hat{v}_\rr^2(x^i)|}$ \smallskip
            \State $Y^i\gets$ rangesearch$(x^i,\rr)$ \smallskip
            \State $Y_i\gets Y_i-\overline{Y_i}$ \smallskip
            \State $\{v_1,\cdots,v_m\}\gets$ eigenvectors associated to $m$ largest eigenvalues of $(Y^i-x^i)^T(Y^i-x^i)$ \smallskip
            \State $T^i\gets \text{Span}\{v_1,\cdots,v_m\}$
    \EndFor
    \For{$i=1 \cdots n$}
        \For{$j=1\cdots n$}
            \State $\displaystyle{\hat{\nu}_{\rr,test}^{ij}=\hat{\nu}_{\rr}^2(x^i)+\frac{\hat{\nu}_{\rr}^2(x^j)-\hat{\nu}_{\rr}^2(x^i)}{2}\one_{\R_+}(\Pi^i(\hat{\nu}_{\rr}^2(x^i))\cdot \Pi^i(\hat{\nu}_{\rr}^2(x^i))})$ \smallskip
        \EndFor
        \If{$\max_{x^j\in B(x^i,\rr) \cap \X}\,\Pi^i[(x^i-x^j)]\cdot\hat{\nu}_{\rr,test}^{ij}>\frac{3\eps}{2}$} $T(i)=0$
            \EndIf
    \EndFor
    \end{algorithmic}
    \end{algorithm}

We add that the algorithms can take a percentile $p\%$ as an input instead of $\eps$, so that it outputs the top $p\%$ of points with smallest estimated distance. This may be easier to implement in practice than choosing $\eps$, as the lower bound for $\eps$ depends not only on $n$ but also on $R,\rho$ and $d$. Theoretically, $p\%$ and $\eps$ are interchangeable; we may set the largest estimated distance within the $p\%$ percentile to equal to the threshold, $\frac{3\eps}{2}$.





\begin{remark}[Computational complexity]\label{rmk:complexity}
   Noting that range search task is essentially equivalent to k-nearest neighbor search for suitable $k$, we briefly remark on the computational expense. The best rigorous upper bounds for computing all-kNN for $n$ points in $\R^d$ known to us, without number of parallel processors growing with $n$, are $O(n(\log n)^{d-1})$ \cite{Ben80} and $O(kd^dn\log n)$ \cite{Vai89}. Note that the the suitable choice of $k$ for us is $k\sim \omega_d\rr^d n$, which, under the optimal choice of the test radius $\rr= RC_r(\log n/n)^{\frac{1}{d+2}}\leq C\omega_d^{-1/d}(\log n/n)^{\frac{1}{d+2}}$ for our first-order test, has the following scaling in $n$ and $d$
   \[k\lesssim (\log n)^{\frac{d}{d+2}} n^{\frac{2}{d+2}}.\]
   Please see Remark \ref{rmk:main} for further details. 
   
   While the computational cost of exact all-kNN is not cheap, approximate all-kNN can be performed at nearly linear time in $n$. For instance, the algorithm suggested in \cite{DongMosLi11} reports that empirical cost scales like $n^{1.14}$ on average with above 90 percent accuracy. Python GraphLearning \cite{Github:GraphLearning} package 
the Approximate Nearest Neighbors algorithm (ANNOY) \cite{Github:annoy}, which also provides close to linear scaling in $n$.
\end{remark}

\begin{remark}[Intrinsic dimension of $\M$]
   In practice the intrinsic dimension $m$ of $\M$ often unknown. However there are many ways to recover this from the eigenvalues $\lambda_1\leq \cdots\leq \lambda_d$ of the sample covariance matrix $(Y^i-x^i)(Y^i-x^i)^T$. There are two big drops in the eigenvalue distribution. Near the boundary, eigenvectors sufficiently parallel to the normal direction have smaller eigenvalues due to the absence of points one one side of $\partial\M$. However, this gap should not reduce the eigenvalues much more than halving. On the other hand, $\lambda_{m+1},\cdots,\lambda_{d}$ are due to curvature, and thus are much smaller compared to the first $m$ when curvature is bounded. Thus we may recover the dimension $m$ by for instance, counting the number of eigenvalues before the steepest drop in ratio $\frac{\lambda_{i+1}}{\lambda_i}$.
\end{remark}

We now describe the setting of our numerical experiments. In Figures \ref{fig:dist_bdry}, \ref{fig:subplots}, and \ref{fig:asymp} we consider two types of domains: a ball, and an annulus, both with reach $R=0.5$. Recall that this means the ball has radius $R$ and the annulus has inner and outer radii $R_1=R,\, R_2=1.6 R$. By the boundary of the ball mean the sphere, and by that of the annulus we refer only to the inner boundary $\{x:|x|=R_1\}$, so we can observe how the test performs when the curvature is negative. Thus we test only the points satisfying $|x|\in[R_1,R_2-\rr]$.

    
We consider the density function $\rho$ parametrized by the Lipschitz constant $L$. The sinusoidal density has the form 
\begin{equation}\label{eq:rho_L}
    \rho(x)=\frac1{|\Omega|}\left(1+\frac12\sin(L|\Omega|x_1)\right),
\end{equation} so that $\sup_{x\in\Omega}|\partial_1\rho(x)|=L$. 
Note that our theory in Section \ref{sec:main} applies to Lipschitz functions that are not necessarily of class $C^1$. Indeed, we note that results obtained using the triangular wave density were similar. 

The boundary tests are as described in \eqref{def:test}, where the first-order test (`1st') uses the distance estimator \eqref{eq:D_hat}, and the second-order test (`2nd') uses the estimator \eqref{eq:Deps2}.

\medskip
  \begin{figure}[h]
       \centering
       \includegraphics[trim={60pt 120pt 40pt 100pt}, clip=true, width=0.49\textwidth]{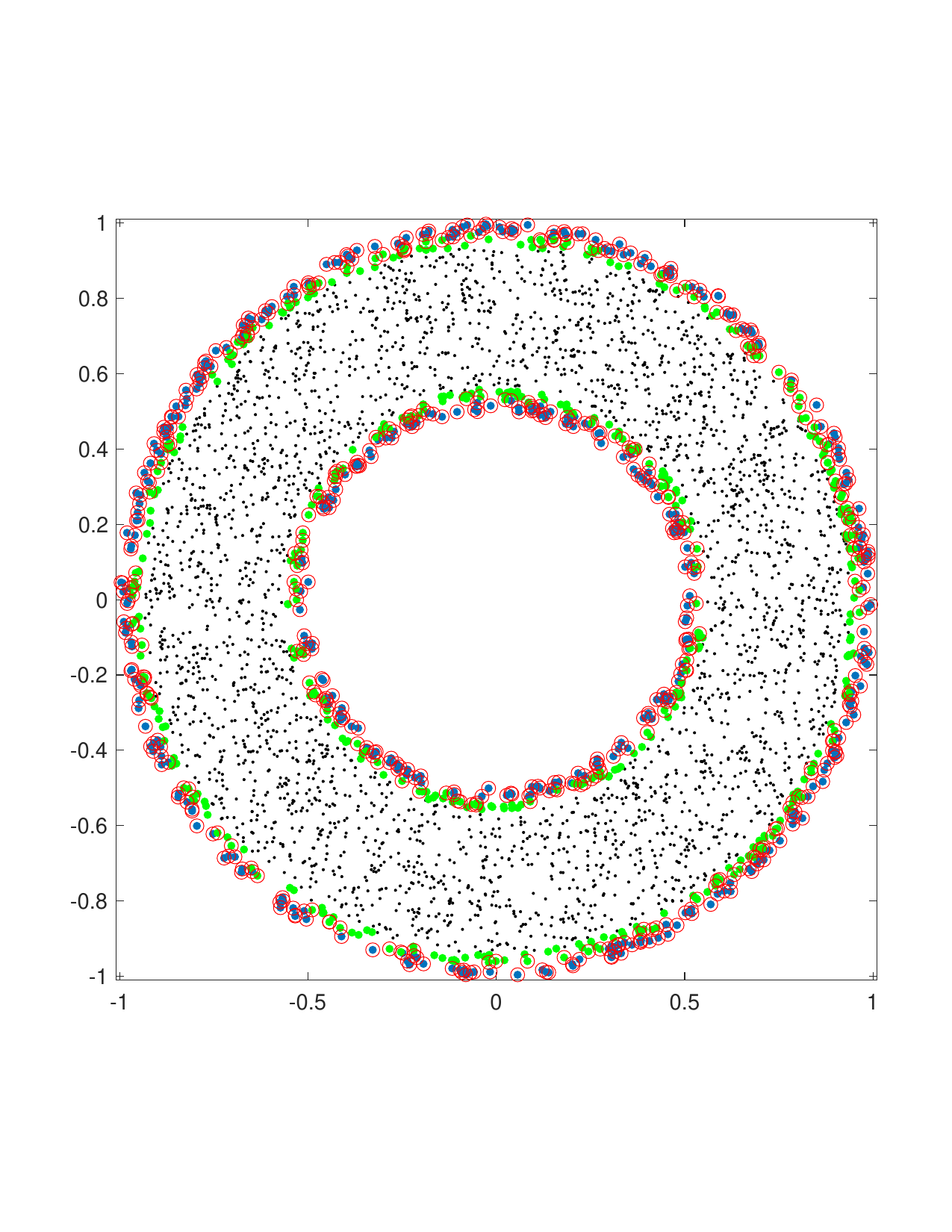}
       \includegraphics[trim={60pt 120pt 40pt 100pt}, clip=true, width=0.49\textwidth]{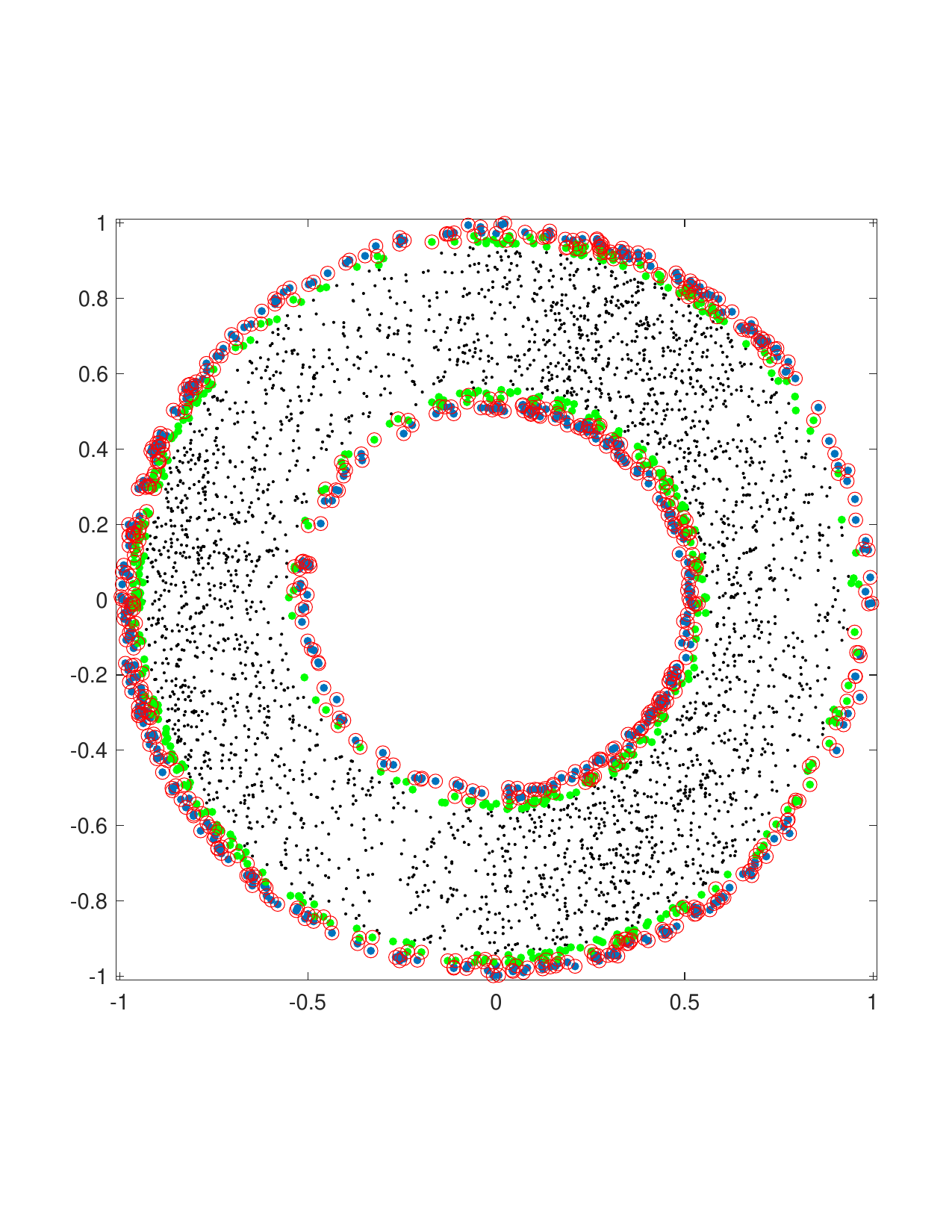}
       \caption{Boundary test on an annulus with inner and outer radii $0.5$ and $0.8$, respectively. $n=2000$ points are drawn from uniform density on the left and sinusoidal density with $L=2$ on the right. The point cloud is represented by black dots, while blue and green dots are the points whose true distance to the boundary are in $[0,\eps)$ and $[\eps,2\eps)$ respectively, for $\eps = 0.03$. The red circles show the points identified by the 2nd order test, with $r=0.18$, as boundary points. Observe that most blue dots are indeed correctly identified, and almost all points identified by the test are either blue or green dots.}
       \label{fig:dist_bdry_pts}
   \end{figure}
    
\textbf{Measuring the test error.} 
 Let $\eps,\rr>0$ be the boundary width and the test radius. 
 Given a test we are considering, let
 the set of \emph{tested boundary points} be the set of points in $\X$ where the test $\widehat{T}_{\eps,\rr}$ defined in \eqref{def:test}. The \emph{tested interior points} is  the complement of the tested $\eps$-boundary points in $\X$.
 Let $P$ be the number of tested boundary points and $N$ the number of tested interior points:
 \[ P = \sharp \{x^i \in \X \::\: \widehat{T}_{\eps,\rr}(x^i) =1\} \quad \te{ and }\quad 
  N = \sharp \{x^i \in \X \::\: \widehat{T}_{\eps,\rr}(x^i)= 0\}. \]

 
 We measure the error rate in a different way than is standard in hypothesis testing. We do it in a way that measures better whether we succeeded in our stated goal to create a test that would identify a large percentage of points near the boundary and would not misidentify as boundary points almost any points deep in the interior. This is important to be able to accurately set boundary conditions for PDE. 
   
      \begin{figure}
       \centering 
       \subfloat[$3$D ball with $R=0.5$, and $n=4000$. Left panel $L=0$ and right panel $L=2$.]{
       \includegraphics[width=0.49\textwidth]{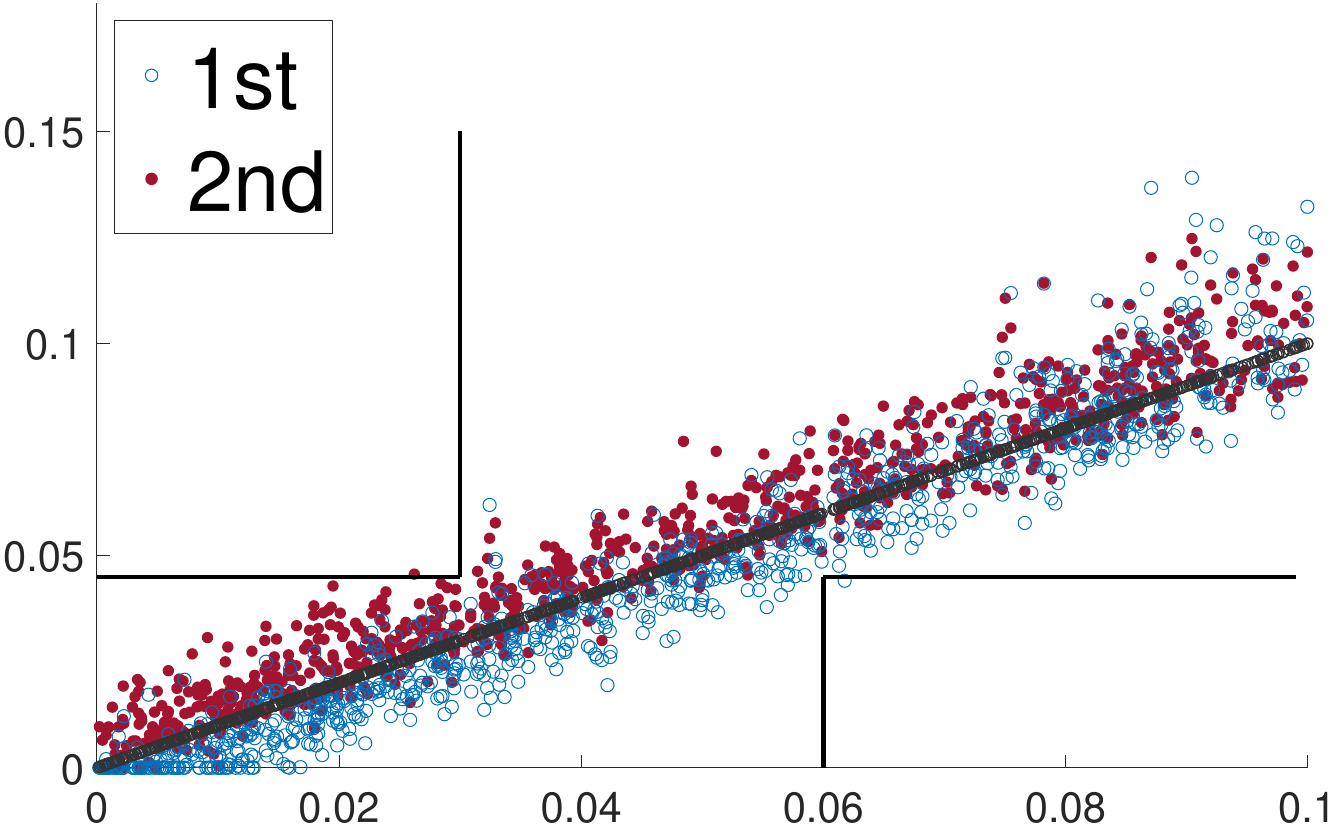}
       \includegraphics[width=0.49\textwidth]{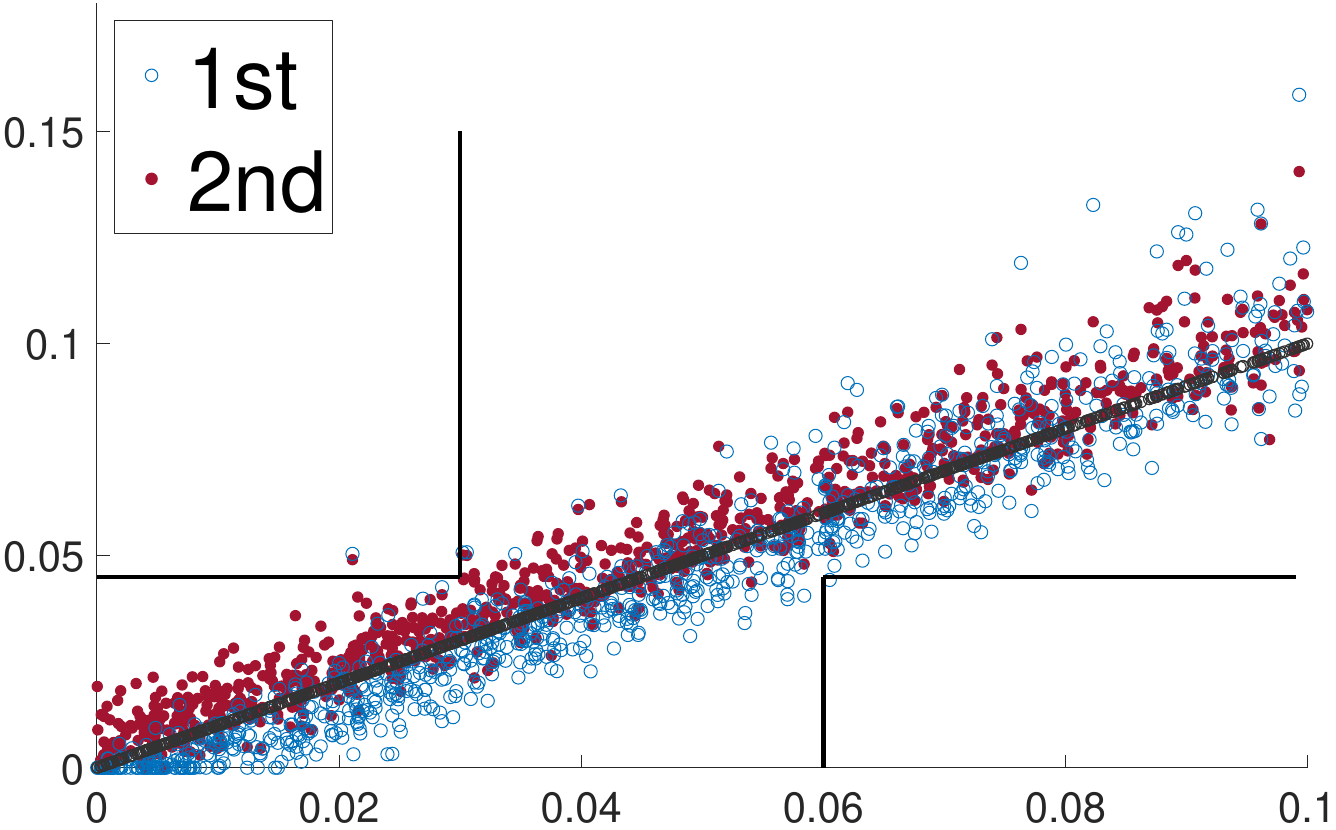}
       }
       
       \subfloat[$3$D annulus with $R_1=0.5, R_2=0.8$, and $n=12000$. Left panel $L=0$ and right panel $L=2$.]{
       \includegraphics[width=0.49\textwidth]{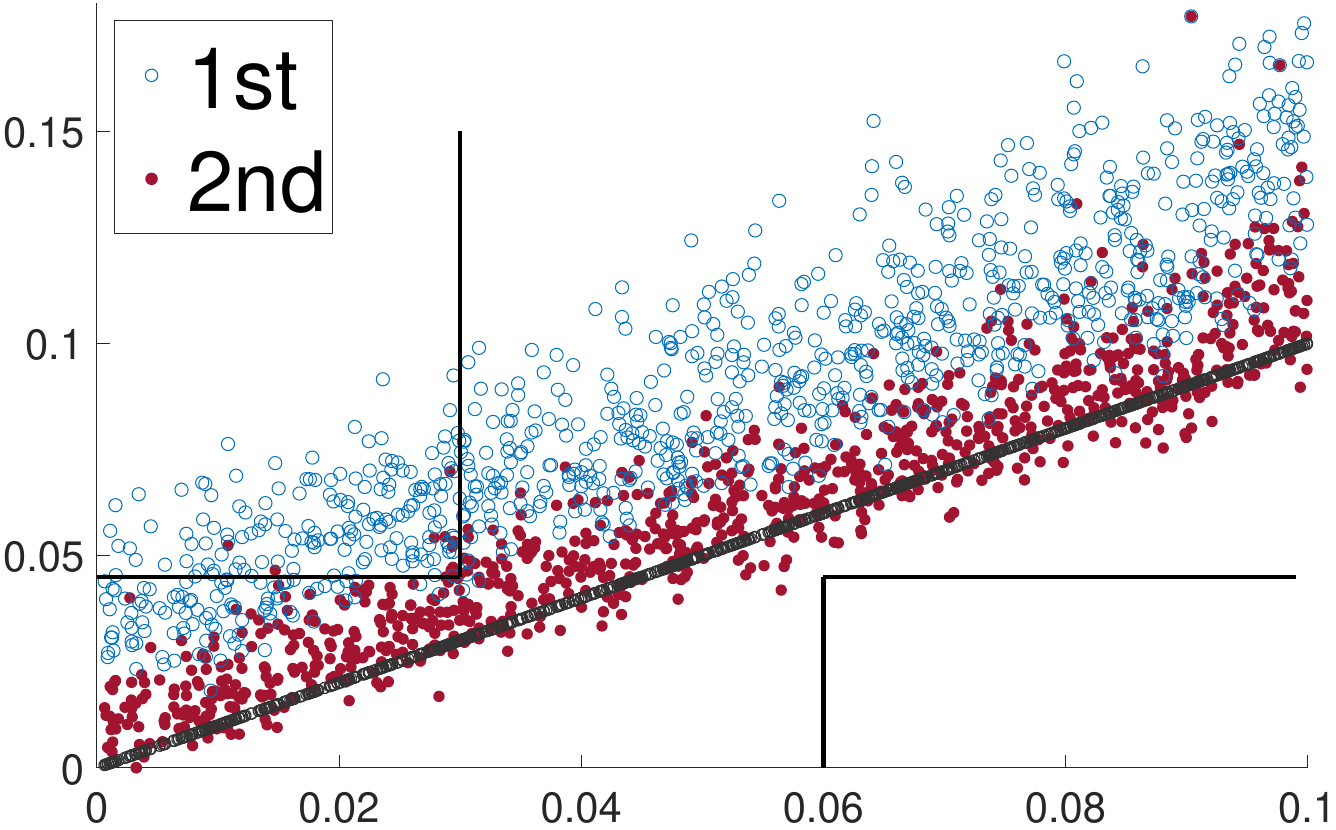}
       \includegraphics[width=0.49\textwidth]{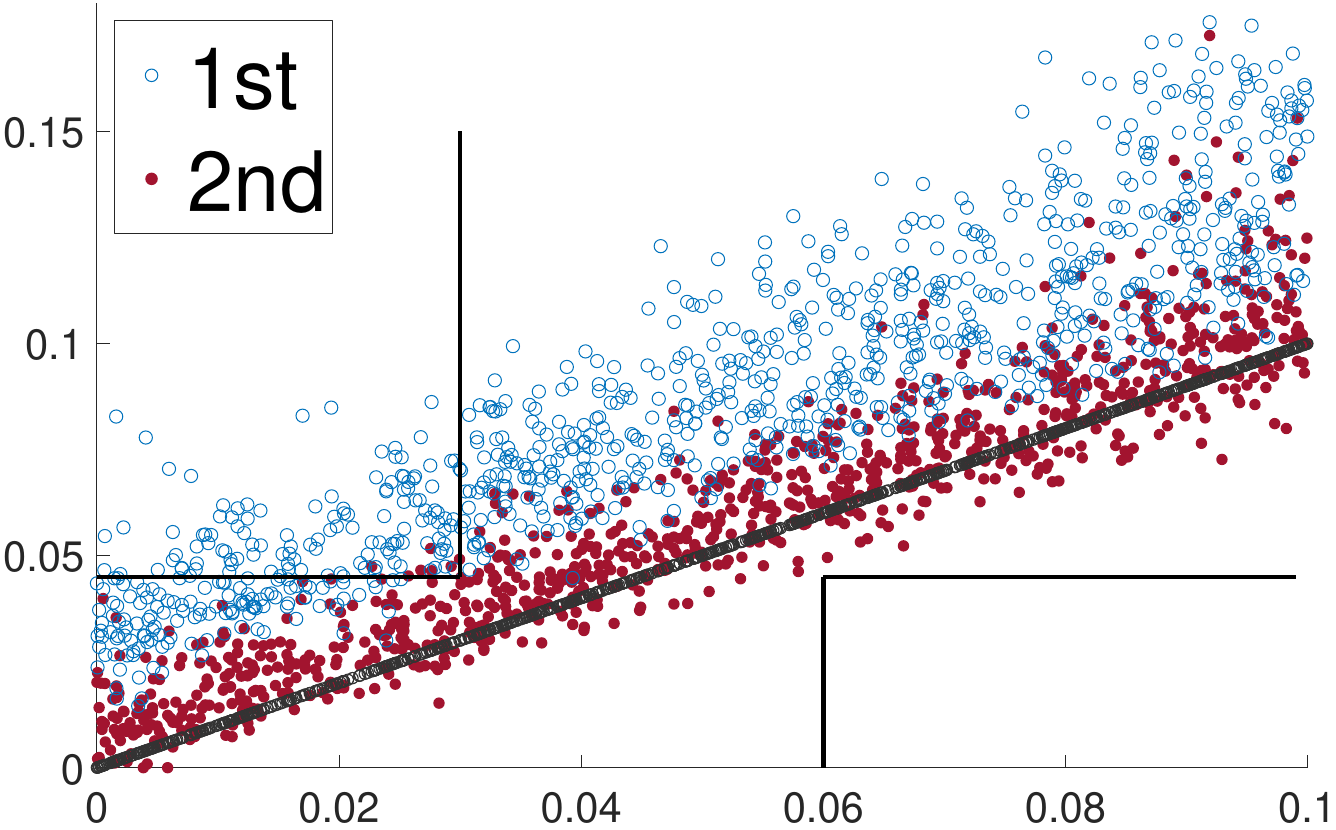}
       }
       \caption{ Plot of distance to boundary with $\eps=0.03$, $\rr=0.18$. $x$- and $y$-axes each represents the true and the estimated distances respectively. 1st and 2nd 
       refer to the order of the algorithm used.
       The boxes in the upper left and lower right corners specify the region for false negatives and false positives respectively. Only 1000 relevant points are plotted for improved visibility. Clear trend of 1st underestimating (resp. overestimating) the distance in a domain of positive (resp. negative) curvature is observed.
       }
       \label{fig:dist_bdry}
   \end{figure}
 
 Thus we  refer to 
 $\partial_{\eps}\Omega=\{x\in \X:\dist\left(x,\Omega\right)\leq\eps\}$ and $ \Omega^\circ_{2\eps}=\{x\in \X:\dist\left(x,\Omega\right)>2\eps\}$
 as true boundary and true interior points, respectively.
 We refer to tested boundary points which lie in $\Omega^\circ_{2\eps}$ as false positives and tested interior  points which lie in $\partial_\eps \Omega$ as false negatives. 
 We denote the number of false positives and false negatives by 
 \[ FP = \sharp \{x \in \X \cap \Omega^\circ_{2\eps} \::\:  \widehat{T}_{\eps,\rr}(x^i) =1 \} \quad  \te{ and } \quad 
 FN = \sharp \{x \in \X \cap \partial_\eps \Omega  \::\:  \widehat{T}_{\eps,\rr}(x^i) =0 \}. \] 
 We denote  by $BP$ the number of true boundary points 
 $ BP = \sharp (\X \cap \partial_\eps \Omega). $
 We define  false negative rate (FNR) and false positive rate (FPR) by
 \[ FNR =\frac{FN}{BP}\quad \te{ and } \quad FPR=\frac{FP}{BP}.\] 
 By the test failure rate (TFR) we mean the sum of FNR and FPR.
 Note the unusual  definition of FPR. From the point of view  hypothesis testing FPR would be the ratio of FP and 
 true interior points. Given the large number of true interior points
 such measure of error would be small even if there is a significant number of points that were misidentified as boundary points. For  our purposes it is important that the impact of false positives is small to the impact of the true positives. Thus we measure the error much more stringently and compare the  number of the false positives to the number of true boundary points.

 
 

    \begin{remark}[Smoothing the estimated normals]\label{rmk:smoothing}
       We observed that it is possible to further improve the accuracy of the estimated normals, thus of the test, if we smooth the normals in a small neighborhood using a suitable kernel. This reduces the variance, and tends to work well in combination with the second-order normal vector estimator \eqref{eq:batvn}, which limits the bias even in the presence of fluctuations in the density. However, when 
       the second derivatives of the density $\rho$ are large there can be a large bias in the estimated normal. In such cases we found  that smoothing may worsen accuracy as errors accumulate. 
    \end{remark}
    
    \begin{figure}[htbp]
        \centering
        \subfloat[Summary for the ball with sinusoidal density. $\eps=0.03$.]{
        \includegraphics[width=0.88\textwidth]{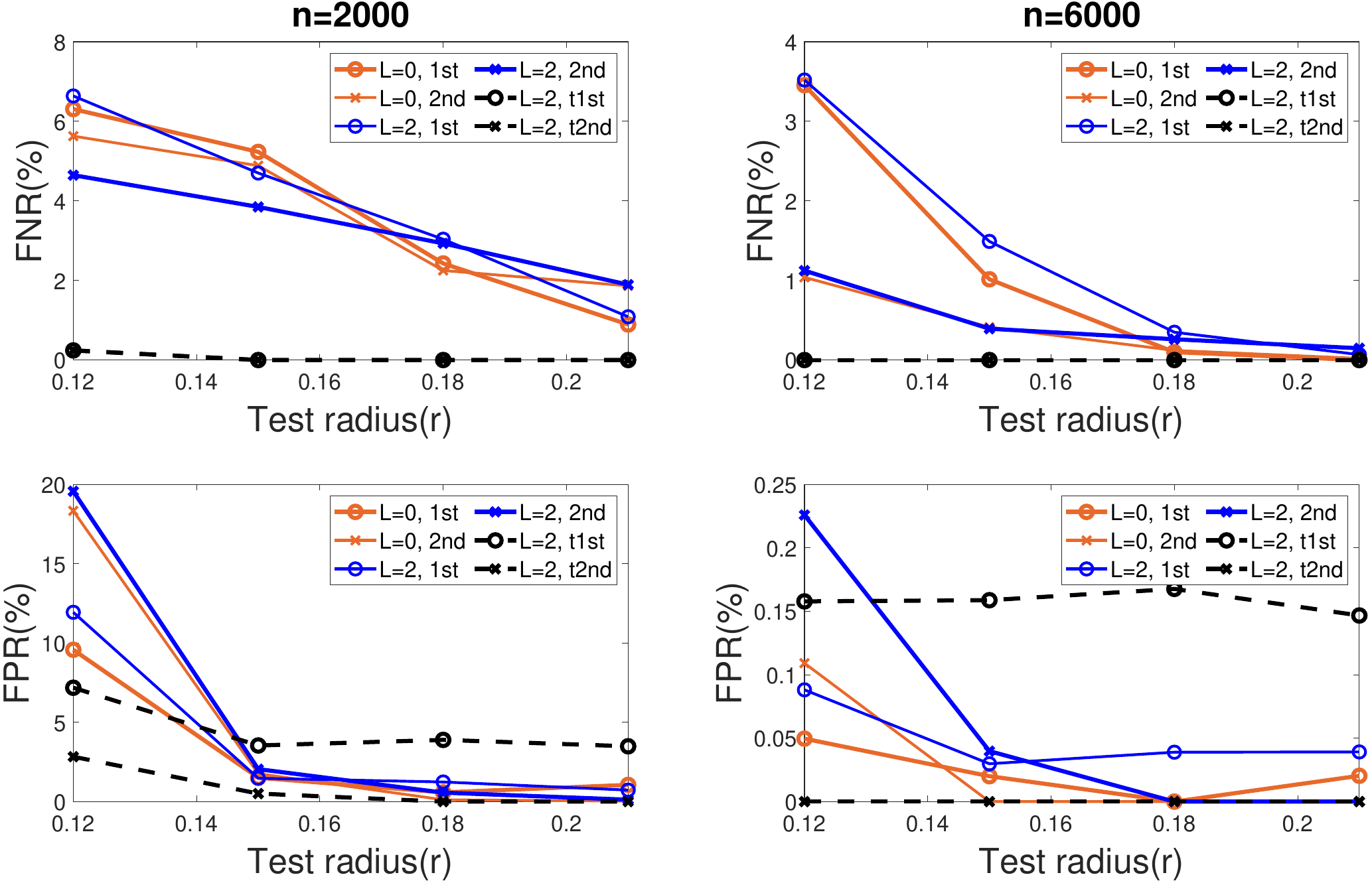} 
        }
    
        
        \subfloat[Summary for the annulus with sinusoidal density. $\eps=0.03$.]{
        \includegraphics[width=0.88\textwidth]{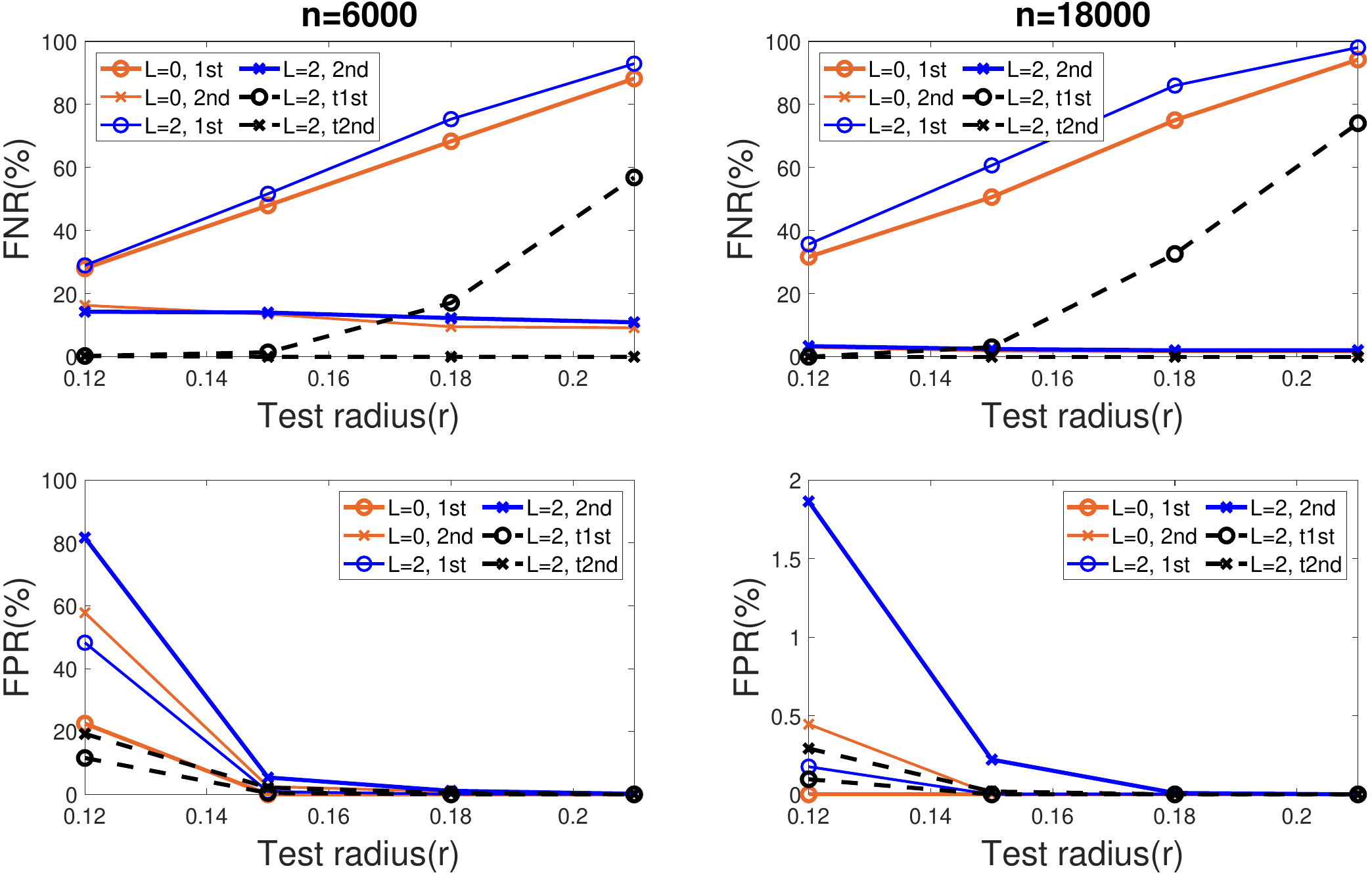} 
        }
        \caption{Test failure rates depending on the test radius, number of points, sign of curvature, and the type of tests. $\eps=0.03$, $R=0.5$. 1st, and 2nd are as in the previous experiments, while t1st and t2nd denote the first and second-order tests using the true normal vectors. Results have been averaged over 10 independent runs.}
        \label{fig:subplots}
    \end{figure}
    
    \begin{figure}[ht]
    \centering
    \includegraphics[width=0.49\textwidth]{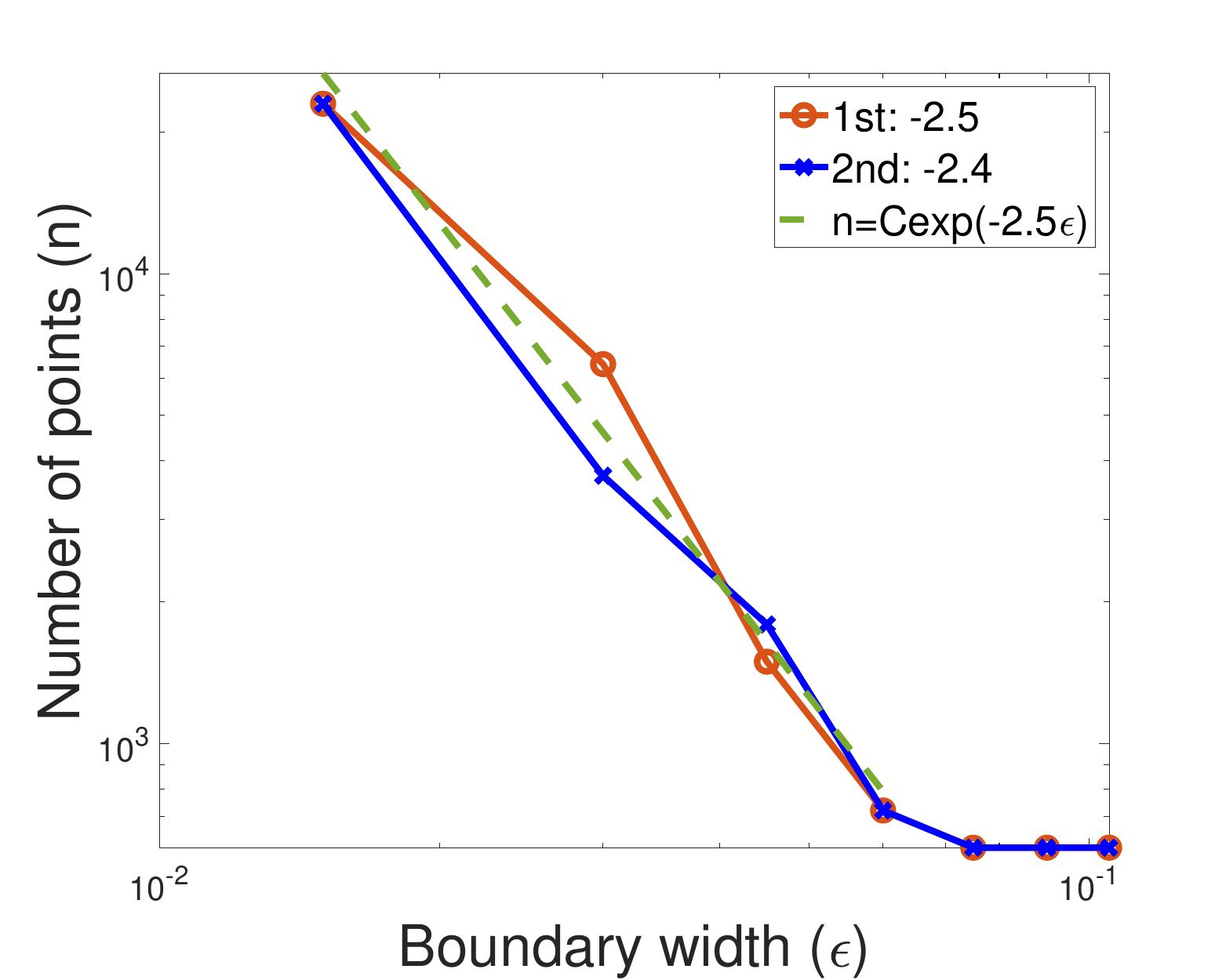}
    \includegraphics[width=0.49\textwidth]{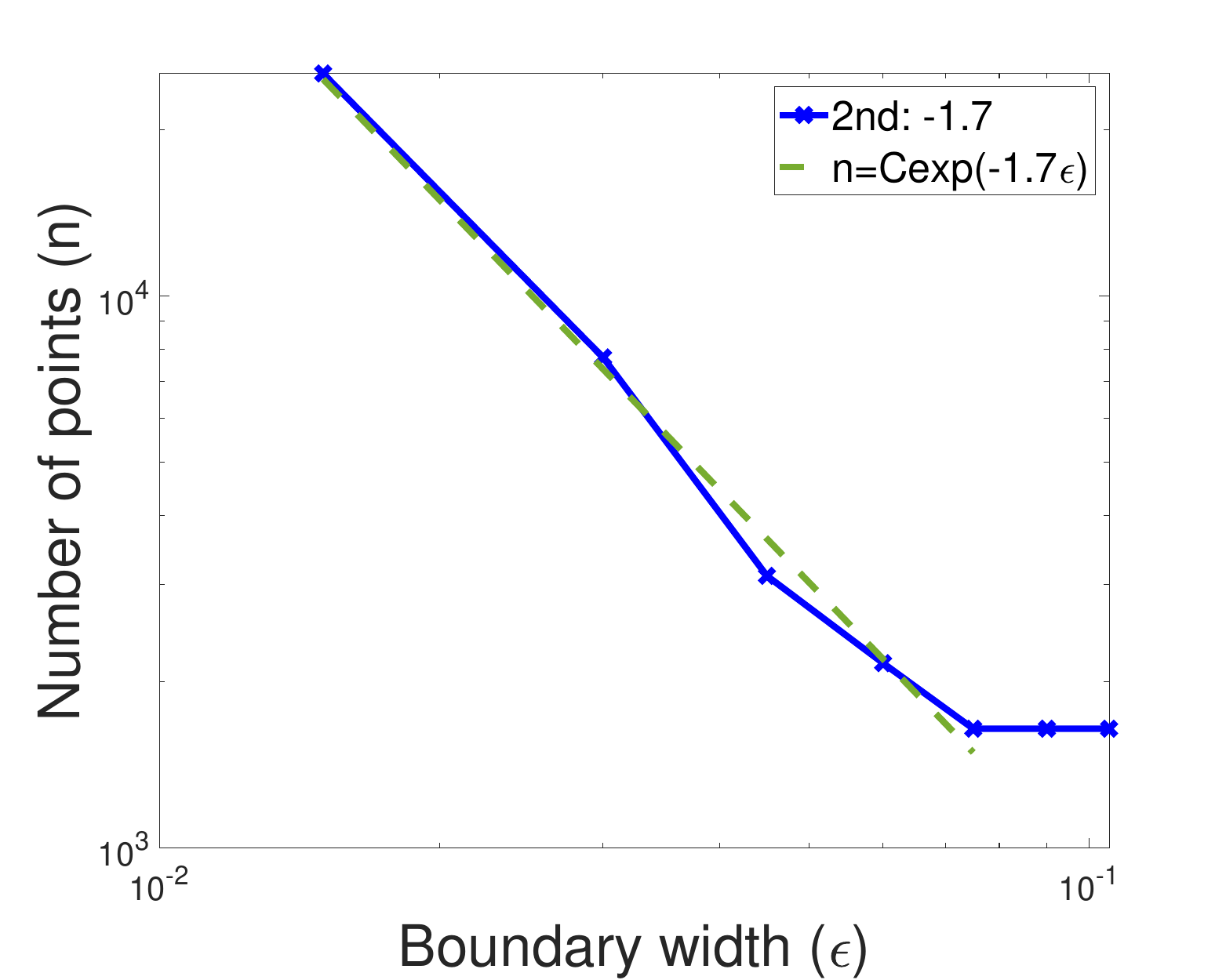} 
    \vspace*{-5pt}
    \caption{
    The plot shows   the smallest number of points $n$ for which TFR$\leq$ threshold, for given boundary width $\eps$.
    (Left) Ball, threshold$=0.5\%$, (Right) Annulus, threshold$=10\%$.
    Maximal $n$ considered was 20000  for the ball and 25000 for the annulus.
    We considered density with $L=2$, 
    and $\rr=\sqrt{\eps}$.  Number in the legend indicate the slope until $n$ becomes stable. 1st order test applied to negatively curved domains have high false negatives, hence the TFR never went below the threshold. Hence the results from the 1st order test is not included for the annulus. 
    Results have been averaged over 10 independent runs.}\label{fig:asymp}
    \end{figure}
    
    In Figure \ref{fig:asymp} we see that the first-order test for the ball shows $n\sim \eps^{-2.5}$, corresponding almost exactly to the optimal theoretical scaling $\eps\sim \rr^2,\,\eps\sim(\log n/n)^{2/(d+2)}$ established in Corollary \eqref{corol:boundary_test}. We see similar trends with the second-order test for the ball. However, the first-order test shows extremely poor performance for the annulus, due to the negative curvature. For it to work, we need $n$ large and $\eps,\rr$ small enough so that the curvature is negligible. On the other hand, the normalized second-order test shows exponential relationship between $n$ and $\eps$, although the exponent is worse than its counterpart for the ball.
    
    \begin{remark}[Choice of parameters $\eps,\rr$]\label{rmk:choice_param}
      
     We have established in Theorem \ref{thm:main} that the optimal scaling for the first-order test is $\rr\sim(\log n/n)^{1/(d+2)}$ and $\eps\sim \rr^2$ as $n\rightarrow\infty$. However, in practical situations, often $n$ is not sufficiently large to guarantee that such scaling is realistic. Then how should we choose $\eps$ and $\rr$?
     
    We observe from Figure \ref{fig:subplots} that the 2nd order test with the true normal vectors (t2nd) gives close to perfect results for both domains. This suggests that the 2nd order test for the most part resolves the challenge posed by curvature, which 1nd order test suffers from, and accurate estimation of normal vectors is key to boosting performance of the boundary test.
     
    There are trade-offs in choosing $\rr$: clearly, when $\rr$ is too small, the estimated normal is inaccurate due to high variance. On the other hand, large $\rr$ leads to larger bias caused by curvature or fluctuations in the density. However, in Section \ref{sec:asym2} we have showed that the normalization by degree in the 2nd order estimator for the normal vector limits the bias to $O(\rr^2)$ even when $\rho$ is non-uniform. Indeed, we see in Figure \ref{fig:subplots} (b) that FNR of 2nd is close to that of t2nd even in the presence of nontrivial fluctuation with $L=2$ and relatively large $\rr$.
    
    Thus, using the 2nd order test, it suffices to choose $\rr$ in a reasonable range, so that $B(x^0,\rr)$ contains sufficiently many points, and $\rr$ is not too close to the reach $R$, when a rough estimate of $R$ is known. When the reach is completely unknown, then we recommend that $\rr$ is taken to be the smallest so that each ball of radius $\rr$ contains sufficient number of points.

    Given $\rr$, $\eps$ should be chosen so that the ratio $\tfrac{|B(x^i,\frac{3\sqrt{2}\eps}{2})|}{|B(x^i,\rr)|}$ of the volume of the balls is no larger than, say, $\frac12$, to limit the number of false positives. The particular coefficient $\tfrac{3}{2}\sqrt{2}$ is is chosen as the threshold of our test is at $\tfrac{3\eps}{2}$, and the $\tfrac{\hat\nu(x^i)+\hat\nu(x^j)}{2}$ can have magnitude as small as $\tfrac{1}{\sqrt{2}}$ when the sharp cutoff function is used. Note that for fixed $\rr,\eps$, the ratio of the volumes decreases in dimension, as volume concentrates near the boundary of the ball in high dimensions. On the other hand, $\eps$ should be large enough so that the strips of height $\frac{\eps}{2}$ and width around $\rr$ contain enough points; this limits the possibility that points $y$ with $d_\Omega(y)$ around $2\eps$ are falsely tested positive. See Figure \ref{fig:false_positive} and Lemma \ref{lem:hatd1_lowbd} for details.
    \end{remark}
    
    \begin{figure}[ht]
    \centering
    
    \includegraphics[width=0.49\textwidth]{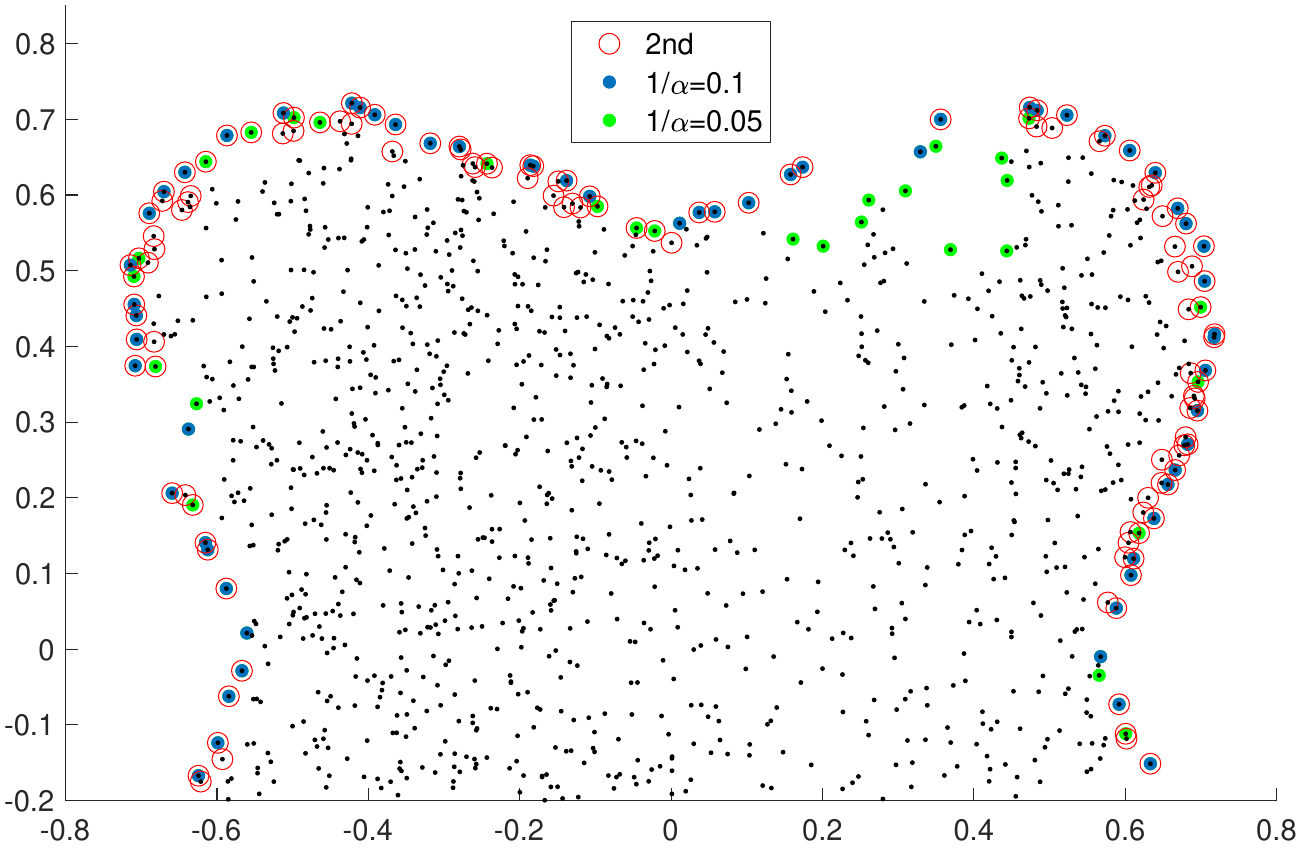}
    \includegraphics[width=0.49\textwidth]{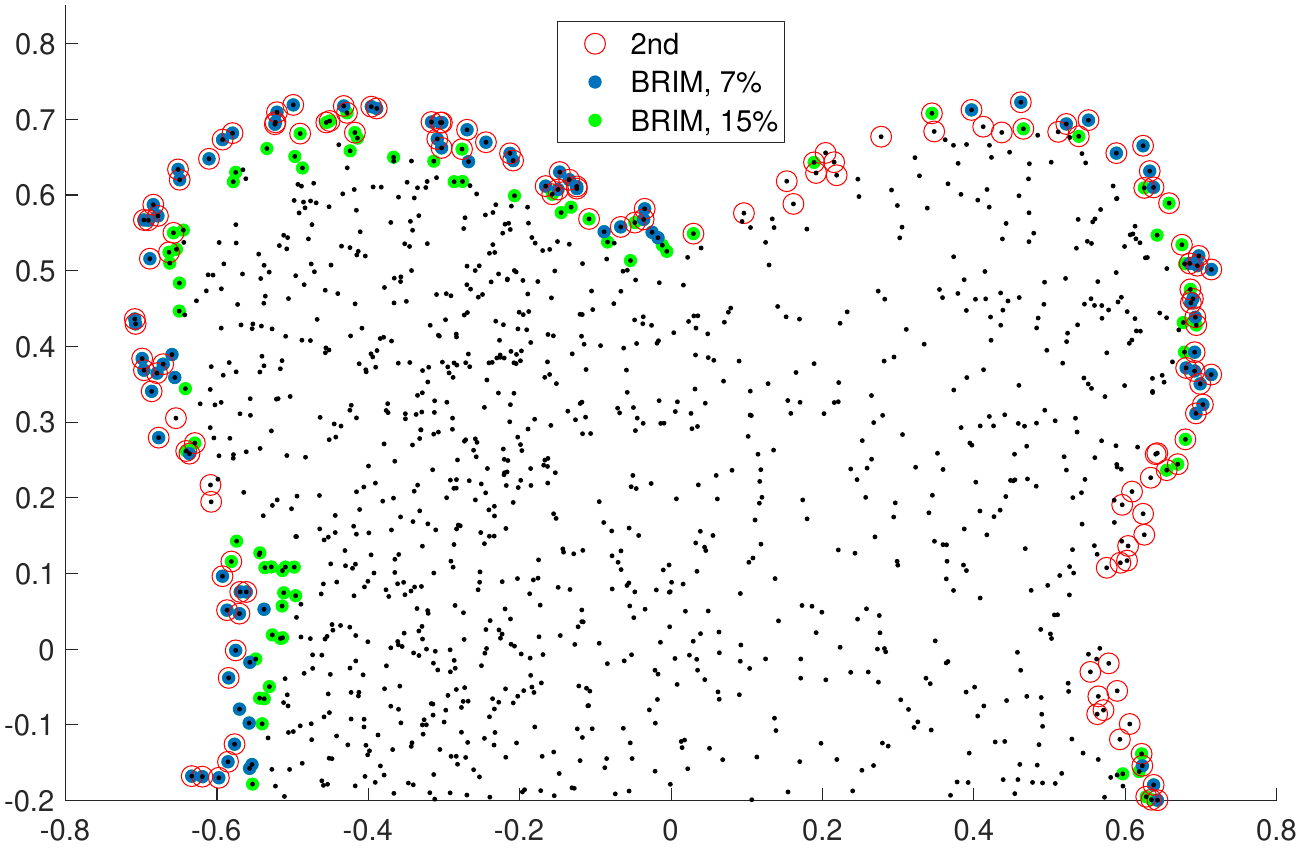}
    
    \includegraphics[width=0.49\textwidth]{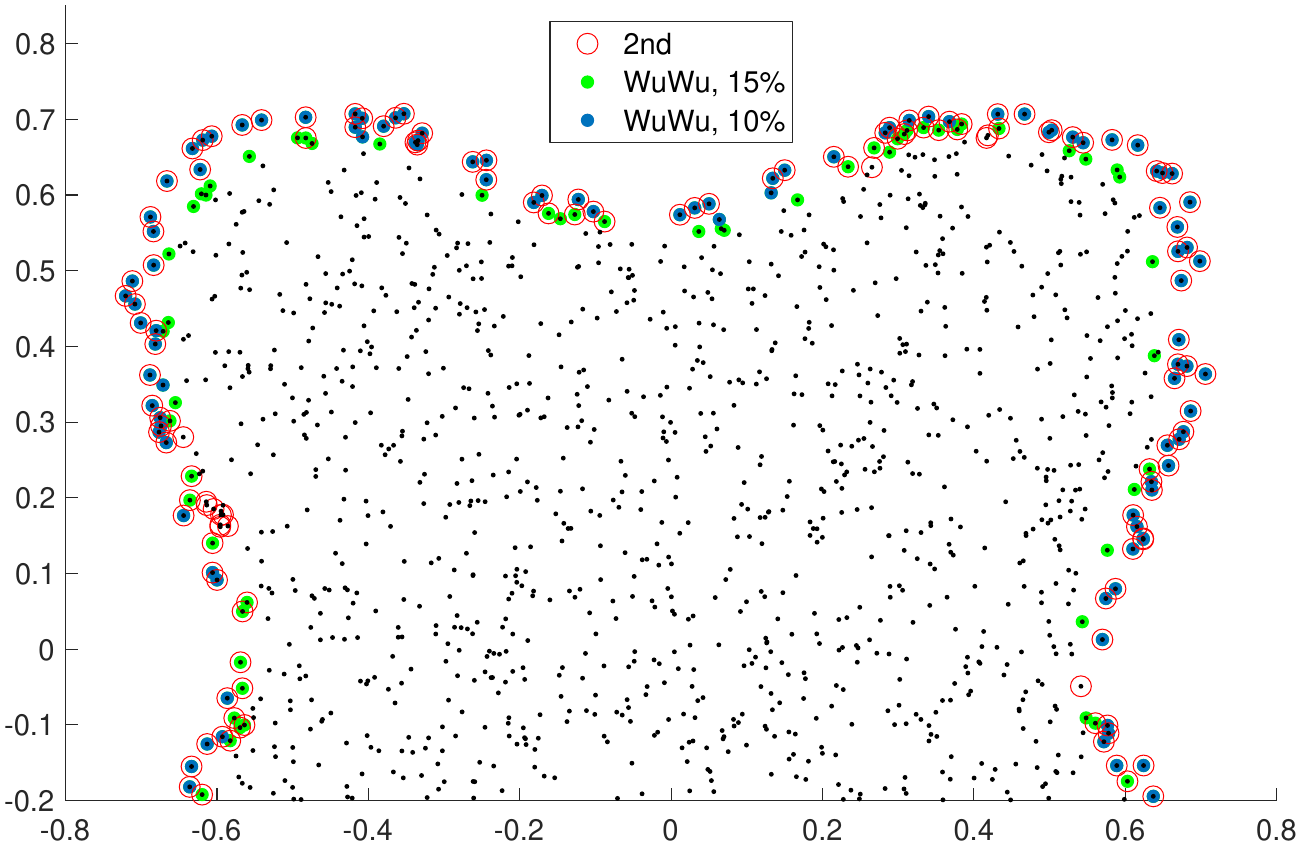}
    \includegraphics[width=0.49\textwidth]{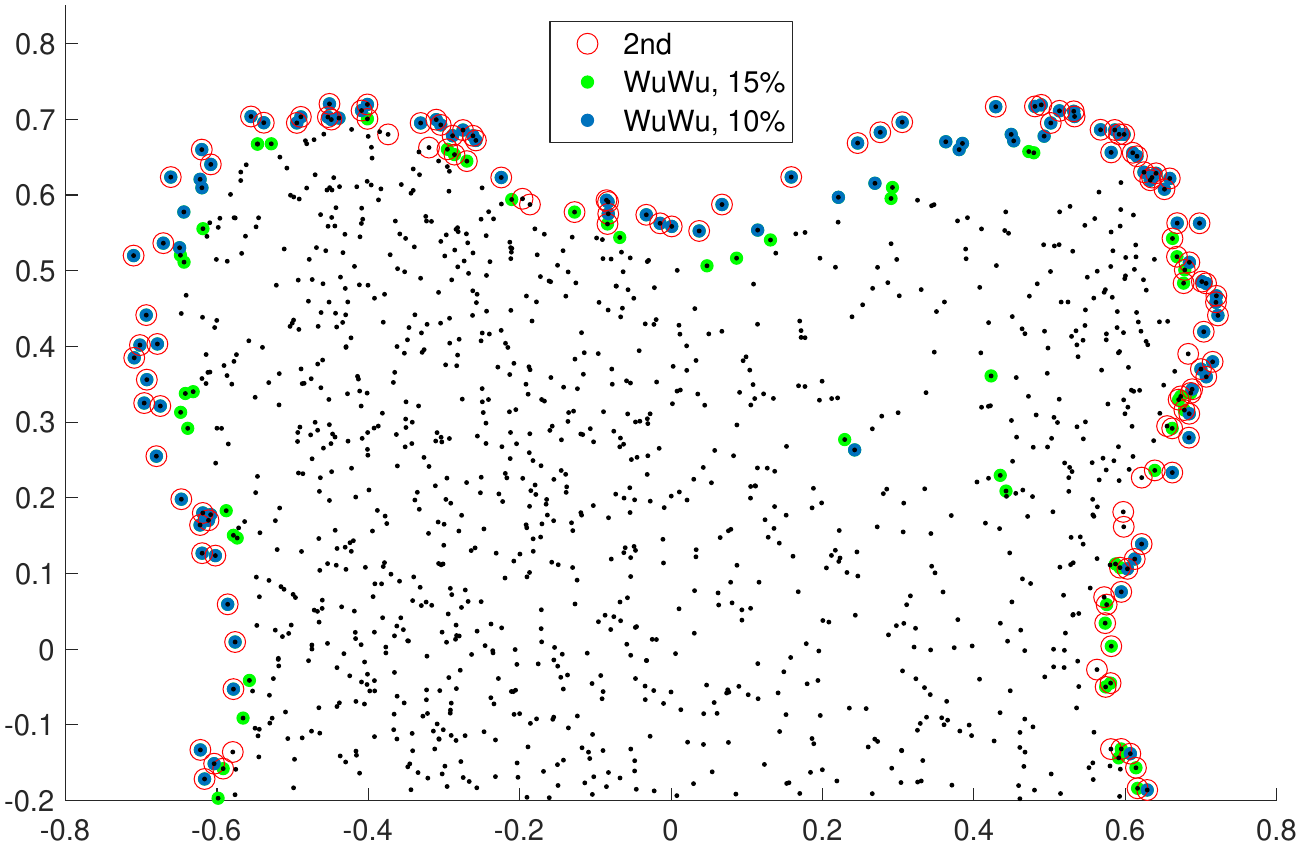}
    \vspace*{-5pt}
    \caption{ Comparison of tests for $n=2000$ points drawn out of density $\rho$ defined in \eqref{eq:rho_L}, with (top, and bottom right) $L=3$, and (bottom left) $L=1$. The second-order test with $\eps=0.03, \rr=0.18$ is compared with (top left) the Devroye-Wise estimator with radius $\alpha^{-1}$, (top right) BRIM, 
    and (bottom) WuWu. For BRIM and WuWu, the colored points are in the indicated top percentile according to the test statistic.
    }\label{fig:2nd-comparison}
    \end{figure}
    
    \subsection{Comparison with other approaches} \label{sec:comparison}
    
    We limit our comparisons with other border detection algorithms to a couple of visual illustrations and remarks. The reason for this is that other algorithms were not designed to identify a boundary layer of desired width, $\epsilon$, that our algorithm is designed for. Furthermore in most cases there  is no straightforward way to adapt other algorithms to do detect a boundary layer of fixed width. 
    
    We compare our 2nd order boundary test with, tests based on the Devroye-Wise estimator \eqref{def:DevWise} (DW), BRIM \cite{BRIM07}, 
    and the statistic of Wu and Wu (WuWu) \cite{WuWu19}. Recall that the Devroye-Wise estimator $\Omega_n$ approximates $\supp\rho$, and by boundary points we mean the points which contribute to the boundary $\partial\Omega_n$ -- i.e. $x^i\in\X$ such that $\overline{B(x^i,\eps)}\cap\partial\Omega_n\neq \emptyset$.
    We note that these are also exactly the data points that lie on the boundary estimator of Casal \cite{Casal07}.
    As discussed in Section \ref{sec:related-works}, such points are precisely the boundary points of the $\alpha$-shape \cite{EdelKirkSeidel83, Edels10}, a generalization of convex hull, with $\alpha=1/\eps$. In dimensions $d=2,3$, efficient algorithms for $\alpha$-shapes exist, and we used the built-in function in MATLAB \cite{aShp_MATLAB} to compute the contributing boundary points. 
    For BRIM and WuWu, we implemented in MATLAB the algorithms described in \cite{BRIM07} and \cite{WuWu19} respectively.
    
    In Figure \ref{fig:2nd-comparison} we see that the Devroye-Wise estimator via $\alpha$-shape effectively finds a thin boundary when a suitable $\alpha$ is used. The choice of appropriate $\alpha$ depends heavily on the density of the set of points considered. 
    Smaller $\alpha^{-1}$ identifies more points, and in particular allows recognizing those where boundary has negative curvature. On the other hand, choosing $\alpha^{-1}$ too small increases the risk of falsely identifying interior points, lying in an area of low density, as boundary points. Indeed, the top plot of Figure \ref{fig:2nd-comparison} exhibits such a trade-off: the test with $\alpha^{-1}=0.1$ misses boundary points around the concave indents, while choosing $\alpha^{-1}=0.05$ results in false positives deep inside the interior. In the context of solving PDEs on graphs, such false positives can be catastrophic.
    As pointed out in Section \ref{sec:related-works}, computing  $\alpha$-shapes becomes expensive when $d>3$. We tested a commonly used alpha shapes package in Python \cite{alphashapetoolbox} on a high performance computer with a 4.5GHz CPU, and found that the computational complexity in dimension for $n=1000$ points independently and uniformly distributed on the unit ball in dimensions $d=2$ up to $d=9$ followed very closely to the exponential complexity $O(n^{0.23 d})$. In terms of raw computational times, the alpha shape for $n=1000$ points in dimension $d=9$ took $110$ minutes, and $d=10$ and $d=11$ would have taken roughly $12$ and $77$ hours, respectively. The memory requirements seem to grow very quickly as well, with $d=8$ taking 13 GB and $d=9$ requiring roughly 45 GB.

    In contrast, BRIM easily  generalizes to dimensions higher than 3. BRIM uses a similar basic idea as our approach: it  approximate the inward normal direction. It does so by identifying the point $x^i\in B(x^0,\rr)$ maximizing $|B(x^i,\rr)\cap\X|$. To detect the boundary it compares the number of points in the normal direction  and those opposite of it. The test is sensitive to variations in the density. Indeed the bottom plot of Figure \ref{fig:2nd-comparison} shows that BRIM identifies significantly more points on the left boundary, near which the density is high, than it does on the  sparsely populated right. 
    
    WuWu also generalizes well to arbitrary dimension. Furthermore, it takes into account the curvature of the boundary by using spectral information of the `sample covariance matrix' (see Section \ref{sec:related-works}). We can see in Figure \ref{fig:2nd-comparison} that  WuWu consistently detects points near negatively curved parts of the boundary. However, it is not as robust under fluctuations in density. Observe WuWu classifies considerably more points on the left side of the boundary, where points are densely distributed, compared to the right. Further, some interior points are in the top 15\% according to the test statistic; this can be resolved by increasing $k$ for kNN, but at the cost of successfully identifying fewer points close to the boundary.
    
    We also ran experiments using the test statistic suggested by Aaron and Cholaquidis \cite{AarCho20}, but it did not perform well, as their statistic is designed to decide whether the manifold has a boundary or not, rather than to identify boundary points. 
    
    We stress again that all the other algorithms we compared were not designed for the task considered. We note that our method is as fast 
    as any of the other methods and provides the best quality boundary for the task considered. Furthermore there is no error analysis that would suggest that any of the other methods are second-order accurate.

    
     \begin{figure}[htb]
     \centering
     \includegraphics[trim={220pt 20pt 220pt 40pt}, clip=true, width=0.45\textwidth]{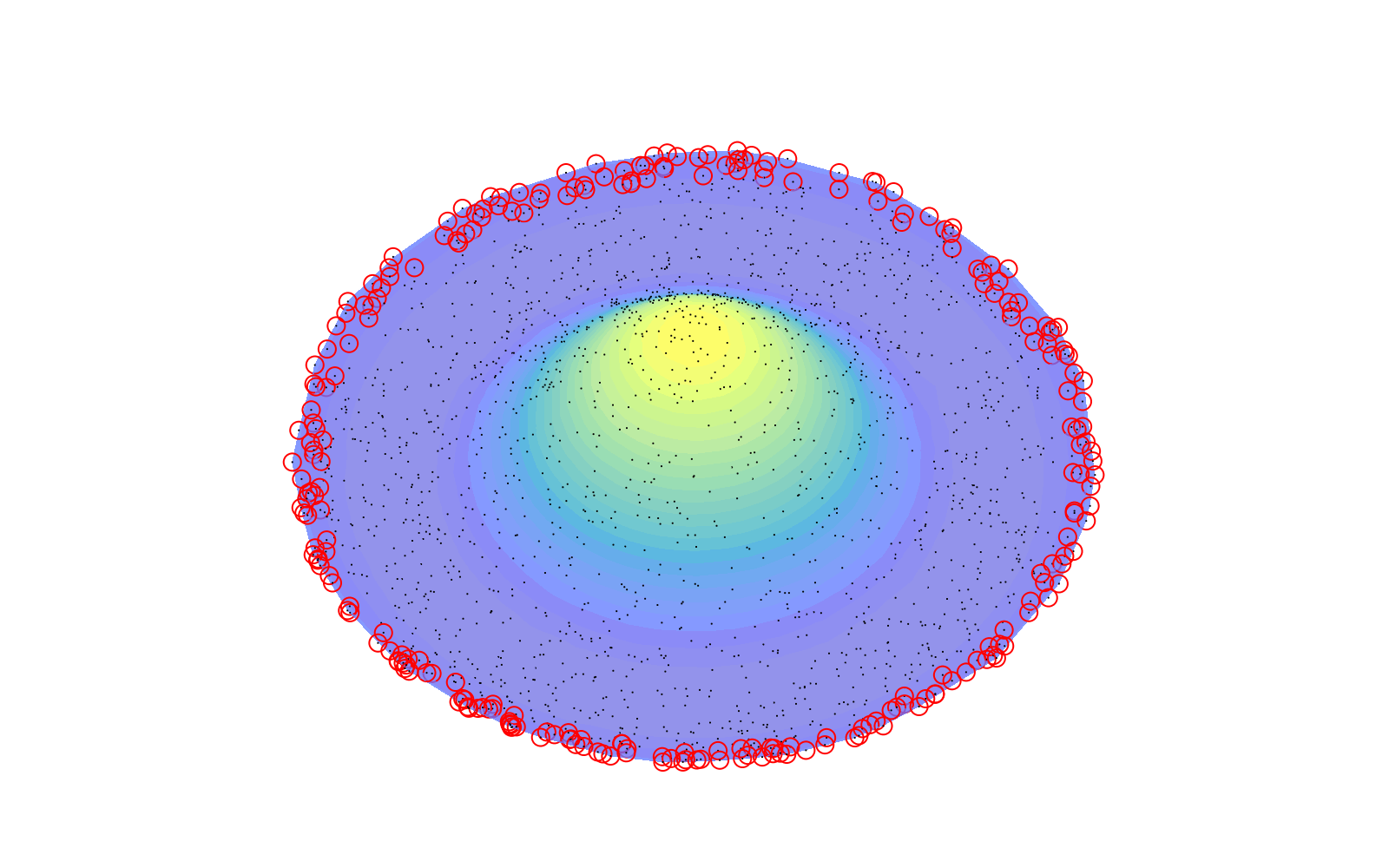}
     \includegraphics[trim={330pt 150pt 330pt 160pt}, clip=true,width=0.45\textwidth]{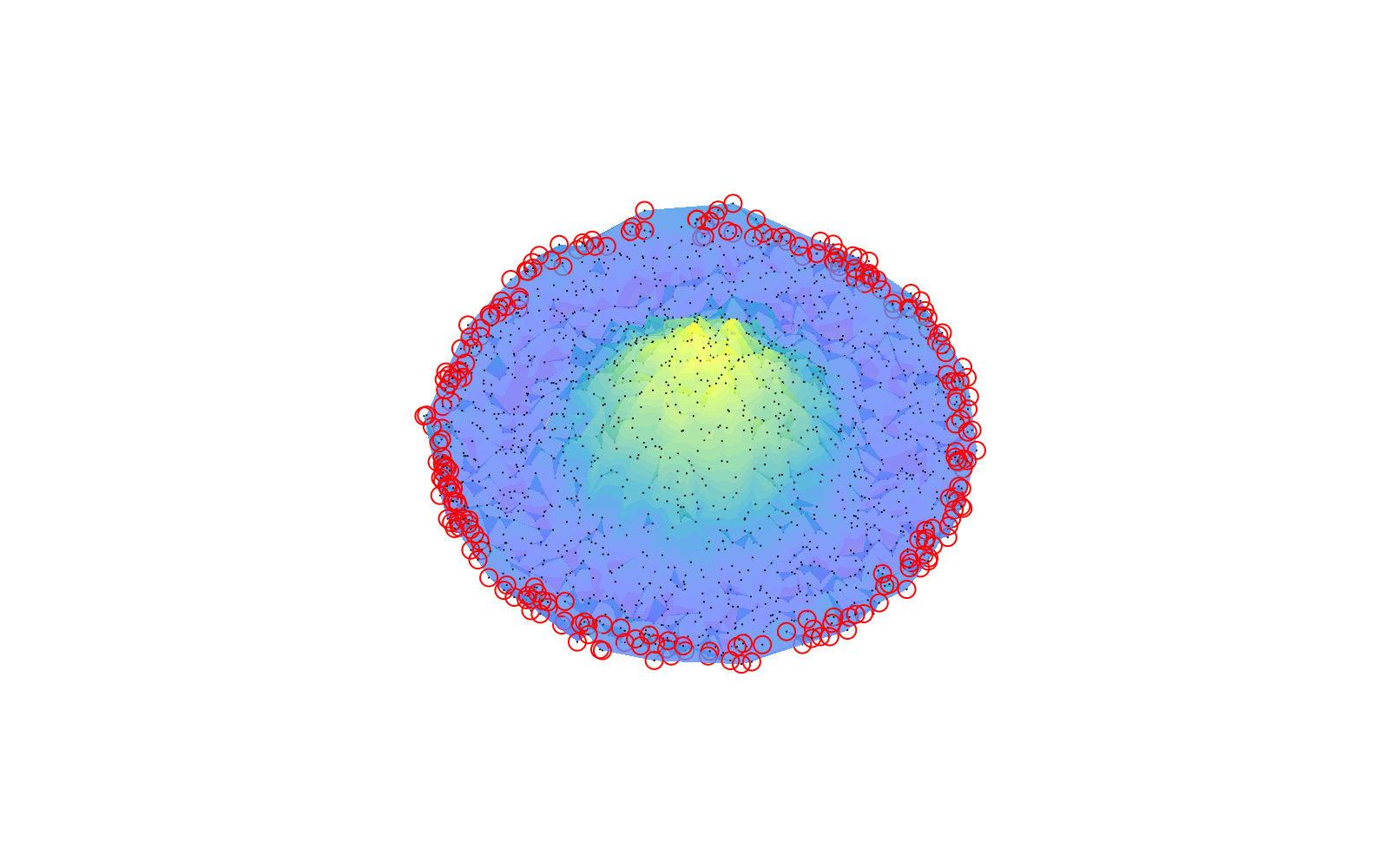}
     
     \vspace*{-15pt}
     
     \includegraphics[trim={180pt 30pt 180pt 0pt}, clip=true, width=0.45\textwidth]{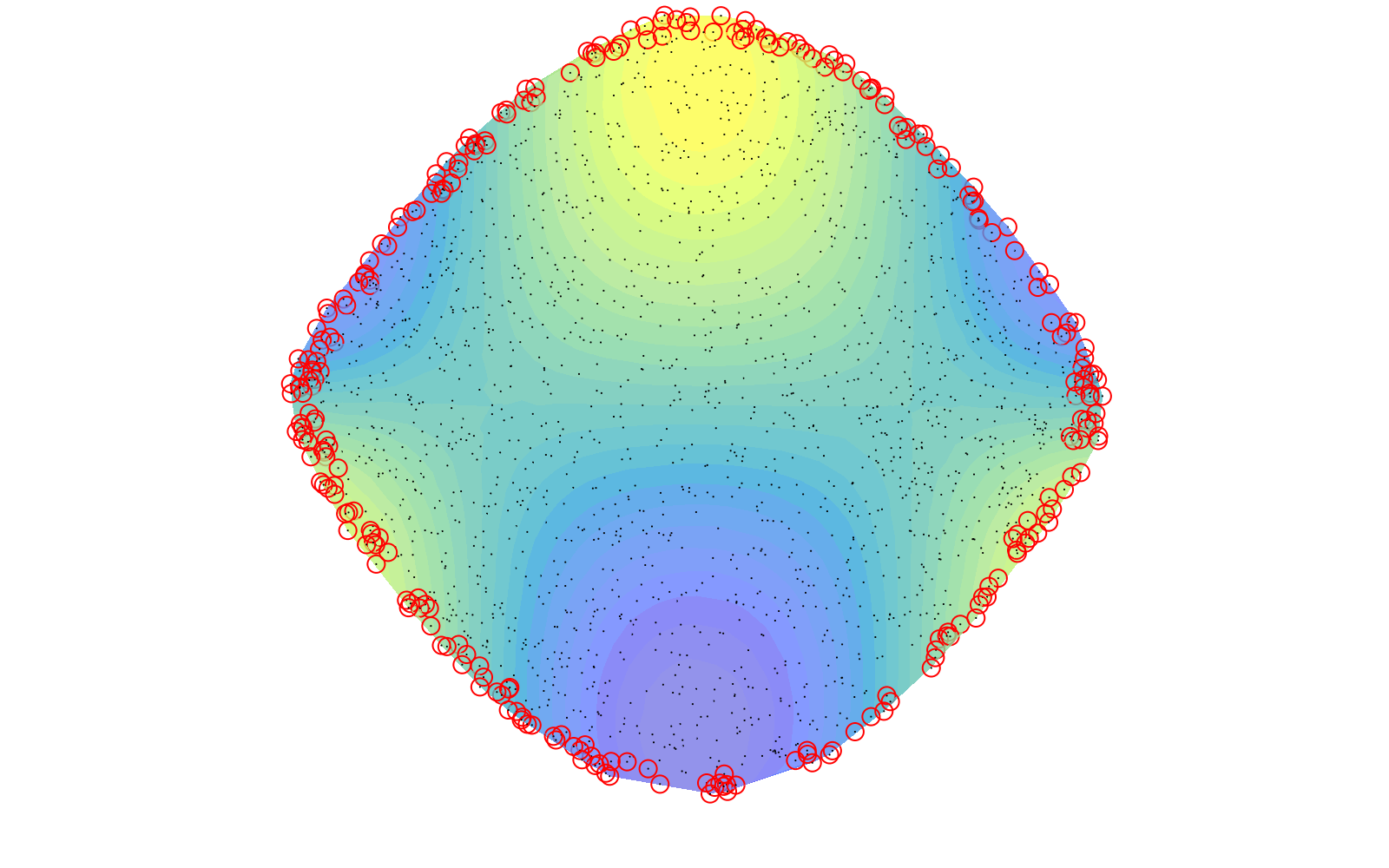}
     \includegraphics[trim={250pt 100pt 250pt 70pt}, clip=true, width=0.45\textwidth]{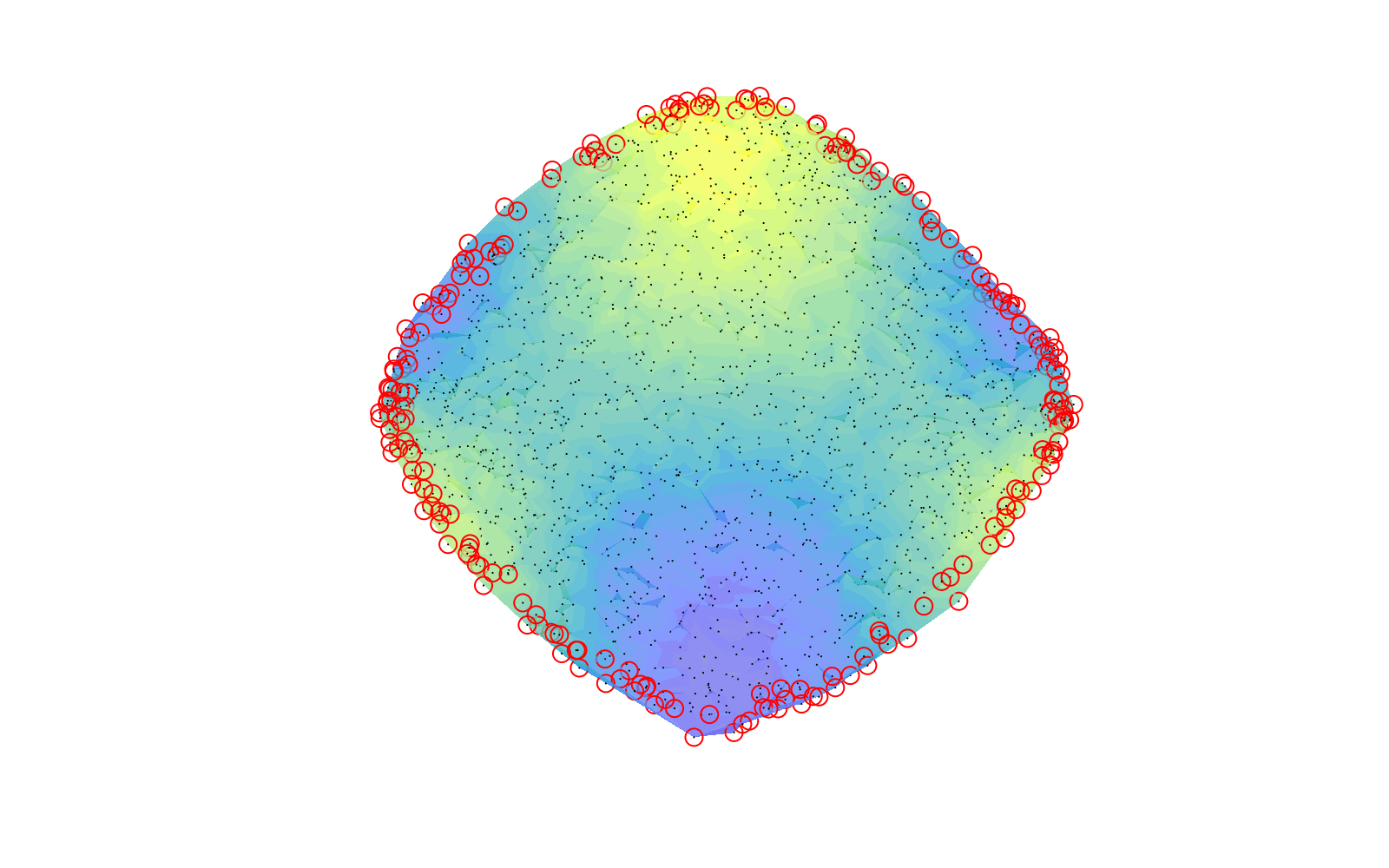}
     
     \vspace*{-12pt}
    
     \caption{Boundary points of point clouds supported on 2-dimensional surfaces, identified using Algorithm \ref{alg:2nd_manifold}. $n=2000,\,\rr=0.21,\,\eps=0.05$. Point clouds are marked in black, and the boundary points are circled in red. (Left) No additive noise. (Right) Additive Gaussian noise with standard deviation set as 1\% of the diameter of the surface. Surfaces appear irregular as they are reconstructed from noisy samples.}
     \label{fig:manifold_illustration}
     \end{figure}

\newpage

\section{Solving PDEs on data clouds}
\label{sec:PDE_graph}

One immediate application of boundary detection is the ability to solve PDEs on point clouds with flexibility in the choice of boundary condition. All of the present approaches to solving PDEs on data clouds, where the boundary is not known in advance,  rely on a variational description of the problem and thus result in natural variational boundary conditions. For the graph Laplacian this always yields homogeneous Neumann boundary conditions (see\cite{CST20} for discussion of the graph Laplacian near the boundary). 
In this section, we show how we can use our boundary detection method, which includes an estimation of the normal vector to the boundary, to solve PDEs on point clouds with various boundary conditions, including Dirichlet, Neumann, oblique, and Robin problems. We then give applications to computing data-depth and medians on real datasets, and present intriguing numerical experiments on MNIST and FashionMNIST.

Throughout this section, we fix some additional notation. For $\eps>0$ we define
\[\partial_\eps\Omega = \{x\in \Omega \, : \, \dist(x,\partial\Omega) \leq  \eps\}\]
and set $\Omega_\eps = \Omega\setminus \partial_\eps\Omega$. We recall that $\X=\{x^1,\dots,x^n\}$ is our point cloud, which is assumed to consist of independent and identically distributed random variables with density $\rho:\Omega\to \R$. We will place various assumptions on $\rho$ throughout the section. We also assume we have an accurate estimation of the points from $\X$ that fall in the boundary tube $\partial_\eps \Omega$. This is provided by our main results on boundary detection in Theorem \ref{thm:main} and Corollary \ref{corol:boundary_test}. In order to make the results in this section as general as possible, we simply assume that we have computed a boundary set $\partial_\eps \X\subset \X$ that satisfies 
\begin{equation}\label{eq:bdy}
\X_\eps\subset \Omega_\eps \ \ \text{ and }\ \ \partial_\eps \X \subset \partial_{2\eps}\Omega,
\end{equation}
where $\X_\eps = \X \setminus \partial_\eps \X$. 

\subsection{The eikonal equation}

First, we consider extending Theorem \ref{thm:main} to estimate the distance function
\begin{equation}\label{eq:dist}
d_\Omega(x):= \dist(x,\partial\Omega)
\end{equation}
on the whole point cloud $\X$. We can do this by solving the graph eikonal equation
\begin{equation}\label{eq:eikonal}
\left.\begin{aligned}
\min_{y\in B_0(x^i,\eps)\cap \X}\left\{ u_\eps(y) - u_\eps(x^i) + |y-x^i| \right\} &= 0,&&\text{if } x^i \in \X_\eps\\
u_\eps(x^i) &=0,&&\text{if }x^i \in \partial_\eps \X,
\end{aligned}\right\}
\end{equation}
where we write $B_0(x,\eps) := B(x,\eps)\setminus \{x\}$ for the punctured ball. The solution $u_\eps$ of the graph eikonal equation \eqref{eq:eikonal} is exactly the distance function on the graph with vertices $\X$ and edge weights $w_{ij}=|x^i-x^j|$ if $|x^i-x^j|\leq \eps$, and $w_{ij}=\infty$ otherwise. When this graph is connected, the solution of \eqref{eq:eikonal} is unique. The solution of \eqref{eq:eikonal} can be computed with Dijkstra's algorithm in $O(nk\log(n))$ time, where $k$ is an upper bound for the number of points in $B(x^i,\eps)\cap \X$ over all $i$. We expect the solution $u_\eps$ converges to the distance function $d_\Omega$ as $\eps\to 0$. Indeed this section is focused on proving this convergence with a quantitative $O(\eps)$ error rate.

For \eqref{eq:eikonal} to be well-defined, we require the set $B_0(x^i,\eps)\cap \X$ to be nonempty for all $x^i \in \X_\eps$.
\begin{proposition}\label{prop:empty}
Let $n\geq 2$. The event that $B_0(x^i,\eps)\cap \X$ is nonempty for all $x^i\in \X_\eps$ has probability at least $1-n\exp\left( -\frac12\omega_d \rho_{min} n\eps^d \right)$.
\end{proposition}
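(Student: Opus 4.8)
The plan is a straightforward union bound over the $n$ sample points, reducing the statement to a single-point tail estimate. Writing $A_i$ for the event that $x^i\in\X_\eps$ and $B_0(x^i,\eps)\cap\X=\varnothing$, we have $\P(\bigcup_i A_i)\le\sum_{i=1}^n\P(A_i)$, so it suffices to show $\P(A_i)\le\exp(-\tfrac12\omega_d\rho_{\min}n\eps^d)$ for each $i$ and then pass to the complement.

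For the single-point estimate, first I would exploit the containment $\X_\eps\subset\Omega_\eps$ from \eqref{eq:bdy}: if $x^i\in\X_\eps$ then $d_\Omega(x^i)>\eps$, hence $B(x^i,\eps)\subset\Omega$ and therefore
\[\int_{B(x^i,\eps)}\rho(z)\,dz\ \ge\ \rho_{\min}\,\omega_d\,\eps^d.\]
In particular $\P(A_i)\le\P\bigl(d_\Omega(x^i)>\eps \text{ and } B_0(x^i,\eps)\cap\X=\varnothing\bigr)$. Next I would condition on $x^i$: the event $\{d_\Omega(x^i)>\eps\}$ depends on $x^i$ alone, and, given $x^i=x$, the remaining $n-1$ points are i.i.d.\ with density $\rho$, each landing in $B(x,\eps)$ independently with probability $p(x):=\int_{B(x,\eps)}\rho$. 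Hence
\[\P\bigl(B_0(x^i,\eps)\cap\X=\varnothing \,\big|\, x^i=x\bigr)=(1-p(x))^{n-1},\]
and on $\{d_\Omega(x)>\eps\}$ we bound $p(x)\ge\rho_{\min}\omega_d\eps^d$, so that $(1-p(x))^{n-1}\le(1-\rho_{\min}\omega_d\eps^d)^{n-1}\le\exp\bigl(-(n-1)\rho_{\min}\omega_d\eps^d\bigr)$ by $1-t\le e^{-t}$.

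Finally, the hypothesis $n\ge2$ gives $n-1\ge n/2$, which upgrades the exponent to $\tfrac12\omega_d\rho_{\min}n\eps^d$; summing over the $n$ choices of $i$ and taking complements yields the stated bound. There is no genuine obstacle here: the only point requiring a little care is that $\partial_\eps\X$, and thus $\X_\eps$, may a priori depend on all of $\X$, which would spoil a naive conditioning argument — but this is circumvented by replacing the event $\{x^i\in\X_\eps\}$ with the larger event $\{d_\Omega(x^i)>\eps\}$, which is measurable with respect to $x^i$ alone, before conditioning on $x^i$.
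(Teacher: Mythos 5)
Your proof is correct and follows essentially the same route as the paper: a union bound over the sample, conditioning on $x^i$, the bound $\int_{B(x^i,\eps)}\rho \geq \rho_{\min}\omega_d\eps^d$ (valid because $\X_\eps\subset\Omega_\eps$ forces $B(x^i,\eps)\subset\Omega$), and $(1-t)^{n-1}\le e^{-(n-1)t}$ with $n-1\ge n/2$. Your added care in replacing $\{x^i\in\X_\eps\}$ by the $x^i$-measurable event $\{d_\Omega(x^i)>\eps\}$ before conditioning is a sound tightening of a point the paper passes over silently.
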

\begin{proof}
By the \emph{i.i.d.}~law, the probability that $B_0(x^i,\eps)\cap \X$ is empty conditioned on $x^i\in \X_\eps$ is
\[\left( 1-\int_{B(x^i,\eps)}\rho(x)\, dx \right)^{n-1}\leq \left( 1 - \rho_{min}\omega_d\eps^d \right)^{n-1} \leq \exp\left( -\rho_{min}\omega_d (n-1)\eps^d \right).\]
The proof is completed by union bounding over $\X$, and using that $n-1\geq \tfrac12 n$ for $n\geq 2$.
\end{proof}

We briefly review some basic properties of the distance function. We recall a function $u:\Omega\to \R$ is \emph{semiconcave} with constant $C$ if $u - C|x|^2$ is concave.  The distance function $d_\Omega$ is $1$-Lipschitz and semiconcave with constant $1/R$ (see, e.g., \cite{cannarsa2004semiconcave}).  By the Alexandrov theorem, a semiconcave function is twice differentiable almost everywhere in $\Omega$.  The distance function also satisfies the \emph{dynamic programming principle}
\[d_{\Omega}(x) = \min_{y\in B(x,\eps)}\left\{ d_{\Omega}(y) + |y-x| \right\}\]
for all balls $B(x,\eps)\subset \Omega$. This can be rearranged into the form
\begin{equation}\label{eq:cont_dpp}
\min_{y\in B(x,\eps)}\left\{ d_{\Omega}(y) - d_{\Omega}(x) + |y-x| \right\}=0.
\end{equation}
Thus, the graph eikonal equation \eqref{eq:eikonal} is merely a discretization of the dynamic programming principle \eqref{eq:cont_dpp} to the point cloud $\X$. At any point $x\in \Omega$ where $d_{\Omega}$ is differentiable, we can Taylor expand $d_{\Omega}$ in \eqref{eq:cont_dpp} and compute the minimum explicitly to find that $|\nabla d_{\Omega}(x)|=1$.  If $\Omega$ is bounded, the distance function $d_\Omega$ always has points of nondifferentiability (for example at its maximum).

The equation $|\nabla u|=1$ is referred to as the \emph{eikonal} equation (more generally $|\nabla u|=f$). The distance function $d_\Omega$ can be interpreted as the unique \emph{viscosity solution} of the eikonal equation. The viscosity solution is a type of weak solution to a partial differential equation (PDE) that allows non-differentiable functions to be solutions of first and second-order PDEs. In the case of the eikonal equation, and other first-order convex Hamilton-Jacobi equations, the viscosity solution coincides with the unique Lipschitz and \emph{semiconcave} function that satisfies the PDE almost everywhere. We use the semiconcave interpretation here and do not discuss viscosity solutions directly. We refer the reader to \cite{calder2020Viscosity,bardi2008optimal}  for more details on viscosity solutions.

We now turn to convergence of the solution of the graph eikonal equation \eqref{eq:eikonal} to the distance function $d_\Omega$.  For this, we require a notion of asymptotic consistency.
\begin{lemma}\label{lem:eikonal_consistency}
Let $0 < t \leq \frac{1}{d}$. The event that
\begin{equation}\label{eq:dbound}
\min_{x\in B_0(x^i,\eps)\cap \X } \left\{ \lambda d_{\Omega}(x) - \lambda d_{\Omega}(x^i) + |x-x^i|\right\}  \leq  t\lambda \epsilon  + \frac{4\lambda\epsilon^2}{R} - (\lambda-1) \epsilon
\end{equation}
holds for all $\lambda\geq 1$ and $x^i\in \X\cap \Omega_\eps$ has probability at least $1-n\exp\left( -\frac{\omega_{d-1}}{4(d+1)}\rho_{min}n\eps^d (2t)^{\frac{d+1}{2}} \right)$.
\end{lemma}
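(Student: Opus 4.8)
The plan is to establish \eqref{eq:dbound} pointwise: for each interior sample point $x^i\in\X\cap\Omega_\eps$ I will exhibit a small spherical cap $A(x^i)$ near $x^i-\eps\nabla d_\Omega(x^i)$ such that \emph{any} competitor $x\in A(x^i)$ makes the bracket in \eqref{eq:dbound} smaller than the asserted right-hand side, simultaneously for all $\lambda\ge 1$; then I bound the probability that some $x^i$ has no point of $\X$ in its cap and take a union bound over $i=1,\dots,n$.

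For the geometric/consistency step, fix $x^i\in\X\cap\Omega_\eps$, so $d_\Omega(x^i)>\eps$ and hence $\overline{B(x^i,\eps)}\subset\Omega$, on which $\rho\ge\rho_{min}$. Discarding a null event, $d_\Omega$ is differentiable at $x^i$; put $p=\nabla d_\Omega(x^i)$, which is a unit vector since $d_\Omega$ solves $|\nabla d_\Omega|=1$ at interior points of differentiability (at the exceptional points one instead uses a unit element of the superdifferential $\partial^+ d_\Omega(x^i)$, which still gives the inequality below). Semiconcavity of $d_\Omega$ with constant $1/R$ yields, for all $x\in B(x^i,\eps)$,
\[
d_\Omega(x)\ \le\ d_\Omega(x^i)+p\cdot(x-x^i)+\frac1R|x-x^i|^2 .
\]
Set $A=A(x^i)=\{x:\ |x-x^i|<\eps,\ -p\cdot(x-x^i)\ge(1-t)\eps\}$, a cap of ``height'' $t\eps$ with $A\subset B_0(x^i,\eps)$ (note $x^i\notin A$). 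For $x\in A$ write $\ell=|x-x^i|$, $r=-p\cdot(x-x^i)$, so $(1-t)\eps\le r\le\ell<\eps$, and then for every $\lambda\ge1$
\begin{align*}
\lambda d_\Omega(x)-\lambda d_\Omega(x^i)+|x-x^i|
&\ \le\ \ell-\lambda r+\frac{\lambda\ell^2}{R}\ =\ (\ell-r)-(\lambda-1)r+\frac{\lambda\ell^2}{R} \\
&\ \le\ t\eps-(\lambda-1)(1-t)\eps+\frac{\lambda\eps^2}{R}\ =\ -(\lambda-1)\eps+\lambda t\eps+\frac{\lambda\eps^2}{R},
\end{align*}
which is bounded above by $t\lambda\eps+\frac{4\lambda\eps^2}{R}-(\lambda-1)\eps$. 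Hence on the event $\{\X\cap A(x^i)\ne\varnothing\}$ the estimate \eqref{eq:dbound} holds at $x^i$ for all $\lambda\ge1$.

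For the probabilistic step, conditioning on the position of $x^i$ the remaining $n-1$ points are i.i.d.\ with density $\rho$, so, using $n-1\ge n/2$ and $1-s\le e^{-s}$,
\[
\P\bigl(\X\cap A(x^i)=\varnothing\bigr)\ \le\ \bigl(1-\rho_{min}|A(x^i)|\bigr)^{n-1}\ \le\ \exp\Bigl(-\tfrac{n}{2}\rho_{min}|A(x^i)|\Bigr).
\]
Slicing the cap perpendicular to $p$ and using $2\eps-\xi\ge 2\eps-t\eps$ on $0\le\xi\le t\eps$,
\[
|A(x^i)|\ =\ \omega_{d-1}\int_0^{t\eps}\bigl(\xi(2\eps-\xi)\bigr)^{\frac{d-1}{2}}d\xi\ \ge\ \frac{2\omega_{d-1}}{d+1}(2-t)^{\frac{d-1}{2}}\,t^{\frac{d+1}{2}}\eps^d,
\]
and since $t\le 1/d$ forces $(2-t)^{\frac{d-1}{2}}=2^{\frac{d-1}{2}}(1-\tfrac t2)^{\frac{d-1}{2}}\ge 2^{\frac{d-1}{2}}\cdot 2^{-1/2}$, one gets $|A(x^i)|\ge \frac{\omega_{d-1}}{2(d+1)}(2t)^{\frac{d+1}{2}}\eps^d$ (with a factor $\sqrt2$ to spare). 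Plugging in gives $\P(\X\cap A(x^i)=\varnothing)\le\exp\bigl(-\tfrac{\omega_{d-1}}{4(d+1)}\rho_{min}n\eps^d(2t)^{\frac{d+1}{2}}\bigr)$, uniformly in $x^i$, and a union bound over $i=1,\dots,n$ finishes the argument.

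The one genuinely delicate piece is the cap-volume lower bound: the goal there is to recover \emph{exactly} the power $(2t)^{(d+1)/2}$ and the explicit constant $\tfrac1{4(d+1)}$ in the exponent, and this is precisely where the hypothesis $t\le 1/d$ is used, namely to keep $(1-\tfrac t2)^{d}$ bounded below by an absolute constant so that the slicing estimate does not lose an exponential-in-$d$ factor. Everything else — the semiconcavity/consistency inequality and the i.i.d.\ probability bound — is routine, modulo the harmless remark that $d_\Omega$ is almost surely differentiable at each random $x^i$ (or, at the exceptional points, that one may use a unit element of its superdifferential, which obeys the same second-order bound).
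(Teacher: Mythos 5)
Your proof is correct, and it follows the same overall template as the paper's: exhibit a spherical cap of height $t\eps$ inside $B(x^i,\eps)$ any point of which certifies \eqref{eq:dbound} for all $\lambda\ge1$ simultaneously, lower-bound its volume by $\frac{\omega_{d-1}}{2(d+1)}(2t)^{\frac{d+1}{2}}\eps^d$ using $t\le\frac1d$, bound the emptiness probability via $(1-\rho_{\min}|A|)^{n-1}\le\exp(-\frac n2\rho_{\min}|A|)$, and union-bound over $i$. The cap you construct is in fact the same set as the paper's $A^i_t$, and your slicing integral gives the identical constant.

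The one genuine difference is where the consistency inequality is anchored. The paper expands around the minimizer $x_*^i$ of $d_\Omega$ over $B(x^i,\eps)$, using Proposition \ref{prop:dist} (proved in the appendix) to identify the supergradient at $x_*^i$ with $p=(x^i-x_*^i)/\eps$ and to get $d_\Omega(x_*^i)=d_\Omega(x^i)-\eps$; the price is the factor $|x-x_*^i|\le2\eps$, hence the $\frac{4\lambda\eps^2}{R}$ in the statement. You instead apply semiconcavity directly at $x^i$ with $p=\nabla d_\Omega(x^i)$ (or a unit reachable gradient in $\partial^+d_\Omega(x^i)$ at the Lebesgue-null set of nondifferentiability points), which bypasses Proposition \ref{prop:dist} entirely and yields the sharper error $\frac{\lambda\eps^2}{R}$, comfortably inside the stated bound. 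What your route buys is economy — no auxiliary proposition and a better constant; what it costs is the (correctly handled, but necessary) care that the chosen supergradient be a \emph{unit} vector, since otherwise the cap $\{-p\cdot(x-x^i)\ge(1-t)\eps\}$ could be empty or strictly smaller than the spherical cap whose volume you compute. The paper's detour through $x_*^i$ avoids this issue because its $p$ is a unit vector by construction.
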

The proof of Lemma \ref{lem:eikonal_consistency} requires some well-known properties of the distance function, which we summarize in the following Proposition, whose proof is postponed to the appendix.
\begin{proposition}\label{prop:dist}
Let $\eps>0$ and  $x^0\in \Omega_\eps$. Let $x_*\in B(x^0,\eps)$ such that
\begin{equation}\label{eq:min}
d_\Omega(x_*) = \min_{B(x^0,\eps)}d_\Omega.
\end{equation}
Then $x_*\in \partial B(x^0,\eps)$, $d_\Omega(x_*) = d_\Omega(x^0) - \eps$,  and for all $x\in \Omega$ we have
\begin{equation}\label{eq:ineq}
d_\Omega(x) - d_\Omega(x_*) \leq p\cdot (x-x_*) + \frac{1}{R}|x-x_*|^2, \ \ \text{ where }p = \frac{x^0-x_*}{\eps}.
\end{equation}
\end{proposition}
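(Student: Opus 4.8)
The plan is to dispatch the three assertions in order of difficulty: the location and value of the minimizer are immediate from Lipschitz continuity of $d_\Omega$, while the second-order inequality \eqref{eq:ineq} requires a little care to produce the uniform constant $1/R$.

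\textbf{The minimizer.} Since $\partial\Omega$ is compact (it is the boundary of a bounded domain) and $d_\Omega$ is continuous, the minimum over the closed ball $\overline{B(x^0,\eps)}$ is attained. As $d_\Omega$ is $1$-Lipschitz, $d_\Omega(x)\ge d_\Omega(x^0)-\eps$ for every $x\in\overline{B(x^0,\eps)}$, which lower-bounds the minimum. For the matching upper bound, pick a nearest boundary point $w\in\partial\Omega$ to $x^0$ (so $|x^0-w|=d_\Omega(x^0)>\eps$ because $x^0\in\Omega_\eps$) and set $z=x^0+\eps\,\frac{w-x^0}{|w-x^0|}$; then $z\in\overline{B(x^0,\eps)}$ and $d_\Omega(z)\le|z-w|=d_\Omega(x^0)-\eps$. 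Hence $\min_{\overline{B(x^0,\eps)}}d_\Omega=d_\Omega(x^0)-\eps$, giving the value claim, and for any minimizer $x_*$ the $1$-Lipschitz bound forces $\eps=d_\Omega(x^0)-d_\Omega(x_*)\le|x^0-x_*|\le\eps$, so $|x^0-x_*|=\eps$, i.e. $x_*\in\partial B(x^0,\eps)$.

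\textbf{Identifying the direction $p$ and the first-order bound.} Let $y_*\in\partial\Omega$ be a nearest point to $x_*$, so $|x_*-y_*|=d_\Omega(x_*)$ (which is positive by the above). Then $|x^0-y_*|\le|x^0-x_*|+|x_*-y_*|=\eps+d_\Omega(x_*)=d_\Omega(x^0)$, while $|x^0-y_*|\ge d_\Omega(x^0)$ always; equality in the triangle inequality means $x_*$ lies on the segment $[x^0,y_*]$. Writing $x^0-x_*=s(x^0-y_*)$ with $s=\eps/d_\Omega(x^0)$ one reads off $x_*-y_*=\frac{d_\Omega(x_*)}{\eps}(x^0-x_*)$, hence $p=\frac{x^0-x_*}{\eps}=\frac{x_*-y_*}{|x_*-y_*|}$. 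Consequently, for every $x\in\Omega$,
\[
d_\Omega(x)\le|x-y_*|=\bigl|(x-x_*)+d_\Omega(x_*)\,p\bigr|\le d_\Omega(x_*)+p\cdot(x-x_*)+\frac{|x-x_*|^2}{2\,d_\Omega(x_*)},
\]
using $|p|=1$ and $\sqrt{1+t}\le 1+\tfrac{t}{2}$; in particular $p$ is a supergradient of $d_\Omega$ at $x_*$.

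\textbf{Upgrading to the constant $1/R$ and the main obstacle.} To replace the point-dependent coefficient $\frac{1}{2\,d_\Omega(x_*)}$ by the uniform $\frac1R$, I would invoke the stated fact that $d_\Omega$ is semiconcave with constant $1/R$: the function $g=d_\Omega-\frac1R|\cdot|^2$ is concave, $p-\frac2R x_*$ lies in its superdifferential at $x_*$ (since $p$ is a supergradient of $d_\Omega$ there), so $g(x)\le g(x_*)+\bigl(p-\tfrac2R x_*\bigr)\cdot(x-x_*)$ for all $x$, and rearranging with $|x|^2-|x_*|^2-2x_*\cdot(x-x_*)=|x-x_*|^2$ gives exactly \eqref{eq:ineq}. (Alternatively one bypasses semiconcavity using that the reach of $\partial\Omega$ is at least $R$: the open ball $B(y_*-Rp,R)$ is disjoint from $\overline\Omega$, the segment from $x\in\Omega$ to $y_*-Rp$ meets $\partial\Omega$ at some $z$ with $|z-(y_*-Rp)|\ge R$, so $d_\Omega(x)\le|x-z|\le|x-(y_*-Rp)|-R$, and expanding as above gives coefficient $\frac{1}{2(R+d_\Omega(x_*))}\le\frac1R$.) The only genuinely delicate step is the collinearity argument pinning down $p$ as the nearest-point direction at $x_*$ together with the choice of mechanism that yields the uniform constant; everything else is routine distance-function bookkeeping.
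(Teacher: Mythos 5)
Your proof is correct, and for the key inequality \eqref{eq:ineq} it takes a genuinely different route from the paper's. The paper never identifies $p$ geometrically: it takes an arbitrary vector $q$ furnished by semiconcavity satisfying $d_\Omega(x)-d_\Omega(x_*)\le q\cdot(x-x_*)+\tfrac1R|x-x_*|^2$ for all $x\in\Omega$, and then pins down $q=p$ by two separate estimates — the linearity of $d_\Omega$ along the segment from $x_*$ to $x^0$ (the same geometric fact you capture via equality in the triangle inequality) forces $q\cdot p\ge 1$, while the dynamic programming principle forces $|q|\le 1$. You instead identify $p$ explicitly as the unit vector pointing from the nearest boundary point $y_*$ of $x_*$ toward $x_*$ (collinearity of $x^0$, $x_*$, $y_*$ from the triangle-inequality equality), verify that $p$ is a supergradient by the elementary bound $d_\Omega(x)\le|x-y_*|$ together with $\sqrt{1+t}\le 1+\tfrac t2$, and then upgrade the point-dependent quadratic error $\tfrac{1}{2d_\Omega(x_*)}$ to the uniform $\tfrac1R$ either through semiconcavity or through the exterior-ball consequence of the reach hypothesis. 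Both your primary mechanism and the paper's argument lean on the same semiconcavity input (including the implicit assumption that the local supergradient inequality globalizes over $\Omega$), so neither is more rigorous on that point; your parenthetical exterior-ball variant sidesteps semiconcavity entirely, at the cost of invoking that $-p$ is the outward normal at $y_*$, and even yields the slightly better constant $\tfrac{1}{2R}$. Your route is more explicit and geometric; the paper's is shorter once the abstract supergradient is granted.
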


\begin{proof}[Proof of Lemma \ref{lem:eikonal_consistency}]
Let $\lambda\geq 1$ and let $x_*^i\in B(x^i,\eps)$ such that $d_{\Omega}(x_*^i)=\min_{B(x^i,\eps)} d_{\Omega}$. For $x^i\in \X\cap \Omega_\eps$ we can apply Proposition \ref{prop:dist} to obtain
\begin{align*} 
\lambda d_{\Omega}(x) - \lambda d_{\Omega}(x^i) + |x-x^i|&= \lambda d_{\Omega}(x) - \lambda d_{\Omega}(x_*^i) - \lambda \epsilon + |x-x^i|\\
&\leq \lambda \,p\cdot(x-x_*^i) + \frac{\lambda}{R}|x-x_*^i|^2 - \lambda \epsilon +  |x-x^i|
\end{align*}
for any $x\in B(x^i,\eps)$, where $p=(x^i-x_*^i)/\eps$. Since $|x-x_*^i| \leq 2\epsilon$ and $|x-x^i|\leq \epsilon$ we obtain
\begin{equation}\label{eq:bound1}
\lambda d_{\Omega}(x) - \lambda d_{\Omega}(x^i) + |x-x^i| \leq \lambda\, p\cdot(x-x_*^i) + \frac{4\lambda\epsilon^2}{R} - (\lambda-1) \epsilon. 
\end{equation}
For $0 \leq t\leq 1$ define the set
\[A^i_t = \left\{ x\in B(x^i,\eps)\, : \, p\cdot(x-x_*^i) \leq t\,\epsilon\right\}.\]
If \eqref{eq:dbound} fails to hold, then it follows from \eqref{eq:bound1} that the set $\X\cap A^i_t$ is empty.  The remainder of the proof is focused on estimating the volume $|A_t^i|$ in order to control the probability that $\X \cap A^i_t$ is empty.

The measure of $A^i_t$ is unchanged by taking $x^i=0$, $x_*^i=\eps e_d$, and $p=-e_d$, which gives
\[|A^i_t| =\left|B(0,\eps)\cap \{x_d \geq (1- t)\epsilon\}\right| = \eps^d \left|B(0,1) \cap \{x_d \geq 1 - t\}\right|.\]
We lower bound the volume of the spherical cap by integrating
\begin{align*}
\left|B(0,1)\cap \{x_d \geq 1-t\}\right|&= \int_{1-t}^{1}\omega_{d-1}(1-x_d^2)^{\frac{d-1}{2}}\, dx_d \\
&\geq \int_{1-t}^{1}\omega_{d-1}(1-x_d^2)^{\frac{d-1}{2}} x_d\, dx_d \\
&= \frac{\omega_{d-1}(2t)^{\frac{d+1}{2}}}{d+1}\left(1 - \tfrac{t}{2}\right)^{\frac{d+1}{2}}.
\end{align*}
Now, since $t\mapsto \left(1 - \tfrac{t}{2}\right)^{\frac{d+1}{2}}$ is convex we have
\[\left(1 - \tfrac{t}{2}\right)^{\frac{d+1}{2}} \geq 1 - \left( \tfrac{d+1}{4}\right)t \geq \frac{1}{2},\]
provided $t \leq \frac{2}{d+1}$, which is satisfied when $t \leq \frac{1}{d}$. This yields
\[|A_t^i| \geq \frac{\omega_{d-1}\eps^d(2t)^{\frac{d+1}{2}}}{2(d+1)}=:\Lambda.\]
Hence, the event that $\X\cap A^i_t$ is empty has probability bounded by
\[(1-\rho_{min}\Lambda)^{n-1}\leq \exp\left( -\rho_{min}(n-1)\Lambda \right) \leq \exp\left( -\frac12 \rho_{min}n\Lambda \right),\]
since $n\geq 2$ so $n-1 \geq \frac12 n$. The proof is completed by union bounding over $\X$.
\end{proof}

We now prove convergence of $u_\eps$ to the distance function $d_{\Omega}$ as $\eps\to 0$ and $n\to \infty$.
\begin{theorem}\label{thm:eikonal}
Assume $\epsilon\leq \frac{R}{8}$ and \eqref{eq:bdy} holds. Let $u_\eps$ solve \eqref{eq:eikonal} and let $0 < t \leq \min\{\frac{1}{d},\frac{1}{2}-\frac{4\epsilon}{R}\}$. Then 
\begin{equation}\label{eq:eikonal_rate}
-2\eps \leq u_\eps - d_\Omega\leq 2d_\Omega\left(t + \frac{4\eps}{R}\right) \ \ \text{on } \X
\end{equation}
holds with probability at least $1-2n\exp\left( -\frac{\omega_{d-1}}{4(d+1)}\rho_{min}n\eps^d (2t)^{\frac{d+1}{2}} \right)$.
\end{theorem}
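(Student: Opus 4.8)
The plan is to establish the two inequalities in \eqref{eq:eikonal_rate} separately: the lower bound $u_\eps-d_\Omega\geq-2\eps$ will be purely deterministic, depending only on \eqref{eq:bdy}, while the upper bound will hold on the high-probability event furnished by Lemma \ref{lem:eikonal_consistency}. Throughout I will use the fact, noted just after \eqref{eq:eikonal}, that $u_\eps(x^i)$ equals the graph-geodesic distance from $x^i$ to the set $\partial_\eps\X$, i.e. the infimum of $\sum_k|y_{k+1}-y_k|$ over chains $x^i=y_0,y_1,\dots,y_m$ with $|y_{k+1}-y_k|\leq\eps$ and $y_m\in\partial_\eps\X$ (interpreted as $+\infty$ if no such chain exists).

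For the lower bound I would fix $x^i\in\X$ and an arbitrary admissible chain to $\partial_\eps\X$. The triangle inequality gives $\sum_k|y_{k+1}-y_k|\geq|y_m-y_0|$, and since $d_\Omega$ is $1$-Lipschitz while $\partial_\eps\X\subset\partial_{2\eps}\Omega$ forces $d_\Omega(y_m)\leq 2\eps$, this is at least $|d_\Omega(x^i)-d_\Omega(y_m)|\geq d_\Omega(x^i)-2\eps$. Taking the infimum over chains yields $u_\eps(x^i)\geq d_\Omega(x^i)-2\eps$, with no probabilistic input (and trivially so if $u_\eps(x^i)=+\infty$).

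For the upper bound I would work on the event $E$ of Lemma \ref{lem:eikonal_consistency} and pick the comparison parameter $\lambda_0=(1-t-4\eps/R)^{-1}$; the hypotheses $\eps\leq R/8$ and $t\leq\tfrac12-4\eps/R$ guarantee $1\leq\lambda_0\leq 2$. The key point is that plugging $\lambda=\lambda_0$ into \eqref{eq:dbound} makes its right-hand side vanish identically, $\lambda_0\eps(t+4\eps/R)-(\lambda_0-1)\eps=0$, so on $E$ every $x^i\in\X_\eps$ (contained in $\Omega_\eps$ by \eqref{eq:bdy}) admits some $x\in B_0(x^i,\eps)\cap\X$ with $\lambda_0 d_\Omega(x)+|x-x^i|\leq\lambda_0 d_\Omega(x^i)$; in particular these punctured balls are nonempty, so \eqref{eq:eikonal} is well posed. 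Starting from any $x^0\in\X_\eps$ I would iterate this selection to build a chain $x^0,x^1,x^2,\dots$ with $\lambda_0 d_\Omega(x^{k+1})+|x^{k+1}-x^k|\leq\lambda_0 d_\Omega(x^k)$, continuing while $x^k\in\X_\eps$. Since $x^{k+1}\neq x^k$ the values $d_\Omega(x^k)$ strictly decrease, so the chain visits distinct points and, $\X$ being finite, must terminate, and it can only terminate at some $x^m\in\partial_\eps\X$. Telescoping gives $\sum_{k=0}^{m-1}|x^{k+1}-x^k|\leq\lambda_0\bigl(d_\Omega(x^0)-d_\Omega(x^m)\bigr)\leq\lambda_0 d_\Omega(x^0)$, hence $u_\eps(x^0)\leq\lambda_0 d_\Omega(x^0)$, and
\[
u_\eps(x^0)-d_\Omega(x^0)\leq(\lambda_0-1)d_\Omega(x^0)=\frac{t+4\eps/R}{1-t-4\eps/R}\,d_\Omega(x^0)\leq 2\Bigl(t+\tfrac{4\eps}{R}\Bigr)d_\Omega(x^0),
\]
using $1-t-4\eps/R\geq\tfrac12$; the case $x^0\in\partial_\eps\X$ is immediate from $u_\eps(x^0)=0$ and $0\leq d_\Omega(x^0)\leq 2\eps$.

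For the probability count, $\mathbb{P}(E)\geq 1-n\exp\bigl(-\tfrac{\omega_{d-1}}{4(d+1)}\rho_{\min}n\eps^d(2t)^{\frac{d+1}{2}}\bigr)$; intersecting with the well-posedness event of Proposition \ref{prop:empty}, whose complement is no larger (using $\kappa_d\leq\sqrt d$ and $(2t)^{(d+1)/2}\leq1$), and union bounding produces the stated $1-2n\exp(\cdots)$. I expect the upper bound to be the delicate part: one must recognize that $\lambda_0 d_\Omega$ is exactly the comparison function annihilating the consistency defect in \eqref{eq:dbound}, and then argue — via strict monotonicity of $d_\Omega$ along the greedy chain, or equivalently by running the graph comparison principle against the strict subsolutions $\lambda'd_\Omega$ with $\lambda'>\lambda_0$ and letting $\lambda'\downarrow\lambda_0$ — that the chain exits $\X_\eps$ into $\partial_\eps\X$ rather than stalling.
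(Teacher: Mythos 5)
Your proof is correct, and it reaches the conclusion by a genuinely different mechanism than the paper, even though the key analytic input is the same. Both arguments hinge on Lemma \ref{lem:eikonal_consistency} and on the observation that $\lambda_0 d_\Omega$ with $\lambda_0=(1-t-4\eps/R)^{-1}$ exactly cancels the consistency defect in \eqref{eq:dbound}; but where the paper transfers this to $u_\eps$ via the discrete maximum principle (locating the max of $u_\eps-\lambda d_\Omega$ for $\lambda>\lambda_0$, showing it must sit on $\partial_\eps\X$, and letting $\lambda\downarrow\lambda_0$), you instead use the representation of $u_\eps$ as the graph geodesic distance to $\partial_\eps\X$ and build an explicit greedy descent chain whose total length telescopes to $\lambda_0 d_\Omega(x^0)$ — the constructive dual of the comparison argument. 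Your treatment of the lower bound is also different and cleaner: the paper runs a supersolution comparison with $\lambda d_\Omega$, $\lambda<1$, which is why it needs Proposition \ref{prop:empty} to make \eqref{eq:super} strict, whereas you get $u_\eps\geq d_\Omega-2\eps$ deterministically from the triangle inequality, the $1$-Lipschitz bound on $d_\Omega$, and $\partial_\eps\X\subset\partial_{2\eps}\Omega$. What the paper's route buys is that it applies to \emph{any} solution of \eqref{eq:eikonal} without invoking the identification $u_\eps=\text{graph distance}$; your route leans on that identification (asserted but not proved in the paper), although your chain construction in fact supplies the missing connectivity — on the event of Lemma \ref{lem:eikonal_consistency} with $\lambda=\lambda_0$ every point of $\X_\eps$ has a nonempty punctured neighborhood and a descending path into $\partial_\eps\X$, so the solution is unique and equals the geodesic distance, closing the loop. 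The probability accounting matches the stated bound (and, as you note, Proposition \ref{prop:empty} is strictly redundant in your version, so you could even drop the factor of $2$).
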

\begin{proof}
The proof is split into three steps.

1. Let $0< t \leq \frac{1}{d}$ and assume the results of Lemma \ref{lem:eikonal_consistency} hold. Let $\lambda \geq 1$ and let $x^i\in \X$ such that $u_\eps - \lambda d_{\Omega}$ attains its maximum over $\X$ at $x^i$. Then we have that
\[u_{\eps}(x^j) - u_\eps(x^i) \leq \lambda d_{\Omega}(x^j) - \lambda d_{\Omega}(x^i)\]
for all $j$. If $x^i\in \X_\eps$, then since $u_\eps$ satisfies \eqref{eq:eikonal} we have
\[0=\min_{y\in B_0(x^i,\eps)\cap \X}\left\{ u_\eps(y) - u_\eps(x^i) + |y-x^i| \right\} \leq\min_{y\in B_0(x^i,\eps)\cap \X}\left\{ \lambda d_{\Omega}(y) - \lambda d_\Omega(x^i) + |y-x^i| \right\}.\]
By \eqref{eq:bdy} we have $x^i\in \Omega_\eps$, which allows us to apply Lemma \ref{lem:eikonal_consistency} to obtain that
\[ 0 \leq  t\lambda \epsilon  + \frac{4\lambda\epsilon^2}{R} - (\lambda-1) \epsilon.\] 
This cannot hold when when $\lambda > \left( 1 -t-\tfrac{4\epsilon}{R} \right)^{-1}$ and $t + \frac{4\epsilon}{R} < 1$. For any such $\lambda$ we must have $x^i\in \partial_\eps \X$ and so
\[\max_{\X}(u_\eps - \lambda d_\Omega)= \max_{\partial_\eps \X} (u_\eps-\lambda d_\Omega) \leq 0.\]
It follows that $u_\eps - d_\Omega \leq (\lambda-1)d_\Omega$ on $\X$.
Sending $\lambda \to \left( 1 -t-\tfrac{4\epsilon}{R} \right)^{-1}$ we obtain
\[u_\eps - d_\Omega \leq d_\Omega\left[\left( 1 -t-\frac{4\epsilon}{R} \right)^{-1} - 1\right] \ \ \text{on } \X.\]
The proof of this direction is completed by using the inequality
\[(1-x)^{-1} - 1 \leq 2x \ \ \text{for } 0 \leq x \leq \tfrac{1}{2}\]
and imposing the additional restriction that $t + \frac{4\epsilon}{R} \leq \frac{1}{2}$ to simplify the right hand side.

2. For the other direction, let $0 < \lambda < 1$. Since $d_\Omega$ is $1$-Lipschitz we have
\begin{equation}\label{eq:super}
\min_{y\in B_0(x^i,\eps)\cap \X}\left\{ \lambda d_\Omega(y) - \lambda d_\Omega(x^i) + |y-x^i| \right\}\geq (1-\lambda)\min_{y\in B_0(x^i,\eps)\cap \X}\left\{ |y-x^i| \right\} >0,
\end{equation}
provided $B_0(x^i,\eps)\cap \X$ is not empty. Thus, by \eqref{eq:bdy} and Proposition \ref{prop:empty}, \eqref{eq:super} holds for all $x^i \in \X_\eps$ with probability at least $1-n\exp\left( -\frac12\omega_d \rho_{min} n\eps^d \right)$. Let $x^i\in \X$ such that $u_\eps - \lambda d_{\Omega}$ attains its minimum over $\X$ at $x^i$. By an argument similar to the first part of the proof, \eqref{eq:eikonal} and \eqref{eq:super} imply that $x^i\in \partial_\eps \X$. Therefore $u_\eps(x^i)=0$ and by \eqref{eq:bdy} we have $x^i \in \partial_{2\eps}\Omega$. It follows that
\[\min_{x\in \X}(u_\eps(x) - \lambda d_\Omega(x)) = -\lambda d_{\Omega}(x^i) \geq -2\lambda \eps.\]
Sending $\lambda\to 1^-$ completes the proof.

3. Union bounding over the events in steps 1 and 2 above, the results of the theorem hold with probability at least
\[1-n\exp\left( -\frac{\omega_{d-1}}{4(d+1)}\rho_{min}n\eps^d (2t)^{\frac{d+1}{2}} \right) -n\exp\left( -\frac12\omega_d \rho_{min} n\eps^d \right).\]
The first exponential is larger, provided 
\[\frac{\omega_{d-1}}{2(d+1)}(2t)^{\frac{d+1}{2}} \leq \omega_d.\]
Recalling $\omega_{d-1}/\omega_d \leq \sqrt{d}$, this is true when $2t\leq 1$, which is implied by the assumption that $t \leq \frac{1}{d}$ and $d\geq 2$. Therefore, \eqref{eq:eikonal_rate} holds with probability at least $1-2n\exp\left( -\frac{\omega_{d-1}}{4(d+1)}\rho_{min}n\eps^d (2t)^{\frac{d+1}{2}} \right)$.
\end{proof}

\begin{remark}
We now provide an interpretation of the result of Theorem \ref{thm:eikonal}. 
To obtain the conditions under which the error rate is linear in $\eps$ we take $t = \eps$ and obtain
\[-2\eps \leq u_\eps - d_\Omega \leq 2d_\Omega\left( 1 + \frac{4}{R} \right)\eps\]
holds with probability at least $1-2n^{-2}$ provided that the length scale $\eps$ satisfies:
\begin{equation}\label{eq:epsscale}
\eps \geq \left( \frac{6(d+1)\log(n)}{2^{\frac{d+1}{2}}\omega_{d-1}\rho_{min}n} \right)^{\frac{2}{3d+1}}.
\end{equation}
Taking the smallest allowable $\eps$ above, we obtain that $u_\eps$ converges to the distance function $d_\Omega$ at a convergence rate of $\O(n^{-2/(3d+1)})$, up to logarithmic factors. We mention that we have numerically seen convergence rates closer to $\O(\eps^2)$ for $\eps$ much larger than the lower bound in \eqref{eq:epsscale}. This may indicate that, in practice, a sharper convergence rate, as a function of $n$, could be obtained by choosing larger value for $\eps$.

To obtain a sufficient condition for uniform convergence alone we need conditions under which we can take $t_n \to 0$ as $n \to \infty$ and $\eps_n \to0$ for the estimate in Theorem \ref{thm:eikonal} to hold with high probability.
We see that this is possible whenever 
\begin{equation}\label{eq:conn}
\lim_{n\to \infty} \frac{n\eps_n^d}{\log(n)} = \infty.
\end{equation}
Then by the Borel-Cantelli lemma we have that $u_{\eps_n}\to d_\Omega$ uniformly on $\X$ as $n\to \infty$ with probability one.  
\label{rem:length}
\end{remark}

\subsubsection{Numerical results}\label{sssec:1storderPDE}

\begin{figure}[!t]
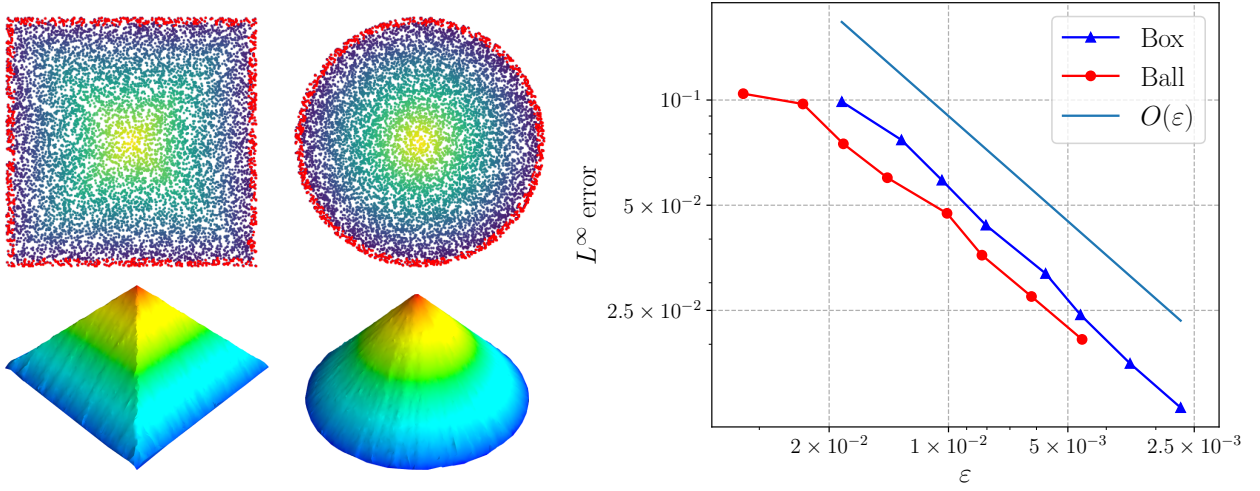

\centering
\begin{minipage}{0.45\textwidth}
\includegraphics[width=0.45\textwidth]{eikonal_box_pts.png} \hspace{3mm}
\includegraphics[width=0.45\textwidth]{eikonal_disk_pts.png}
\vspace{2mm}

\includegraphics[width=0.47\textwidth]{eikonal_box.png} \hspace{3mm}
\includegraphics[width=0.4\textwidth]{eikonal_disk.png}
\end{minipage}
\begin{minipage}{0.54\textwidth}
\subfloat{\includegraphics[clip=true,trim=12 15 13 12, width=\textwidth]{eikonal_plot.pdf}}
\end{minipage}
\caption{Plots of the solution to the graph eikonal equation \eqref{eq:eikonal} for $n=10^4$ for both the box and ball domains, and error plots for varying $\eps$ averaged over $100$ trials. The red points indicate the detected boundary points used in solving \eqref{eq:eikonal}. We see convergence rates better than the linear $O(\eps)$ rate guaranteed by Theorem \ref{thm:eikonal}.  }
\label{fig:eikonal}
\end{figure}

We tested the $O(\eps)$ convergence rate from Theorem \ref{thm:eikonal} on a box $\Omega=[0,1]^2$ and ball $\Omega=B(0,1)$ domain. We used $n=2^{10}$ up to $n=2^{17}=131,072$ \emph{i.i.d.}~random variables uniformly distributed on the domain, and chose $\eps$ adaptively  based on the distance to the $k^{\rm th}$ nearest neighbor, where $k=10 n^{\frac{1}{5}}$. This is equivalent to the scaling $\eps\sim n^{-\frac25}$, since $k \sim n\eps^2$. We detected the boundary by thresholding $\hat{d}_r(x)$ at $\frac{3\eps}{2}$, where $r$ is the distance from $x$ to its $k^{\rm th}$ nearest neighbor, and $\eps$ satisfies $36\pi \rho n\eps^2=k$. In Figure \ref{fig:eikonal} we show the solution of \eqref{eq:eikonal} for $n=10^4$ as both a colored point cloud, and visualized as a surface, computed by constructing a triangulated mesh over the point cloud. In the plot in Figure \ref{fig:eikonal} we show the $L^\infty$ error $|u_\eps-d_\Omega|$ versus $\eps$ averaged over $100$ trials. Both domains track very closely to the theoretical $O(\eps)$ convergence rates.

\subsection{Second-order equations}
\label{sec:2ndOrder}

We now turn to second-order equations on point clouds with general boundary conditions.  In particular, we show how our estimation $\hat{\nu}_\eps$ of the inward unit normal vector $\nu$ can be used to set general boundary conditions involving normal derivatives. We recall that Theorem \ref{thm:normal_vector} shows that $\hat{\nu}_\eps$ is an $O(\eps)$ approximation of $\nu$ with high probability. In order to state the results in the most general setting, we simply assume there exists a constant $C_\nu$ such that
\begin{equation}\label{eq:nu}
|\hat{\nu}_\eps(x^i) - \nu(x^i)| \leq C_\nu \eps
\end{equation}
for all $x^i \in \X\cap \partial_{2\eps}\Omega$. We recall that Theorem \ref{thm:normal_vector} shows that the bound \eqref{eq:nu} holds with high probability as long as $\epsilon \geq C ( \log n/n)^{1/(d+2)}$. This lower bound on $\epsilon$ is also required for all the results in this section to hold with high probability. Indeed, Theorems \ref{thm:PointwiseConsistency} and \ref{thm:2rate} both require $n\epsilon^{d+2} \geq C\log n$ for a sufficently large constant $C$, which amounts to the same lower bound on $\epsilon$ up to constants.

The graph PDEs we solve will involve the graph Laplacian $\L_\eps$, which is defined by
\begin{equation}\label{eq:GL}
\L_\eps u(x^i) = \frac{2}{\sigma_\eta n\eps^{d+2}}\sum_{j=1}^n \eta\left( \frac{|x^i-x^j|}{\eps} \right)(u(x^j)-u(x^i)),
\end{equation}
where $\sigma_\eta = \int_{\R^d}\eta(|z|)z_1^2\, dz$, and $\eta$ is smooth, compactly supported on $[0,1]$, and satisfies $\int_{\R^d}\eta(|z|)\, dz =1$. We define the normal derivative $\nabla_\nu u(x)=\nabla u(x)\cdot \nu$ and the approximate normal derivative $\widehat{\nabla}_\nu$ by
\begin{equation}\label{eq:normalderGL}
\widehat{\nabla}_\nu u(x^i) = \frac{u(p_n(x^i+\eps\hat{\nu}_\eps(x^i))) - u(x^i)}{\eps},
\end{equation}
where $p_n:\Omega \to \X$ is the closest point map. We consider the following graph Poisson equation with Robin-type boundary conditions
\begin{equation}\label{eq:robinGL}
\left.\begin{aligned}
\L_\eps u(x^i) &= f(x^i),&&\text{if } x^i \in \X_\eps\\ 
\gamma u(x^i) - (1-\gamma)\widehat{\nabla}_{\nu}u(x^i) &=g(x^i),&&\text{if } x^i\in \partial_\eps \X.
\end{aligned}\right\}
\end{equation}
Here, $\gamma\in (0,1]$ and $f$ and $g$ are given smooth functions. In this section, we show that the solution of \eqref{eq:robinGL} converges as $n\to \infty$ and $\eps\to 0$ to the solution of the Robin problem 
\begin{equation}\label{eq:robin}
\left\{\begin{aligned}
-\rho^{-1}\div(\rho^2 \nabla u) &= f,&&\text{in }\Omega\\ 
\gamma u -(1-\gamma)\nabla_\nu u &=g,&&\text{on }\partial\Omega.
\end{aligned}\right.
\end{equation}

\begin{remark}\label{rem:extension}
We note that in order to solve the graph PDE \eqref{eq:robinGL} given a \emph{nonconstant} boundary condition $g:\partial \Omega \to \R$, we need a way to define an extension $g_\epsilon :\partial_{2\epsilon}\Omega \to \R$ that is uniformly close to $g$ within the boundary tube $\partial_{2\epsilon}\Omega$. One way to do this is to define the closest point extension $g_\epsilon(x)=g(x_*)$ where $x_* = \text{argmin}_{y\in \partial\Omega}|x-y|$. The closest point $x_*$ is unique for $x\in \partial_{2\epsilon}\Omega$ when $2\epsilon<R$ and if $g$ is Lipschitz then $|g_\epsilon(x)-g(x_*)|\leq C\epsilon$ for $x\in \partial_{2\epsilon} \Omega$.  It is important to note, however, that the closest point extension requires knowledge of the boundary $\partial\Omega$. In applications where the boundary $\partial\Omega$ is not known \emph{a priori}, and is instead estimated from the point cloud, such as in data depth in machine learning, we can only handle constant boundary conditions (i.e., $g=0$ on $\partial\Omega$ for data depth).
\end{remark}

Throughout this section we assume $\partial\Omega$ and $\rho$ are smooth. By elliptic regularity, the solution $u$ of \eqref{eq:robin} is smooth. The constants in this section will be denoted by $C,C_1,C_2,\dots>0$, and may depend on $\gamma,u,d,f,g,\rho,\Omega$ and $\partial\Omega$, and can change from line to line.

The proof of convergence is based on a maximum principle for \eqref{eq:robinGL}. 
\begin{lemma}\label{lem:maxprinc}
If $u$ satisfies
\begin{equation}\label{eq:robinSUB}
\left.\begin{aligned}
-\L_\eps u(x^i) &< 0,&&\text{if } x^i\in X_\eps\\ 
\gamma u(x^i) - (1-\gamma)\widehat{\nabla}_{\nu}u(x^i) &\leq 0,&&\text{if } x^i \in \partial_\eps \X
\end{aligned}\right\}
\end{equation}
then $u \leq 0$ on $\X$.
\end{lemma}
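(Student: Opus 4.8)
The plan is to establish the discrete maximum principle by a standard argument: assume for contradiction that $u$ attains a positive maximum somewhere on $\X$, and derive a contradiction from whichever of the two conditions applies at the maximizing point.

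\textbf{Setup.} First I would let $x^i \in \X$ be a point where $u$ attains its maximum over the (finite) point cloud $\X$, and suppose toward a contradiction that $u(x^i) > 0$. There are two cases depending on whether $x^i \in \X_\eps$ or $x^i \in \partial_\eps \X$.

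\textbf{Interior case.} Suppose $x^i \in \X_\eps$. Since $u(x^j) \leq u(x^i)$ for all $j$, every term $u(x^j) - u(x^i)$ in the sum \eqref{eq:GL} defining $\L_\eps u(x^i)$ is $\leq 0$, and the weights $\eta(|x^i-x^j|/\eps)$ are nonnegative. Hence $\L_\eps u(x^i) \leq 0$, so $-\L_\eps u(x^i) \geq 0$, which directly contradicts the strict inequality $-\L_\eps u(x^i) < 0$ in \eqref{eq:robinSUB}. (Note the strictness in the subsolution hypothesis is exactly what makes this case immediate, with no need to track whether the graph is connected or whether the maximum is attained at more than one point.)

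\textbf{Boundary case.} Suppose instead $x^i \in \partial_\eps \X$. Here I use the approximate normal derivative \eqref{eq:normalderGL}: since $p_n(x^i + \eps\hat\nu_\eps(x^i)) \in \X$ and $u(x^i)$ is the maximum of $u$ over $\X$, we have $u(p_n(x^i + \eps\hat\nu_\eps(x^i))) - u(x^i) \leq 0$, so $\widehat{\nabla}_\nu u(x^i) \leq 0$. Combined with $\gamma > 0$ and $u(x^i) > 0$ (and $1-\gamma \geq 0$), we get
\[
\gamma u(x^i) - (1-\gamma)\widehat{\nabla}_\nu u(x^i) \geq \gamma u(x^i) > 0,
\]
contradicting the boundary inequality in \eqref{eq:robinSUB}. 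Therefore the assumption $u(x^i) > 0$ is untenable in both cases, so $\max_\X u \leq 0$, i.e. $u \leq 0$ on $\X$.

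\textbf{Main obstacle.} There is essentially no obstacle here; the only subtlety is the edge case $\gamma = 1$, where $1-\gamma = 0$ and the boundary condition reduces to $u(x^i) \leq 0$, which still gives the contradiction with $u(x^i) > 0$ directly. One should also note that $\X$ being finite guarantees the maximum is attained, and that nonnegativity of the kernel weights $\eta(|z|)$ is used in the interior case; both are immediate from the standing assumptions. I would write the proof in the contradiction form above rather than via a $\lambda$-perturbation argument, since the strict inequality in the interior hypothesis makes the direct argument clean.
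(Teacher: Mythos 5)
Your proof is correct and is essentially the paper's argument in contradiction form: the strict interior inequality rules out an interior maximum (since at a max every weighted difference is nonpositive), and the boundary condition together with $\widehat{\nabla}_\nu u(x^i)\leq 0$ at a boundary maximum and $\gamma>0$ forces the maximum to be nonpositive. No gaps; the paper just phrases it directly (the max must lie in $\partial_\eps\X$, where $\gamma u(x^i)\leq(1-\gamma)\widehat{\nabla}_\nu u(x^i)\leq 0$) rather than by contradiction.
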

\begin{proof}
Let us write $w_{ij}=\eta\left( \frac{|x^i-x^j|}{\eps} \right)$ and $d_i = \sum_{j=1}^n w_{ij}$. Then by \eqref{eq:robinSUB} we have
\[d_i u(x^i) - \sum_{j=1}^n w_{ij}u(x^j) =\sum_{j=1}^n w_{ij}(u(x^i)-u(x^j)) < 0\]
for all $x^i\in X_\eps$. It follows that $d_i > 0$, and so $u(x^i) < \frac{1}{d_i}\sum_{j=1}^n w_{ij}u(x^j)$. Therefore, $u$ attains its maximum over $\X$ at some $x^i \in \partial_\eps \X$, and so
\[\gamma u(x^i) \leq (1-\gamma)\frac{u(p_n(x^i+\eps\hat{\nu}_\eps(x^i)))-u(x^i)}{\eps} \leq 0.\]
Since $\gamma>0$ we have $u(x^i)\leq 0$.
\end{proof}

The convergence proof also requires pointwise consistency for the graph Laplacian. We refer to \cite[Remark 5.26]{calder2020Calculus} for the following result.
\begin{theorem}\label{thm:PointwiseConsistency}
Let $u\in C^4(\Omega)$, $\eps>0$ and $0<\lambda\leq \eps^{-1}$. Then 
\begin{equation}\label{eq:PC}
\max_{x^i\in \Omega_\eps \cap \X}\left|\L_{\eps}u(x^i) - \rho(x^i)^{-1}\div(\rho^2 \nabla u)\vert_{x_i}\right| \leq C_1\|u\|_{C^4(\Omega)}(\eps^2+\lambda)
\end{equation}
holds with probability at least $1-2 n \exp\left( -C_2 n\eps^{d+2}\lambda^2 \right)$.
\end{theorem}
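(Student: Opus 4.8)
The plan is to split the error into a deterministic bias term, coming from replacing the empirical sum by its conditional expectation, and a stochastic fluctuation term controlled by Bernstein's inequality. Since $\eta$ is supported in $[0,1]$, for $x^i\in\Omega_\eps$ the ball $B(x^i,\eps)$ lies inside $\Omega$, so no boundary truncation occurs and I can compare $\L_\eps u(x^i)$ with the nonlocal average
\[
\bar\L_\eps u(x) := \frac{2}{\sigma_\eta \eps^{d+2}}\int_\Omega \eta\!\left(\tfrac{|x-y|}{\eps}\right)(u(y)-u(x))\,\rho(y)\,dy,
\]
noting that $\E[\L_\eps u(x^i)\mid x^i] = \tfrac{n-1}{n}\bar\L_\eps u(x^i)$, the $\tfrac{n-1}{n}$ factor producing only an $O(\|u\|_{C^2}/n)$ error that is absorbed into the claimed bound.

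First I would prove the deterministic consistency $|\bar\L_\eps u(x)-\rho(x)^{-1}\div(\rho^2\nabla u)(x)|\le C\|u\|_{C^4(\Omega)}\eps^2$ for $x\in\Omega_\eps$. After the change of variables $y=x+\eps z$ this becomes $\bar\L_\eps u(x)=\frac{2}{\sigma_\eta\eps^2}\int_{B(0,1)}\eta(|z|)(u(x+\eps z)-u(x))\rho(x+\eps z)\,dz$; I would Taylor expand both $u$ and $\rho$ and multiply out. The terms of order $\eps^1$ and $\eps^3$ in the integrand are odd in $z$ and integrate to zero because $\eta(|z|)$ is radial, the fourth-order remainder contributes $O(\eps^2\|u\|_{C^4})$ once divided by $\eps^2$, and the surviving second-order terms give, using $\int\eta(|z|)zz^\top\,dz=\sigma_\eta I$ and $\int\eta(|z|)\,dz=1$, exactly $2\nabla\rho\cdot\nabla u+\rho\,\Delta u$, which equals $\rho^{-1}\div(\rho^2\nabla u)$.

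Second, for the stochastic part I would fix $x^i$ and write $\L_\eps u(x^i)=\frac1n\sum_{j\ne i}Z_j$ with $Z_j:=\frac{2}{\sigma_\eta\eps^{d+2}}\eta(|x^i-x^j|/\eps)(u(x^j)-u(x^i))$, which are i.i.d.\ conditionally on $x^i$. On the support one has $|u(x^j)-u(x^i)|\le\|u\|_{C^4}\eps$, hence $|Z_j|\le C\|u\|_{C^4}\eps^{-d-1}$ and $\E[Z_j^2\mid x^i]\le C\|u\|_{C^4}^2\rho_{\max}\eps^{-d-2}$; applying Bernstein's inequality to the centered sum with deviation $\|u\|_{C^4}\lambda$, and using $\lambda\le\eps^{-1}$ to keep the linear term in the Bernstein denominator comparable to the variance, gives $\P(|\L_\eps u(x^i)-\bar\L_\eps u(x^i)|>\|u\|_{C^4}\lambda)\le 2\exp(-C_2 n\eps^{d+2}\lambda^2)$ with $C_2$ depending only on $d,\eta,\rho_{\max}$. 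A union bound over the at most $n$ points $x^i\in\X\cap\Omega_\eps$, combined with the bias estimate through the triangle inequality, then yields \eqref{eq:PC}.

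I do not anticipate a genuine obstacle — the statement is classical and is cited from \cite{calder2020Calculus} — but the steps requiring care are: (i) restricting to $\Omega_\eps$ so the kernel integral never sees $\partial\Omega$; (ii) exploiting the radial symmetry of $\eta$ to kill the first- and third-order moments, which is what makes the bias $O(\eps^2)$ rather than $O(\eps)$; and (iii) using the Lipschitz gain $|u(x^j)-u(x^i)|=O(\eps)$ in the variance bound, which is exactly what produces the $\eps^{d+2}$ (rather than $\eps^d$) scaling inside the exponent, together with the hypothesis $\lambda\le\eps^{-1}$ that keeps Bernstein's denominator variance-dominated.
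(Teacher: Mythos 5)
Your argument is correct and is exactly the standard bias--variance proof of pointwise consistency for graph Laplacians: the paper does not prove this theorem but quotes it from \cite{calder2020Calculus}, and the proof there proceeds just as you describe (Taylor expansion with radial symmetry killing the odd moments to get the $O(\eps^2)$ bias, Bernstein with the $O(\eps)$ Lipschitz gain in the variance to get the $\eps^{d+2}\lambda^2$ exponent, then a union bound). The only point worth stating explicitly is that the $O(\|u\|_{C^2}/n)$ error from the $\tfrac{n-1}{n}$ factor is absorbed only in the regime where the probability bound is non-vacuous (i.e.\ $n\eps^{d+2}\lambda^2\gtrsim\log n$, which forces $1/n\lesssim\lambda$ since $\eps\lambda\leq 1$), but that is routine.
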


We now establish our main convergence result in this section. 
\begin{theorem}\label{thm:2rate}
Assume \eqref{eq:bdy} and \eqref{eq:nu}. Let $\eps>0$ and assume $C_\nu \eps \leq 1$. Let $u$ be the solution of \eqref{eq:robin} with $\gamma>0$, and let $u_\eps$ satisfy \eqref{eq:robinGL}.  Then for any $0 < \lambda\leq \eps^{-1}$ and $t>0$, the event that
\begin{equation}\label{eq:2rate}
|u(x^i) - u_\eps(x^i)| \leq C\left(\|\gamma u - (1-\gamma)\nabla_\nu u-g\|_{L^\infty(\partial_{2\eps}\Omega)} + (1-\gamma)(t+C_\nu\eps+\eps)+ \eps^2 + \lambda\right) 
\end{equation}
holds for all $x^i \in \X$ has probability at least $1-n\exp\left( -\frac{1}{6}\omega_d\rho_{min}n\eps^dt^d\right) - 2n\exp\left( -Cn\eps^{d+2}\lambda^2\right)$.
\end{theorem}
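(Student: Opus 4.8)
The plan is a discrete comparison‑principle (barrier) argument based on Lemma~\ref{lem:maxprinc}, comparing $u_\eps$ with the continuum solution $u$ restricted to $\X$ and corrected by a fixed smooth barrier. First I would fix $\psi\in C^\infty(\overline\Omega)$ solving $-\rho^{-1}\div(\rho^2\nabla\psi)=1$ in $\Omega$ with $\gamma\psi-(1-\gamma)\nabla_\nu\psi\equiv M$ on $\partial\Omega$ for a large constant $M$, and then add a constant so that in addition $\psi\geq 1$ on $\overline\Omega$; such a $\psi$ exists by elliptic regularity and has $\|\psi\|_{C^4(\overline\Omega)}$ controlled by $\Omega,\rho,\gamma,d$. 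By Theorem~\ref{thm:PointwiseConsistency} applied to $\psi$ we get $-\L_\eps\psi\geq\tfrac12$ on $\X\cap\Omega_\eps$ once $\eps^2+\lambda$ is below a threshold depending only on those data; if instead $\eps^2+\lambda$ exceeds that threshold, the right‑hand side of \eqref{eq:2rate} dominates $\|u\|_{L^\infty}+\|u_\eps\|_{L^\infty}$ for a suitable $C$ (the latter bounded a priori via the discrete maximum principle), so the estimate is trivial, and I may assume from now on that $\eps$ and $\lambda$ are small.

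Next I would isolate two favorable events. Let $E_1$ be the event that the pointwise‑consistency estimate \eqref{eq:PC} holds simultaneously for $u$ and $\psi$; by Theorem~\ref{thm:PointwiseConsistency} and a union bound, $\P(E_1)\geq 1-2n\exp(-C_2 n\eps^{d+2}\lambda^2)$ after relabeling $C_2$. Let $E_2$ be the event that $B(y^i,\eps t)\cap\X\neq\varnothing$ for every $x^i\in\partial_\eps\X$, where $y^i:=x^i+\eps\hat{\nu}_\eps(x^i)$. Since $C_\nu\eps\leq 1$, \eqref{eq:nu} gives $\hat{\nu}_\eps(x^i)\cdot\nu(x^i)\geq\tfrac12$, and because $\nu(x^i)=\nabla d_\Omega(x^i)$ and $d_\Omega$ is semiconcave with constant $1/R$ we get $d_\Omega(y^i)\geq d_\Omega(x^i)+\eps\,\hat{\nu}_\eps(x^i)\cdot\nabla d_\Omega(x^i)-\tfrac{\eps^2}{R}\geq\tfrac{\eps}{4}$ for $\eps$ small; in particular $y^i\in\Omega$ (so $p_n(y^i)$ is defined) and $|B(y^i,\eps t)\cap\Omega|\geq\tfrac13\omega_d(\eps t)^d$, whence the usual \emph{i.i.d.}~and union‑bound computation yields $\P(E_2)\geq 1-n\exp(-\tfrac16\omega_d\rho_{\min}n\eps^d t^d)$.

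Working on $E_1\cap E_2$, I would then estimate the two residuals. On the boundary, for $x^i\in\partial_\eps\X$ write $z^i=p_n(y^i)\in\X$ with $|z^i-y^i|\leq\eps t$; Taylor expanding $u$ at $x^i$ first to $y^i$ and then from $y^i$ to $z^i$ gives $\widehat{\nabla}_\nu u(x^i)=\tfrac{u(z^i)-u(x^i)}{\eps}=\nabla u(x^i)\cdot\hat{\nu}_\eps(x^i)+O(\|u\|_{C^2}(\eps+t))$, and replacing $\hat{\nu}_\eps(x^i)$ by $\nu(x^i)$ costs $O(\|u\|_{C^1}C_\nu\eps)$ by \eqref{eq:nu}, so, using $\partial_\eps\X\subset\partial_{2\eps}\Omega$ from \eqref{eq:bdy},
\[
\big|\gamma u(x^i)-(1-\gamma)\widehat{\nabla}_\nu u(x^i)-g(x^i)\big|\leq \|\gamma u-(1-\gamma)\nabla_\nu u-g\|_{L^\infty(\partial_{2\eps}\Omega)}+C(1-\gamma)\big(t+C_\nu\eps+\eps\big)=:B ,
\]
while the same expansion for $\psi$ (with $\psi\geq1$ and $\gamma\psi-(1-\gamma)\nabla_\nu\psi\geq M$) gives $\gamma\psi(x^i)-(1-\gamma)\widehat{\nabla}_\nu\psi(x^i)\geq\tfrac12$ for $\eps,t$ small. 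On the interior, \eqref{eq:robin}, \eqref{eq:robinGL} and \eqref{eq:PC} give $|\L_\eps u(x^i)-\L_\eps u_\eps(x^i)|\leq C_1\|u\|_{C^4}(\eps^2+\lambda)=:B'$ for $x^i\in\X_\eps\subset\Omega_\eps$. Now set $A=2(B+B')+\delta$ for $\delta>0$ and $\Phi^{\pm}=\pm(u-u_\eps)-A\psi$. On $\X_\eps$ we get $-\L_\eps\Phi^{\pm}\leq B'-\tfrac A2<0$; on $\partial_\eps\X$, since $u_\eps$ satisfies \eqref{eq:robinGL} exactly, $\gamma\Phi^{\pm}(x^i)-(1-\gamma)\widehat{\nabla}_\nu\Phi^{\pm}(x^i)\leq B-\tfrac A2\leq0$. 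Lemma~\ref{lem:maxprinc} then yields $\Phi^{\pm}\leq0$ on $\X$, i.e.\ $|u-u_\eps|\leq A\|\psi\|_{L^\infty}$; letting $\delta\downarrow0$ and inserting the values of $B,B'$ gives \eqref{eq:2rate}, with probability at least $\P(E_1\cap E_2)$, which is the stated bound.

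I expect the main obstacle to be the bookkeeping in the third step: controlling the boundary residual, which must simultaneously absorb the error from using $\hat{\nu}_\eps$ in place of $\nu$, the error from snapping $y^i$ to its nearest sample point $p_n(y^i)$, and the finite‑difference Taylor error, and then verifying that one fixed barrier $\psi$ can be made a strict supersolution of both the interior graph equation and the discretized Robin condition at once — which is why $\psi$ is taken with a large boundary datum $M$ and then lifted by a constant. The probabilistic ingredients are routine given Theorem~\ref{thm:PointwiseConsistency} and the \emph{i.i.d.}~sampling.
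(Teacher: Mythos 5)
Your proposal is correct and follows essentially the same route as the paper's proof: control the discretized normal derivative via the nearest-sample event (giving the $t+C_\nu\eps+\eps$ boundary residual and the first probability term), invoke Theorem \ref{thm:PointwiseConsistency} for the interior residual (the second probability term), and close with a barrier plus the discrete maximum principle of Lemma \ref{lem:maxprinc}. The only cosmetic difference is your choice of barrier ($\psi$ with large Robin datum $M$, lifted to be $\geq 1$) versus the paper's $K\varphi$ with $\varphi$ solving the Robin problem with data $1,1$ and $K$ chosen large; these are interchangeable.
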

\begin{proof}
The proof is split into three steps.

1. Note that $x^i + \eps \nu \in \Omega_\eps$. By \eqref{eq:nu} we have
\[|x^i + \eps\hat{\nu}_\eps(x^i) - (x^i + \eps\nu)| = \eps|\hat{\nu}_\eps - \nu| \leq C_\nu \eps^2.\]
Since $C_\nu \eps \leq 1$ we have $x^i + \eps\hat{\nu}_\eps(x^i) \in \Omega$. Therefore, we can compute
\begin{align*}
\widehat{\nabla}_{\nu} u(x^i) &= \frac{u(p_n(x^i + \eps\hat{\nu}_\eps(x^i))) - u(x^i)}{\eps}\\
&=\frac{u(x^i + \eps\nu(x^i)) - u(x^i)}{\eps} + \O\left( \eps^{-1}|p_n(x^i+\eps\hat{\nu}_\eps(x^i))-(x^i+\eps\hat{\nu}_\eps(x^i))| + C_\nu\eps \right)\\
&=\nabla_\nu u(x^i) + \O\left( \eps^{-1}|p_n(x^i+\eps\hat{\nu}_\eps(x^i))-(x^i+\eps\hat{\nu}_\eps(x^i))| + C_\nu\eps + \eps \right).
\end{align*}
Let $t\geq 0$. If $|p_n(x^i+\eps\hat{\nu}_\eps(x^i))-(x^i+\eps\hat{\nu}_\eps(x^i))|  \geq t\eps$ then the set $B(x^i+\eps\hat{\nu}_\eps(x^i),t\eps)\cap \X$ is empty, which by Lemma \ref{lem:p_bounds} has probability less than $1-\exp\left( -\frac{1}{3}\omega_d\rho_{min}(n-1)\eps^dt^d \right)$. Union bounding over $x^i$ and using that $n-1 \geq \frac{1}{2}n$ for $n\geq 2$, we find that
\[\widehat{\nabla}_{\nu} u(x^i) = \nabla_\nu u(x^i) + \O\left( t + C_\nu \eps + \eps \right)\]
holds for all $x^i \in \partial_\eps \X\subset \partial_{2\eps}\Omega$ with probability at least $1-n\exp\left( -\frac{1}{6}\omega_d\rho_{min}n\eps^dt^d\right)$. A similar computation can be made for $\varphi$, and so we find that
\begin{equation}\label{eq:uBC}
|\widehat{\nabla}_{\nu} \phi(x^i)  - \nabla_\nu \varphi(x^i)|,|\widehat{\nabla}_{\nu} u(x^i)  - \nabla_\nu u(x^i)| \leq C(t + C_\nu\eps + \eps) 
\end{equation}
for all $x^i \in \partial_\eps \X$.

2. Let $0 < \lambda \leq \eps^{-1}$. Let $\phi$ be the solution of
\begin{equation}\label{eq:barrier}
\left.\begin{aligned}
-\rho^{-1}\div(\rho^2\nabla \phi)  &=1 &&\text{in }\Omega\\ 
\gamma \phi -(1-\gamma)\nabla_\nu \phi &=1&&\text{on }\partial\Omega.
\end{aligned}\right\}
\end{equation}
By assumption, $u,\varphi\in C^4(\bar{\Omega})$, and so by Theorem \ref{thm:PointwiseConsistency}, with probability at least $1-2 n \exp\left( -C n\eps^{d+2}\lambda^2 \right)$ we have
\begin{equation}\label{eq:uPC}
|\L_\eps \phi(x^i) - 1\:|\:,|\L_\eps u(x^i) - f(x^i)| \leq C(\eps^2 + \lambda)
\end{equation}
whenever $\dist(x^i,\partial\Omega)\geq \eps$. 

3. Let us now define
\[w(x^i) = u(x^i) - u_\eps(x^i) - K\varphi(x^i),\]
for $K$  to be determined. Then by \eqref{eq:uPC} and \eqref{eq:uBC} we have
\[\L_\eps w(x^i) \leq -K + C(\eps^2 + \lambda)\]
for $x^i \in X_\eps$ and
\[\gamma w(x^i) - (1-\gamma) \widehat{\nabla}_{\nu} w(x^i) \leq -K + \|\gamma u - (1-\gamma)\nabla_\nu u -g\|_{L^\infty(\partial_{2\eps}\Omega)} +C(1-\gamma)(t + C_\nu \eps + \eps)\]
for $x^i \in \partial_\eps \X$. For any choice of $K$ satisfying
\[K > C\left(\|\gamma u - (1-\gamma)\nabla_\nu u-g\|_{L^\infty(\partial_{2\eps}\Omega)} + (1-\gamma)(t+C_\nu\eps+\eps)+ \eps^2 + \lambda\right) \]
we can apply Lemma \ref{lem:maxprinc} to find that $w \leq 0$, and so $u - u_\eps \leq C K\|\varphi\|_{L^\infty(\Omega)}$. The other direction of the proof is similar.
\end{proof}

\begin{remark}
The proof of Theorem \ref{thm:2rate} relies on the maximum principle (Lemma \ref{lem:maxprinc}), which requires $\gamma>0$. Thus, the result does not apply to the pure Neumann case $\gamma=0$. This case would require special attention to ensure the compatibility condition 
\[\int_{\Omega}f \, dx= \int_{\partial \Omega} g\, dS\]
holds at both the continuum and discrete level. 
\label{rem:neumann}
\end{remark}

\begin{remark}
Consider the Dirichlet problem in Theorem \ref{thm:2rate} by setting $\gamma=1$. If we set $\lambda=\eps^2$, then we obtain the rate
\[|u - u_\eps| \leq C(\|u - g\|_{L^\infty(\partial_{\eps}\Omega)} + \eps^2)\]
with probability at least $1-2n\exp\left( -Cn\eps^{d+6} \right)$. If we are able to extend the boundary conditions $g$ to $\Omega$ so that $\|u - g\|_{L^\infty(\partial_{\eps}\Omega)}\leq C\eps^2$, then we obtain a second-order $\O(\eps^2)$ convergence rate in Theorem \ref{thm:2rate}.
\label{rem:secondorder}
\end{remark}

\begin{figure}[!t]
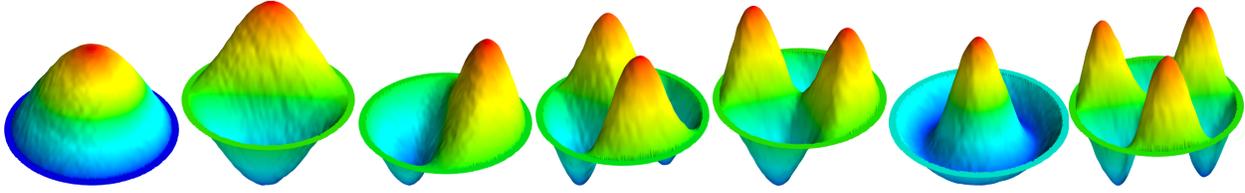

\centering
\subfloat{\hspace{-1mm}
\includegraphics[width=0.14\textwidth]{eigen0_graph_1e5.png} \hspace{-1.5mm}
\includegraphics[width=0.14\textwidth]{eigen1_graph_1e5.png} \hspace{-1.5mm}
\includegraphics[width=0.14\textwidth]{eigen2_graph_1e5.png} \hspace{-1.5mm}
\includegraphics[width=0.14\textwidth]{eigen3_graph_1e5.png} \hspace{-1.5mm}
\includegraphics[width=0.14\textwidth]{eigen4_graph_1e5.png} \hspace{-1.5mm}
\includegraphics[width=0.145\textwidth]{eigen5_graph_1e5.png}\hspace{-1mm}
\includegraphics[width=0.14\textwidth]{eigen6_graph_1e5.png}}
\caption{First 7 Laplacian Dirichlet eigenfunctions on the disk computed via approximation with graph Laplacian eigenvectors with $n=10^5$ points.}
\label{fig:eigenfunctions}
\end{figure}

\begin{remark}
Finally, we remark that our boundary detection method allows us to consider Dirichlet eigenfunctions of the Laplacian on the point cloud $\X$ by solving the eigenfunction problem
\begin{equation}\label{eq:eigenGL}
\left.\begin{aligned}
\L_\eps u(x^i) &= \lambda u(x^i),&&\text{if } x^i \in X_\eps\\ 
 u(x^i)&=0,&&\text{if } x^i\in \partial_\eps \X
\end{aligned}\right\}
\end{equation}
The Dirichlet eigenfunctions of $\L_\eps$ would naturally converge to continuum Dirichlet eigenfunction for the weighted Laplacian $-\rho^{-1}\div(\rho^2 \nabla u)$. The proof of this is expected to be more involved than Theorem \ref{thm:2rate}, since we cannot use the maximum principle to obtain strong discrete stability results. We expect discrete to continuum convergence results to hold for the eigenvector problem \eqref{eq:eigenGL} using the combined variational and PDE methods from \cite{GGHS2020,calder2019improved,calder2020Lip}. We show in Figure \ref{fig:eigenfunctions} the first 7 Dirichlet eigenfunctions on the disk computed by solving \eqref{eq:eigenGL} over a graph constructed with $n=10^5$ random variables independent and uniformly distributed on the disk.
\label{rem:eigenfunction}
\end{remark}

\begin{remark}
In the case that $f=0$ and we consider Dirichlet boundary conditions ( $\gamma=1$), we can extend Theorem \ref{thm:2rate} to hold even when $\partial_\eps \X$ is replaced with a thinner boundary $\partial \X_\delta$ for any $\eps^2 \ll \delta \leq \eps$. That is when only the points in a very thin region near the true boundary are identified. 
In this case we can prove the error rate of $O(\eps^2/\delta)$. The proof is a minor adaptation of \cite[Theorem 2.4]{CST20}. We expect the proof would extend to the case of nonzero $f$ as well, though the incorporation of $\gamma<1$ seems more difficult.
\label{rem:delta}
\end{remark}

\subsubsection{Numerical results}\label{sssection:2ndorderPDE}

\begin{figure}[!t]
\centering
\begin{minipage}{0.45\textwidth}
\includegraphics[width=0.45\textwidth]{robin_graph_1e5.png} \hspace{3mm}
\includegraphics[width=0.45\textwidth]{robin_true_1e5.png}
\vspace{2mm}

\includegraphics[width=0.45\textwidth]{eigen0_graph_1e5.png} \hspace{3mm}
\includegraphics[width=0.45\textwidth]{eigen0_true_1e5.png}
\end{minipage}
\begin{minipage}{0.54\textwidth}
\subfloat{\includegraphics[clip=true,trim=12 12 10 10, width=\textwidth]{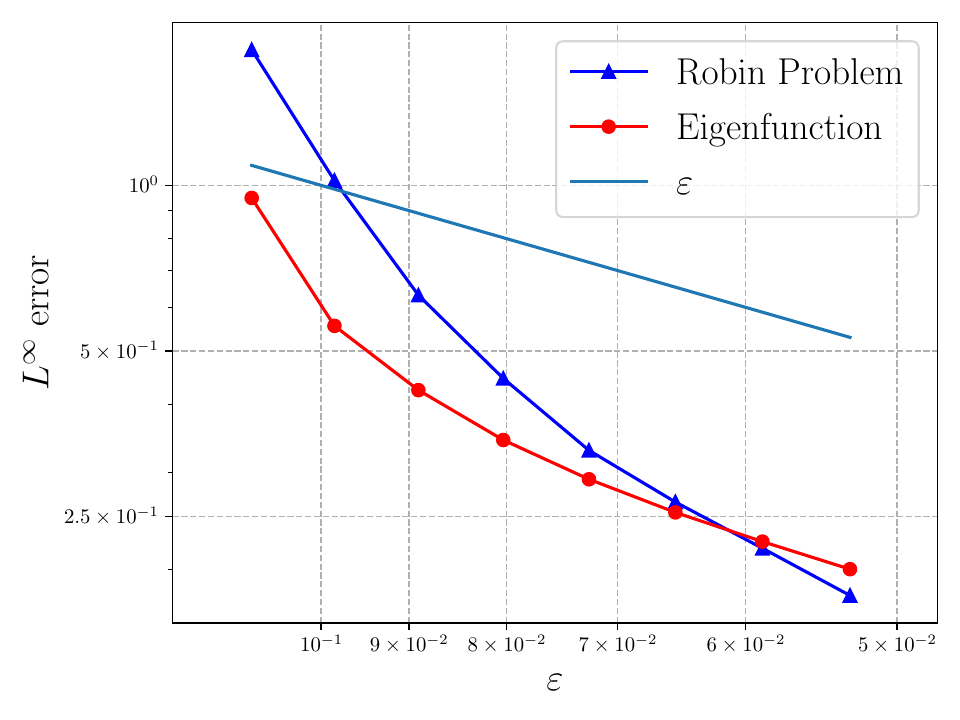}}
\end{minipage}
\caption{On the left, plots of the solution to the Robin problem and principal Dirichlet eigenvector for $n=10^5$ points on the disk, compared to the exact solutions of each problem. On the right we show an error plot for varying $\eps$ averaged over $100$ trials.  }
\label{fig:secondorder}
\end{figure}

We ran several numerical experiments to test the rate of convergence in Theorem \ref{thm:2rate} on the disk $\Omega=B(0,1)\subset \R^2$. In this case, $\rho=1/\pi$. In the first experiment, we set the solution of the Robin problem \eqref{eq:robin} with $\gamma=1/2$ to be
\[u(x) = \sin(2x_1^2) - \cos(2x_1^2)\]
and then set $f = -\frac{1}{\pi}\Delta u$ and $g = \tfrac12 (u - \nabla_\nu u)$, and tested how well the solution of the graph Laplace equation \eqref{eq:robinGL} can reconstruct $u$. In the second problem, we solved \eqref{eq:eigenGL} for the principal Dirichlet eigenfunction, and compared against the true solution $u(x) = J_0(\lambda |x|)$, where $J_0$ is the zeroth order Bessel function of the first kind, and $\lambda$ is the first positive root of $J_0$. In each case we varied the number $n$ of random variables in the point cloud from $n=2^{10}$ up to $n=2^{17}=131,072$ by powers of 2, and set
\[\eps = \frac{1}{4}\left( \frac{\log n}{n} \right)^{\frac{1}{d+4}},\]
where here, $d=2$. We approximated the $\eps$ boundary using $k=2\pi n\eps^2$ nearest neighbors. Figure \ref{fig:secondorder} shows plots of the solutions to each graph-based problem, compared to the true solutions of their corresponding PDEs, and a plot of maximum absolute error versus $\eps$, averaged over 100 trials. In both cases we see better convergence rates than the $O(\eps)$ guaranteed by Theorem \ref{thm:2rate}. Taking the last three data points on each plot, the empirical convergence rates are $\eps^{1.86}$ for the Robin problem and $\eps^{1.13}$ for the Dirichlet eigenfunction.

\subsection{Experiments with real data}
\label{sec:realdata}

We now turn to experiments with real data. We use the MNIST \cite{lecun1998gradient} and FashionMNIST \cite{xiao2017fashion} datasets. MNIST is a standard dataset for handwritten digit recognition, consisting of 70,000 images of handwritten digits $0$--$9$. Each image is a $28\times 28$ grayscale image, which we interpret as a vector in $\R^{784}$. The FashionMNIST dataset is a drop-in replacement for MNIST, with the same number of datapoints and image resolution, except that the 10 classes in FashionMNIST correspond to different items of clothing, with pictures taken from a fashion catalog. In all experiments, we use Euclidean distance between the raw pixel values in $\R^{784}$ to compare images.

We focus our experiments on detecting the boundary images for each class, and then using the discovered boundary to compute a notion of data depth by solving PDEs over the data with Dirichlet boundary conditions. In this way, we also compute a notion of data median, by taking the deepest images in the dataset. To compute the boundary points, we use $k=10$ Euclidean nearest neighbors and compute $\hat d_\eps(x^{i})$ for each image $x^i$ by taking $\eps$ as the Euclidean distance to the $k^{\rm th}$ nearest neighbor. We then set the images with scores $\hat d_\eps(x^{i})$ in the lower 10\% of all images to be boundary points. This is an implicit way to select the desired width of the boundary by instead specifying how many boundary points are desired. Figures \ref{fig:MNIST} and \ref{fig:FashionMNIST} show that top 10 boundary images in each class compared to randomly selected images.

%
%

\begin{figure}[!t]
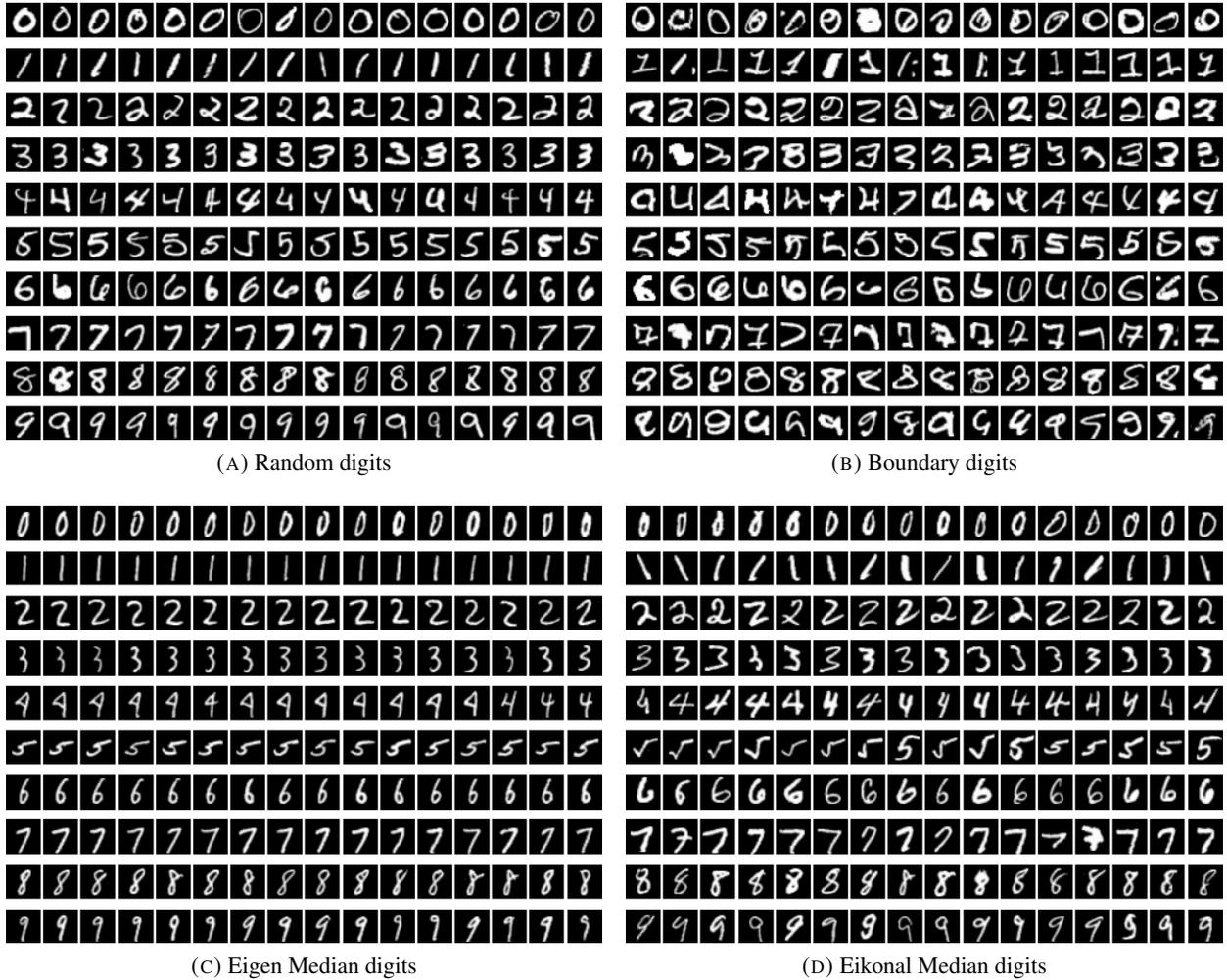

\centering
\subfloat[Random digits]{\includegraphics[trim=0 0 0 0,   clip=true, width=0.49\textwidth]{MNIST_random.png}}\hfill
\subfloat[Boundary digits]{\includegraphics[trim=0 0 0 0, clip=true, width=0.49\textwidth]{MNIST_boundary.png}}\\
\subfloat[Eigen Median digits]{\includegraphics[trim=0 0 0 0,   clip=true, width=0.49\textwidth]{MNIST_eigen_median.png}}\hfill
\subfloat[Eikonal Median digits]{\includegraphics[trim=0 0 0 0,   clip=true, width=0.49\textwidth]{MNIST_eikonal_median.png}}
\caption{MNIST experiments.}
\label{fig:MNIST}
\end{figure}

\begin{figure}[!t]
\centering
\subfloat[Random images]{\includegraphics[trim=0 0 0 0,   clip=true, width=0.49\textwidth]{FashionMNIST_random.png}}\hfill
\subfloat[Boundary images]{\includegraphics[trim=0 0 0 0, clip=true, width=0.49\textwidth]{FashionMNIST_boundary.png}}\\
\subfloat[Eigen Median images]{\includegraphics[trim=0 0 0 0,   clip=true, width=0.49\textwidth]{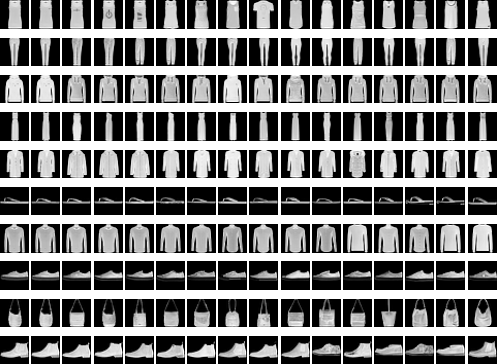}}\hfill
\subfloat[Eikonal Median images]{\includegraphics[trim=0 0 0 0,   clip=true, width=0.49\textwidth]{FashionMNIST_eikonal_median.png}}
\caption{FashionMNIST experiments.}
\label{fig:FashionMNIST}
\end{figure}

Once the boundary points are detected, we construct a $k$ nearest neighbor graph over the data points in each class. We use Gaussian weights given by
\[w_{ij} =\exp\left( -\frac{4|x^i-x^j|^2}{\eps_k(x^i)^2} \right),\]
where $\eps_k(x_i)$ is the distance between $x^i$ and its $k^{\rm th}$ nearest neighbor. We used $k=10$ in all experiments, and the weight matrix was symmetrized by replacing $W$ with $W+W^T$. For a notion of data depth, we compute the principal Dirichlet eigenfunction of the graph Laplacian, i.e., the solution of \eqref{eq:eigenGL} with smallest $\lambda$. We found the symmetric normalization
\[\L u(x) = \sum_{j=1}^n w_{ij}\left( \frac{u(x_i)}{\sqrt{d_i}} - \frac{u(x_j)}{\sqrt{d_j}} \right), \ \ d_i = \sum_{j=1}^n w_{ij}\]
gives slightly more consistent results, and so we report the results with this normalization. The principal Dirichlet eigenfunction has one sign on all of $\X$, and we choose the version that is positive on $\X$. We use $u(x^i)$ as a notion of data depth, and the $x^i$ where $u(x^i)$ is largest can be interpreted as median images for each class.   The median images computed this way are shown in Figures \ref{fig:MNIST} (c) and \ref{fig:FashionMNIST} (c). We also computed the median by solving the eikonal equation \eqref{eq:eikonal}, again using our detected boundary images as Dirichlet boundary conditions. The eikonal median images are shown in Figures \ref{fig:MNIST} (d) and \ref{fig:FashionMNIST} (d). 

We observe that the eigen-median images are all very similar to each other, compared with the eikonal median images, which have much more variation. There is some work showing that the maximum or minimum points of graph Laplacian eigenvectors correspond to nodes in the graph that are unusually well-connected, in the sense that a random walker will take a long time to escape the region (see, e.g., \cite{depavia2020spectral}). These regions then contain groups of highly similar images. In contrast, the eikonal median images are simply those that are furthest from the boundary in the graph geodesic distance, and these images may be scattered around the graph and have far more variability.

We remark that we can also construct a similar notion of data depth by solving the Dirichlet problem \eqref{eq:robinGL} with $f\equiv 1$, $\gamma=1$, and $g\equiv 0$. The solution of this Poisson equation has the interpretation that $u(x^i)$ is the mean exit time for random walkers starting at $x^i$, and exiting at $\partial_\eps \X$.  We almost always obtained the same set of median images, up to some minor differences, using the two graph PDEs, so we only show the results using the Dirichlet eigenfunction. 

\begin{remark}
It is important to point out that our boundary detection method is designed for data sampled from a distribution with a Lebesgue density on a domain  $\Omega\subset \R^d$. That is, our results do not apply to the \emph{manifold assumption}, which is a commonly used modeling assumption in machine learning that assumes the data is sampled from a low dimensional smooth submanifold, possibly with boundary, embedded in $\R^d$. The dimension $m$ of the smooth submanifold is called the \emph{intrinsic dimension} of the data.  While the MNIST dataset has extrinsic dimension $d=784$ (i.e., the number of pixels in each image), it has been estimated that intrinsic dimension of each class of MNIST digits is between $m=12$ and $m=14$ \cite{hein2005intrinsic,costa2006determining}. In the manifold setting, it is possible that our approximation of the unit normal vector $\hat{\nu}_\eps$ will point in the direction normal to the data submanifold in regions of higher curvature.  This would cause interior points to be incorrectly identified as boundary points. This could be addressed by projecting $\hat{\nu}_\eps$ onto the tangent space to the submanifold, but we leave this for future work. Since we see good results for our method on MNIST and FashionMNIST in Figures \ref{fig:MNIST} (b) and \ref{fig:FashionMNIST} (b), this may indicate that curvature is low for both datasets and does not play a large role in boundary detection. 
\label{rem:manifold}
\end{remark}


\bibliographystyle{siam}
\bibliography{be_bib}

\begin{thebibliography}{10}

\bibitem{AAL21}
{\sc E.~Aamari, C.~Aaron, and C.~Levrard}, {\em Minimax boundary estimation and
  estimation with boundary}, arXiv preprint arXiv:2108.03135,  (2021).

\bibitem{AL19}
{\sc E.~Aamari and C.~Levrard}, {\em {Nonasymptotic rates for manifold, tangent
  space and curvature estimation}}, The Annals of Statistics, 47 (2019),
  pp.~177 -- 204.

\bibitem{AarCho20}
{\sc C.~Aaron and A.~Cholaquidis}, {\em On boundary detection}, Ann. Inst.
  Henri Poincar\'{e} Probab. Stat., 56 (2020), pp.~2028--2050.

\bibitem{depavia2020spectral}
{\sc S.~S. Adela~DePavia}, {\em Spectral clustering revisited: Information
  hidden in the {F}iedler vector}, Foundations of Data Science, 3 (2021),
  pp.~225--249.

\bibitem{bardi2008optimal}
{\sc M.~Bardi and I.~Capuzzo-Dolcetta}, {\em Optimal control and viscosity
  solutions of {H}amilton-{J}acobi-{B}ellman equations}, Springer Science \&
  Business Media, 2008.

\bibitem{barnett1976ordering}
{\sc V.~Barnett}, {\em The ordering of multivariate data}, Journal of the Royal
  Statistical Society: Series A (General), 139 (1976), pp.~318--344.

\bibitem{alphashapetoolbox}
{\sc K.~Bellock}, {\em Alpha shape toolbox}, 2021.
\newblock \url{https://github.com/bellockk/alphashape}. (accessed 2021/10/22).

\bibitem{Ben80}
{\sc J.~L. Bentley}, {\em Multidimensional divide-and-conquer}, Communications
  of the ACM, 23 (1980), pp.~214--229.

\bibitem{Vai89}
\leavevmode\vrule height 2pt depth -1.6pt width 23pt, {\em Multidimensional
  divide-and-conquer}, Discrete and Comp. Geom., 4 (1989), p.~101–115.

\bibitem{Github:annoy}
{\sc E.~Bernhardsson}, {\em Annoy: Approximate nearest neighbors in
  c++/python}, 2018.
\newblock \url{https://pypi.org/project/annoy/} (accessed 2020/10/19).

\bibitem{BerrySauer17}
{\sc T.~Berry and T.~Sauer}, {\em Density estimation on manifolds with
  boundary}, Computational Statistics \& Data Analysis, 107 (2017), pp.~1--17.

\bibitem{BD-medial}
{\sc L.~Birbrair and M.~P. Denkowski}, {\em Medial axis and singularities}, J.
  Geom. Anal., 27 (2017), pp.~2339--2380.

\bibitem{bou2021hamilton}
{\sc A.~Bou-Rabee and P.~S. Morfe}, {\em Hamilton-{J}acobi scaling limits of
  pareto peeling in 2d}, arXiv preprint arXiv:2110.06016,  (2021).

\bibitem{boucheron2013concentration}
{\sc S.~Boucheron, G.~Lugosi, and P.~Massart}, {\em Concentration inequalities:
  A nonasymptotic theory of independence}, Oxford university press, 2013.

\bibitem{calder2018game}
{\sc J.~Calder}, {\em The game theoretic p-{L}aplacian and semi-supervised
  learning with few labels}, Nonlinearity, 32 (2018), pp.~301--330.

\bibitem{calder2020Viscosity}
{\sc J.~Calder}, {\em Lecture notes on viscosity solutions}, Online Lecture
  Notes,  (2018).
\newblock
  \url{http://www-users.math.umn.edu/~jwcalder/viscosity_solutions.pdf}.

\bibitem{calder2019lip}
{\sc J.~Calder}, {\em {Consistency of Lipschitz learning with infinite
  unlabeled data and finite labeled data}}, SIAM Journal on Mathematics of Data
  Science, 1 (2019), pp.~780--812.

\bibitem{calder2020Calculus}
{\sc J.~Calder}, {\em The calculus of variations}, Online Lecture Notes,
  (2020).
\newblock
  \url{http://www-users.math.umn.edu/~jwcalder/CalculusOfVariations.pdf}.

\bibitem{Github:GraphLearning}
\leavevmode\vrule height 2pt depth -1.6pt width 23pt, {\em Graph-based
  clustering and semi-supervised learning}, 2020.
\newblock \url{https://github.com/jwcalder/GraphLearning}. (accessed
  2020/10/19).

\bibitem{calder2014}
{\sc J.~Calder, S.~Esedo\=glu, and A.~O. Hero}, {\em A {H}amilton-{J}acobi
  equation for the continuum limit of non-dominated sorting}, SIAM Journal on
  Mathematical Analysis, 46 (2014), pp.~603--638.

\bibitem{calder2021hamilton}
{\sc J.~Calder and M.~Ettehad}, {\em Hamilton-{J}acobi equations on graphs with
  applications to semi-supervised learning and data depth}, In preparation,
  (2021).

\bibitem{calder2019improved}
{\sc J.~Calder and N.~Garc\'ia~Trillos}, {\em Improved spectral convergence
  rates for graph {Laplacians} on $\varepsilon$-graphs and k-{NN} graphs},
  arXiv:1910.13476,  (2019).

\bibitem{calder2020Lip}
{\sc J.~Calder, N.~Garc\'ia~Trillos, and M.~Lewicka}, {\em Lipschitz regularity
  of graph {L}aplacians on random data clouds}, arXiv:2007.06679,  (2020).

\bibitem{CST20}
{\sc J.~Calder, D.~Slep{\v{c}}ev, and M.~Thorpe}, {\em Rates of convergence for
  {L}aplacian semi-supervised learning with low labeling rates},
  arXiv:2006.02765,  (2020).

\bibitem{calder2020convex}
{\sc J.~Calder and C.~K. Smart}, {\em {The limit shape of convex hull
  peeling}}, Duke Mathematical Journal, 169 (2020), pp.~2079--2124.

\bibitem{cannarsa2004semiconcave}
{\sc P.~Cannarsa and C.~Sinestrari}, {\em Semiconcave functions,
  {H}amilton-{J}acobi equations, and optimal control}, vol.~58, Springer
  Science \& Business Media, 2004.

\bibitem{carrizosa1996characterization}
{\sc E.~Carrizosa}, {\em A characterization of halfspace depth}, Journal of
  multivariate analysis, 58 (1996), pp.~21--26.

\bibitem{chen2017meshfree}
{\sc J.-S. Chen, M.~Hillman, and S.-W. Chi}, {\em Meshfree methods: progress
  made after 20 years}, Journal of Engineering Mechanics, 143 (2017),
  p.~04017001.

\bibitem{ChGeWa17}
{\sc Y.-C. Chen, C.~R. Genovese, and L.~Wasserman}, {\em Density level sets:
  asymptotics, inference, and visualization}, J. Amer. Statist. Assoc., 112
  (2017), pp.~1684--1696.

\bibitem{BORDER06}
{\sc {Chenyi Xia}, W.~{Hsu}, M.~L. {Lee}, and B.~C. {Ooi}}, {\em Border:
  efficient computation of boundary points}, IEEE Transactions on Knowledge and
  Data Engineering, 18 (2006), pp.~289--303.

\bibitem{chernozhukov2017monge}
{\sc V.~Chernozhukov, A.~Galichon, M.~Hallin, and M.~Henry}, {\em
  Monge--kantorovich depth, quantiles, ranks and signs}, The Annals of
  Statistics, 45 (2017), pp.~223--256.

\bibitem{costa2006determining}
{\sc J.~A. Costa and A.~O. Hero}, {\em Determining intrinsic dimension and
  entropy of high-dimensional shape spaces}, in Statistics and Analysis of
  Shapes, Springer, 2006, pp.~231--252.

\bibitem{cuevas1997plug}
{\sc A.~Cuevas, R.~Fraiman, et~al.}, {\em A plug-in approach to support
  estimation}, The Annals of Statistics, 25 (1997), pp.~2300--2312.

\bibitem{CuFrGy13}
{\sc A.~Cuevas, R.~Fraiman, and L.~Gy\"{o}rfi}, {\em Towards a universally
  consistent estimator of the {M}inkowski content}, ESAIM Probab. Stat., 17
  (2013), pp.~359--369.

\bibitem{CuFrRo07}
{\sc A.~Cuevas, R.~Fraiman, and A.~Rodr\'{\i}guez-Casal}, {\em A nonparametric
  approach to the estimation of lengths and surface areas}, Ann. Statist., 35
  (2007), pp.~1031--1051.

\bibitem{CueRod04}
{\sc A.~Cuevas and A.~Rodr\'{\i}guez-Casal}, {\em On boundary estimation}, Adv.
  in Appl. Probab., 36 (2004), pp.~340--354.

\bibitem{de2020depth}
{\sc P.~L. de~Micheaux, P.~Mozharovskyi, and M.~Vimond}, {\em Depth for curve
  data and applications}, Journal of the American Statistical Association,
  (2020), pp.~1--17.

\bibitem{DevWis80}
{\sc L.~Devroye and G.~L. Wise}, {\em Detection of abnormal behavior via
  nonparametric estimation of the support}, SIAM J. Appl. Math., 38 (1980),
  pp.~480--488.

\bibitem{DongMosLi11}
{\sc W.~Dong, C.~Moses, and K.~Li}, {\em Efficient k-nearest neighbor graph
  construction for generic similarity measures}, in Proceedings of the 20th
  International Conference on World Wide Web, WWW ’11, New York, NY, USA,
  2011, Association for Computing Machinery, p.~577–586.

\bibitem{Edels10}
{\sc H.~Edelsbrunner}, {\em Alpha shapes—a survey}, Tessellations in the
  Sciences,  (2010).

\bibitem{EdelKirkSeidel83}
{\sc H.~{Edelsbrunner}, D.~{Kirkpatrick}, and R.~{Seidel}}, {\em On the shape
  of a set of points in the plane}, IEEE Transactions on Information Theory, 29
  (1983), pp.~551--559.

\bibitem{EdelsMucke94}
{\sc H.~Edelsbrunner and E.~P. M\"{u}cke}, {\em Three-dimensional alpha
  shapes}, ACM Trans. Graph., 13 (1994), p.~43–72.

\bibitem{finlay2019improved}
{\sc C.~Finlay and A.~Oberman}, {\em Improved accuracy of monotone finite
  difference schemes on point clouds and regular grids}, SIAM Journal on
  Scientific Computing, 41 (2019), pp.~A3097--A3117.

\bibitem{flores2019algorithms}
{\sc M.~Flores, J.~Calder, and G.~Lerman}, {\em {Analysis and algorithms for
  {L}p-based semi-supervised learning on graphs}}, arXiv:1901.05031,  (2019).

\bibitem{flyer2009radial}
{\sc N.~Flyer and G.~B. Wright}, {\em A radial basis function method for the
  shallow water equations on a sphere}, Proceedings of the Royal Society A:
  Mathematical, Physical and Engineering Sciences, 465 (2009), pp.~1949--1976.

\bibitem{foote1984regularity}
{\sc R.~L. Foote}, {\em Regularity of the distance function}, Proceedings of
  the American Mathematical Society, 92 (1984), pp.~153--155.

\bibitem{froese2018meshfree}
{\sc B.~D. Froese}, {\em Meshfree finite difference approximations for
  functions of the eigenvalues of the {H}essian}, Numerische Mathematik, 138
  (2018), pp.~75--99.

\bibitem{fuselier2012scattered}
{\sc E.~Fuselier and G.~B. Wright}, {\em Scattered data interpolation on
  embedded submanifolds with restricted positive definite kernels: Sobolev
  error estimates}, SIAM Journal on Numerical Analysis, 50 (2012),
  pp.~1753--1776.

\bibitem{GGHS2020}
{\sc N.~Garc{\'\i}a~Trillos, M.~Gerlach, M.~Hein, and D.~Slep{\v{c}}ev}, {\em
  Error estimates for spectral convergence of the graph {L}aplacian on random
  geometric graphs toward the {L}aplace--{B}eltrami operator}, Foundations of
  Computational Mathematics, 20 (2020), pp.~827--887.

\bibitem{garcia2020maximum}
{\sc N.~Garc\'ia~Trillos and R.~W. Murray}, {\em A maximum principle argument
  for the uniform convergence of graph {L}aplacian regressors}, SIAM Journal on
  Mathematics of Data Science, 2 (2020), pp.~705--739.

\bibitem{hein2005intrinsic}
{\sc M.~Hein and J.-Y. Audibert}, {\em Intrinsic dimensionality estimation of
  submanifolds in rd}, in Proceedings of the 22nd international conference on
  Machine learning, 2005, pp.~289--296.

\bibitem{LacVeg17}
{\sc R.~Lachi\`eze-Rey and S.~Vega}, {\em Boundary density and {V}oronoi set
  estimation for irregular sets}, Trans. Amer. Math. Soc., 369 (2017),
  pp.~4953--4976.

\bibitem{lai2013local}
{\sc R.~Lai, J.~Liang, and H.-K. Zhao}, {\em A local mesh method for solving
  pdes on point clouds}, Inverse Problems \& Imaging, 7 (2013), p.~737.

\bibitem{lecun1998gradient}
{\sc Y.~LeCun, L.~Bottou, Y.~Bengio, and P.~Haffner}, {\em Gradient-based
  learning applied to document recognition}, Proceedings of the IEEE, 86
  (1998), pp.~2278--2324.

\bibitem{li2017point}
{\sc Z.~Li, Z.~Shi, and J.~Sun}, {\em Point integral method for solving
  poisson-type equations on manifolds from point clouds with convergence
  guarantees}, Communications in Computational Physics, 22 (2017),
  pp.~228--258.

\bibitem{liang2013solving}
{\sc J.~Liang and H.~Zhao}, {\em Solving partial differential equations on
  point clouds}, SIAM Journal on Scientific Computing, 35 (2013),
  pp.~A1461--A1486.

\bibitem{liang2021solving}
{\sc S.~Liang, S.~W. Jiang, J.~Harlim, and H.~Yang}, {\em Solving pdes on
  unknown manifolds with machine learning}, arXiv:2106.06682,  (2021).

\bibitem{liu1999multivariate}
{\sc R.~Y. Liu, J.~M. Parelius, and K.~Singh}, {\em Multivariate analysis by
  data depth: descriptive statistics, graphics and inference,(with discussion
  and a rejoinder by liu and singh)}, The annals of statistics, 27 (1999),
  pp.~783--858.

\bibitem{mcmullen_1970}
{\sc P.~McMullen}, {\em The maximum numbers of faces of a convex polytope},
  Mathematika, 17 (1970), p.~179–184.

\bibitem{molina2021eikonal}
{\sc M.~Molina-Fructuoso and R.~Murray}, {\em Eikonal depth: an optimal control
  approach to statistical depths}, In preparation,  (2021).

\bibitem{molina2021tukey}
\leavevmode\vrule height 2pt depth -1.6pt width 23pt, {\em Tukey depths and
  {H}amilton-{J}acobi differential equations}, arXiv:2104.01648,  (2021).

\bibitem{oberman2008wide}
{\sc A.~M. Oberman}, {\em Wide stencil finite difference schemes for the
  elliptic {M}onge-{A}mpere equation and functions of the eigenvalues of the
  {H}essian}, Discrete \& Continuous Dynamical Systems-B, 10 (2008), p.~221.

\bibitem{piret2012orthogonal}
{\sc C.~Piret}, {\em The orthogonal gradients method: A radial basis functions
  method for solving partial differential equations on arbitrary surfaces},
  Journal of Computational Physics, 231 (2012), pp.~4662--4675.

\bibitem{piret2016fast}
{\sc C.~Piret and J.~Dunn}, {\em Fast rbf ogr for solving pdes on arbitrary
  surfaces}, in AIP Conference Proceedings, vol.~1776, AIP Publishing LLC,
  2016, p.~070005.

\bibitem{QiaPol19}
{\sc W.~Qiao and W.~Polonik}, {\em Nonparametric confidence regions for level
  sets: statistical properties and geometry}, Electron. J. Stat., 13 (2019),
  pp.~985--1030.

\bibitem{BRIM07}
{\sc B.-Z. Qiu, F.~Yue, and J.-Y. Shen}, {\em Brim: An efficient boundary
  points detecting algorithm}, in Advances in Knowledge Discovery and Data
  Mining, Z.-H. Zhou, H.~Li, and Q.~Yang, eds., Berlin, Heidelberg, 2007,
  Springer Berlin Heidelberg, pp.~761--768.

\bibitem{Casal07}
{\sc A.~Rodr{\'\i}guez~Casal}, {\em Set estimation under convexity type
  assumptions}, Annales de l'I.H.P. Probabilit\'es et statistiques, 43 (2007),
  pp.~763--774.

\bibitem{sethian2000fast}
{\sc J.~A. Sethian and A.~Vladimirsky}, {\em Fast methods for the eikonal and
  related {H}amilton--{J}acobi equations on unstructured meshes}, Proceedings
  of the National Academy of Sciences, 97 (2000), pp.~5699--5703.

\bibitem{Shi17}
{\sc Z.~Shi}, {\em Enforce the {D}irichlet boundary condition by volume
  constraint in point integral method}, Commun. Math. Sci., 15 (2017),
  pp.~1743--1769.

\bibitem{small1997multidimensional}
{\sc C.~G. Small}, {\em Multidimensional medians arising from geodesics on
  graphs}, The Annals of Statistics,  (1997), pp.~478--494.

\bibitem{suchde2019fully}
{\sc P.~Suchde and J.~Kuhnert}, {\em A fully lagrangian meshfree framework for
  pdes on evolving surfaces}, Journal of Computational Physics, 395 (2019),
  pp.~38--59.

\bibitem{suchde2019meshfree}
{\sc P.~Suchde and J.~Kuhnert}, {\em A meshfree generalized finite difference
  method for surface pdes}, Computers \& Mathematics with Applications, 78
  (2019), pp.~2789--2805.

\bibitem{aShp_MATLAB}
{\sc {The MathWorks Inc.}}, {\em alphashape: Matlab documentation}.
\newblock \url{https://www.mathworks.com/help/matlab/ref/alphashape.html}.
\newblock Accessed: 2021-10-17.

\bibitem{WuWu19}
{\sc H.~tieng Wu and N.~Wu}, {\em When locally linear embedding hits boundary},
  arXiv:1811.04423,  (2019).

\bibitem{trask2020compatible}
{\sc N.~Trask and P.~Kuberry}, {\em Compatible meshfree discretization of
  surface pdes}, Computational Particle Mechanics, 7 (2020), pp.~271--277.

\bibitem{tukey1975mathematics}
{\sc J.~W. Tukey}, {\em Mathematics and the picturing of data}, in Proceedings
  of the International Congress of Mathematicians, Vancouver, 1975, vol.~2,
  1975, pp.~523--531.

\bibitem{vaughn2019diffusion}
{\sc R.~Vaughn, T.~Berry, and H.~Antil}, {\em Diffusion maps for embedded
  manifolds with boundary with applications to pdes}, arXiv preprint
  arXiv:1912.01391,  (2019).

\bibitem{wang2018modified}
{\sc M.~Wang, S.~Leung, and H.~Zhao}, {\em Modified virtual grid difference for
  discretizing the {L}aplace--{B}eltrami operator on point clouds}, SIAM
  Journal on Scientific Computing, 40 (2018), pp.~A1--A21.

\bibitem{xiao2017fashion}
{\sc H.~Xiao, K.~Rasul, and R.~Vollgraf}, {\em Fashion-{MNIST}: A novel image
  dataset for benchmarking machine learning algorithms}, arXiv:1708.07747,
  (2017).

\bibitem{yuan2020continuum}
{\sc A.~Yuan, J.~Calder, and B.~Osting}, {\em A continuum limit for the
  pagerank algorithm}, European Journal of Applied Mathematics,  (2020),
  pp.~1--33.

\end{thebibliography}


\newpage 

\appendix 
\section{Proof of Lemma \ref{lem:hatd1_lowbd}}\label{appendix:proof_lowbd}

The following lemma will be useful in proving Lemma \ref{lem:hatd1_lowbd}.

\begin{lemma}[Covering with spherical segments]\label{lem:covering}
    Let $\rr\leq 1$ and $0<a<b\leq \rr$. For $u\in \S^{d-1}$ and $0 < a < b\leq r$  define the spherical sector by
    \[S_{a,b}^u = \{x\in B(0,r) \, : \, a \leq x\cdot u\leq b\}.\]
    
    Suppose $\Sigma\subset\S^{d-1}$ is a finite set satisfying the following property:
    \begin{equation}\label{eq:res}
    \text{ for all } u\in\S^{d-1} \text{ there exists } v\in\Sigma \text{ such that } |u-v|\leq \delta.
    \end{equation}
    Then, for any $u\in\S^{d-1}$ we can find $v\in\Sigma$ such that
    \[S^{v}_{a+\delta b,b-\delta b}\subset S^u_{a,b}.\]
\end{lemma}

\begin{proof}
Let $u\in \S^{d-1}$ and fix a $v\in \Sigma$ satisfying \eqref{eq:res}. Suppose that $x\in S_{a+\delta b,b-\delta b}^v$. Then we have
\[a+\delta b \leq x\cdot v \leq b-\delta b.\]
We have
\[|x\cdot v - x\cdot u| =| x\cdot (v-u)| \leq |x| |u-v| \leq \delta |x| \leq \delta b,\]
since $|x|\leq b-\delta \leq b$. Therefore 
\[x \cdot u \leq b-\delta b + \delta b=b  \ \ \text{and}  \ \ x\cdot u \geq a + \delta b - \delta b=a .\]
Therefore $x\in S_{a,b}^u$, which shows that for each $u\in \S^{d-1}$ there exists $v\in \Sigma$ such that
\[S_{a,b}^u \supset S_{a+\delta b,b-\delta b}^v.\]
Hence, the event that $S_{a,b}^u $ is empty for some $u\in \S^{d-1}$ is contained in the event that $S_{a+\delta b,b-\delta b}^v$ is empty for some $v\in \Sigma$---a finite collection of events.

\end{proof}

\begin{remark}[$\eps$-nets and upper bound on $|\Sigma|$]\label{rmk:delta_net}

   Recall that an $\eps$-net of $\S^{d-1}$ is the set of points in $\S^{d-1}$ such that the pairwise distance is at least $\eps$. Then we define a maximal $\eps$-net of the sphere to be an $\eps$-net such that no point on $\S^{d-1}$ can be added while preserving the lower bound for the pairwise distance. 
   
   Then, observe that any maximal $\eps$-net of the unit sphere satisfies the condition of Lemma \ref{lem:covering}. If $\Sigma_\eps=\{x^1,\cdots,x^{N_\eps}\}$ is a maximal $\eps$-net of $\S^{d-1}$, then for each $x\in\S^{d-1}$ there exists $x^i\in\Sigma_{\eps}$ such that $|x-x^i|\leq \eps$. To see this, suppose $|x^\ast - x^i|>\eps$ for all $i=1,\cdots,N_\eps$. Then 
   \[B(x^\ast,\eps/2)\cap B(x^i,\eps/2) =\emptyset \text{ for all } x^i\in\Sigma_\eps.\]
   Thus $\Sigma_{\eps}\cap\{x^\ast\}$ should also be an $\eps$-net, which contradicts the maximality of $\Sigma_\eps$.
   
   Now, let $\Sigma_\delta$ be any $\delta$-net -- i.e. $\eps$-net with $\eps=\delta$. Then $\{B(v^i,\delta/2):\,v^i\in\Sigma_\delta\}$ is a collection of disjoint balls, all contained in $B(0,1+\delta/2)\setminus B(0,1-\delta/2)$. Thus, base on a simple volumetric argument, we can deduce
   \begin{equation}\label{eq:delta_net;cardinality}
       |\Sigma_\delta| \leq 2d\left(1+\frac{2}{\delta}\right)^{d-1},
   \end{equation}
\end{remark}

\begin{proof}[Proof of Lemma \ref{lem:hatd1_lowbd}]
$ $
\begin{enumerate}
    \item Let $\{v_i\}_{i=1}^M=\Sigma\subset\S^{d-1}$ be a maximal $\delta$-net. By Lemma \ref{lem:covering} and Remark \ref{rmk:delta_net}, for any $u\in\S^{d-1}$ we can find $v_k\in \Sigma$ such that
   \[S_{a+b\delta,b-b\delta}^{v_k}\subset S_{a,b}^u.\]
   This means that if all of $S_{a+b\delta,b-b\delta}^{v_i}$ are nonempty, all of $S_{a,b}^u$ is nonempty for $u\in\S^{d-1}$ hence
   \[\hat d_r^1(x^0)\geq a.\]
   Without loss of generality, assume $x^0\in\R^d$ is the origin, and let $\alpha=d_\Omega(x^0)\wedge \frac{\rr}{2}$. Denote by $K_{a,b}^u\subset S_{a,b}^u$ the cone of maximal height sharing the base with $S_{a,b}^u$. Note that $b\leq \alpha$ implies $K_{a,b}^u\subset \overline B(x_0,\rr)\cap\Omega$. On the other hand, we need $a\geq (1-\lambda)\alpha-t$ to deduce the desired lower bound on $\hat d_\rr^1$. Thus choose
   \[a=(1-\lambda)\alpha-t,\,b=\alpha.\]
   Further, we need the height of $S_{a+b\delta,b-b\delta}^{v_i}$ to scale like $t$, in order to lower bound the volume. Thus we need
   \[
   b-b\delta-(a+b\delta)=(1-2\delta)b-\alpha=(1-2\delta)\alpha-(1-\lambda)\alpha-t 
   =(\lambda-2\delta)\alpha+t.
   \]
   As we are interested in $t\lesssim \rr^2\ll \alpha\sim \eps$, we need $\lambda-2\delta\geq0$, hence
   \[\delta\leq\frac{\lambda}{2}.\]
   
   \item Following the discussion in the previous step, let $\Sigma=\{v^1,\cdots,v^{N_\lambda}\}$ be a maximal $\frac{\lambda}{2}$-net of $\S^{d-1}$, and write
   \[S^i = S^{v^i}_{a+b\lambda/2,b-b\lambda/2} \text{ where } a=(1-\lambda) \alpha,\,b=\alpha,\text{ and }. \]
   Thus, to show \eqref{eq:hatd1_lowbd} holds with probability at least $1-n^{-\gamma}$, it suffices to show
   \[\P(\text{ No point in } S^i)\leq (1-\rho_{\min}|S^i\cap \Omega|)^{n}\leq N_\lambda^{-1}n^{-\gamma} \text{ for all } i=1,\cdots,N_\lambda.\]
   
   \item We first compute the lower bound for $|S^i\cap \Omega|$. 
   Temporarily write $a'=a+b\lambda/2,\,b'=b-b\lambda/2$. Let $K_{a',b'}^i$ be the cone of height $b'-a'=t$ sharing the base of $S^i$. Note that $K_{a',b'}^i\subset S^i\cap\Omega$ and its base has radius $\sqrt{r^2-(a')^2}=\rr\sqrt{1-(a'/\rr)^2}$. As the $|K_{a',b'}^i|$ is independent of $i$, we may drop the superscript and deduce
   \begin{align*}
        |S^i\cap\Omega|\geq |K_{a',b'}|=\int_0^t \omega_{d-1}\left(\rr\sqrt{1-(a'/\rr)^2}\frac{s}{t}\right)^{d-1}\,ds
        =\frac1d\omega_{d-1}t\rr^{d-1}(1-(a'/\rr)^2)^{\frac{d-1}{2}}.
   \end{align*}
   As $a'\leq b\leq \alpha\leq \rr/2$, we have $(1-(a'/r)^2)^{(d-1)/2}\geq 2^{-(d-1)/2}$.
   Hence, for each $i=1,\cdots,N_\lambda$
   \[\P(\text{ No point in } S^i)\leq (1-\rho_{\min}|K_{a',b'}|)^{n}\leq \left(1-\frac{\rho_{\min}}{d2^{(d-1)/2}}t\rr^{d-1}\right)^{n}.\]
   The expression on the right is less than $N_\lambda^{-1}n^{-\gamma}$ if
   \[n\log\left(1-\frac{\rho_{\min}}{d2^{(d-1)/2}}t\rr^{d-1}\right)\leq -\gamma\log n-\log N_\lambda,\]
   or equivalently
   \[t\rr^{d-1}\geq \frac{d2^{(d-1)/2}(1-e^{-\frac{\gamma\log n+\log N_\lambda}{n}})}{\rho_{\min}\omega_{d-1}}.\]
   As $1-e^{-x}\leq x$, it suffices for $t,\rr$ to satisfy
   \[t\rr^{d-1}\geq\frac{d2^{(d-1)/2}}{\rho_{\min}\omega_{d-1}}\left(\frac{\gamma\log n+\log N_\lambda}{n}\right).\]
   
   \item We claim that $\log N_\lambda\leq \gamma(d-1)\log n$. By setting $\delta=\frac{\lambda}{2}$ in \eqref{eq:delta_net;cardinality}, we know 
   \[N\leq 2d\left(1+\frac{4}{\lambda}\right)^{d-1}=2d\left(\frac{\lambda+4}{\lambda}\right)^{d-1}.\]
   By hypothesis $n\geq d\vee \frac{\lambda+4}{\lambda}$ and $\gamma>2$, we see
   \[n^{\gamma(d-1)}\geq n^{d-1} n^{d-1}\geq 2d \left(\frac{\lambda+4}{\lambda}\right)^{d-1}\geq N_\lambda.\]
   Thus $\gamma\log n +\log N\leq d\gamma \log n$, and it suffices for $t,\rr$ to satisfy
   \[t\rr^{d-1}\geq\frac{d^2 2^{(d-1)/2}\gamma}{\rho_{\min}\omega_{d-1}}\left(\frac{\log n}{n}\right).\]
   This completes the proof
   
\end{enumerate}
\end{proof}

\section{Proof of Proposition \ref{prop:dist}}
\begin{proof}
The proof is split into several steps.

1. Let $y\in \partial \Omega$ satisfy $d_\Omega(x_*)=|x_*-y|$. Let $z\in \partial B(x^0,\eps)$ be along the line from $x_*$ to $y$. Then we have
\[d_{\Omega}(z) \leq d_\Omega(x_*) - |x_*-z|\]
and so by the property defining $x_*$ we have $x_*=z$; that is $x_*\in \partial B(x^0,\eps)$. Since $d_\Omega$ is 1-Lipschitz, we have $d_\Omega(x_*) \geq d_\Omega(x^0)-\eps$.  By a similar argument as above, we have $d_\Omega(x_*) \leq d_\Omega(x^0) - \eps$, and so
\[d_\Omega(x_*) = d_\Omega(x^0) - \eps.\]
Now, note that the function
\[g(r) = d_\Omega(x_* +  r p)\]
is 1-Lipschitz and satisfies $g(\eps) = d_\Omega(x^0) = g(0) + \eps$. It follows that $g(l) = g(0) + r$ for $0\leq r\leq \eps$, and so
\begin{equation}\label{eq:linear}
d_\Omega(x_* + r p) = d_\Omega(x_*) + r \ \ \text{ for }0 \leq r \leq \eps.
\end{equation}

2. Since $d_\Omega - \frac{1}{R}|x-x_*|^2$ is a concave function, there exists $q\in \R^n$ such that
\[d_\Omega(x) - d_\Omega(x_*) \leq q\cdot(x-x_*)+ \frac{1}{R}|x-x_*|^2. \] 
for all $x\in \Omega$. By \eqref{eq:linear} we have
\[r = d_\Omega(x_* + rp) - d_\Omega(x_*) \leq r q\cdot p + \frac{r^2}{R}\]
for $0 \leq r \leq \eps$. Therefore
\[q\cdot p \geq 1 - \frac{r}{R}.\]
Sending $r\to 0^+$ we find that $p\cdot q \geq 1$. 

3. We now claim that $|q|\leq 1$, which combined with $p\cdot q \geq 1$ from part 2 implies that $p=q$ and completes the proof. To see this, since $B(x^0,\eps)\subset \Omega$, we have $B(x_*,r)\subset \Omega$ for $r>0$ sufficiently small. Now, the dynamic programming principle gives
\[0 = \min_{x\in B(x_*,r)}\left\{d_\Omega(x) - d_\Omega(x_*)+|x-x_*|  \right\} \leq \min_{x\in B(x_*,r)}\left\{q\cdot(x-x_*)+|x-x_*|  \right\} + \frac{r^2}{R}.\]
Setting $x-x_*=-|x-x_*|q/|q|$ we have
\[0 \leq \min_{x\in B(x_*,r)}\left\{ |x-x_*|(1-|q|) \right\} + \frac{r^2}{R} = -r(|q|-1)_+ + \frac{r^2}{R}.\]
Sending $r\to 0^+$ we obtain $|q|\leq 1$, which completes the proof.
\end{proof}

  \section{Concentration inequalities}

 For reference, we state the Chernoff bounds, Hoeffding inequality, and the Bernstein inequality, which are concentration of measure inequalities used to control the variance of our normal and distance estimators. We refer the reader to \cite{boucheron2013concentration} for a general reference on concentration inequalties. Proofs of the exact inequalities below can also be found in \cite[Chapter 5]{calder2020Calculus}. 
 
\begin{theorem}[Chernoff bounds]\label{thm:chernoff}
Let $X_1,X_2\dots,X_n$ be a sequence of \emph{i.i.d.}~Bernoulli random variables with parameter $p\in [0,1]$ (i.e., $\P(X_i=1)=p$ and $\P(X_i=0)=1-p$). Then for any $\eps>0$ we have
\begin{equation}\label{eq:chernoffbound}
\P\left(\sum_{i=1}^n X_i \geq (1+\eps)np\right)\leq \exp\left(-\frac{np\,\eps^2}{2(1+\tfrac13 \eps)} \right),
\end{equation}
and for any $0 \leq \eps < 1$ we have
\begin{equation}\label{eq:chernoffbound2}
\P\left(\sum_{i=1}^n X_i \leq (1-\eps)np\right)\leq \exp\left(-\frac{1}{2}np\,\eps^2 \right),
\end{equation}
\end{theorem}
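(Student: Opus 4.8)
The plan is to use the standard exponential moment (Chernoff) method for both bounds. Write $S_n = \sum_{i=1}^n X_i$, and recall that for a Bernoulli($p$) variable $X$ and any $s\in\R$ we have $\E[e^{sX}] = 1-p+pe^{s} \leq \exp\!\big(p(e^{s}-1)\big)$, using $1+u\leq e^{u}$. By independence, $\E[e^{sS_n}] \leq \exp\!\big(np(e^{s}-1)\big)$.

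For the upper bound \eqref{eq:chernoffbound}, I would apply Markov's inequality to $e^{tS_n}$ with $t>0$:
\[
\P\big(S_n \geq (1+\eps)np\big) \leq e^{-t(1+\eps)np}\,\E[e^{tS_n}] \leq \exp\!\Big(np\big(e^{t}-1 - t(1+\eps)\big)\Big).
\]
Optimizing the exponent over $t>0$ gives $t=\log(1+\eps)$, yielding the bound $\exp\!\big(np(\eps - (1+\eps)\log(1+\eps))\big)$. It then remains to verify the elementary inequality
\[
(1+\eps)\log(1+\eps) - \eps \;\geq\; \frac{\eps^{2}}{2\left(1+\tfrac13\eps\right)} \qquad (\eps\geq 0),
\]
which I would prove by setting $h(\eps)$ equal to the difference of the two sides, noting $h(0)=0$, and checking $h'(\eps)\geq 0$ for $\eps\geq 0$ (after clearing denominators this reduces to a polynomial inequality in $\eps$ that is manifestly nonnegative).

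For the lower bound \eqref{eq:chernoffbound2}, I would instead apply Markov's inequality to $e^{-tS_n}$ with $t>0$:
\[
\P\big(S_n \leq (1-\eps)np\big) \leq e^{t(1-\eps)np}\,\E[e^{-tS_n}] \leq \exp\!\Big(np\big(e^{-t}-1 + t(1-\eps)\big)\Big).
\]
Optimizing over $t>0$ gives $t = -\log(1-\eps) > 0$ (valid since $0\le\eps<1$), leading to $\exp\!\big(np(-\eps - (1-\eps)\log(1-\eps))\big)$. The proof is then completed by the inequality $(1-\eps)\log(1-\eps) + \eps - \tfrac12\eps^{2}\geq 0$ on $[0,1)$, which follows since the left side vanishes at $\eps=0$ and has derivative $-\log(1-\eps)-\eps\geq 0$ (because $-\log(1-\eps)\geq\eps$).

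The only real obstacle is the first scalar inequality with the $1+\tfrac13\eps$ denominator in the upper-tail case; the lower-tail scalar inequality is comparatively immediate. Everything else is the routine Chernoff packaging and can be assembled directly from the bounds above.
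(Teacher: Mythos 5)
Your proposal is the standard exponential-moment (Chernoff) argument, and it is essentially correct. Note that the paper does not actually prove this theorem: it states it as a known result and refers to Boucheron--Lugosi--Massart and to \cite[Chapter 5]{calder2020Calculus} for proofs, so your write-up supplies a self-contained derivation of exactly the kind those references contain. The optimization steps, the choices $t=\log(1+\eps)$ and $t=-\log(1-\eps)$, and the reduction to the two scalar inequalities are all right, and your treatment of the lower-tail inequality $(1-\eps)\log(1-\eps)+\eps\geq\tfrac12\eps^2$ is complete as written.

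One small over-claim in the upper-tail case: with
\[
h(\eps)=(1+\eps)\log(1+\eps)-\eps-\frac{\eps^2}{2\left(1+\tfrac13\eps\right)},
\]
one computes $h'(\eps)=\log(1+\eps)-\frac{3\eps(6+\eps)}{2(3+\eps)^2}$, which still contains a logarithm, so clearing denominators does \emph{not} reduce $h'\geq 0$ to a polynomial inequality. Moreover $h'(0)=0$, so you cannot conclude directly. The fix is one more differentiation: $h''(\eps)=\frac{1}{1+\eps}-\frac{27}{(3+\eps)^3}$, and $h''\geq 0$ is equivalent to $(3+\eps)^3\geq 27(1+\eps)$, i.e.\ $9\eps^2+\eps^3\geq 0$, which is the manifestly nonnegative polynomial inequality you were after. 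Then $h''\geq 0$, $h'(0)=0$, $h(0)=0$ give $h\geq 0$. With that adjustment the proof is complete.
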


\begin{theorem}[Hoeffding inequality]\label{thm:hoeffding}
Let $X_1,X_2\dots,X_n$ be a sequence of \emph{i.i.d.}~real-valued random variables with finite expectation $\mu=\E[X_i]$, and write $S_n=\frac{1}{n}\sum_{i=1}^n X_i$. Assume there exists $b>0$ such that $|\X-\mu|\leq b$ almost surely.  Then for any $t>0$ we have
\begin{equation}\label{eq:Hoeffding}
\P(S_n-\mu \geq t)\leq \exp\left(-\frac{nt^2}{2b^2}\right).
\end{equation}
\end{theorem}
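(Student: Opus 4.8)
The plan is to run the classical Chernoff exponential-moment argument, isolating the one nontrivial ingredient—an exponential moment bound for a centered, bounded random variable (Hoeffding's lemma)—and then optimizing over the free parameter.

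\textbf{Step 1 (exponential Markov inequality).} For any $s>0$, I would apply Markov's inequality to the nonnegative random variable $e^{s(S_n-\mu)}$, obtaining $\P(S_n-\mu\geq t)\leq e^{-st}\,\E[e^{s(S_n-\mu)}]$. Since $S_n-\mu=\frac1n\sum_{i=1}^n(X_i-\mu)$ and the $X_i$ are independent, the moment generating function factorizes: $\E[e^{s(S_n-\mu)}]=\prod_{i=1}^n\E[e^{\frac sn(X_i-\mu)}]$.

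\textbf{Step 2 (Hoeffding's lemma, the crux).} I would prove that any random variable $Y$ with $\E[Y]=0$ and $|Y|\leq b$ almost surely satisfies $\E[e^{\lambda Y}]\leq e^{\lambda^2 b^2/2}$ for every $\lambda\in\R$. This follows from convexity of $u\mapsto e^{\lambda u}$ on $[-b,b]$: pointwise $e^{\lambda Y}\leq\frac{b-Y}{2b}e^{-\lambda b}+\frac{b+Y}{2b}e^{\lambda b}$, and taking expectations while using $\E[Y]=0$ leaves $\E[e^{\lambda Y}]\leq\cosh(\lambda b)\leq e^{\lambda^2 b^2/2}$, the last inequality by comparing Taylor coefficients via $(2k)!\geq 2^k k!$. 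Applying this with $Y=X_i-\mu$ and $\lambda=s/n$ gives $\E[e^{\frac sn(X_i-\mu)}]\leq e^{s^2 b^2/(2n^2)}$, hence $\E[e^{s(S_n-\mu)}]\leq e^{s^2 b^2/(2n)}$ and $\P(S_n-\mu\geq t)\leq \exp\!\big(-st+\tfrac{s^2b^2}{2n}\big)$.

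\textbf{Step 3 (optimize in $s$).} The exponent $-st+\tfrac{s^2b^2}{2n}$ is a quadratic in $s$ minimized at $s=tn/b^2>0$, where it equals $-nt^2/(2b^2)$; substituting yields \eqref{eq:Hoeffding}. I do not anticipate a genuine obstacle here, this being a classical bound; the only step demanding care is Step 2, where one needs the sharp Gaussian-type estimate $\cosh(u)\leq e^{u^2/2}$ rather than a cruder bound, so that the final exponent comes out exactly as $nt^2/(2b^2)$. The identical argument applied to $-X_i$ would give the matching lower-tail estimate if needed.
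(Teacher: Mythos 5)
Your argument is correct and is exactly the standard Chernoff--Hoeffding proof (exponential Markov bound, the $\cosh(\lambda b)\leq e^{\lambda^2b^2/2}$ form of Hoeffding's lemma for symmetric bounds, and quadratic optimization in $s$); the paper does not prove this theorem itself but cites \cite[Chapter 5]{calder2020Calculus}, where this same argument appears. All steps, including the pointwise convexity bound and the coefficientwise comparison $(2k)!\geq 2^k k!$, check out, and the optimized exponent matches \eqref{eq:Hoeffding} exactly.
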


\begin{theorem}[Bernstein Inequality]\label{thm:bernstein}
Let $X_1,X_2\dots,X_n$ be a sequence of \emph{i.i.d.}~real-valued random variables with finite expectation $\mu=\E[X_i]$ and variance $\sigma^2=\Var(X_i)$, and write $S_n=\frac{1}{n}\sum_{i=1}^n X_i$. Assume there exists $b>0$ such that $|\X-\mu|\leq b$ almost surely. Then for any $t>0$ we have
\begin{equation}\label{eq:bernstein}
\P(S_n-\mu \geq t)\leq \exp\left( -\frac{nt^2}{2(\sigma^2 + \tfrac13 bt)} \right).
\end{equation}
\end{theorem}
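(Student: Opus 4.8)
The plan is to prove this by the classical Chernoff (exponential-moment) method; the only real work is a sharp bound on the moment generating function of a bounded, centered random variable, after which the estimate follows by a one-line optimization.

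First I would pass to a centered problem: set $Y_i = X_i - \mu$, so $\E[Y_i] = 0$, $\E[Y_i^2] = \sigma^2$, and $|Y_i| \le b$ almost surely. For any $\lambda > 0$, Markov's inequality applied to the nonnegative variable $e^{\lambda \sum_{i=1}^n Y_i}$, together with independence, gives
\[ \P(S_n - \mu \ge t) = \P\!\left( e^{\lambda \sum_{i=1}^n Y_i} \ge e^{\lambda n t} \right) \le e^{-\lambda n t} \prod_{i=1}^n \E\!\left[ e^{\lambda Y_i} \right]. \]

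The key step is the bound $\E[e^{\lambda Y_i}] \le \exp\!\left( \frac{\lambda^2 \sigma^2 / 2}{1 - \lambda b/3} \right)$, valid for $0 < \lambda < 3/b$. To obtain it I would Taylor-expand the exponential, use $\E[Y_i] = 0$ and the moment bound $|\E[Y_i^k]| \le b^{k-2}\E[Y_i^2] = b^{k-2}\sigma^2$ for $k \ge 2$, together with the elementary inequality $k! \ge 2 \cdot 3^{k-2}$, to get
\[ \E\!\left[ e^{\lambda Y_i} \right] = 1 + \sum_{k \ge 2} \frac{\lambda^k \E[Y_i^k]}{k!} \le 1 + \frac{\lambda^2 \sigma^2}{2} \sum_{k \ge 2} \left( \frac{\lambda b}{3} \right)^{k-2} = 1 + \frac{\lambda^2 \sigma^2 / 2}{1 - \lambda b/3} \le \exp\!\left( \frac{\lambda^2 \sigma^2 / 2}{1 - \lambda b/3} \right), \]
where the geometric series converges precisely because $\lambda b/3 < 1$, and the last step is $1 + x \le e^x$. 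Substituting this back into the product bound yields
\[ \P(S_n - \mu \ge t) \le \exp\!\left( - \lambda n t + \frac{n \lambda^2 \sigma^2 / 2}{1 - \lambda b / 3} \right). \]

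Finally I would optimize the exponent over $\lambda$. Choosing $\lambda = \dfrac{t}{\sigma^2 + bt/3}$, which automatically lies in $(0, 3/b)$ for every $t > 0$, a short computation gives $1 - \lambda b/3 = \sigma^2/(\sigma^2 + bt/3)$, so the exponent collapses exactly to $- \dfrac{n t^2}{2(\sigma^2 + \tfrac13 b t)}$, which is the claimed inequality. The only genuinely delicate point is the moment-generating-function estimate for bounded centered variables — controlling all higher moments by the variance and summing the resulting geometric tail; the Chernoff reduction and the scalar optimization are routine. Since these are textbook facts, one could alternatively simply invoke \cite{boucheron2013concentration} or \cite[Chapter 5]{calder2020Calculus}, as the surrounding text already does.
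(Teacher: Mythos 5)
Your proof is correct and complete: the Chernoff reduction, the moment bound $|\E[Y_i^k]|\le b^{k-2}\sigma^2$ combined with $k!\ge 2\cdot 3^{k-2}$, and the choice $\lambda = t/(\sigma^2+bt/3)$ all check out and yield the stated exponent exactly. The paper itself does not prove this theorem; it defers to the cited references, and your argument is precisely the standard one found there, so nothing further is needed.
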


\section{List of Constants}\label{appendixConstants}
    We list the explicit constants that appear in \emph{Sections \ref{sec:prelim} and \ref{sec:main}
   }.  Below $\omega_d$ is the volume of unit ball in $d$ dimensions, and $\gamma>2$ is a parameter of choice related to the error rate in the following way: $\P(\text{ Boundary test fails })=O(n^{-\gamma})$.
  
    \begin{align*}
    C_{x} &= 2\wod[d-1]+\frac{LR\wod}{\rho_{min}},    \\
    C_{y} &= \frac{\omega_{d-1}}{2(d+1)}, \\
    C_\rr &= \frac{1}{R}\max\left[\left(\frac{3\gamma\rho_{\max}d^2\wod R^2}{{C_{x}}^2 \rho_{\min}^2 }\right)^{\frac{1}{d+2}},\left(\frac{4\gamma C_yd^2 2^{(d-1)/2}}{13\rho_{\min}\wod[d-1]C_{x}}\right)^{\frac{1}{d+1}}\right],\\
    \end{align*}

\end{document}